\documentclass[reqno,a4paper]{amsart}
\pdfoutput 1

% fix bibliography erros with the following 
%\DeclareUnicodeCharacter{0301}{*************************************}
\usepackage[all,numberbysection]{bi-discrete}
% only for biber 

\usepackage[backend=biber,maxbibnames=5,maxalphanames=4,style=alphabetic,bibencoding=utf8,giveninits,url=false,isbn=false,doi=true,eprint=false]{biblatex}
\addbibresource{bc-num.bib}
\renewbibmacro{in:}{}
\AtBeginBibliography{\small}

\usepackage{blindtext}

\usepackage[T1]{fontenc}
\allowdisplaybreaks

\usepackage{mathtools}
\usepackage{a4wide}
\usepackage{bm}
\usepackage{dutchcal}
\usepackage{xcolor}

\usepackage{pifont}

\usepackage{graphicx}

\definecolor{cobalt}{rgb}{0.0, 0.28, 0.67}
\definecolor{darkcerulean}{rgb}{0.03, 0.27, 0.49}
\definecolor{morado}{rgb}{0.65, 0.14, 0.78}

\usepackage[colorlinks=true]{hyperref}
\hypersetup{citecolor=darkcerulean,linkcolor=darkcerulean, urlcolor=darkcerulean}

\DeclareGraphicsExtensions{.pdf,.png,.jpg}
\usepackage{pgfplots}
\pgfplotsset{compat=1.3}
\usepackage{placeins}
\usepackage{tabularx}
\usepackage{enumitem}
\usepackage{tikz}
\usepackage[font=footnotesize,margin=0.2cm]{caption}

\usepackage[caption=false]{subfig}
\usepackage{dsfont}

\usepackage[thinlines]{easytable}

\makeatother
% \setcounter{tocdepth}{1} %subsection in table of contents

%Notation%%%%%%%%%%%%%%%%%%%%%%%%%%%%%%%%%%%%

\def\abs#1{\left| #1 \right|}
\def\norm#1{\left|\!\left| #1 \right|\!\right|}
\def\ll{\left\langle}
\def\rr{\right\rangle}

\def\tens#1{\pmb{\mathsf{#1}}}
\def\vec#1{\boldsymbol{#1}}

\def\R{\mathbb{R}}

\def\sym{\mathop{\mathrm{sym}}\nolimits}
\def\Rds{\mathbb{R}^{d \times d}_{\sym}}

\def\diam{\mathop{\mathrm{diam}}\nolimits} 		%diameter
\def\dim{\mathop{\mathrm{dim}}\nolimits} %dimension

\def\tr{\mathop{\mathrm{tr}}\nolimits}
 %identity
 %identity

%diff op
\def\diver{\mathop{\mathrm{div}}\nolimits} %\divergence

\def\d{{\rm d}}
\def\dx{\,\d x}
\def\ds{\,\d s}
\def\dt{\,\d t}
\def\difft{\d_t}

\def\Du{\BD\bu}
\def\Dv{\BD\bv}

\def\wconv{\rightharpoonup}
\def\wsconv{\overset{*}{\rightharpoonup}}

\def\toto{\rightrightarrows}

%function spaces

\def\Leb{\mathrm{L}} %Lebesgue space
 %Sobolev space
\def\l2d{ \Leb^2_{\diver}(\Omega)^d}

\def\Hn{ H^1_{\bn}(\Omega)}
\def\W1pn{ W^{1,p}_{\bn}(\Omega)^d}
\def\Hdivn{ H^1_{\bn,\divergence}(\Omega)}
\def\Bh{{W_h}}

\def\Bspace{B}

\def\Bdiv{{B_{\divergence}}}
\def\Wdiv{{W_{\divergence}}}

%special numbers

%scalars

%vectors
\def\b0{\vec{0}}

\def\bb{\vec{b}}

\def\bf{\vec{f}}

\def\bn{\vec{n}}
\def\bs{\vec{s}}
\def\bu{\vec{u}}
\def\bv{\vec{v}}
\def\bw{\vec{w}}
\def\bx{\vec{x}}

\def\bpsi{\vec{\psi}}

\def\bphi{\vec{\varphi}}
\def\bsigma{\vec{\sigma}}
\def\brho{\vec{\rho}}

\def\btheta{\vec{\theta}}

%tensors
\def\B0{\tens{0}}

\def\BD{\tens{D}}

\def\BI{\tens{I}}

\def\BS{\tens{S}}

%iterates

%tensor functions

%Galerkin spaces and projectors
\def\Xdiv{X_{h,\diver}}
\def\Xh{X_{h}}
 %general space

%graph

\def\gbd{\vec{\mathcal{g}}}

%abbreviations of often used expressions
 %standard mollifier

 %piecewise constant interpolant of velocity
 %piecewise affine interpolant of velocity
%space of regularised function in Omega, divergence-free
%space of regularised function in Q
%space of regularised function in Q
 %exponent
\def\Srel{\vec{\mathcal{s}}}
\def\Seps{\Srel_{\varepsilon}}

\def\wz{{z_1}}
\def\vz{{z_2}}
\def\yz{{z_3}}

%FEM notation
\def\tria{\mathcal{T}_h}
\def\faces{\mathcal{F}_h}

\def\discr{\oldepsilon}

%projections 
\def\Pid{{\Pi_\delta}}
\def\PiBh{{\Pi_h}}
\def\PiSZ{{I_h}}

% Averages

\usepackage[top=2.75cm,bottom=2.75cm,left=3cm,right=3cm]{geometry}

%% Patch 'normal' math environments:
\usepackage{lineno}
\newcommand*\linenomathpatch[1]{%
	\cspreto{#1}{\linenomath}%
	\cspreto{#1*}{\linenomath}%
	\csappto{end#1}{\endlinenomath}%
	\csappto{end#1*}{\endlinenomath}%
}
\linenomathpatch{equation}
\linenomathpatch{gather}
\linenomathpatch{multline}
\linenomathpatch{align}
\linenomathpatch{alignat}
\linenomathpatch{flalign}
%\linenumbers

\theoremstyle{theorem}

\newtheorem{theorem}{Theorem}[section]
\newtheorem{lemma}{Lemma}[section]
\newtheorem{proposition}{Proposition}[section]
\newtheorem{corollary}{Corollary}[section]
\newtheorem{definition}{Definition}[section]

\theoremstyle{definition}

\newtheorem{remark}{Remark}[section]
\newtheorem{example}{Example}[section]
\newtheorem{assumption}{Assumption}[section]

%Document %%%%%%%%%%%%%%%%%%%%%%%%%%%%%%%%%%%
\begin{document}
	
	\title[A Nitsche method for fluids with dynamic boundary conditions]{A Nitsche method for incompressible fluids \\ with general dynamic boundary conditions
	}
	
	\author[P.A.~Gazca-Orozco]{Pablo Alexei Gazca-Orozco}
	\author[F.~Gmeineder]{Franz Gmeineder}
	\author[E.~Maringov\'a Kokavcov\'{a}]{{Erika  Maringov\'a Kokavcov\'a}}
	\author[T.~Tscherpel]{Tabea Tscherpel}
	
        \address[P.\ A. Gazca-Orozco]{Faculty of Mathematics and Physics, Charles University, Sokolovská 83, 186 75, Prague, Czech Republic}
	\email{gazca@karlin.mff.cuni.cz}
	\address[F. Gmeineder]{Fachbereich Mathematik und Statistik, Universität Konstanz, Universitätsstr.~10, 78464 Konstanz, Germany}
	\email{franz.gmeineder@uni-konstanz.de}
	\address[E. Maringov\'{a} Kokavcov\'{a}]{{Institute of Science and Technology Austria (ISTA), Am Campus 1, 3400 Klosterneuburg, Austria}}
	\email{erika.kokavcova@ist.ac.at}
	\address[T.\ Tscherpel]{Department of Mathematics, Technische Universit\"at Darmstadt, Dolivostraße 15, 64293 Darmstadt, Germany}
	\email{tscherpel@mathematik.tu-darmstadt.de}
	
	\subjclass[2020]{
		65N30, %Finite element, Rayleigh-Ritz and Galerkin methods for boundary value problems involving PDEs 
		76D07, %Stokes and related (Oseen, etc.) flows
		76M10%Finite element methods applied to problems in fluid mechanics
	}
	
	\keywords{Nitsche method, dynamic boundary conditions, friction boundary condition, Tresca boundary condition, Korn inequality, convergence}
	
	\date{\today}
	
	\setcounter{tocdepth}{1} 	
	\begin{abstract} 
		Both Newtonian and non-Newtonian fluids may exhibit complex slip behaviour at the boundary.  
		We {examine} a broad class of slip boundary conditions that generalises the commonly used Navier slip, perfect slip, stick-slip and Tresca friction boundary conditions. 
		In particular, set-valued, nonmonotone, noncoercive and dynamic relations may occur. For a unifying framework of such relations, we present a fully discrete numerical scheme for the time-dependent Navier--Stokes equations subject to impermeability and general slip-type boundary conditions on polyhedral domains. 
		Based on compactness arguments, we prove convergence of subsequences, finally ensuring the existence of a weak solution. 
		The numerical scheme uses a general inf-sup stable pair of finite element spaces for the velocity and pressure, a regularisation approach for the implicit slip boundary condition and, most importantly, a general Nitsche method to impose the impermeability and a backward Euler time stepping. 
		One of the key tools in the convergence proof is an inhomogeneous Korn inequality that includes a normal trace term. 
	\end{abstract}

    \maketitle

	\tableofcontents
	
	\section{Introduction}
	
	The behaviour of fluids crucially depends on both constitutive relations and on the type of boundary conditions. {Indeed, the boundary condition is a constitutive law describing the fluid behaviour at the fluid-solid interface.}  
	Well-investigated boundary conditions in PDE and numerical analysis -- such as periodic, natural and no-slip (homogeneous Dirichlet) boundary conditions -- are insufficient for a variety of practical applications. 
	Both for Newtonian and for non-Newtonian fluid flow, the interaction with the boundary can be much more complicated, and various types of slip and friction boundary conditions must be considered. 
	For impermeable boundaries, meaning that  the normal fluid velocity is zero at the boundary, such conditions relate the tangential velocity to the tangential wall shear stress. 
	
	Slip-type boundary conditions can already be observed for Newtonian fluids, and we refer the reader to ~\cite{NEBBC.2005,MalekRajagopal2023} for detailed discussions of the supporting experiments; for non-Newtonian fluids, they are thus at least as typical. 
	For instance, linear relations between the tangential velocities and the tangential wall shear stress can be found in the modelling of slip for the flow of blood through the aortic root, see~\cite{Chabiniok2022}, and energy preserving linear boundary conditions for the Navier--Stokes equations are investigated in~\cite{BotheKoehnePruess2013}. 
	As already observed and suggested by Stokes in 1845~\cite{Stokes1845}, the description of the flow in a circular pipe with  {sufficiently large} velocities  requires the incorporation of nonlinear boundary relations. 
	
	Nowadays, such nonlinear relations can be found in numerous models. 
	A typical instance thereof are the Tresca and stick-slip models, where slip occurs only if the wall shear stress exceeds a certain threshold, see, e.g.,~\cite{Denn2004} for an overview.  
    Explicit examples in applications include channel flows with beds covered by shingle or mud, iron melts leaving the furnace, and avalanches. 
    For non-Newtonian fluids such as polymer melts, slip boundary conditions are even more typical~\cite{Hervet2003} and may  exhibit some relaxation behaviour due to the entanglement of molecular chains. 
	This can be modelled by use of dynamic boundary conditions, that is, boundary conditions which include the time derivative of the velocity, see, e.g.,~\cite{Hervet2003,Hatzikiriakos2012}. Notably, the aforementioned relaxation behaviour is observed analytically for special scenarios in~\cite{Abbatiello2021}. Dynamic boundary conditions are also relevant for a wider class of equations, see, e.g.,~\cite{BotheKashiwabaraKoehne2017,DenkPlossRauEtAl2023}.
	
	Furthermore, in certain cases the relation between slip velocity and wall shear-stress can be nonmonotone~\cite{YarinGraham1998,Hervet2003}, as described in more detail in Section~\ref{sec:model-results} below. It is clear that similar contact and friction boundary conditions play an important role in elasticity too, leading to related mathematical models; see, e.g.,~\cite{Kikuchi1988}. 
	
	From a purely mathematical perspective, slip-type boundary conditions are also advantageous concerning regularity properties of the solutions.
	For instance, the pressure corresponding {to} the unsteady Navier--Stokes system with homogeneous Dirichlet boundary conditions is in general only a distribution in time.
	In contrast, when allowing for slip on the boundary, it is possible to prove that the pressure is an integrable function, see~\cite{BM.2017}. 
	
	Given the ubiquity of slip-type boundary conditions, it is particularly important to come up with stable and converging numerical approximation of solutions. 
    This is precisely the goal of this work; inspired by~\cite{Abbatiello2021}, we present a unifying framework that covers a wide array of slip boundary conditions and we develop appropriate finite element discretisations. 
	
	\subsection{Impermeability boundary  {conditions} {and Nitsche's method}}	
	As mentioned above, the present work focuses on impermeable boundaries, meaning that there is no inflow or outflow of the fluid through the boundary. 
	In view of numerical approximations of solutions, we shall employ Nitsche's method~\cite{Nitsche1971}. 
 In particular, we do  not impose the impermeability condition strongly but by penalising the normal velocity; see also~\cite{S.1995}. 
	This especially implies that the discretisation is non-conforming with respect to spaces of weakly differentiable functions with vanishing zero normal trace. 
	
	Originally derived for elliptic equations subject to inhomogeneous Dirichlet boundary conditions~\cite{Nitsche1971}, the Nitsche method  has been further developed and applied in a multitude of scenarios. 
	For a non-exhaustive list of contributions on the Stokes and Navier--Stokes equations with various choices of boundary conditions, we refer the reader to~\cite{FS.1995,S.1995,BeckerCapatinaLuceEtAl2015,GS.2022}. 
	
	Even though we will exclusively consider polyhedral spatial domains, we wish to stress that Nitsche's method proves particularly powerful when approximating curved domains by polyhedral ones. Indeed, imposing the impermeability boundary conditions strongly might cause the limits of approximate solutions to satisfy no-slip boundary conditions. This phenomenon is referred to as \emph{Babuška's paradox} and was first detected in elasticity~\cite{Babuska1959}, see also~\cite{MNP.2000}. 
	The analogous phenomenon in fluid dynamics was described in~\cite[Sec.~5]{V.1985}, and is due to the fact that imposing zero normal velocity on a polyhedral domain enforces the velocity to be zero at boundary vertices. In this regard, the Nitsche method is one way of  relaxing the impermeability boundary conditions. 
	Alternative strategies include Lagrange multipliers or  different penalisations, and we refer the reader to~\cite{V.1987,V.1991,BaenschDeckelnick1999,Basaric2022} for more detail; similar strategies in the context of elasticity are employed, e.g., in~\cite{ChoulyFabreHildEtAl2017,BartelsTscherner2024,Gustafsson2022}. 
	
	The focus of the present paper is on giving a unified approach to various boundary conditions for polyhedral domains. 
	Based on our above discussion, however, it is clear that the method presented here is finally geared towards applications on potentially curved domains. 
	With this being the goal of future work, we now proceed to discuss the   mathematical key challenges that already have to be tackled in the relevant base case of polyhedral domains as considered here.
	
	\subsection{Models and key difficulties}
	As alluded to above, we aim for a proof of convergence of numerical approximations to a weak solution. 
	This simultaneously serves as a proof of existence of weak solutions, and we briefly pause to address the challenges that arise for the class of boundary conditions to be examined here. 
	
	For slip boundary conditions, the impermeability boundary condition is usually supplemented by a relation between the tangential velocity on the boundary $\bu_\tau$ and the tangential part of the normal traction $- (\BS \bn)_{\tau}$, where $\BS$ denotes the deviatoric stress tensor. In studying such models, the aforementioned relations might be nonlinear, nonmonotone, implicit, noncoercive, or even time-dependent, including dynamic slip laws. Referring the reader to Section~\ref{sec:model:ex} for a more detailed account, we highlight the following aspects: \\
	
	\textbf{Nonlinear relations {and growth behaviour at infinity}.} 
	For linear boundary conditions, such as for the \emph{Navier friction law} or \emph{Navier slip boundary condition}, it is straightforward to identify the boundary condition in the limit. This is substantially harder for nonlinear relations such as, e.g., power-law relations, and crucially depends on the growth behaviour of the relations. 
	In the setting considered here, we will often be able to establish strong convergence of the traces, and if only weak convergence is available, we may still employ tools from monotone operator theory for a large range of growth exponents. 
	This enables us to cover relations with $r$-growth for a certain, yet natural range of $r>1$ as in the Navier slip or power-law boundary conditions, and in the case $r=1$ as, e.g., in the Tresca law. 
	
	In situations where the velocity is not an admissible test function, the convergence proof requires regularisation and truncation (similarly as in~\cite{DieningRuzickaWolf2010,BulicekGwiazdaMalekEtAl2012,DieningKreuzerSueli2013}), and thus is considerably more difficult and technical. 
	Most contributions on implicit constitutive relations imposed pointwisely, such as ~\cite{Abbatiello2021}, work in reflexive spaces and require $r>1$, see also~\cite{BM.2017,MZ.2018}.  
	The case $r = 1$ is usually handled within the framework of variational inequalities ~\cite{Fuj.1994,LeRoux.2005}. 
	\medskip
	
	\textbf{Implicit monotone relations.} 
	If the relations are monotone and not explicit, like the power-law relation, but instead are \emph{set-valued} or \emph{implicit},  like the Tresca~\cite{Fuj.1994} and stick-slip boundary conditions~\cite{BulicekMalek2016,BulicekMalekMaringova2023}, monotone operator theory may still be employed. 
	Then, graph approximation is used  to obtain an explicit relation, and the limit of this approximation has to be taken additionally to the discretisation limit. 
	Note that this can be achieved without the use of variational inequalities. 
	While the latter are classically used~\cite{Duvaut1976, Glowinski2008}, they often come with extra complications in numerical approximations. 	
	
	A key issue is the identification of boundary terms in the limit. While, even for implicit relations, this can be accomplished more easily in the presence of strong convergence of the boundary traces, mere weak convergence of the traces requires a Minty-type convergence lemma. 
	In particular, this necessitates a version which involves graph approximation. 
    In order to  apply this convergence result, we need to pass to the limit in an energy identity. 
	This is possible only for a skew-symmetric version of the Nitsche method. 
	\medskip
	
	\textbf{Noncoercive relations, Korn's inequality and Nitsche's method.} 
	The aforementioned boundary conditions are  \emph{coercive} in the sense that control of the boundary term entails estimates on the tangential velocity traces. 
	In this case, one may employ standard versions of Korn's inequality to bound the norms of the full gradient in terms of the symmetric gradient and a \emph{full} trace term, see~\cite{Nitsche1981, BulicekMalekRajagopal2007}. 
	However, there are also non-coercive boundary conditions such as the perfect slip boundary condition $-(\BS \bn)_{\tau} = \bm{0}$, for which no bounds on the full traces are available. 
    In particular, this requires  a Korn inequality which only involves the impermeability boundary condition $\bu\cdot\bn = 0$. 
	If the domain is not axisymmetric, such inequalities have been addressed in~\cite{DesvillettesVillani2002} for $H^{1}$-functions with zero normal traces; see also~\cite{Bauer2016,BP.2016,Dominguez2022}. 
	In view of Nitsche's method, which imposes the impermeability constraint weakly rather than strongly, such inequalities are not applicable, and  estimates on quotient spaces modulo rigid deformations as in~\cite[(2.5)-(2.6)]{V.1987} come closest to what is required here. This requires a novel inequality, which we state and prove via a self-contained and direct argument in Section~\ref{sec:korn}.  
	This comes with optimal conditions on the underlying domain and, {in view of}~\cite{LeRoux.2005, AABS.2019, WLHJ.2020}, is well-suited to handle mixed boundary conditions as well. 
	\medskip 
	
	\textbf{Nonmonotone relations.} Sharp versions of trace inequalities allow to handle also certain types of \emph{nonmonotone} relations between~$-(\BS\bn)_{\tau}$ and~$\bu_{\tau}$.  
	This concerns both implicit (coercive) relations and non-coercive (explicit) relations. 
	In the explicit case, we only need to assume certain bounds on the explicit {nonlinearity}. 
	On the other hand, in the implicit case, we proceed by splitting the relation into an implicit monotone relation and a linear relation with negative slope $-\lambda$.  
	Still, by the very method, it is essential to assume that $\lambda$ is sufficiently small. This gives rise to the notion of \emph{$\lambda$-monotone relations}, and it is customary to formulate the problem by means of hemivariational inequalities, see, e.g., in~\cite{Fang2016,MP.2018,MD.2022} or, regarding numerical approximations, ~\cite{Fang2020,HCJ.2021}. 
	Here, however, we use a formulation with a \emph{(pointwise) nonlinear relation} imposed on the boundary as in~\cite{Abbatiello2021,BulicekMalekMaringova2023} or, in the related context of nonmonotone boundary conditions from~\cite{BulicekMalek2019}. This allows to avoid the use of hemivariational inequalities and, aside from being conceptually simpler, is also advantageous, since standard solution methods (such as Newton's method) can be applied to compute the numerical solutions. Yet, we wish to point out that handling nonmonotone relations for boundary conditions is easier than nonmonotone relations on the bulk~\cite{LeRoux2013, JaneckaMalekProcircusaEtAl2019}. 
	\medskip
	
	\textbf{Time dependence and pressure integrability.} 
	Due to the convective term, the velocity function is not an admissible test function for the three-dimensional time-dependent Navier--Stokes equations. 
	For this reason, only an energy inequality rather than an equality is available. 
	For non-Newtonian fluids this makes the identification of the nonlinear stress strain relation substantially more difficult, and certain regularisations and truncations are employed~\cite{DieningRuzickaWolf2010,BulicekGwiazdaMalekEtAl2012}. 
	For the sake of readability, we will treat only the Newtonian case, and focus on the boundary conditions. 
	
	For the time-dependent problem the integrability of the pressure on Lipschitz domains depends on the type of boundary conditions, see~\cite{BulicekMalek2016}. 
	Aiming to cover the general case, we refrain from using any assumptions on integrability of the pressure. In particular, it would be preferrable to work in a pressure-free formulation. 
	For a convergence proof of this sort, a suitable Fortin operator would be useful~\cite{DieningKreuzerSueli2013,SueliTscherpel2020}. 
	Such operators preserve the divergence in the dual of the discrete pressure space, and here a local Fortin operator \emph{compatible with non-zero traces} would be required. 
	The construction of such an operator is highly dependent on the specific finite element pair and is not available to date. 
	In consequence, we work with the (local) Scott--Zhang operator, and additional  care is needed to show convergence of the pressure terms. 
	\medskip 
	
	\textbf{Dynamic slip laws and Gelfand triplets.} Lastly,  for dynamic slip-type boundary conditions, the relation between $\bu_\tau$ and $-(\BS\bn)_{\tau}$ also includes the term $\partial_t \bu$. 
	Such a term guarantees bounds on the velocity traces and hence improves the situation in the non-coercive case. 
	However, for the convergence proof and compactness, it requires a modified setup for the Gelfand triplet. 
	Triplets of this sort have been  presented in~\cite{Kovacs2017} for parabolic equations and in~\cite{Abbatiello2021} for incompressible fluids, {where} the function spaces consist of  divergence-free functions. 
	Since the numerical solutions are not divergence-free in general, we are bound to work with a Gelfand triplet that does \emph{not} include the divergence-constraint. In particular, this will be employed in the time-dependent nondynamic case \emph{and} in the dynamic one. 
	
	\subsection{Available results on numerical approximations} 
	Most numerical schemes introduced before cover one specific type of boundary condition. For example,  this includes ~\cite{V.1987,V.1991} for the Navier slip law,~\cite{DjokoKokoMbehouEtAl2022} for power-law slip relations, \cite{DM.2013,D.2014,DK.2016} for the stick-slip boundary conditions with $r = 2$ and in~\cite{HKSS.2018} for general $r\in (1,\infty)$, \cite{LL.2010,LA.2011,ABGS.2014,JHYW.2018,AABS.2019,DjokoKokoKucera2019,ZJK.2023} for the Tresca condition ($r=1$), and~\cite{Fang2020,HCJ.2021} for nonmonotone relations. 
	Most of these contributions are concerned with error estimates based on regularity assumptions and not with plain convergence to weak solutions. 
	Many numerical methods use (hemi-)variational inequalities to cover set-valued relations~\cite{DM.2013,D.2014,DK.2016} and $\lambda$-monotone relations~\cite{Fang2020,HCJ.2021}. 
	Exceptions include regularisation approaches in~\cite{D.2014,LA.2011,DjokoKokoKucera2019}, and a Lagrange multiplier method in~\cite{ZJK.2023} for the set-valued relation in the stick-slip and the Tresca friction law. 
	The majority of numerical methods imposes the impermeability strongly rather than by penalisation, except of the Lagrange multiplier method in~\cite{AABS.2019} and a DG method with a penalisation approach~\cite{JHYW.2018} for the Tresca boundary conditions.
	We also stress that many of these works are based on specific finite elements, as opposed to general inf-sup pairs, as done here. 
	To date, Nitsche methods have only been presented for linear boundary conditions~\cite{BansalBarnafiPandey2024}, and  no other relaxation methods for the impermeability for the time dependent problem seem to be available in  the general nonlinear case. 
	For dynamic boundary conditions, finite element methods have been developed for scalar parabolic equations with linear boundary conditions~\cite{Fairweather1979,Kovacs2017,AltmannZimmer2024}, but to the best of our knowledge not for fluid equations.  
	
	\subsection{Contributions of this work}		
	Here, for a general class of nonlinear, possibly noncoercive, nonmonotone, implicit slip-type boundary conditions, we present a numerical scheme and establish convergence of subsequences to a weak solution. 
	This is in a similar spirit as the convergence proofs for non-Newtonian fluid equations with homogeneous boundary conditions in~\cite{SueliTscherpel2020,FarrellGazcaOrozcoSueli2020}. The general setup allows us to cover a wide range of boundary conditions of practical interest. 
	For the space discretisation, we use a general mixed finite element method for the velocity and pressure with Nitsche penalisation for the impermeability boundary condition. 
	For the time discretisation, we employ a backward Euler time stepping. 
	In case an implicit relation is involved, we additionally use a regularisation.  
	As far as we are aware,  this is the first Nitsche method for incompressible fluid equations for such general slip-type boundary conditions, both in the stationary and in the unsteady case. 
	In proving the convergence result, we consider the simultaneous limit in the discretisation parameters in space and time $h,\delta$ as well as in the regularisation parameter $\epsilon$. 
	Since in the stationary case the method reads analogously and the arguments are easier, we present the full proof only in the time-dependent case and in the stationary case we only present the corresponding results. 
	
	In particular, the convergence proof establishes the existence of weak solutions. 
	Thus, it extends previous existence results~\cite{BulicekMalek2019,Abbatiello2021, BulicekMalekMaringova2023} to $r = 1$, to the noncoercive case, and to the nonmonotone case. Note that our approach  applies also to nonlinear dynamic boundary conditions for parabolic equations, and hence extends~\cite{Kovacs2017,AltmannZimmer2024} as well. 
 
	Finally, we present numerical simulations that showcase that the method is capable of approximating very general boundary conditions. 
    Notably, they confirm the relaxation behaviour for dynamic boundary conditions, meaning a nonmonotone in time behaviour of the tangential velocity, as observed in experiments~\cite{Hatzikiriakos2012} and as analysed in~\cite{Abbatiello2021}.
	
	\subsection{Outline}
	In Section~\ref{sec:model-results}, we introduce the models including boundary conditions in detail. 
	Moreover, we present the main result on convergence of solutions to the numerical scheme in form of a metatheorem, see Subction~\ref{sec:main-results}; examples for the general slip-type boundary conditions are discussed in Subsection~\ref{sec:model:ex}. 
	In Section~\ref{sec:korn},  we prove a Korn inequality with a normal trace terms  (see Theorem~\ref{thm:korn-n}); here, we also discuss geometric conditions on the boundary for the estimate to hold. 
	Section~\ref{sec:prel} gathers auxiliary results required for the convergence proof,  as well as the discretisation and approximation to be employed afterwards. 
	Section~\ref{sec:un-NS} introduces the Nitsche method, and it is here where we state and prove the precise version of our main convergence result, Theorem~\ref{thm:main-unsteady}. 
    Furthermore, for some special cases we obtain convergence results for a larger range or parameters in Propositions~\ref{prop:beta}--\ref{prop:skew-symmetric-Nitsche}. The corresponding results in the steady case are presented in Theorem~\ref{thm:main-steady}. 
    All convergence results and their respective conditions are summarised in Tables~\ref{tab:overview} and~\ref{tab:overview-steady}. 
    Lastly, in Section~\ref{sec:num-exp}, we present numerical experiments for the case of a steady flow with nonmonotone slip, Tresca friction and stick-slip boundary conditions, and dynamic boundary conditions. 
	
	%%%%%%%%%%%%%%%%%%%%%%%%%%%%%%%%%%%		
	\section{Model and main results}\label{sec:model-results}

	In this section, we introduce the Navier--Stokes equations subject to general slip boundary conditions, see Section~\ref{sec:bc-slip}. 
	We shall present the mathematical framework and showcase its richness by a range of examples in Section~\ref{sec:model:ex}. 
	Section~\ref{sec:main-results} states the main results on the existence of weak solutions and the convergence of an approximation by means of a Nitsche method. 
	For the precise convergence results, the reader is referred to Theorem~\ref{thm:main-unsteady} and to Propositions~\ref{prop:beta}--\ref{prop:skew-symmetric-Nitsche} below; see also Table~\ref{tab:overview} for an overview.

	\subsubsection*{Navier--Stokes equations}
	
	For $d \in \{2,3\}$ let $\Omega \subset \mathbb{R}^d$ be a bounded (connected) Lipschitz domain and let $T >0$ be a given final time. 
	We denote the time interval by $I \coloneqq (0,T)$ and the parabolic cylinder by $Q \coloneqq I \times \Omega$. 
	Further, let $\bf\colon Q \to \mathbb{R}^d$ be a given external force, and let $\bu_0 \colon \Omega \to \setR^d$ be a given initial velocity.
	We aim to find a velocity field $\bu\colon \overline{Q} \to \mathbb{R}^d$, a pressure function $\pi\colon Q \to \mathbb{R}$, and a trace-free stress tensor field $\BS\colon Q \to \Rds$ satisfying
	\begin{alignat}{2}
		\begin{aligned}
			\label{eq:NS-unst}
			\partial_t \bu +	\diver(\bu \otimes \bu) + \nabla \pi
			-\diver\BS &= \bf \qquad \quad &&\text{ in } Q,\\
			\diver\bu &= 0 \qquad \quad &&\text{ in } Q,\\
			\bu(0,\cdot) &= \bu_0 \quad && \text{ on } \Omega.
		\end{aligned}
	\end{alignat}
	The first equation represents the balance law of linear momentum, the second one the incompressibility constraint, and the last one the initial condition. 
	For simplicity, we assume that the fluid is Newtonian, i.e., for given viscosity $\nu >0$ the symmetric velocity gradient $\BD \bu = \tfrac{1}{2}(\nabla \bu + (\nabla \bu)^\top)$ and the stress tensor $\BS$ are related by the linear constitutive law
	\begin{align}\label{eq:NS-law}
		\BS = 2 \nu \BD \bu.
	\end{align}
	
	\subsection{General slip-type boundary conditions and assumptions} \label{sec:bc-slip}	

	We complement the system~\eqref{eq:NS-unst}, \eqref{eq:NS-law} with boundary conditions on $\Gamma\coloneqq \partial \Omega$. 
    Below in Remark~\ref{rmk:mixed-bc} we also discuss the case $\Gamma \subsetneq \partial \Omega$, where on the remaining boundary other boundary conditions are imposed. 
    
	Note that since $\Omega$ is a Lipschitz domain, the outer unit normal $\bn$ exists for $\mathcal{L}^{d-1}$ every boundary point. 
	For a function $\bv \colon \Gamma \to \setR^d$ its normal and tangential parts are defined by 
	\begin{align*}
		\bv_n  \coloneqq (\bv \cdot \bn) \bn 
		\quad \text{ and } \quad 
		\bv_{\tau} \coloneqq \bv - \bv_n,
	\end{align*}
	respectively. 
	Thus, $\bv$ decomposes into $\bv = \bv_{\bn} + \bv_{\tau}$. 
	In the following we supplement~\eqref{eq:NS-unst}, \eqref{eq:NS-law} with the \emph{impermeability boundary condition} 
	\begin{align}\label{eq:imperm}
		\bu \cdot \bn = 0 \quad \text{ on } I\times\Gamma,
	\end{align}	
	and the \emph{dynamic boundary condition}
	\begin{align}\label{eq:bc-Gamma}
		- (\BS \bn)_{\tau} = \bsigma + \beta \partial_t \bu \quad \text{ on } I\times\Gamma, 
	\end{align}
	where $\beta\geq 0$ is a given constant and $\BS = 2 \nu \BD \bu$ is the stress tensor in~\eqref{eq:NS-unst}. 
	The function $\bsigma$ is related to $\bu_{\tau}$ by a possibly implicit and possibly nonmonotone relation to be specified in the following. 
	Let us point out that $\bsigma$ is an auxiliary variable without a clear physical interpretation. 
	
	In the following, we shall consider two situations, namely
	\begin{itemize}		
		\item 
		explicit, not necessarily coercive relations, and 
		\item 
		possibly implicit, coercive relations. 
	\end{itemize}
	In both cases slightly nonmonotone relations are addressed. 
	We now proceed with a detailed formulation of both settings. 
	In combination, they cover a variety of dynamic boundary conditions to be discussed in Section~\ref{sec:model:ex} below. 
	
	\subsubsection{Explicit noncoercive relations}\label{sec:expl-rel} 
	
	We first consider the case of $\bsigma$ in~\eqref{eq:bc-Gamma} is represented in terms of an explicit function~$\mathcal{\bs}$ as
	\begin{align}\label{eq:expl-s}
		\bsigma = \mathcal{\bs}(\bu_\tau). 
	\end{align}
	Here,~$\mathcal{\bs}$  is not necessarily coercive, but it has certain boundedness properties.
	
	\begin{assumption}[explicit noncoercive relation]\label{assump:s-expl}
	We suppose that there is a continuous map $\Srel \colon \setR^d \to \setR^d$ which satisfies the following growth condition for some $r\in [1,\infty)$: 
	There exists a constant $\lambda\geq 0$ such that
	\begin{align*}
			\abs{ \Srel(\bv)} \leq \lambda (1 +  \abs{\Gamma}^{\tfrac{r-2}{2}}
			\abs{\bv}^{r-1})  \qquad \text{ for all } \bv \in \setR^d. 
	\end{align*}
        
	\end{assumption}
	\noindent 
	This is a very general assumption which also allows for nonmonotone explicit functions $\mathcal{\bs}$. 
	For explicit relations in the convergence proof we shall assume that $r \leq 2$ throughout. 
	In this situation, we may record that
	\begin{align}\label{est:case1-bd}
		\int_{\Gamma }\abs{\Srel(\bv) \cdot \bv} \ds &\leq \lambda   \left(c + \int_{\Gamma}\abs{\bv}^2  \ds \right),\\ \label{est:case1-bd2}
		\int_{\Gamma }\abs{\Srel(\bv)}^2 \ds &\leq c  \left( 1 +  \int_{\Gamma}\abs{\bv}^2  \ds \right), \qquad \text{ for any } \bv \in L^2(\Gamma)^d, 
	\end{align}
	 with a constant $c>0$ depending on $\mathcal{H}^{d-1}(\Gamma)$. 
	For future reference, we note that the boundary or trace terms appearing on the right-hand side of~\eqref{est:case1-bd}, \eqref{est:case1-bd2} will typically be estimated above by~$H^{1}$-norms in $\Omega$. 
	In particular, this will provide us with a weak substitute for the lack of monotonicity or coercivity conditions in the presence of Assumption~\ref{assump:s-expl}.
  In the convergence proof we shall assume that $\lambda$ is sufficiently small compared to the viscosity $\nu$, see Assumption \ref{as:param}. 

Instead of the estimate in Assumption~\ref{assump:s-expl} one may assume that 
\begin{align*}
\abs{\Srel(\bv)} \leq c (1 + \abs{\bv}^{r-1}) \quad \text{ for all } \bv \in \setR^d.
\end{align*}  
Then, the convergence proof below works with the obvious modification of Assumption~\ref{as:param} below. 
        
	\subsubsection{Implicit coercive relations}\label{sec:impl-mon-rel} 
	Let us now consider the case of implicit relations, including set-valued relations which satisfy certain coercivity estimates. 
	
	\subsubsection*{The monotone case}
	Let us first focus on the case, where $\bsigma$ and $\bu_{\tau}$ in~\eqref{eq:bc-Gamma} are related by a monotone, possibly implicit relation. 
	This will be formulated as
	\begin{align}\label{eq:gbd}
		\gbd(\bsigma, \bu_\tau) = \b0 \quad \text{ on } I\times\Gamma,
	\end{align}
	where $\gbd$  has a maximal monotone zero set. 
	This means that it is a monotone subset of $\mathbb{R}^d\times \mathbb{R}^d$, i.e., 
	\begin{align*}
		\skp{\bs_1- \bs_2}{\bv_1 - \bv_2} \geq 0 \qquad \text{ for any } (\bv_i, \bs_i)\in \setR^d, \; i \in \{1,2\}, \;\text{ with } \gbd(\bs_i,\bv_i) = \b0, 
	\end{align*}
	and that it is maximal as monotone set with respect to inclusion, see, e.g.,~\cite[Sec.~1]{Phelps1997}.
	
	\begin{assumption}[implicit monotone relation]\label{assump:gbd-mon}
		We assume that $\gbd \in C^{0}(\setR^d \times \setR^d)^d$ satisfies the following properties:
		\begin{enumerate}[label = (A\arabic*)]
			\item \label{itm:gbd-0} $\gbd(\b0,\b0) = \b0$; 
			\item \label{itm:gbd-q} $\gbd$ satisfies $r$-growth and coercivity for some $r\in [1,\infty)$ and $\frac{1}{r} + \frac{1}{r'} = 1$, in the sense that there exists a constant $c>0$ such that for all $(\bs, \bv) \in \setR^d \times \setR^d$ with $\gbd(\bs,\bv) = \b0$ one has  
			\begin{alignat*}{3}
				\bs \cdot \bv &\geq c (\abs{\bs}^{r'} + \abs{\bv}^r - 1)&& \qquad \qquad &\text{ if } r >1, \\
				\bs \cdot \bv &\geq c ( \abs{\bv} - 1) \quad   \text{ and }\quad & &\abs{\bs} \leq c \qquad 
				&\text{ if } r = 1; 
			\end{alignat*} 
			\item \label{itm:gbd-mon} 
			the zero set of $\gbd$, denoted by
			\begin{align*}
				\mathcal{A} \coloneqq \{(\bv,\bs) \in \setR^d \times \setR^d \colon \gbd(\bs,\bv) = \b0\}, 
			\end{align*}
			is a maximal monotone subset of $\setR^d \times \setR^d$; 
			\item \label{itm:gbd-mon-lp}	
			for any open set $M \subset \setR^n$, with $n \in \mathbb{N}$, the set defined by 
			\begin{align}\label{def:A}
				{A} \coloneqq \{(\bv,\bs) \in L^{r}(M)^d\times L^{r'}(M)^d \colon \gbd(\bs(\bx),\bv(\bx)) = \b0 \;\text{ for a.e.~$\bx\in M$}\}
			\end{align}
			is a maximal monotone subset of $L^{r}(M)^d\times L^{r'}(M)^d$, where $L^{r'}(M)^d$  is the dual space of $L^{r}(M)^d$. 
		\end{enumerate}
	\end{assumption}
	
	\noindent Also explicit coercive relations fit into this framework by considering $\gbd \colon \setR^d \times \setR^d \to \setR^d$ of the form 
	\begin{align}\label{eq:gbd-expl}
		\gbd(\bs,\bv) = \Srel(\bv) - \bs \qquad \text{ for } \bs, \bv \in \setR^d,
	\end{align}
	for some given monotone continuous function $\Srel \colon \setR^d \to \setR^d$.
	
	\begin{remark}[maximal monotonicity] For later purposes, we record the following:
		\begin{enumerate}[label = (\alph*)] \label{rmk:max-mon}
			\item 
			Condition~\ref{itm:gbd-mon-lp} implies that for any $(\bv,\bs) \in  L^{r}(M)^d\times L^{r'}(M)^d$ one has 
			\begin{align*}
				(\bv,\bs) \in A \quad \Leftrightarrow \quad 
				\int_{M} (\bs - \overline{\bs}) \cdot (\bv - \overline{\bv}) \ds  
				\geq 0 \quad \text{ for all } (\overline{\bv},\overline{\bs}) \in {A}. 
			\end{align*} 
			\item \label{itm:rmk-A4} 
			For $r \in (1,\infty)$, the boundedness and coercivity condition~\ref{itm:gbd-mon} implies condition~\ref{itm:gbd-mon-lp}, cf.~\cite[Thm.~(3.18)~(e)]{Browder1976} and~\cite[Thm.~1.9]{ChiadoPiatDalMasoDefranceschi1990}. 
			For this reason, for implicit relations we cannot drop the assumption of coercivity. 
			
			\item \label{itm:max-mon-r1} 
			For $r = 1$ the result in~\cite{ChiadoPiatDalMasoDefranceschi1990} is not available to ensure condition~\ref{itm:gbd-mon-lp}. 
			However, for special cases the condition can be verified by other means. 
			A theorem by Rockafellar~\cite[Thm.~A]{Rockafellar1970} states that for a Banach space $B$ and a lower semicontinuous, convex function $j \colon B \to \setR$, its subdifferential $\partial j \colon B \toto B'$ is a maximal monotone operator. 
			Here, $\toto$ is used to denote set-valued mappings.   	
			For this reason, for $r = 1$ one may use the following condition, which in combination with~\ref{itm:gbd-q} is sufficient to guarantee~\ref{itm:gbd-mon} and~\ref{itm:gbd-mon-lp}: 
			\smallskip  
			
			\begin{enumerate}[label = (A\arabic*)]   
				\item[(A5)] \label{itm:gbd-pot}
				There exists a convex function $\mathcal{j} \colon \setR^d \to \setR$, such that $\mathcal{A}$ defined in~\ref{itm:gbd-mon} is the graph of the subdifferential $\partial \mathcal{j} \colon \setR^d \toto \setR^d$ of $\mathcal{j}$.  
			\end{enumerate}
			\smallskip 
			By the Rockafellar theorem  mentioned before, condition~\ref{itm:gbd-mon} holds. 
			Furthermore, the functional $J \colon L^1(M)^d \to \setR$, defined by 
			$J(\bv) = \int_{M} \mathcal{j}(\bv) \dx $ 
			is convex and one can show that it is continuous. 
			Moreover, it is possible to verify that the graph of $\partial J \colon L^1(M)^d \toto L^\infty(M)^d$ is  precisely $A$ as defined in~\eqref{def:A}; cf.~\cite[Thm.~14.60]{Rockafellar2009}. 
			\item \label{itm:rmk-A3} 
			For $r \in (1,\infty)$, conditions that are easier to verify, and which are sufficient to ensure Assumption~\ref{assump:gbd-mon} are presented in~\cite[Sec.~2]{BulicekMalekMaringova2023}.  
			More specifically, therein the authors present a condition, that may replace~\ref{itm:gbd-mon}, and by~\ref{itm:rmk-A4} also~\ref{itm:gbd-mon-lp}. 
		\end{enumerate}
	\end{remark}
	
	\subsubsection*{The nonmonotone case}
	Our framework also allows to include certain nonmonotone implicit relations, namely nonmonotone relations that arise as sum of an implicit and a linear relation with negative slope. 
	More precisely, we consider the relation between $\bsigma$ and $\bu_\tau$ of the form 
	\begin{align}\label{eq:gbd-nonmon}
		\gbd(\bsigma + \lambda \bu_\tau,\bu_\tau) = \b0 \quad \text{ on } I \times \Gamma,
	\end{align}
	for some given constant $\lambda \geq 0$, and for $\gbd$ satisfying Assumption~\ref{assump:gbd-mon}. 
	This means that $\bsigma$ and $\bu_\tau$ are related by a so-called \emph{$\lambda$-monotone} relation, which is a weaker notion than monotonicity. 
	Indeed, for any $(\bsigma,\bu_\tau)$ and $(\overline \bsigma,\overline \bu_\tau)$ satisfying~\eqref{eq:gbd-nonmon} with $\lambda \geq 0$ one has that
	\begin{align}\label{est:lambda-mon}
		(\bsigma - \overline \bsigma) \cdot (\bu_\tau - \overline \bu_\tau) \geq - \lambda \abs{\bu_\tau - \overline \bu_\tau}^2.
	\end{align}	
	For $\lambda = 0$ the condition for monotonicity is recovered, while for $\lambda>0$ it is a weaker condition. 
	In the convergence proof we shall assume that the constants $\lambda$ and $r$ are sufficiently small. 
	The condition on $\lambda$ will be used in the a priori estimates to absorb the terms with negative sign.  
	
	Let us summarize the conditions in Section~\ref{sec:expl-rel} and~\ref{sec:impl-mon-rel}: 
	In the remaining work we consider for $\beta\geq 0$ and for $r \in [1,\infty)$ the general boundary conditions 
	\begin{align}\label{eq:genbc}
		- (\BS \bn)_{\tau}  &= \bsigma 
		+ \beta \partial_t \bu,
	\end{align}
	with one of the following: 
	\begin{alignat}{2} 
		\label{itm:case-expl-noncoerc-1}
		\bsigma &= \Srel(\bu_\tau) \qquad &&
		\text{ with  $\Srel$ satisfying Assumption~\ref{assump:s-expl}, }
		\\			
		\label{itm:case-impl-coerc-1} 
		\gbd(\bsigma  + \lambda \bu_\tau, \bu_\tau)&= \b0 
		&&\text{ with $\gbd$ satisfying Assumption~\ref{assump:gbd-mon},} 
	\end{alignat}
	with exponent $r > 1$, and constant $\lambda \geq 0$.
	
	The constants in Assumption~\eqref{assump:s-expl} and in~\eqref{eq:gbd-nonmon} are labelled by the same variable $\lambda$, since they will take the same role in the a priori estimates in Section~\ref{sec:unst-apriori} below. 
	
	\subsection{Statement of main convergence result and context}\label{sec:main-results}
	
	To give a brief idea on our main result let us state it in the form of a metatheorem.  
	The precise formulation and its proof can be found in Section~\ref{sec:un-NS}, see specifically Definition~\ref{def:unst-w-sol},  Theorem~\ref{thm:main-unsteady} and Propositions~\ref{prop:beta}--\ref{prop:skew-symmetric-Nitsche}, see also Table~\ref{tab:overview}. 
    We refer to Theorem~\ref{thm:main-steady} and Table~\ref{tab:overview-steady} for the corresponding results in the steady case. 
	Recall that we consider the Navier--Stokes equations~\eqref{eq:NS-unst}, \eqref{eq:NS-law} subject to impermeability boundary conditions~\eqref{eq:imperm} and~\eqref{eq:bc-Gamma} and either~\eqref{eq:expl-s} in case~\eqref{itm:case-expl-noncoerc-1}, or~\eqref{eq:gbd-nonmon} in case~\eqref{itm:case-impl-coerc-1} with $\lambda \geq 0$. 
	\medskip 
.8

	{\hfill
		\begin{minipage}{0.9\linewidth}
			\textbf{Metatheorem.}
			\emph{For a bounded polyhedral domain in dimension $d \in \{2,3\}$, let $\bf$ and $\bu_0$ be given functions. 
			Moreover, let $\nu>0$, $\beta \geq 0$ and $\lambda \geq 0$ be constants. 
			In case~\eqref{itm:case-expl-noncoerc-1} let Assumption~\ref{assump:s-expl} be satisfied and in case~\eqref{itm:case-impl-coerc-1}, let Assumption~\ref{assump:gbd-mon} and assume that there is a regularisation of the implicit relation with parameter~$\varepsilon \to 0$.}
			
			\emph{Consider a Nitsche method for an inf-sup stable pair of finite element spaces with space discretisation parameter $h>0$, where we employ
			an implicit Euler time stepping on a uniform time grid with time step size~$\delta >0$ for the discretisation in time.
			For a certain parameter range for $r$, under the assumption that~$\lambda$ is sufficiently small, and that the penalty parameter of the Nitsche method is sufficiently large,  the discrete solutions exist. Moreover, 
				there are suitable subsequences of the approximate velocity and pressure functions  which converge weakly to a weak solution to the equations in the simultaneous limit $h,\delta,  \varepsilon \to 0$. 
					In particular, a weak solution exists.}
		\end{minipage}  
		\hfill}
	\medskip 
	
	\medskip 
	With the choice $\Gamma \subsetneq \partial \Omega$, other (and, in particular, mixed) boundary conditions can be included as well; see Remark~\ref{rmk:mixed-bc} below for more details. 
	
	\subsection{Examples of boundary conditions}\label{sec:model:ex}
	Let us present a panorama of slip-type boundary conditions occurring in fluid flow models and having been investigated mathematically, that fit into the framework presented above.  
	Thanks to the general framework our results apply to all of them. 
	The case~$\beta = 0$ in~\eqref{eq:genbc} occurs both in the stationary as well as in the time-dependent problem, and we refer to the case $\beta>0$ as dynamic case. 
	
	\begin{example}[Navier boundary conditions]\label{ex:Navier}
		The classical \emph{Navier slip} or \emph{Navier friction} boundary condition, see, e.g.,~\cite{V.1987,Gjerde2022} with friction coefficient $\gamma \geq 0$ reads
		\begin{align}\label{eq:Navier-slip}
			- (\BS \bn)_{\tau} = \gamma \bu_\tau. 
		\end{align}
		For $\gamma = 0$ this describes perfect slip for a flow without friction, whereas no-slip  is formally recovered in the limit $\gamma \to \infty$. 
		It is an explicit relation of the form~\eqref{eq:expl-s} with~$\mathcal{\bs}(\bv) = \gamma \bv$, which is coercive with $r = 2$, provided that~$\gamma >0$.  
		
		The case~$\gamma >0 $ is covered by Assumption~\ref{assump:gbd-mon} and case~\eqref{itm:case-impl-coerc-1}. 
		The case of perfect slip~$\gamma = 0$ is trivially covered by~\eqref{itm:case-expl-noncoerc-1} with $\Srel(\bv) = \b0$, and no further assumptions are needed. 
		Negative parameter~$\gamma$ in~\eqref{eq:Navier-slip} can also be treated in case~\eqref{itm:case-expl-noncoerc-1} for $r = 2$. 
		As argued in~\cite{Gjerde2022} such boundary conditions may describe an active wall. 
		In this case for existence and the convergence proof we  have to assume that $\lambda = - \gamma $ is sufficiently small, see Assumption~\ref{as:param} below.    
		Our convergence result in Theorem~\ref{thm:main-unsteady}  below applies with $\beta \geq 0$ in~\eqref{eq:genbc}. 
		Also the generalisation for a matrix $\gamma$ is contained in our framework, see~\cite{MaekawaMazzucato2018} for a review of mathematical results.  
		Numerical methods are presented, e.g., in~\cite{V.1987,V.1991}. 
	\end{example}
	
	\begin{example}[Power-law relations]\label{ex:powerlaw}
		All monotone explicit laws with $r$-growth, for $r \in (1,\infty)$ fit into our framework. 
		For example we consider the following explicit function $\Srel$, see also~\cite[Table~3]{BulicekMalekMaringova2023},  
		\begin{align*}
			\mathcal{s}(\bv) &= c \abs{\bv}^{r-2} \bv,\\
			\mathcal{s}(\bv) &= c (1 + \abs{\bv})^{r-2} \bv,
		\end{align*}
		for $c >0$ constant or a positive definite matrix and $r \in (1,\infty)$. 
		They are explicit, but coercive, and hence are covered by case~\eqref{itm:case-impl-coerc-1} with $\lambda = 0$, where $\gbd$ is represented by~\eqref{eq:gbd-expl} and satisfies Assumption~\ref{assump:gbd-mon} with $r$. Our convergence result in Theorem~\ref{thm:main-steady} applies for any $r \in [1,\infty)$. 
		Power-law models can be reformulated as relation of the form~$\bv = \mathcal{v}(\bsigma)$, as commonly used in the engineering community, cf.~\cite{KalikaDenn1987,HatzikiriakosDealy1991}. 
		
		Existence of weak solutions is available in~\cite{LeRoux2023} and numerical schemes for the corresponding Stokes equations are investigated in~\cite{DjokoKokoMbehouEtAl2022} for $r\in (1,2)$. 
	\end{example}
	
	\begin{table}[t] \small
		\begin{TAB}(r)[5pt]{|c|c|c|}{|c|cc|c|}
			& 	explicit & implicit  \\
			monotone	&  Navier friction $r =2 $ (Ex~\ref{ex:Navier}) & Tresca slip $r = 1$ (Ex.~\ref{ex:Tresca}) \\
			& power-law type $r > 1$ (Ex.~\ref{ex:powerlaw}) 
			& stick-slip $r > 1$ (Ex.~\ref{ex:Tresca}) \\
			nonmonotone  & 
			Le~Roux, Rajagopal (Ex.~\ref{ex:gbd-nonmon})
			& 	nonmonotone friction (Fang et al.) (Ex.~\ref{ex:Fang})
			\\
		\end{TAB}
		\caption{Examples of coercive boundary conditions~\eqref{itm:case-impl-coerc-1}.  
		}
		\label{tbl:examples}
	\end{table}
	
	\begin{example} \label{ex:gbd-nonmon}
		Explicit nonmonotone relations as in~\cite{LeRoux2013} of the form 
		\begin{align*}
			\bsigma = \Srel(\bu_\tau) = \left(a(1 + b \abs{\bu_{\tau}}^2)^{\theta} + c \right)\bu_\tau,				
		\end{align*}
		for constants~$a, b, c >0$ and~$\theta \in \setR$  are contained in our framework.  
		If $\theta>-1/2$, then $\Srel$  is monotone, and otherwise, it may be nonmonotone. 
		For the explicit constants, we refer to~\cite{LeRoux2013}. 	
		Note that for~$\theta \leq 0$ the relation is $2$-coercive, and hence it is covered by case~\eqref{itm:case-impl-coerc-1}.  
		Alternatively, since the relation is explicit, one can also work with case~\eqref{itm:case-expl-noncoerc-1}. 
		In both cases, for the purpose of the convergence proof we have to assume that the parameter $\lambda \geq 0$ in Assumption~\ref{assump:s-expl} and in~\eqref{eq:gbd-nonmon}, respectively, is sufficiently small, see    Assumption~\ref{as:param} below. 
	\end{example}
	
	\begin{example}[Tresca slip and stick-slip boundary condition]\label{ex:Tresca}
		The Tresca slip and stick-slip boundary conditions are given by 	
		\begin{equation}\label{def:Tresca}
			\begin{array}{rlr}
				\bsigma &= \gamma_\star \bu_\tau + \mu_\star \frac{\bu_\tau}{|\bu_\tau|} \;\; & \text{ if }\bu_\tau \neq 0  \\
				|\bsigma| &\leq \mu_\star  & \text{ if }\bu_\tau = 0
			\end{array}
			\quad
			\text{ for constants  }	\gamma_\star,\mu_\star \geq 0.
		\end{equation}
		For $\gamma_{\star} = 0$ this reduces to the Tresca slip boundary condition~\cite{Fuj.1994}, and for~$\gamma_{\star}>0$ it is referred to as stick-slip boundary condition~\cite{LeRoux.2005}.  
		
		For $\mu_\star>0$ there is no explicit relation to express $\bsigma$ in terms of $\bu_\tau$. 
		Defining with $s^+ \coloneqq \max(0,s)$ the mappings 
		\begin{alignat}{3} \label{eq:gbd-Tresca}
			\gbd(\bs,\bv) &= \bv -  \frac{1}{\mu_\star} (\abs{\bs + \bv} - \mu_\star)^+ \bs  \qquad 
			& \text{ if } \gamma_\star = 0,\\
			\label{eq:gbd-stickslip}
			\gbd(\bs,\bv) &= \bv - 
			\frac{1}{\gamma_\star} \frac{(\abs{\bs} - \mu_\star)^+}{\abs{\bs}} \bs   
			& \text{ if } \gamma_\star > 0,
		\end{alignat}
		one can show that $\gbd(\bsigma, \bv) = \b0$ if and only if~\eqref{def:Tresca} is satisfied, cf.~\cite{BM.2017}. 
		
		For $\gamma_\star>0$ it is straightforward to see that $\gbd$ satisfies  Assumption~\ref{assump:gbd-mon} with $r = 2$. 
		Note that by choosing $\bsigma = \gamma_{\star} \abs{\bu_\tau}^{r-2}  \bu_\tau  + \mu_{\star} \frac{\bu_\tau}{\abs{\bu_\tau}}$, if $\bu_\tau \neq \b0$, or by combining it with any power-law type relation as in Example~\ref{ex:powerlaw} one can generalise this to $r \in (1,\infty)$. 
		The analogous laws in the bulk correspond to the  Bingham model for $r = 2$ , and to the Herschel--Bulkley model for $r \in (1,\infty)$.  
		
		For~$\gamma_\star = 0$ we wish to verify  Assumption~\ref{assump:gbd-mon} with $r = 1$. 
		The only difference compared to the previous case, where $r = 1$, is the proof of maximal monotonicity of $\mathcal{A}$ and $A$ as in~\ref{itm:gbd-mon} and~\ref{itm:gbd-mon-lp}, respectively.  
		As mentioned in Remark~\ref{rmk:max-mon}~\ref{itm:max-mon-r1}, it suffices to find a convex potential $j \colon \setR^d \to \setR$ such that $\mathcal{A} = \partial \mathcal{j}$; see~(A5). 
		It is straightforward to check that $\mathcal{j}(\bv) = \mu_\star \abs{\bv}$ serves the purpose. 
		Altogether, the Tresca slip and stick-slip boundary conditions are covered by case~\eqref{itm:case-impl-coerc-1} choosing $\lambda = 0$ in~\eqref{eq:genbc}, with $r = 1$ and $r = 2$, respectively, and our convergence result applies. 
		
		Existence of weak solutions to the Navier--Stokes equations with Tresca friction is proved in~\cite{Fuj.1994,Fuj.2002}, and for generalisations in~\cite{LeRoux.2005, LeRoux2007}.  
		For the stick-slip-type boundary for non-Newtonian fluids we refer to~\cite{BulicekMalek2016} for~$r = 2$ and to~\cite{BulicekMalekMaringova2023}  for $r \in (1,\infty)$. 
		
		Numerical methods for the (Navier--)Stokes equations  with stick-slip conditions with~$r = 2$ and direct imposition of the impermeability  are presented in~\cite{DM.2013,D.2014,DK.2016}. 
		An extension to~$r \in (1,\infty)$ is presented for the stationary Stokes equation in~\cite{HKSS.2018}. 
		
		Approximations for the (Navier)--Stokes equations  with Tresca boundary law are considered in~\cite{LL.2010, LA.2011, Kas.2013c, ABGS.2014,JHYW.2018,AABS.2019}. 
	\end{example}
	
	\begin{example}[implicit, nonmonotone]\label{ex:Fang} 
		Also nonmonotone and implicit relations are covered. 
		The following example is presented in~\cite{Fang2020}, where it is handled by means of variational inequalities: 
		\begin{equation}\label{eq:nonmon-impl}
			\begin{array}{rlr}
				\bsigma &= \mu(\abs{\bu_\tau})\frac{\bu_\tau}{\abs{\bu_\tau}}  & \text{ if }\bu_\tau \neq 0  \\
				|\bsigma| &\leq \mu(0)  & \text{ if }\bu_\tau = 0
			\end{array}
			\qquad \text{ where }\;
			\mu(s) \coloneqq (a-b)e^{-\alpha s} + b, \;\; \text{ for } s \geq 0,
		\end{equation}
		for given constants $a > b \geq 0$ and $\alpha \geq 0$. 
		For $\lambda \coloneqq \alpha (a-b)$  the set-valued mapping 
		\begin{align*}
			\bv \mapsto \{\bsigma + \lambda \bv \colon  (\bsigma, \bv) \text{ satisfy~\eqref{eq:nonmon-impl}} \}
		\end{align*}
		is maximally monotone and $2$-coercive. 
		Hence, the implicit relation can be cast in the form~\eqref{itm:case-impl-coerc-1} with $\gbd$ satisfying Assumption~\ref{assump:gbd-mon} with $r = 2$ and $\lambda$ as above. 
		Our convergence proof below applies if $\lambda = \alpha (a-b)$ satisfies Assumption~\ref{as:param}.
		
		Existence results in this direction can be found in~\cite{Fang2016,MP.2018,MD.2022}, and numerical schemes are presented in~\cite{Fang2020,HCJ.2021}. 			
	\end{example}
	
	All previous examples can be considered for stationary and for time-dependent problems, which leads to~\eqref{eq:genbc} with $\beta = 0$.

	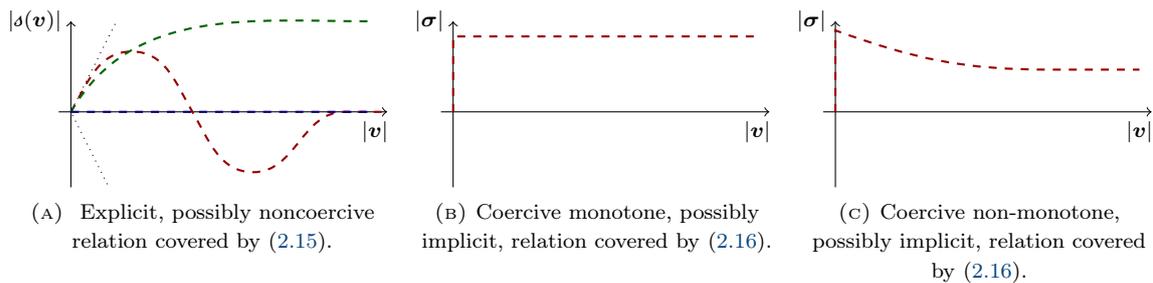
\begin{figure}
		\centering
		\subfloat[a][{\centering { 
				Explicit, possibly noncoercive relation covered by~\eqref{itm:case-expl-noncoerc-1}.}
		}]{{%
				\begin{tikzpicture}[scale = 0.8]
					\draw[->] (0,-1.25) -- (0,1.5); 
					\draw[->] (-0.2,0) -- (5.2,0);
					\draw[-,thick,red!60!black,dashed] (0,0) [out = 60, in = 180] to (1,1) [out = 0, in =120] to (2,0) to [out=-60, in =180] (3,-1) to [out = 0, in = 180] (4.5,0) -- (5.15,0);
					\draw[-,thick,green!40!black,dashed] (0,0) [out = 60, in = 180] to (5,1.5);
					\draw[-,thick,blue!50!black,dashed] (0,0) -- (5,0);
					\draw[dotted] (0.6,-1.2) -- (0,0) -- (0.75,1.5);
					\node[left] at (0,1.5) {\footnotesize $\abs{\Srel(\bv)}$};
					\node[below] at (5,0) {\footnotesize $\abs{\bv}$};
				\end{tikzpicture}
		}}%
		\subfloat[b][{\centering {Coercive monotone, possibly implicit, relation covered by~\eqref{itm:case-impl-coerc-1}}.}]{{%
				\begin{tikzpicture}[scale = 0.8]
					\draw[->] (0,-1.25) -- (0,1.5); 
					\draw[->] (-0.2,0) -- (5.2,0);
					\draw[-,thick,red!60!black,dashed] (0,0) to (0,1.25) -- (5,1.25);
					\node[left] at (0,1.5) {\footnotesize $\abs{\bsigma}$};
					\node[below] at (5,0) {\footnotesize $\abs{\bv}$};
				\end{tikzpicture}
		}}%
		\subfloat[c][{\centering {Coercive nonmonotone, possibly implicit, relation covered by~\eqref{itm:case-impl-coerc-1}}.}]{{%
				\begin{tikzpicture}[scale = 0.8]
					\draw[->] (0,-1.25) -- (0,1.5); 
					\draw[->] (-0.2,0) -- (5.2,0);
					\draw[-,thick,red!60!black,dashed] (0,0) to (0,1.35) to [out =-20, in = 180] (3.5,0.7) -- (5,0.7);
					\node[left] at (0,1.5) {\footnotesize $\abs{\bsigma}$};
					\node[below] at (5,0) {\footnotesize $\abs{\bv}$};
				\end{tikzpicture}
		}}\\
		\caption{Examples of relations covered by the framework in Section~\ref{sec:bc-slip}.}%
		\label{fig:examples}
	\end{figure}

	\begin{example}[time-dependent and dynamic boundary conditions]
		For the time-dependent problem boundary conditions of the form 
		\begin{align}
			g(- (\BS \bn)_{\tau})  &= \bu_\tau 
			+ \beta \partial_t \bu
		\end{align}
		are proposed in the context of polymer melts~\cite{Hatzikiriakos2012}, where $g$ is for example a power-law relation and $\beta \geq 0$. 
		We refer to them as dynamic, if $\beta>0$.  
		If $g$ is the identity, we have
		\begin{align}
			- (\BS \bn)_{\tau}  &= \bu_\tau 
			+ \beta \partial_t \bu.
		\end{align}
		Thus, the relation is of the form~\eqref{eq:genbc} with~$\bsigma = \bu_\tau$, i.e., we have an explicit linear relation. 
		This means that it is an extension of the Navier slip boundary condition in Example~\ref{ex:Navier} which allows for a relaxation  effect. 
		
		More general dynamic boundary conditions of the form~\eqref{eq:genbc}, with $\beta>0$ and  with $\bsigma$ and $\bu_\tau$ related by an $r$-coercive, possibly implicit maximal monotone relation are considered in~\cite[Ch.~8-10]{Maringova2019} and in~\cite{Abbatiello2021}. 
		This corresponds to case~\eqref{itm:case-impl-coerc-1} with $\lambda  = 0$ and $r \in (1,\infty)$. 
		Therein, existence of solutions is proved for $r\in (1,\infty)$ and $\beta > 0$. 
		For the non-dynamic case $\beta = 0$ existence of solutions was proved in~\cite{BulicekMalekMaringova2023}. 
		While for time-dependent problems a Nitsche type method has been tested, e.g., in~\cite{BansalBarnafiPandey2024}, for dynamic slip boundary conditions we are not aware of any contributions. 
	\end{example}
	
	\section*{General notation}
	\noindent Even though mostly standard, we wish to comment on selected aspects regarding our notation:
	
	\textbf{Vectors, sets and measures.} 
	For vectors $\bx\in\R^{n}$ and matrices $A\in\R^{n\times n}$, for some $n \in \mathbb{N}$, we denote by $|\bx|$ the Euclidean norm of $\bx$ and by $|A|$ the Frobenius norm of $A$. 
	These are induced by the corresponding Euclidean and Frobenius inner products, both of which are denoted by $\skp{\cdot}{\cdot}$. 
	To highlight that a certain quantity is vectorial rather than scalar, we write $\bv$ instead of $v$. 
	
	Throughout, $\Omega\subset\R^{d}$ for $d \in \{2,3\}$ denotes an open and bounded (spatial) domain with Lipschitz boundary $\partial\Omega$, and $\bn\colon\partial\Omega\to\mathbb{S}^{d-1}\coloneqq \{\bx\in\R^{d}\colon\,|\bx|=1\}$ is the outer unit normal to $\partial\Omega$. 
	The $d$-dimensional Lebesgue measure is denoted by $\mathcal{L}^{d}$, and we write $\mathcal{H}^{d-1}$ for the $(d-1)$-dimensional Hausdorff measure. 
	To alleviate notation, we sometimes use the convention $\mathrm{s}=\mathcal{H}^{1}$ or $\mathrm{s}=\mathcal{H}^{2}$ for the surface measure in two or three dimensions, respectively. 
    Both for Lebesgue measurable  and for Hausdorf-measurable subsets $S\subset \R^{d}$  we denote by $\abs{S}$ the $d$-dimensional Lebesgue measure, or the $(d-1)$-dimensional Hausdorf measure, respectively. 
    Which of the two is considered will be clear from the context. 
    
	\textbf{Function spaces.} 
	As usual, we write $C^{0,1}(\Omega)$ and $C^{0,1}(\Omega)^{d}$ for the Lipschitz functions and vector fields on $\Omega$, respectively. 
	We use the standard notation for the Lebesgue spaces $L^p(\Omega)$ and the Sobolev spaces $W^{1,p}(\Omega)$, where  $p \in [1,\infty]$, and the vector-valued variants $L^{p}(\Omega)^{d}$ and $W^{1,p}(\Omega)^{d}$. 
	It is customary to set 
	\begin{align*}
		L^p_0(\Omega)\coloneqq \Big\{v \in L^p(\Omega)\colon \int_{\Omega} v \dx = 0\Big\}.
	\end{align*}
	If $p=2$, we write $H^{1}(\Omega)=W^{1,2}(\Omega)$, and denote by $H_{0}^{1}(\Omega)$ the closure of $C_{c}^{\infty}(\Omega)$ with respect to the $H^{1}$-norm $\|\cdot\|_{H^{1}(\Omega)}$. 
	Moreover, whenever a $\mathcal{H}^{d-1}$-measurable subset $\Gamma\subset\partial\Omega$ is fixed, we define 
	\begin{align*}
		\Hn & \coloneqq \overline{\{\bf \in C^{\infty}(\overline \Omega)^d\colon \bf \cdot \bn |_{\Gamma} = 0 \}}^{\|\cdot\|_{H^{1}(\Omega)}}, \\ 
		\Hdivn & \coloneqq \overline{\{\bf \in C^{\infty}(\overline \Omega)^d\colon \bf \cdot \bn |_{\Gamma} = 0 \text{ and } \divergence \bf = 0\}}^{\|\cdot\|_{H^{1}(\Omega)}},
	\end{align*}
	but since $\Gamma=\partial\Omega$ in most parts of the paper, no ambiguities will arise from this. 
	The image of the boundary trace operator $\mathrm{tr}\colon W^{1,p}(\Omega)^{d}\to W^{1-1/p,p}(\partial\Omega)^{d}$ involves the fractional Sobolev spaces $W^{s,p}(\partial\Omega)^{d}$. 
	For $0<s<1$ and $1\leq p<\infty$, we recall that $W^{s,p}(\partial\Omega)^{d}$ is defined as the collection of all $\mathcal{H}^{d-1}$-measurable $v\colon\partial\Omega\to\R^{d}$ such that 
	\begin{align*}
		\|v\|_{W^{s,p}(\partial\Omega)}\coloneqq \Big(\|v\|_{L^{p}(\partial\Omega)}^{p}+\iint_{\partial\Omega\times\partial\Omega}\frac{|v(x)-v(y)|^{p}}{|x-y|^{d-1+sp}}\,\mathrm{d}\mathcal{H}^{d-1}(x) \,\mathrm{d}\mathcal{H}^{d-1}(y)\Big)^{\frac{1}{p}}<\infty. 
	\end{align*}
	With the obvious modifications, the spaces $W^{s,p}(\Omega)^{d}$ are defined analogously. 
	Moreover, for an $\mathcal{L}^{d}$-measurable subset $\omega\subset\Omega$, we write 
	\begin{align*}
		\langle \bu,\bv\rangle_{\omega} \coloneqq \int_{\omega}\bu\cdot \bv\,\mathrm{d}x\qquad \text{for measurable functions}\;\bu,\bv\colon\omega\to\R^{d},
	\end{align*}
	whenever this expression is well-defined, and simply $\langle \bu,\bv\rangle \coloneqq \langle \bu,\bv\rangle_{\Omega}$; similar notation will be employed for measurable subsets of $\partial\Omega$. 
	Different from this convention, a capital subscript as in $\langle\cdot,\cdot\rangle_{X} \coloneqq \skp{\cdot}{\cdot}_{X',X}$ denotes the duality pairing between a Banach space $X$ and its dual $X'$. 
	Lastly, we write $(\cdot,\cdot)_{B}$ for the inner product in Hilbert spaces $B$.
	
	\textbf{Inequalities.} 	
	For two non-negative quantities $A,B$, we write $A\lesssim  B$ if there exists a constant $c>0$ independent of $A$ and $B$ such that $A\leq cB$. 
	Finally, $c>0$ and $C>0$ denote two generic constants whose values might change from one line to the other, and their precise values will only be specified if they are required in the sequel. 
	
	\section{A Korn-type inequality with normal trace terms}
	\label{sec:korn}
	In this section, we establish a Korn-type inequality which involves normal traces on a fixed part $\Gamma$ of the boundary of the domain. 
	This constitutes a key ingredient in the convergence proof of the Nitsche method in the noncoercive case, since we do not have a priori control of the tangential traces in this situation. 
	Moreover, by the very nature of the Nitsche method, the zero normal trace condition is imposed by penalisation. In particular, our finite element functions do not have zero normal trace.
	
	To the best of our knowledge, the available Korn inequalities involving trace terms either include the full traces, see~\cite{Pompe2003}, or vanishing partial (normal) traces as in ~\cite{DesvillettesVillani2002} and~\cite{BP.2016,Bauer2016}. 
	Theorem~\ref{thm:korn-n} below extends these results by not requiring the normal trace to vanish; different from e.g. ~\cite{BulicekMalekRajagopal2007}, it only requires bounds on the normal traces on a fixed part of the boundary.
 Since it might be of independent interest, we state the theorem in slightly higher generality than is actually required here. 
	
	 Throughout this section, let $d\in\mathbb{N}$ be arbitrary, and let $\Omega \subset \setR^d$ be a bounded Lipschitz domain. 
	We record that the nullspace of the symmetric gradient is given by the space of \emph{rigid deformations} 
	\begin{align*}
		\mathcal{R}(\Omega) \coloneqq \operatorname{ker} \BD =  
		\{\bx \mapsto A\bx + \bb \text{ for some } A \in \setR^{d \times d} \text{ with } A = -A^\top \text{ and } \bb \in \setR^d \}. 
	\end{align*} 
	To state our result conveniently, we work subject to the following geometric assumption: 
	\begin{assumption}[geometry of $\Gamma$]\label{assump:dom}
		We assume that  $\Gamma\subset\partial\Omega$ is an $\mathcal{H}^{d-1}$-measurable subset, and that 
		\begin{align}\label{eq:trampolin}
			\bv \mapsto \norm{\bv \cdot \bn}_{L^q(\Gamma)}\qquad \text{is a norm on the  rigid deformations $\mathcal{R}(\Omega)$}
		\end{align}
		for some $1\leq q \leq \infty$, where $\bn\colon\partial\Omega\to\mathbb{S}^{d-1}$ is the outer unit normal to $\partial\Omega$. 
	\end{assumption}
	It is clear that if~\eqref{eq:trampolin} holds for some $1\leq q\leq\infty$, then it holds for all $1\leq q\leq \infty$. 
	Deferring the discussion of the geometric impact of Assumption~\ref{assump:dom} on $\Omega$ to the end of the section, the required Korn-type inequality is as follows: 
	
	\begin{theorem}[Korn-type inequality with normal traces I]\label{thm:korn-n}
		Let $\Omega\subset \setR^d$ be an open and bounded Lipschitz domain, and suppose that $\Gamma\subset\partial\Omega$ satisfies Assumption~\ref{assump:dom} with some $1\leq q \leq\infty$. 
		Moreover, let $1<p<\infty$  be such that the trace operator is continuous as a mapping $ W^{1,p}(\Omega)^{d}\to L^{q}(\partial\Omega)^{d}$. 
		Then we have the estimate
		\begin{align}\label{eq:kornetto0}
			\norm{\bu}_{W^{1,p}(\Omega)} &\lesssim \norm{\BD \bu}_{L^p(\Omega)} + \norm{\tr(\bu)\cdot \bn}_{L^q(\Gamma)}\qquad \text{ for any } \bu \in W^{1,p}(\Omega)^{d}. 
		\end{align}
		In particular, we have  
		\begin{align}\label{eq:kornetto}
			\norm{\nabla \bu}_{L^p(\Omega)} &\lesssim \norm{\BD \bu}_{L^p(\Omega)} + \norm{\tr(\bu)\cdot \bn}_{L^q(\Gamma)}\qquad \text{ for any } \bu \in W^{1,p}(\Omega)^{d}.
		\end{align}
		Here, the underlying constants only depend on $\Omega$, $\Gamma$, $p$ and $q$. The same conclusion holds true if $\Omega\subset\R^{d}$ is open and bounded with Lipschitz boundary and has finitely many connected components. 
	\end{theorem}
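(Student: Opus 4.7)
\medskip

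\noindent\textbf{Proof plan for Theorem~\ref{thm:korn-n}.} The plan is to reduce the claim to the standard quotient version of Korn's inequality by splitting off a rigid deformation, and then to exploit the finite-dimensionality of $\mathcal{R}(\Omega)$ together with Assumption~\ref{assump:dom} to control the rigid-deformation component via the normal trace on $\Gamma$. To this end I would first recall the classical Korn inequality in the form
\begin{align*}
\inf_{\br \in \mathcal{R}(\Omega)}\|\bu-\br\|_{W^{1,p}(\Omega)}\lesssim \|\BD\bu\|_{L^{p}(\Omega)},\qquad \bu\in W^{1,p}(\Omega)^{d},
\end{align*}
valid on bounded Lipschitz domains for $1<p<\infty$, and pick an almost-minimiser $\br=\br(\bu)\in\mathcal{R}(\Omega)$ so that the function $\widetilde{\bu}\coloneqq \bu-\br$ satisfies $\|\widetilde{\bu}\|_{W^{1,p}(\Omega)}\lesssim \|\BD\bu\|_{L^{p}(\Omega)}$.

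The second step is to estimate the two pieces $\widetilde{\bu}$ and $\br$ separately. For $\widetilde{\bu}$ nothing more is required: it is already controlled by $\|\BD\bu\|_{L^{p}(\Omega)}$. For $\br$ I use that, by assumption, the trace operator is continuous $W^{1,p}(\Omega)^{d}\to L^{q}(\partial\Omega)^{d}$, so that
\begin{align*}
\|\br\cdot\bn\|_{L^{q}(\Gamma)}\leq \|\bu\cdot\bn\|_{L^{q}(\Gamma)} + \|\widetilde{\bu}\cdot\bn\|_{L^{q}(\Gamma)}\lesssim \|\bu\cdot\bn\|_{L^{q}(\Gamma)}+\|\BD\bu\|_{L^{p}(\Omega)}.
\end{align*}
The crucial observation is now that, by Assumption~\ref{assump:dom}, $\br\mapsto \|\br\cdot\bn\|_{L^{q}(\Gamma)}$ is a genuine norm on the finite-dimensional space $\mathcal{R}(\Omega)$; hence it is equivalent to the ambient $W^{1,p}(\Omega)$-norm on that space. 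This yields $\|\br\|_{W^{1,p}(\Omega)}\lesssim \|\br\cdot\bn\|_{L^{q}(\Gamma)}$, and combining the two estimates through the triangle inequality gives \eqref{eq:kornetto0}; the bound \eqref{eq:kornetto} is then an immediate consequence.

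The step I expect to be the most delicate is the one involving the finite-dimensional equivalence of norms: one has to make sure that the implicit constant in $\|\br\|_{W^{1,p}(\Omega)}\lesssim \|\br\cdot\bn\|_{L^{q}(\Gamma)}$ genuinely depends only on $\Omega$, $\Gamma$, $p$ and $q$, which in turn hinges on Assumption~\ref{assump:dom} being a nondegeneracy condition on $\Gamma$ rather than on the chosen $\bu$. Everything else is a standard interplay of Korn's inequality and trace continuity. Finally, for the extension to domains with finitely many connected components $\Omega_{1},\dots,\Omega_{N}$, I would apply the already-established inequality on each component $\Omega_{j}$ with $\Gamma_{j}\coloneqq \Gamma\cap\partial\Omega_{j}$; the verification that Assumption~\ref{assump:dom} still holds on each $\Omega_{j}$ whenever it holds on $\Omega$ requires a brief argument, using that a rigid deformation on $\Omega_{j}$ with vanishing normal trace on $\Gamma_{j}$ can be extended trivially to a rigid deformation on $\Omega$ if one allows different rigid deformations on different components, and then summing the componentwise inequalities.
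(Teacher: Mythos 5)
Your proposal is correct and follows essentially the same route as the paper's proof: split off a rigid deformation via the quotient Korn inequality, control it through the equivalence of norms on the finite-dimensional space $\mathcal{R}(\Omega)$ granted by Assumption~\ref{assump:dom}, and close the estimate with trace continuity, exactly as done in Section~\ref{sec:korn}. The only differences are cosmetic (almost-minimiser versus $L^{2}$-projection, and the order in which the triangle inequality is applied), and your remark on the componentwise extension matches the paper's concluding step.
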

	\begin{proof}
		Let $\bu \in W^{1,p}(\Omega)^{d}$. 
		Since $\Omega$ is connected and has Lipschitz boundary, it is well-known that there exists a $\brho \in \mathcal{R}(\Omega)$, independent of $p$, such that 
		\begin{align}\label{est:korn-1}
			\norm{\bu - \brho}_{W^{1,p}(\Omega)} \lesssim \norm{\BD \bu}_{L^p(\Omega)},
		\end{align} 
		for instance, see ~\cite{Necas1966} for Korn-type inequalities of this sort. 
		Note that $\brho$ can be chosen as the $L^2$-projection of $\bu$ onto the finite dimensional space $\mathcal{R}(\Omega)$. 
		For this choice of $\brho$, we have 
		\begin{align}\label{est:korn-2}
			\norm{\bu}_{W^{1,p}(\Omega)}
			& \leq 	\norm{\brho}_{W^{1,p}(\Omega)} + \norm{\bu - \brho}_{W^{1,p}(\Omega)}. 
		\end{align}
		For the first term we may use the fact that on the finite dimensional space $\mathcal{R}(\Omega)$, all norms are equivalent and that, by Assumption~\ref{assump:dom}, $\bv \mapsto \norm{\bv \cdot \bn}_{L^q(\Gamma)}$ is a norm on $\mathcal{R}(\Omega)$. 
		In combination with $\mathrm{tr}\colon W^{1,p}(\Omega)^{d}\to L^{q}(\partial\Omega)^{d}$, we arrive at 
		\begin{align}\label{est:korn-3}
			\norm{\brho}_{W^{1,p}(\Omega)}
			\lesssim \norm{\brho\cdot \bn}_{L^q(\Gamma)} 
			\leq \norm{(\brho - \tr(\bu)) \cdot \bn}_{L^q(\Gamma)} +  \norm{\tr(\bu) \cdot \bn}_{L^q(\Gamma)}, 
		\end{align}
		where the last term is already in the requisite form. 
		For the first term on the right-hand side of~\eqref{est:korn-3}, we estimate the normal trace by the full trace. 
		Again using that  $\mathrm{tr}\colon W^{1,p}(\Omega)^{d}\to L^{q}(\partial\Omega)^{d}$ is continuous, we find that 
		\begin{align}\label{est:korn-4}
			\norm{(\brho - \tr(\bu)) \cdot \bn}_{L^q(\Gamma)} 
			\leq  \norm{\brho - \tr(\bu)}_{L^q(\Gamma)} 
			\lesssim \norm{\brho-\bu}_{W^{1,p}(\Omega)}. 
		\end{align}
		Combining~\eqref{est:korn-2}--\eqref{est:korn-4} and finally applying~\eqref{est:korn-1} gives us
		\begin{align*}
			\norm{\bu}_{W^{1,p}(\Omega)}
			& \leq 	\norm{\bu - \brho}_{W^{1,p}(\Omega)} + 	\norm{\brho}_{W^{1,p}(\Omega)}\\
			&\lesssim 	\norm{\bu - \brho}_{W^{1,p}(\Omega)} + \norm{(\tr(\bu)-\brho) \cdot \bn}_{L^q(\Gamma)} + \norm{\tr(\bu) \cdot \bn}_{L^q(\Gamma)} \\
			&\lesssim  \norm{\bu-\brho}_{W^{1,p}(\Omega)} + \norm{\tr(\bu) \cdot \bn}_{L^q(\Gamma)} \\
			&\lesssim  \norm{\BD \bu}_{L^p(\Omega)} + \norm{\tr(\bu) \cdot \bn}_{L^q(\Gamma)}.
		\end{align*}
  This is~\eqref{eq:kornetto0}, and~\eqref{eq:kornetto} is an immediate consequence of~\eqref{eq:kornetto0}. 
		For the supplementary statement, we use the preceding inequality on each of the finitely many connected components and sum over the latter. This completes the proof. 
	\end{proof}
Different from proofs by contradiction, it is in principle possible to extract bounds on the underlying constants; note that~\eqref{est:korn-1} can be proved constructively. 
Moreover, in~\eqref{eq:kornetto0}, we estimate the full Sobolev norm. 
	Hence, the following Poincar\'{e}-type inequality is an immediate consequence:
	
	\begin{corollary}[Poincaré inequality] 
		Under the conditions of Theorem~\ref{thm:korn-n}, one also has
		\begin{align}\label{eq:poincare}
			\norm{ \bu}_{L^p(\Omega)} 
			&\lesssim \norm{\BD\bu}_{L^p(\Omega)} + \norm{\tr(\bu) \cdot \bn}_{L^r(\Gamma)}\qquad \text{ for any } \bu \in W^{1,p}(\Omega)^{d}.
		\end{align}
	\end{corollary}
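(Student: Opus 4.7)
The corollary is an immediate consequence of~\eqref{eq:kornetto0}, since the full Sobolev norm controls the $L^{p}$-norm. More precisely, the plan is simply to note that, for any $\bu\in W^{1,p}(\Omega)^{d}$, one has the trivial bound
\begin{equation*}
\|\bu\|_{L^{p}(\Omega)}\leq \|\bu\|_{W^{1,p}(\Omega)},
\end{equation*}
and then invoke Theorem~\ref{thm:korn-n}, whose estimate~\eqref{eq:kornetto0} precisely gives
\begin{equation*}
\|\bu\|_{W^{1,p}(\Omega)}\lesssim \|\BD \bu\|_{L^{p}(\Omega)}+\|\tr(\bu)\cdot \bn\|_{L^{q}(\Gamma)},
\end{equation*}
with a constant depending only on $\Omega$, $\Gamma$, $p$ and $q$. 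Chaining these two inequalities yields~\eqref{eq:poincare} (reading the exponent $r$ in the statement as $q$, matching the hypothesis on the trace operator).

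There is no genuine obstacle here; the only thing to check is that the exponent condition on the trace (continuity of $\operatorname{tr}\colon W^{1,p}(\Omega)^{d}\to L^{q}(\partial\Omega)^{d}$) is already built into the hypotheses of Theorem~\ref{thm:korn-n}, so no new geometric or embedding argument is needed. Note also that if one wished to avoid bounding the full gradient, one could alternatively reprove the statement directly following the proof of Theorem~\ref{thm:korn-n}: split $\bu=(\bu-\brho)+\brho$ for the $L^{2}$-projection $\brho\in\mathcal{R}(\Omega)$, use the classical Poincaré--Korn-type bound $\|\bu-\brho\|_{L^{p}(\Omega)}\lesssim \|\BD\bu\|_{L^{p}(\Omega)}$, and bound $\|\brho\|_{L^{p}(\Omega)}$ via finite-dimensionality by $\|\brho\cdot\bn\|_{L^{q}(\Gamma)}$ as in~\eqref{est:korn-3}--\eqref{est:korn-4}. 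However, since~\eqref{eq:kornetto0} already controls the full $W^{1,p}$-norm, the one-line derivation above is the most economical option.
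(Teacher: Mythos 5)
Your one-line derivation is exactly the paper's argument: the corollary is stated as an immediate consequence of~\eqref{eq:kornetto0} because that estimate already bounds the full $W^{1,p}$-norm, hence the $L^{p}$-norm. Your reading of the exponent $r$ in the statement as the $q$ from Theorem~\ref{thm:korn-n} is also the intended one, so the proposal is correct and matches the paper's proof.
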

	Based on the above proof, it is moreover straightforward to obtain a version for the trace-free symmetric gradient (or deviatoric symmetric gradient) 
	\begin{align*}
		\BD^{\mathrm{tf}}\bv \coloneqq \BD\bv - \frac{1}{d}\mathrm{div}(\bv)\BI,
	\end{align*}
	too, where $\BI \in \mathbb{R}^{d \times d}$ denotes the unit matrix. 
    If $d\geq 3$ and $\Omega\subset\R^{d}$ is connected, then the nullspace of $\BD^{\mathrm{tf}}$ is given by the conformal Killing vectors $\mathcal{K}(\Omega)$, see~\cite{Resetnjak1}. 
	This is a space of polynomials of degree at most $2$, hence finite dimensional. We then have the following result: 
	\begin{corollary}[Korn-type inequality with normal traces II] 
		Let $d\geq 3$. 
		In the situation of Theorem~\ref{thm:korn-n}, suppose moreover that 
  \begin{align}\label{eq:trampolin1}
			\bv \mapsto \norm{\bv \cdot \bn}_{L^q(\Gamma)}\qquad \text{is a norm on the  conformal Killing vectors $\mathcal{K}(\Omega)$}.
		\end{align}
Then we have the estimate 
		\begin{align}\label{eq:tfsym}
			\norm{\bu}_{W^{1,p}(\Omega)} &\lesssim \norm{\BD^{\mathrm{tf}}\bu}_{L^p(\Omega)} + \norm{\tr(\bu) \cdot \bn}_{L^q(\Gamma)}\qquad \text{ for any } \bu \in W^{1,p}(\Omega)^{d}. 
		\end{align}
	\end{corollary}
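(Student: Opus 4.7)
The plan is to mirror the argument for Theorem~\ref{thm:korn-n} with the rigid deformations $\mathcal{R}(\Omega)$ replaced by the finite-dimensional space $\mathcal{K}(\Omega)$ of conformal Killing vectors. The key ingredient that has to be substituted is the Korn-type inequality modulo the kernel of $\BD^{\mathrm{tf}}$: since $d\geq 3$ and $\Omega\subset\R^{d}$ is a bounded connected Lipschitz domain, one has the estimate
\begin{align}\label{eq:kornmod-tf}
\inf_{\bkappa\in\mathcal{K}(\Omega)} \norm{\bu-\bkappa}_{W^{1,p}(\Omega)} \lesssim \norm{\BD^{\mathrm{tf}}\bu}_{L^p(\Omega)}\qquad\text{for all }\bu\in W^{1,p}(\Omega)^{d},
\end{align}
which plays the role of \eqref{est:korn-1}. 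This is the $L^p$-analogue of Reshetnyak's observation for the deviatoric symmetric gradient and is well-documented in the literature on traceless Korn inequalities for $d\geq 3$. Choosing $\bkappa$ to be the $L^{2}$-projection of $\bu$ onto the finite-dimensional space $\mathcal{K}(\Omega)$, the infimum in \eqref{eq:kornmod-tf} is attained, and $\bkappa$ is independent of $p$.

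From here the argument is essentially a copy of the proof of Theorem~\ref{thm:korn-n}. I would first split
\begin{align*}
\norm{\bu}_{W^{1,p}(\Omega)}\leq \norm{\bu-\bkappa}_{W^{1,p}(\Omega)}+\norm{\bkappa}_{W^{1,p}(\Omega)}.
\end{align*}
The first term is controlled by $\norm{\BD^{\mathrm{tf}}\bu}_{L^p(\Omega)}$ via \eqref{eq:kornmod-tf}. For the second, I invoke that all norms on the finite-dimensional space $\mathcal{K}(\Omega)$ are equivalent, together with the new hypothesis~\eqref{eq:trampolin1}, to obtain
\begin{align*}
\norm{\bkappa}_{W^{1,p}(\Omega)}\lesssim \norm{\bkappa\cdot\bn}_{L^q(\Gamma)}\leq \norm{(\bkappa-\tr(\bu))\cdot\bn}_{L^q(\Gamma)}+\norm{\tr(\bu)\cdot\bn}_{L^q(\Gamma)}.
\end{align*}
Estimating the normal trace by the full trace and using the continuity $\mathrm{tr}\colon W^{1,p}(\Omega)^{d}\to L^{q}(\partial\Omega)^{d}$ then bounds the first right-hand term by $\norm{\bu-\bkappa}_{W^{1,p}(\Omega)}$, which is again absorbed by \eqref{eq:kornmod-tf}. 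Combining these estimates yields~\eqref{eq:tfsym}.

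The only genuinely new input compared to Theorem~\ref{thm:korn-n} is the traceless Korn inequality~\eqref{eq:kornmod-tf}; everything else is a verbatim transcription with $\mathcal{R}(\Omega)$ replaced by $\mathcal{K}(\Omega)$ and the role of Assumption~\ref{assump:dom} now played by~\eqref{eq:trampolin1}. The restriction $d\geq 3$ enters solely through the availability of~\eqref{eq:kornmod-tf}; in $d=2$ the nullspace of $\BD^{\mathrm{tf}}$ is infinite-dimensional (holomorphic fields), which would invalidate the norm-equivalence step. If desired, one can also formulate a multi-component version exactly as in Theorem~\ref{thm:korn-n} by summing the resulting estimate over the finitely many connected components of $\Omega$.
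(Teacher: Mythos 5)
Your proposal is correct and follows essentially the same route as the paper: it replaces \eqref{est:korn-1} by the traceless Korn inequality modulo the finite-dimensional space $\mathcal{K}(\Omega)$ (citing the Reshetnyak-type results), and then repeats the decomposition, norm-equivalence and trace arguments of Theorem~\ref{thm:korn-n} with \eqref{eq:trampolin1} in place of Assumption~\ref{assump:dom}. The remarks on the $L^{2}$-projection choice of $\bkappa$ and on the failure in $d=2$ are fine but not needed beyond what the paper already records.
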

	\begin{proof}
	Solely based on the aforementioned fact that $\BD^{\mathrm{tf}}$ has a finite dimensional space of polynomials as nullspace, it suffices to note that one the has following Korn-type inequality: 
		For each $\bu\in W^{1,p}(\Omega)^{d}$, there exists $\brho\in\mathcal{K}(\Omega)$ such that 
		\begin{align}\label{eq:uso}
			\|\bu - \brho\|_{W^{1,p}(\Omega)}\lesssim \|\BD^{\mathrm{tf}}\bu\|_{L^{p}(\Omega)}. 
		\end{align}
  See, e.g., \cite{Resetnjak1,DieningGmeineder2024} for this inequality, even subject to more general assumptions on $\Omega$ than imposed here. Without knowing the precise structure of the elements of $\mathcal{K}(\Omega)$, the mere fact that $\dim(\mathcal{K}(\Omega))<\infty$ allows to replace estimate~\eqref{est:korn-1} by~\eqref{eq:uso}. 
  By~\eqref{eq:trampolin1}, we also have the analogue of~\eqref{est:korn-3} for $\brho\in\mathcal{K}(\Omega)$,  and then may argue as above to conclude~\eqref{eq:tfsym}. 
		This completes the proof. 
	\end{proof}
 For the preceding proof, the assumption $d\geq 3$ cannot be avoided; in $d=2$ dimensions, the nullspace of $\BD^{\mathrm{tf}}$ is isomorphic to the holomorphic functions and hence is not of finite dimension, see~\cite{Resetnjak1}. Moreover, in this case, trace estimates as required for the right-hand side of~\eqref{eq:tfsym} to be meaningful are impossible; see~\cite{BDG.2020,DieningGmeineder2024} for more on this matter.   

 \medskip
	
	We now study the geometric Assumption~\ref{assump:dom} in more detail, and first address its optimality: 
	
	\begin{remark}[optimality of Assumption~\ref{assump:dom}]  
		Assumption~\ref{assump:dom} is in fact equivalent to the validity of~\eqref{eq:kornetto0}. 
		To see this, note that $\bv\mapsto \norm{\bv\cdot\bn}_{L^{q}(\Gamma)}$ can only fail to be a norm on $\mathcal{R}(\Omega)$ provided that there exists $\bv\in\mathcal{R}(\Omega)\setminus\{\b0\}$ such that $\norm{\bv\cdot\bn}_{L^{q}(\Gamma)}=0$. 
		Inserting this map $\bv$ into~\eqref{eq:kornetto0} consequently yields the contradictory $\bv\equiv \b0$. 	
	\end{remark} 
	
	As discussed in the following example, Assumption~\ref{assump:dom} is not satisfied for all domains: 
	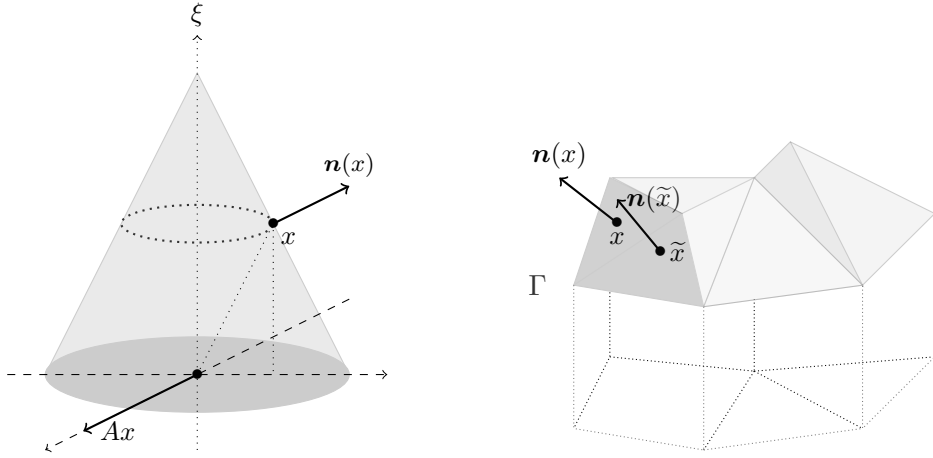
\begin{figure}[t]
		\begin{tikzpicture}
			\draw[-,fill=black!40!white,opacity=.2] (2,0) -- (0,4) -- (-2,0);
			\draw[black!20!white,fill=black!20!white] (0,0) ellipse (2cm and 0.5cm);
			\draw[black!80!white,dotted,thick] (0,2) ellipse (1cm and 0.25cm);
			\draw[dotted,->] (0,-1) -- (0,4.5);
			\node[above] at (0,4.5) {$\xi$};
			\node at (0,0) {\small\textbullet};
			\draw[dotted] (0,0) -- (1,2);
			\draw[dashed,->] (-2.5,0) -- (2.5,0);
			\node at (1,2) {\small\textbullet};
			\node[right,below] at (1.2,2) {$x$};
			\draw[dotted] (1,2) -- (1,0);
			\draw[->,thick] (1,2) -- (2,2.5);
			\node[above] at (2,2.5) {\small$\bn(x)$};
			\draw[dashed,->] (2,1) -- (-2,-1);
			\draw[->,thick] (0,0) -- (-1.5,-0.75);
			\node[right] at (-1.4,-0.75) {$Ax$};
		\end{tikzpicture}
		\hspace{1.5cm}
		\begin{tikzpicture}[scale=1.9]
  \node at (2.3,1.435) {\small\textbullet};
   \draw[->,thick] (2.3,1.435) -- (1.9,1.75);
   \node[above] at (1.9,1.75) {$\bn(x)$};
   \node[below] at (2.3,1.435) {$x$};
    \node at (2.6,1.235) {\small\textbullet};
    \node[right] at (2.6,1.235) {$\widetilde{x}$};
    \draw[->,thick] (2.6,1.235) -- (2.3,1.6);
    \node[right] at (2.3,1.6) {$\bn(\widetilde{x})$};
			\draw[-,fill=black!90!white,opacity=0.2] (2,1) -- (2.75,1.5) -- (2.25,1.75) -- (2,1) -- (2.9,0.85) -- (2.75,1.5);
            \node[black!80!white] at (1.75,1) {\large $\Gamma$};
			\draw[-,fill=black!20!white,opacity=0.2] (2.75,1.5) -- (3.25,1.75) -- (2.27,1.75) -- (2.75,1.5);
			\draw[-,fill=black!40!white,opacity=0.2] (2.9,0.85) -- (3.25,1.75) -- (4,1) -- (3.5,2) -- (3.25,1.75);
			\draw[-,fill=black!20!white,opacity=0.2] (2.9,0.85) -- (4,1);
			\draw[-,fill=black!20!white,opacity=0.2] (4,1) -- (4.5,1.5) -- (3.5,2);
			\draw[-,densely dotted,black!50!white] (2,1) -- (2,0);
			\draw[-,densely dotted,black!50!white] (2.9,0.85) -- (2.9,-0.15) -- (2,0);
			\draw[-,densely dotted,black!50!white] (2.9,-0.15) -- (4,0) -- (4,1);
			\draw[densely dotted,-,black!50!white]  (4.5,1.5) -- (4.5,0.5) -- (4,0);
			\draw[-,black!40!white,fill=black!10!white,opacity=0.3] (2.9,0.85) -- (3.25,1.75) -- (4,1) -- (2.9,0.85);
			\draw[-,fill=black!10!white,black!10!white,opacity=0.3] (2.9,0.85) -- (3.25,1.75) -- (2.75,1.5) -- (2.9,0.85);
   \draw[densely dotted] (2.25,0.95) -- (2.25,0.5) -- (2,0);
   \draw[densely dotted] (2.25,0.5) -- (3.25,0.4) -- (3.25,0.9);
     \draw[densely dotted] (3.25,0.4) -- (4.5,0.5);
      \draw[densely dotted] (3.25,0.4) -- (2.9,-0.15);
      \draw[densely dotted] (3.25,0.4) -- (4,0);
		\end{tikzpicture}
		\caption{Assumption~\ref{assump:dom} and its geometric impact. 
			Left-hand figure: An axisymmetric cone $\Omega$, for which $Ax\bot \bn(x)$ holds for any $x\in\partial\Omega$, where $Ax\coloneqq x\times\xi$. Note that $\bn(x)$ is always contained in the plane spanned by $\xi$ and $x$, and $Ax$ is orthogonal to this plane; see Example~\ref{ex:failureAssump}. Right-hand figure: Polyhedral domains as in the overall setting of the paper, and Corollary~\ref{cor:polyhedral}. If $\Gamma\subset\partial\Omega$ is polyhedral and contains two non-collinear normals $\bn(x),\bn(\widetilde{x})$ as indicated for $x$ and $\widetilde{x}$, Assumption~\ref{assump:dom} is satisfied.}\label{fig:Korno}
	\end{figure}
	\begin{example}[On Assumption \ref{assump:dom}]\label{ex:failureAssump}
		As observed in~\cite{DesvillettesVillani2002} (also see~\cite{BP.2016,Bauer2016,Dominguez2022}), the special case $\|\bu\|_{W^{1,p}(\Omega)}\lesssim\|\BD u\|_{L^{p}(\Omega)}$ for all $\bu\in W^{1,p}(\Omega)^{d}$ with $\bu\cdot\bn=0$ along $\partial\Omega$ rules out the axisymmetry of $\partial\Omega$. In the case considered here, Assumption~\ref{assump:dom} particularly implies that $\Gamma\subset\partial\Omega$ must not be contained in an axisymmetric $(d-1)$-dimensional manifold. We consider the following instances: 
		\begin{enumerate}[label = (\roman*)]
			\item Let $B=B_{r}(x_{0})\subset\R^{d}$ be an open ball of radius $r>0$ and centered at $x_{0}\in\R^{d}$. Then Assumption~\ref{assump:dom} is never satisfied: Indeed, in this situation $\brho(x)=A(x-x_{0})$ with $A=-A^{\top}$ belongs to $\mathcal{R}(\Omega)$ and satisfies $\brho(x)\cdot\bn(x)=A(x-x_{0})\cdot \frac{x-x_{0}}{|x-x_{0}|}=0$ for all $x\in\partial B_{r}(x_{0})$. 
			\item Following~\cite[\S 5]{DesvillettesVillani2002}, if $\Gamma\subset\partial\Omega$ is such that $\mathcal{H}^{d-1}(\Gamma)>0$ and $\Gamma$ is contained in an axisymmetric domain with axis of symmetry $\R\xi$, then Assumption~\ref{assump:dom} is not satisfied. Indeed, e.g., in $d=3$ dimensions, we may consider 
			\begin{align*}
				\brho(x)\coloneqq Ax = x\times \xi,\qquad \text{so that, in particular,}\;\;\;A=\left(\begin{matrix}  0 & \xi_{3} & -\xi_{2} \\ -\xi_{3} & 0 & \xi_{1} \\ \xi_{2} & -\xi_{1} & 0 \end{matrix} \right), 
			\end{align*}
			i.e., $A\in\R_{\mathrm{skew}}^{3\times 3}$,
			and this map satisfies $\brho(x)\bot\bn(x)$ for $\mathcal{H}^{2}$-a.e.~$x\in\Gamma$; 
			see Figure~\ref{fig:Korno} for the case of parts $\Gamma$ contained in cones.
		\end{enumerate}
	\end{example}
	As a key point for the present	paper, Assumption~\ref{assump:dom} however is always fulfilled for polyhedral domains provided that a simple non-collinearity condition on the normals on $\Gamma$ is satisfied: 
	
	\begin{corollary}[Assumption~\ref{assump:dom} and polyhedral domains] \label{cor:polyhedral}
		Let $\Omega\subset\R^{d}$ be open and bounded with Lipschitz boundary $\partial\Omega$. 
		Moreover, suppose that $\partial\Omega = \Gamma_{1}\cup \Gamma_{2}\cup R$, where 
		\begin{enumerate}
			\item\label{item:pinegrove1} $\Gamma_{1},\Gamma_{2}$ are $\mathcal{H}^{d-1}$-measurable with  $\mathcal{H}^{d-1}(\Gamma_{1}),\mathcal{H}^{d-1}(\Gamma_{2})>0$, and 
			\item\label{item:pinegrove2} there exist affine hyperplanes $H_{1}=\xi_{1}+\{\nu_{1}\}^{\bot}$, $H_{2}=\xi_{2}+\{\nu_{2}\}^{\bot}$ with $\nu_{1}\notin\R \nu_{2}$, such that $\Gamma_{1}\subset H_{1}$, $\Gamma_{2}\subset H_{2}$. 
		\end{enumerate}
		Then $\Gamma=\Gamma_{1}\cup\Gamma_{2}$ satisfies Assumption~\ref{assump:dom}. 
	\end{corollary}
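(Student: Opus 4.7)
The plan is to verify Assumption~\ref{assump:dom} by showing that every rigid deformation $\brho\in\mathcal{R}(\Omega)$ with $\brho\cdot\bn=0$ $\mathcal{H}^{d-1}$-a.e.\ on $\Gamma$ vanishes identically; this reformulation is precisely the optimality remark following Theorem~\ref{thm:korn-n}. Writing $\brho(\bx)=A\bx+\bb$ with $A=-A^{\top}\in\R^{d\times d}$ and $\bb\in\R^{d}$, the task reduces to deducing $A=0$ and $\bb=0$.

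First I would exploit the flat structure of each $\Gamma_{i}$: since $\Gamma_{i}\subset H_{i}$ and $\partial\Omega$ is Lipschitz, the outer unit normal coincides $\mathcal{H}^{d-1}$-a.e.\ on $\Gamma_{i}$ with the constant $\pm\nu_{i}/|\nu_{i}|$. Fixing a base point $\bx_{i}^{0}\in\Gamma_{i}$ and writing $\bx=\bx_{i}^{0}+\by$ with $\by\in\{\nu_{i}\}^{\perp}$, the condition $\brho\cdot\bn=0$ on $\Gamma_{i}$ becomes the affine equation
\[
(A\bx_{i}^{0}+\bb)\cdot\nu_{i}+\langle A^{\top}\nu_{i},\by\rangle \;=\; 0 \qquad \mathcal{H}^{d-1}\text{-a.e.\ on }\Gamma_{i}-\bx_{i}^{0}.
\]
Because $\mathcal{H}^{d-1}(\Gamma_{i})>0$, this set affinely spans $\{\nu_{i}\}^{\perp}$, and hence the linear and the constant parts must vanish separately. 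The linear part forces $A^{\top}\nu_{i}\in\R\nu_{i}$, and skew-symmetry yields $\langle A^{\top}\nu_{i},\nu_{i}\rangle=-\langle A\nu_{i},\nu_{i}\rangle=0$, so $A\nu_{i}=0$. The constant part then collapses to $\bb\cdot\nu_{i}=0$.

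The remaining, and main, step is purely linear-algebraic: conclude $A=0$ and $\bb=0$ from $A\nu_{1}=A\nu_{2}=0$, $\bb\cdot\nu_{1}=\bb\cdot\nu_{2}=0$, and the linear independence $\nu_{1}\notin\R\nu_{2}$. For the skew part I would invoke that a nonzero skew-symmetric matrix has even rank, so in the dimensions $d\in\{2,3\}$ relevant here $\dim\ker A\leq 1$ whenever $A\neq 0$; two linearly independent null vectors therefore force $A=0$. In $d=2$ the orthogonality conditions $\bb\cdot\nu_{i}=0$ already exhaust $\R^{2}$, so $\bb=0$ follows at once. I expect the translation part in $d=3$ to be the main obstacle: there the two orthogonality conditions only confine $\bb$ to the one-dimensional complement $\{\nu_{1},\nu_{2}\}^{\perp}$, so the argument as given closes only when one further exploits either a third independent normal carried by $\Gamma$ or additional geometric input from the polyhedral structure of $\Omega$ that rules out a nonzero $\bb\in\{\nu_{1},\nu_{2}\}^{\perp}$.
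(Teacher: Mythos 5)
Your reduction of Assumption~\ref{assump:dom} to showing that every $\brho(\bx)=A\bx+\bb\in\mathcal{R}(\Omega)$ with $\brho\cdot\bn=0$ $\mathcal{H}^{d-1}$-a.e.\ on $\Gamma$ must vanish is the right starting point, and your per-face computation is correct: a.e.\ on $\Gamma_{i}$ the outer normal equals $\pm\nu_{i}/|\nu_{i}|$, the positive $\mathcal{H}^{d-1}$-measure lets you upgrade the a.e.\ identity to all of $H_{i}$, and separating linear and constant parts gives $A\nu_{i}=\b0$ and $\bb\cdot\nu_{i}=0$. In $d=2$ this closes the argument, as you say. The obstruction you flag in $d=3$ is not a defect of your proof but of the statement: with only two normal directions the corollary, as literally stated (i.e.\ with the normal trace of Assumption~\ref{assump:dom}), fails for $d=3$. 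Take for $\Omega$ the unit cube, $\Gamma_{1}$ the top face and $\Gamma_{2}$ a front face; the constant field $\brho\equiv\nu_{1}\times\nu_{2}$ (a translation along their common edge) is a nonzero rigid deformation with $\brho\cdot\bn=0$ on all of $\Gamma_{1}\cup\Gamma_{2}$, so $\bv\mapsto\norm{\bv\cdot\bn}_{L^{q}(\Gamma)}$ is only a seminorm on $\mathcal{R}(\Omega)$ there. Two non-collinear normals control the skew part (in $d\leq 3$; for $d\geq 4$ your even-rank argument likewise needs more normals) and two components of $\bb$, but never the component of $\bb$ along $\{\nu_{1},\nu_{2}\}^{\bot}$; one needs $d$ linearly independent normals, exactly as you suspect.

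It is worth comparing this with the paper's own proof, which takes a genuinely different---and, for the statement as formulated, inadequate---route: it assumes that the full vector $\pi(\bx)=A\bx+\bb$ vanishes on $\Gamma$, hence on $H_{1}\cup H_{2}$, and deduces $A=0$ and $\bb=\b0$ by a basis argument. That verifies that the full boundary trace $\bv\mapsto\norm{\bv}_{L^{q}(\Gamma)}$ is a norm on $\mathcal{R}(\Omega)$, which is strictly stronger input than the normal-trace quantity appearing in Assumption~\ref{assump:dom} supplies; in particular it does not exclude the translation above. So your proposal addresses the actual hypothesis, while the paper's argument proves a different implication. Note finally that where the corollary is effectively used, namely Lemma~\ref{lem:h-norm}, one has $\Gamma=\partial\Omega$ for a bounded polyhedron, whose face normals span $\R^{d}$; your computation ($A\nu=\b0$ and $\bb\cdot\nu=0$ for a spanning family of normals) then yields $A=0$ and $\bb=\b0$ in any dimension, so the downstream application is unaffected once the hypothesis ``two non-collinear normals'' is strengthened to ``normals spanning $\R^{d}$''.
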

	\begin{proof}
		Let $A\in\R^{d\times d}$ be skew-symmetric,  $b\in\R^{d}$, and suppose that $\pi(x)\coloneqq Ax+b$ satisfies $\pi=0$ on $\Gamma$, whereby $\pi=0$ on $H_{1}\cup H_{2}$. 
		Let $\mathbf{x}_{1},...,\mathbf{x}_{d-1}$ be an orthonormal basis of $\{\nu_{1}\}^{\bot}$, and let $\mathbf{y}_{2},...,\mathbf{y}_{d}$ be an orthonormal basis of $\{\nu_{2}\}^{\bot}$. 
		In consequence, we have 
		\begin{align}\label{eq:pino}
			\sum_{j=1}^{d-1}\lambda_{j}A\mathbf{x}_{j} = -A\xi_{1} - b \;\;\;\text{and}\;\;\;\sum_{j=2}^{d}\mu_{j}A\mathbf{y}_{j} = -A\xi_{2} - b\qquad\text{for all}\;\lambda_{1},...,\lambda_{d-1},\mu_{2},...,\mu_{d}\in\R. 
		\end{align}
		Based on~\ref{item:pinegrove2} and potentially relabelling the indices, we may assume that $\{\mathbf{x}_{1},...,\mathbf{x}_{d-1},\mathbf{y}_{d}\}$ is a basis of $\R^{d}$. Setting $\mu_{2}=...=\mu_{d-1}=0$ and $\lambda_{d}\coloneqq\mu_{d}$, we have 
		\begin{align*}
			\Big(\sum_{j=1}^{d-1}\lambda_{j}A\mathbf{x}_{j}\Big) + \lambda_{d}A\mathbf{y}_{d} = -A\xi_{1}-A\xi_{2}-2b\eqqcolon \mathbf{c}\qquad \text{for all}\;\lambda_{1},...,\lambda_{d}\in\R. 
		\end{align*}
		The choice $\mathbf{c}\neq \b0$ is impossible by linearity of $A$. 
		If $\mathbf{c}=\b0$, then $A=0$ and so~\eqref{eq:pino} gives us $\pi=0$, completing the proof. 
	\end{proof}
	
	To conclude, we note that the above results motivate the study of related inequalities involving more general differential operators, conditions in the spirit of~\eqref{eq:trampolin} and their geometric interplay, both in the so-called compatible and incompatible scenarios (\`{a} la~\cite{GmeinederSpector,ContiGmeineder2022,GmeinederLewintanNeff2022,GmeinederLewintanNeff2023}). 
	However, to keep our exposition at a reasonable length, this will be deferred to future work.

	\section{Approximation and discretisation}
	\label{sec:prel}

	In this section, we gather  various results to set up the Nitsche scheme and to show convergence in Section~\ref{sec:un-NS}. 
	In Section~\ref{sec:graphapprox} we address the approximation of implicit constitutive relations. 
	Then, in Section~\ref{sec:fct-spaces} we introduce the spaces required for the weak formulation and collect background facts on traces and embeddings.
	Section~\ref{sec:prelim-fem} introduces the finite element discretisation, and
	Section~\ref{sec:prel-timediscr} gathers auxiliary material on time discretisation and compactness in time. 
	
	\subsection{Approximation of monotone graphs}\label{sec:graphapprox}
	To obtain existence of numerical approximations we need to work with explicit relations. 
	For this reason, we consider an approximation of the zero set $\mathcal{A}$ of $\gbd$, that is the graph of a single-valued mapping $\bv \mapsto \bs$. 
	Let us collect general assumptions on such approximations, which we verify for specific examples in the sequel. 
	
	\begin{assumption}[regularisation]
		\label{assump:gbd-reg}
		Assume that for $\gbd$ satisfying Assumption~\ref{assump:gbd-mon} with $r \in [1,\infty)$, and for any $\epsilon \in (0,1)$ there is a  continuous functions $\Seps\colon \setR^d \to \setR^d$ such that the following properties hold for any $\epsilon \in (0,1)$: 
		\begin{enumerate}[label = (a\arabic*)]
			\item \label{itm:gbd-eps-0}
			$\Seps(\b0) = \b0$;
			\item \label{itm:gbd-eps-q} each $\Seps$ satisfies $r$-growth and coercivity in the sense that there is a constant $c>0$ independently of $\epsilon>0$ such that for any $\bv \in \setR^d$ one has
			\begin{alignat*}{3}
				\Srel_{\epsilon}(\bv)  \cdot \bv &\geq c (\abs{	\Srel_{\epsilon}(\bv) }^{r'} + \abs{\bv}^r - 1)&& \qquad \qquad &\text{ if } r >1, \\
				\Srel_{\epsilon}(\bv)  \cdot \bv &\geq c ( \abs{\bv} - 1) \quad   \text{ and }\;\; & &\abs{\Srel_{\epsilon}(\bv) } \leq c \qquad 
				&\text{ if } r = 1;
			\end{alignat*}
			\item \label{itm:gbd-eps-mon}
			$\Seps$ is monotone, i.e., 
			\begin{align*}
				(\Seps(\bv_1) - \Seps(\bv_2)) \cdot (\bv_1 - \bv_2) \geq 0 \qquad \text{ for all } \bv_1, \bv_2 \in \setR^d;
			\end{align*} 
			\item \label{itm:gbd-eps-approx} 
            Let $M \subset \setR^n$ be an open set, and let $A$ be defined as in~\eqref{def:A}. 
			For any~$\bv \in L^{r}(M)^d$ and any $\bs\in L^{r'}(M)^d$ such that $\gbd(\bs,\bv) = \b0$  a.e.~in $M$, i.e., 
			$(\bv,\bs) \in A$, there exists a sequence $(\bv_{\epsilon})_{\epsilon>0}$ such that 
			\begin{alignat*}{3}
				\bv_{\epsilon} &\to \bv \qquad &&\text{ strongly in } L^{r}(M)^d,\\
				\Seps(\bv_{\epsilon}) &\to \bs \quad &&\text{ strongly  in }  L^{r'}(M)^d,
			\end{alignat*}
			as $\epsilon \to 0$. 
		\end{enumerate} 
	\end{assumption}
	
	\begin{example}[generalised Yosida approximation]\label{ex:gen-Yosida-r} \hfill 
		\begin{enumerate}[label = (\alph*)]
			\item ($r \in (1,\infty)$) \label{itm:gen-Yosida-r}
			For $\gbd$ as in Assumption~\ref{assump:gbd-mon} with some $r \in (1, \infty)$ we define the continuous function $\gbd_{\epsilon} \colon \setR^{d \times d} \to \setR$ via 
			\begin{align*}
				\gbd_{\epsilon} (\bs, \bv) \coloneqq \gbd\left(\bs,\bv - \epsilon \abs{\bs}^{r'-2}
				\bs\right) \quad \text{ for } \bs, \bv \in \setR^d.
			\end{align*}
			Note that for $A \subset L^{r}(M)^d\times L^{r'}(M)^d$ as defined  in~\eqref{def:A} in terms of $\gbd$, the corresponding ${A}_\epsilon \subset L^{r}(M)^d\times L^{r'}(M)^d$ defined analogously in terms of $\gbd_{\epsilon}$ is the generalised Yosida approximation of ${A}$, as  in~\cite[Sec.~4]{Francfort2004}. 
			The mapping~$\Seps \colon \setR^d \toto \setR^d$, defined by 
			\begin{align*}
				\gbd_\epsilon(\bs, \bv) = \b0 \quad \Leftrightarrow : \quad   \bs \in  \Seps(\bv) \qquad \text{ for } \bs, \bv \in \setR^d, 
			\end{align*}
			is a single-valued map which satisfies~\ref{itm:gbd-eps-0}--\ref{itm:gbd-eps-mon}, see~\cite[Lem.~3.29]{Tscherpel2018}. 
			To verify~\ref{itm:gbd-eps-approx} let $\bv \in L^{r}(M)^d$ and $\bs \in L^{r'}(M)^d$ be such that $\gbd(\bs,\bv) = \b0$ a.e.~in $M$. 
			By definition, for $\bv_{\epsilon} \coloneqq \bv + \epsilon \abs{\bs}^{r'-2} \bs $,  we have that 
			\begin{align*}
				\gbd_{\epsilon}(\bs,\bv_{\epsilon}) = \b0 \quad \text{ a.e.~in } M. 
			\end{align*}
			Clearly, we also obtain 
			\begin{alignat*}{3}
				\bv_{\epsilon} \coloneqq \bv + \epsilon \abs{\bs}^{\frac{1}{r-1}-1} \bs
				&\to \bv \quad &&\text{ strongly in } L^r(M)^d, \\
				\Seps(\bv_{\epsilon}) = \bs &\to \bs  \quad &&\text{ strongly in } L^{r'}(M)^d
			\end{alignat*}
			as $\epsilon \to 0$. 
			This proves~\ref{itm:gbd-eps-approx} for the generalised Yosida approximation. 
			\item ($r = 1$) 
			Similarly, for $\gbd$ as in Assumption~\ref{assump:gbd-mon} for $r= 1$ we define the continuous function $\gbd_{\epsilon} \colon \setR^{d \times d} \to \setR$ via 
			\begin{align*}
				\gbd_{\epsilon} (\bs, \bv) \coloneqq \gbd\left(\bs,\bv - \epsilon \bs\right) \quad \text{ for } \bs, \bv \in \setR^d.
			\end{align*}
			As before, one may check, that the set-valued map $\Seps \colon \setR^d \toto \setR^d$, defined by 
			\begin{align*}
				\gbd_\epsilon(\bs, \bv) = \b0 \quad \Leftrightarrow : \quad   \bs \in  \Seps(\bv) \qquad \text{ for } \bs,\bv \in \setR^d,
			\end{align*}
			is in fact single-valued and it satisfies~\ref{itm:gbd-eps-0}--\ref{itm:gbd-eps-mon}.  
			
			To verify also~\ref{itm:gbd-eps-approx} let $\bs \in L^\infty(M)^d$ and $\bv \in L^1(M)$ be such that $\gbd(\bs,\bv) = \b0$ a.e.~on $M$. 
			By definition we have $\gbd_\epsilon(\bs,\bv+ \epsilon \bs) = \b0$, and hence we set $\bs_{\epsilon} = \bs$ and $\bv_{\epsilon} \coloneqq \bv + \epsilon \bs$. 
			Then, both $\bs_{\epsilon} \to \bs$ in $L^\infty(M)^d$ as well as $\bv_\epsilon \to \bv$ converges  strongly in $L^1(M)^d$, as $\epsilon \to \b0$. 
			This proves that~\ref{itm:gbd-eps-approx} holds. 
		\end{enumerate}
	\end{example}
	
	\begin{example}[regularised Tresca relation]\label{ex:reg-rel}
		The Tresca slip boundary condition, as in Example~\ref{ex:Tresca} for $\gamma_\star = 0$, is an example of an implicit relation with $r=1$. 
		A simple regularisation $\Seps\in C^\infty(\R^d)^d$, employed in the computational experiments below, is the following:\begin{equation}\label{eq:tresca_regularised}
			\Seps(\bv) \coloneqq \mu_\star \frac{\bv}{\sqrt{|\bv|^2 + \varepsilon^2}}
			\quad \text{ for } \varepsilon \in (0,1).
		\end{equation} 
		This relation clearly satisfies~\ref{itm:gbd-eps-0}, and one can check that also~\ref{itm:gbd-eps-q} and~\ref{itm:gbd-eps-mon} hold.  
		The proof of \ref{itm:gbd-eps-approx} is contained in Appendix~\ref{app:ex-reg-Tresca}.
	\end{example}
	
	In the convergence proof for $r >2$ we require a convergence lemma of Minty-type for weakly converging {sequences} in corresponding dual spaces. 
	The special feature is, that the weakly converging sequences are related by approximate relations (as in Assumption~\ref{assump:gbd-reg}) rather than by a single maximal monotone graph. 
	A suitable Minty-type lemma for the generalised Yosida approximation is due to the last author~\cite[Lem.~3.31]{Tscherpel2018}.  
	Alternatively, one may use a regularisation of $\gbd$ with a $2$-coercive function, as in~\cite[Lem.~4.1]{Bulicek2021}, for which an analogous convergence result is available. 
	The benefit of such a result is that the regularisation limit may be taken simultaneously with the remaining limits, such as, e.g.,~discretisation limits. 
	
	We begin by collecting a convergence lemma in the simpler case where the arguments of the nonmonotone relation converge strongly. 
	This will be applied in the case of $r\leq 2$ in  Section~\ref{sec:un-NS}, and to keep our presentation self-contained, we include a proof: 
	
	\begin{lemma}[convergence lemma]\label{lem:conv-simple}
		Let $M \subset \setR^d$ be measurable, and suppose that $\gbd$ satisfies Assumption~\ref{assump:gbd-mon} with some $r \in [1,\infty)$. 
		Given a regularisation as in Assumption~\ref{assump:gbd-reg}~\ref{itm:gbd-eps-0}--\ref{itm:gbd-eps-approx}, let $(\bw_{\varepsilon})_{\varepsilon >0}$ and $(\Seps(\bw_{\varepsilon}))_{\varepsilon >0}$ be sequences such that 
		\begin{align}\label{conv:simple-1}
			\bw_{\varepsilon} \to  \bw \qquad & \text{ strongly in } L^{r}(M)^d,\\
			\label{conv:simple-2}
			\Seps(\bw_{\varepsilon}) \wsconv  \bsigma \qquad & \text{ weakly$^*$ in } L^{r'}(M)^d
		\end{align}
		as $\varepsilon \to 0$. 
		Then we have $\gbd(\bsigma,\bw) = \b0$ a.e.~in $M$. 
		
	\end{lemma}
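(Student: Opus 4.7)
The plan is to execute a classical Minty-type argument, exploiting the maximal monotonicity of the set $A$ from \eqref{def:A} together with the approximation property \ref{itm:gbd-eps-approx} and the monotonicity of $\Seps$ in \ref{itm:gbd-eps-mon}.

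First, I would fix an arbitrary test pair $(\overline{\bv},\overline{\bs})\in A$, and invoke Assumption~\ref{assump:gbd-reg}\ref{itm:gbd-eps-approx} to produce an approximating sequence $(\overline{\bv}_{\varepsilon})_{\varepsilon>0}$ with $\overline{\bv}_{\varepsilon}\to\overline{\bv}$ strongly in $L^{r}(M)^{d}$ and $\Seps(\overline{\bv}_{\varepsilon})\wsconv \overline{\bs}$ weakly$^\ast$ in $L^{r'}(M)^{d}$. Combined with the hypotheses \eqref{conv:simple-1}--\eqref{conv:simple-2}, the coercivity/boundedness in \ref{itm:gbd-eps-q} ensures that both $(\bw_{\varepsilon})$ and $(\overline{\bv}_{\varepsilon})$ are bounded in $L^{r}(M)^{d}$ while $(\Seps(\bw_{\varepsilon}))$ and $(\Seps(\overline{\bv}_{\varepsilon}))$ are bounded in $L^{r'}(M)^{d}$, so all the integrals below are well defined and uniformly controlled.

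The monotonicity property \ref{itm:gbd-eps-mon} applied pointwise and then integrated yields
\begin{equation*}
\int_{M}\bigl(\Seps(\bw_{\varepsilon}) - \Seps(\overline{\bv}_{\varepsilon})\bigr)\cdot\bigl(\bw_{\varepsilon} - \overline{\bv}_{\varepsilon}\bigr)\ds \geq 0.
\end{equation*}
The crucial observation is that each of the four resulting products pairs a weakly$^\ast$ convergent $L^{r'}$-sequence against a strongly convergent $L^{r}$-sequence, which is enough to pass to the limit. For $r>1$, weak $L^{r'}$ times strong $L^{r}$ convergence gives the limit directly; for $r=1$, the uniform $L^{\infty}$-bound on $\Seps$ from \ref{itm:gbd-eps-q} combined with strong $L^{1}$-convergence of the second factor handles the difference $\int_{M}\Seps(\bw_{\varepsilon})\cdot(\bw_{\varepsilon}-\bw)\ds$, while $\bw\in L^{1}(M)^{d}$ pairs against the weak$^\ast$ limit in $L^{\infty}(M)^{d}$. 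Passing to the limit therefore yields
\begin{equation*}
\int_{M}(\bsigma - \overline{\bs})\cdot(\bw - \overline{\bv})\ds \geq 0.
\end{equation*}

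Since $(\overline{\bv},\overline{\bs})\in A$ was arbitrary, Remark~\ref{rmk:max-mon}(a), which is a direct consequence of the maximal monotonicity of $A$ in $L^{r}(M)^{d}\times L^{r'}(M)^{d}$ (Assumption~\ref{assump:gbd-mon}\ref{itm:gbd-mon-lp}), identifies $(\bw,\bsigma)\in A$, i.e., $\gbd(\bsigma,\bw)=\b0$ almost everywhere in $M$. The only delicate step is ensuring the limit of the ``diagonal'' products $\int_{M}\Seps(\bw_{\varepsilon})\cdot \bw_{\varepsilon}\ds$ and $\int_{M}\Seps(\overline{\bv}_{\varepsilon})\cdot \overline{\bv}_{\varepsilon}\ds$; the absence of compensated compactness hypotheses here is precisely what forces the strong convergence assumption \eqref{conv:simple-1}, and is the reason why the $r>2$ case (where only weak convergence of $\bw_{\varepsilon}$ is available) requires a separate, more elaborate Minty-type lemma involving the specific structure of the regularisation.
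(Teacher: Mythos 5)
Your proposal is correct and follows essentially the same route as the paper's proof: fix an arbitrary pair in $A$, approximate it via Assumption~\ref{assump:gbd-reg}~\ref{itm:gbd-eps-approx}, integrate the pointwise monotonicity of $\Seps$, pass to the limit using weak$^\ast$-against-strong pairings (with the $L^\infty$ bound covering $r=1$), and conclude by the maximal monotonicity of $A$ from Assumption~\ref{assump:gbd-mon}~\ref{itm:gbd-mon-lp}. The only cosmetic difference is that you spell out the limit passage term by term where the paper simply records the $\limsup$ identity.
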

	\begin{proof}
		Let $\bv \in L^r(M)^d$ and $\bs \in L^{r'}(M)$ be arbitrary functions such that $\gbd(\bs,\bv)= \b0 $ a.e.~in $M$. 
		Then, by Assumption~\ref{assump:gbd-reg}~\ref{itm:gbd-eps-approx}, there exists a sequence $(\bv_{\varepsilon})_{\varepsilon>0}$ such that
		\begin{alignat}{2}\label{conv:simple-3}
			\bv_{\varepsilon} &\to \bv \quad &&\text{ strongly in } L^{r}(M)^d,\\ \label{conv:simple-4}
			\Seps(\bv_{\varepsilon}) &\wsconv \bs && \text{weakly$^*$ in } L^{r'}(M)^d, 
		\end{alignat}
		as $\varepsilon \to 0$. 
		By the monotonicity of $\Seps$ thanks to Assumption~\ref{assump:gbd-reg}~\ref{itm:gbd-eps-mon} and the convergences displayed in~\eqref{conv:simple-1}--\eqref{conv:simple-4} we obtain 
		\begin{align*}
			0&  \leq \limsup_{\varepsilon \to 0} \int_{M} (\Seps(\bw_{\varepsilon}) - \Seps(\bv_{\varepsilon})) \cdot (\bw_{\varepsilon} - \bv_{\varepsilon})   \dx = 
			\int_{M}  (\bsigma - \bs) \cdot (\bw - \bv)  \dx.
		\end{align*}
		By the maximal monotonicity of $A$ from Assumption~\ref{assump:gbd-mon}, see~\eqref{def:A}, this shows that $\gbd(\bsigma,\bw) = \b0$ a.e.~in $M$. 
		This completes the proof.  
	\end{proof}
	
	If only weak convergence in $L^r$ is available,  a Minty-type result is essential.  
	This will be the case for a skew-symmetric variant of the Nitsche method with $r >2$ in Section~\ref{sec:extensions_rgeq2} below, where we make use of the following variant: 
	
	\begin{lemma}[Minty-type convergence lemma]\label{lem:minty}
		
		Let $M \subset \setR^d$ {be measurable}, and suppose that $\gbd$ satisfies Assumption~\ref{assump:gbd-mon} with some $r \in [1,\infty)$. 
		Given a regularisation as in Assumption~\ref{assump:gbd-reg}~\ref{itm:gbd-eps-0}--\ref{itm:gbd-eps-approx}, let  $(\bw_{\varepsilon})_{\varepsilon >0}$ and $(\Seps(\bw_{\varepsilon}))_{\varepsilon >0}$ be sequences such that 
		\begin{align}\label{conv:minty-1}
			\bw_{\varepsilon} \wconv  \bw \qquad & \text{ weakly in } L^{r}(M)^d,\\
			\label{conv:minty-2}
			\Seps(\bw_{\varepsilon}) \wsconv  \bsigma \qquad & \text{ weakly$^*$ in } L^{r'}(M)^d
		\end{align}
		as $\varepsilon \to 0$, 
		and assume additionally that 
		\begin{align}\label{est:minty-lsc}
			\limsup_{\varepsilon \to 0}	\int_{M}  \Seps(\bw_{\varepsilon}) \cdot \bw_{\varepsilon}  \dx \leq\int_{M} \bsigma \cdot \bw  \dx. 
		\end{align}
		Then we have $\gbd(\bsigma,\bw) = \b0$ a.e.~in $M$. 
		
	\end{lemma}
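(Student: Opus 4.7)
The plan is to follow the classical Minty strategy, exploiting the strong-convergence in the approximation property \ref{itm:gbd-eps-approx-var} precisely where weak convergence alone is insufficient. Fix an arbitrary pair $(\bv,\bs)\in L^r(M)^d\times L^{r'}(M)^d$ with $\gbd(\bs,\bv)=\b0$ a.e.\ in $M$. By Assumption~\ref{assump:gbd-reg}~\ref{itm:gbd-eps-approx-var} we obtain a sequence $(\bv_\varepsilon)$ with $\bv_\varepsilon\to\bv$ strongly in $L^r(M)^d$ and $\Seps(\bv_\varepsilon)\to\bs$ strongly in $L^{r'}(M)^d$. The monotonicity of $\Seps$, i.e.\ Assumption~\ref{assump:gbd-reg}~\ref{itm:gbd-eps-mon}, then yields
\begin{align*}
0 \leq \int_M \bigl(\Seps(\bw_\varepsilon)-\Seps(\bv_\varepsilon)\bigr)\cdot (\bw_\varepsilon-\bv_\varepsilon)\dx.
\end{align*}

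The next step is to pass to the $\limsup$ as $\varepsilon\to 0$ in the four terms of this expansion. The diagonal term involving $\bw_\varepsilon$ twice is controlled directly by hypothesis~\eqref{est:minty-lsc}. For the two mixed terms I pair a strongly convergent sequence with a weakly (or weakly$^*$) convergent one: namely $\Seps(\bw_\varepsilon)\overset{*}{\rightharpoonup}\bsigma$ with $\bv_\varepsilon\to\bv$ yields $\int \Seps(\bw_\varepsilon)\cdot\bv_\varepsilon\dx\to \int\bsigma\cdot\bv\dx$, and $\Seps(\bv_\varepsilon)\to\bs$ with $\bw_\varepsilon\rightharpoonup \bw$ yields $\int \Seps(\bv_\varepsilon)\cdot\bw_\varepsilon\dx\to\int\bs\cdot\bw\dx$. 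Here the fact that \emph{both} $\Seps(\bv_\varepsilon)\to\bs$ is strong in $L^{r'}$ (from \ref{itm:gbd-eps-approx-var}) is essential, since $\bw_\varepsilon$ is only weakly convergent. The purely strong term $\int\Seps(\bv_\varepsilon)\cdot\bv_\varepsilon\dx$ converges to $\int\bs\cdot\bv\dx$. For the case $r=1$ the same scheme applies with the roles $L^1$/$L^\infty$; weak$^*$ convergence in $L^\infty$ paired with strong $L^1$ convergence, and vice versa, still gives convergence of the integrals.

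Combining these four limits, the $\limsup$ inequality reduces to
\begin{align*}
0\leq \int_M (\bsigma-\bs)\cdot(\bw-\bv)\dx.
\end{align*}
Since $(\bv,\bs)\in A$ is arbitrary and $A$ is maximal monotone in $L^r(M)^d\times L^{r'}(M)^d$ by Assumption~\ref{assump:gbd-mon}~\ref{itm:gbd-mon-lp}, the characterisation in Remark~\ref{rmk:max-mon}(a) gives $(\bw,\bsigma)\in A$, i.e.\ $\gbd(\bsigma,\bw)=\b0$ a.e.\ in $M$.

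The main obstacle is the handling of the two mixed terms, since products of two merely weakly convergent sequences need not converge. This is why the strengthening of \ref{itm:gbd-eps-approx} to \ref{itm:gbd-eps-approx-var} (strong convergence of $\Seps(\bv_\varepsilon)\to\bs$) is indispensable here, in contrast to Lemma~\ref{lem:conv-simple} where strong convergence of $\bw_\varepsilon$ made this point essentially automatic. The inequality \eqref{est:minty-lsc} then plays exactly the role of an energy identity that closes the Minty argument under the sole assumption of weak convergence of the $\bw_\varepsilon$.
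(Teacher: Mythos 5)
Your proof is correct and follows essentially the same route as the paper's: fix an arbitrary $(\bv,\bs)$ in the graph, approximate it via Assumption~\ref{assump:gbd-reg}~\ref{itm:gbd-eps-approx}--\ref{itm:gbd-eps-approx-var}, expand the monotonicity inequality into four terms, control the diagonal term by~\eqref{est:minty-lsc} and the mixed terms by strong--weak pairing, and conclude via the maximal monotonicity of $A$ from Assumption~\ref{assump:gbd-mon}~\ref{itm:gbd-mon-lp}. Your additional remarks on why the strong convergence in \ref{itm:gbd-eps-approx-var} is indispensable match the paper's rationale exactly.
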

	\begin{proof}
		Let $\bv \in L^r(M)^d$ and $\bs \in L^{r'}(M)^{d}$ be arbitrary with $\gbd(\bs,\bv)= \b0 $ a.e.~in $M$. 
		Then, by 
        Assumption~\ref{assump:gbd-reg}~\ref{itm:gbd-eps-approx}, there exists a sequence $(\bv_{\varepsilon})_{\varepsilon>0}$ such that
		\begin{alignat}{2}\label{conv:minty-3}
			\bv_{\varepsilon} &\to \bv \quad &&\text{ strongly in } L^{r}(M)^d,\\ \label{conv:minty-4}
			\Seps(\bv_{\varepsilon}) &\to \bs && \text{strongly in } L^{r'}(M)^d
		\end{alignat}
		as $\varepsilon \to 0$. 
		By monotonicity of $\Seps$ due to Assumption~\ref{assump:gbd-reg}~\ref{itm:gbd-eps-mon} we obtain 
		\begin{align*}
			0&  \leq \limsup_{\varepsilon \to 0} \int_{M} (\Seps(\bw_{\varepsilon}) - \Seps(\bv_{\varepsilon})) \cdot (\bw_{\varepsilon} - \bv_{\varepsilon})  \dx \\
			&= \limsup_{\varepsilon \to 0}  
			\left( 
			\int_{M}  \Seps(\bw_{\varepsilon}) \cdot \bw_{\varepsilon}  \dx 
			+ 	\int_{M}  \Seps(\bv_{\varepsilon}) \cdot  \bv_{\varepsilon}  \dx 
			- 	\int_{M}   \Seps(\bv_{\varepsilon}) \cdot \bw_{\varepsilon} \dx 
			- 	\int_{M}  \Seps(\bw_{\varepsilon}) \cdot  \bv_{\varepsilon} \dx 
			\right).
		\end{align*}
		On the first term we shall use~\eqref{est:minty-lsc}. 
		On each of the remaining terms we shall employ the fact that by~\eqref{conv:minty-1}, \eqref{conv:minty-2} and~\eqref{conv:minty-3}, \eqref{conv:minty-4} at least one of the factors converges strongly, and the other factor converges weakly. 
		Thus, we arrive at 
		\begin{align*}
			0  \leq \int_{M}(\bsigma - \bs) \cdot  (\bw - \bv) \dx.
		\end{align*}
		Recall that  $\bv\in L^r(M)^d$ and $\bs \in L^{r'}(M)^d$ were arbitrary with $\gbd( \bs,\bv) = \b0$ a.e.~in $M$.   
		Because $ A \subset L^r(M)^d \times L^{r'}(M)^d$ as in~\eqref{def:A} is maximal monotone by Assumption~\ref{assump:gbd-mon}~\ref{itm:gbd-mon-lp}, this allows us to conclude that $\gbd(\bsigma,\bw) = \b0$ a.e.~in $M$. 
		The proof is complete. 
	\end{proof}

	\subsection{Function spaces and traces}\label{sec:fct-spaces} 
 We now give a detailed account of our function space setup, with a focus on traces and spaces of divergence-free functions as required in the sequel. 
	\subsubsection{Trace estimates}
	
	We collect some background facts on the trace operator restricted to $\Gamma \coloneqq \partial\Omega$, where $\Omega\subset\R^{d}$ is open and bounded domain with Lipschitz boundary. 
	To this end, we define 
	\begin{align}\label{def:2-sharp}
		2^\sharp \coloneqq 
		\frac{2d-2}{d-2} =  \begin{cases}
			4 \quad &\text{ if } d = 3,\\
			\infty 	\quad &\text{ if } d = 2.
		\end{cases}
	\end{align}
	Since $\Gamma$ is bounded and Lipschitz, we have the embedding  $H^{\frac{1}{2}}(\Gamma) \hookrightarrow L^{p}(\Gamma)$ for all $p \in [1, \infty)$ with $p \leq 2^{\sharp}$. 
	Combining this observation with the usual boundedness of the trace operator $\mathrm{tr}\colon H^{1}(\Omega)\to H^{\frac{1}{2}}(\Gamma)$, we arrive at the trace inequality 
	\begin{align}\label{est:trace}
		\norm{\tr(v)}_{L^{p}(\Gamma)} \lesssim \norm{v}_{H^1(\Omega)} \qquad \text{ for all } v \in H^1(\Omega),
	\end{align}
	for all $p< 2^{\sharp}$ (and for $p = 2^\sharp$, if $2^\sharp < \infty$). 
	
	Due to this trace inequality, we will mostly focus  on boundary conditions with $r \in [1,2^\sharp)$ in this work.
	
\subsubsection{Function space setup}\label{sec:prel-fctsp}
To obtain compactness on the trace terms in the dynamic case $\beta >0$, we need a special function space setup, similar to the one in~\cite[Sec.~3]{Abbatiello2021}, see also~\cite{Kovacs2017}. 
For divergence-free functions we proceed as in~\cite{Abbatiello2021}. 
For the numerical approximation, however, we also need the spaces without the divergence constraint and the zero normal trace conditions.  
This is because most finite element spaces are not conforming with respect to the divergence constraint and in a Nitsche method the impermeability is not imposed directly but by penalisation. 
Let us define 
\begin{align}
	\mathcal{W} 
	&\coloneqq \{(\bv, \btheta) \in C^{0,1}(\overline\Omega)^{d} \times C^{0,1}(\Gamma)^d\colon  \bv = \btheta \text{ on } \Gamma\},\\
	\mathcal{W}_{\divergence} 
	&\coloneqq \{(\bv, \btheta) \in C^{0,1}(\overline\Omega)^{d} \times C^{0,1}(\Gamma)^d\colon \divergence \bv = 0\; \text{ in } \Omega,\,\bv \cdot \bn = 0 \text{ and } \bv = \btheta \text{ on } \Gamma\}.
\end{align} 
For $(\bv, \btheta), (\tilde \bv, \tilde \btheta) \in \mathcal{W}$ let us define the inner product and the corresponding norm 
\begin{align}\label{def:ip-B}
	((\bv,\btheta),(\tilde \bv, \tilde \btheta))_B &\coloneqq \int_{\Omega} \bv \cdot \tilde \bv \dx +  \beta \int_{\Gamma} \btheta \cdot \tilde \btheta \ds,\\
	\label{def:norm-B}
	\norm{(\bv,\btheta)}_{B} &\coloneqq  \left(\norm{\bv}_{L^2(\Omega)}^2 +  \beta \norm{\btheta}_{L^2(\Gamma)}^2 \right)^{1/2},
\end{align}
and the norm 
\begin{align}
	\norm{(\bv,\btheta)}_{W} &\coloneqq  \norm{\bv}_{H^1(\Omega)} + \norm{\btheta}_{L^2(\Gamma)}. 
	\label{def:norm-W}
\end{align}
Then, we may introduce the spaces $W, \Wdiv$ and $B$, $\Bdiv$ as the closures 
\begin{alignat}{5} \label{def:sp-W}
	W
	&\coloneqq \overline{\mathcal{W}}^{\norm{\cdot}_{W}}, \qquad
	&\Wdiv 
	&\coloneqq \overline{\mathcal{\Wdiv}}^{\norm{\cdot}_{W}},
	\\
	\label{def:sp-B}
	B 
	&\coloneqq \overline{\mathcal{W}}^{\norm{\cdot}_{B}},
	\qquad
	&\Bdiv 
	&\coloneqq \overline{\mathcal{\Wdiv}}^{\norm{\cdot}_{B}}.
\end{alignat}
Note that all those spaces  are reflexive and separable. 

By virtue of the trace inequality~\eqref{est:trace} with $r = 2 < 2^\sharp$, for any $(\bv,\btheta)\in W$ one has that $\btheta = \tr(\bv)$. 
In particular, this means that $\norm{(\bv,\btheta)}_{W}$ is equivalent to $\norm{\bv}_{H^1(\Omega)}$, and hence we obtain  that 
\begin{align}\label{est:W-norm}
	\norm{(\bv,\btheta)}_{W}
	= \norm{(\bv,\tr(\bv))}_{W}
	\lesssim \norm{\bv}_{H^1(\Omega)} 
	\qquad \text{ for any } \bv \in H^1(\Omega)^d. 
\end{align}
For this reason, in the following we shall not distinguish between $(\bv,\tr(\bv))\in W$ and $\bv \in H^1(\Omega)$, and with slight misuse of notation we write, e.g., ~$\bv \in W$ and $(\bv,\cdot)_B$. 
Note that for $(\bv, \btheta)\in B$ in general there is no relation between $\bv$ and $\btheta$, since $\bv$ may not even have a trace. 
By the Sobolev embedding we have the following continuous, dense and compact embeddings
\begin{align}\label{eq:WB-emb}
	W \hookrightarrow\hookrightarrow B
	\qquad \text{ and } \quad 
	\Wdiv  \hookrightarrow\hookrightarrow \Bdiv. 
\end{align}
Consequently, also the embedding $(\Bdiv)' \hookrightarrow \hookrightarrow (\Wdiv)'$ is dense. 
We identify $\Bdiv \equiv \Bdiv'$, and denote the corresponding inner product by $(\cdot,\cdot)_{\Bdiv}$. 
This means in particular, that 
\begin{align}\label{eq:B-Bdiv}
	((\bv, \btheta),(\tilde \bv, \tilde \btheta))_{B}  = 	((\bv, \btheta),(\tilde \bv, \tilde \btheta))_{\Bdiv} \quad \text{ for any } (\bv, \btheta),(\tilde \bv, \tilde \btheta) \in \Bdiv. 
\end{align}
By the dense embeddings we have the Gelfand triplet 
\begin{align}\label{def:Gelfand}
	\Wdiv \hookrightarrow \Bdiv \equiv (\Bdiv)'  \hookrightarrow (\Wdiv)'. 
\end{align}
In particular, the duality pairing between $(\Wdiv)'$ and $\Wdiv$, denoted by $\skp{\cdot}{\cdot}_{\Wdiv}$  can be defined by extension of the inner product 
$(\cdot,\cdot)_B$, see~\cite[Sec.~3]{Abbatiello2021}, and we have 
\begin{align}
	\skp{ \bpsi}{ \bphi}_{\Wdiv} 
	= ( \bpsi, \bphi)_{\Bdiv} \quad \text{ for any }  \bpsi \in \Bdiv,\,  \bphi \in \Wdiv.
\end{align}

In case $\beta = 0$, there are no extra trace terms in the equation. 
To unify the notation in this case,  we denote 
\begin{alignat}{3}\label{def:sp-B-v}
	B &\coloneqq L^2(\Omega)^d, \qquad &&B_{\diver} &\coloneqq L^2_{\diver}(\Omega)^d,
	\\ \label{def:sp-W-v}
	W &\coloneqq H^1(\Omega)^d, \qquad &&W_{\diver} &\coloneqq H^{1}_{\diver}(\Omega)^d,
\end{alignat}
with the corresponding norms. 
Also in this case the embeddings in~\eqref{eq:WB-emb} and in~\eqref{def:Gelfand} are available. 

\subsubsection{Bochner spaces and interpolation}\label{sec:prel-bochner} 

For $T>0$, let $I \coloneqq (0,T)$. 
For $1\leq p \leq \infty$ and a Banach space $X$, we denote the usual Bochner space by $L^p(I;X)$, and write
\begin{align}
	C_w(\overline I;X) \coloneqq \{ v \colon \overline I \to X \colon t \mapsto \skp{w}{v(t)}_{X} \in C(\overline I) \,\; \forall w \in X' \}
\end{align}
for the space of weakly continuous functions with values in $X$. 
For future purposes, we record two results on weak continuity and interpolation.  
\begin{lemma}[weak continuity]\label{lem:w-cont}
	Let $B,Z$ be reflexive Banach spaces with continuous embedding $B \hookrightarrow Z$. 
	If $v \in L^1(I;Z) \cap L^{\infty}(I;B)$ and its distributional time derivative satisfies $\partial_t v \in L^1(I;Z)$, then  $v \in C_{w}(\overline{I};B)$. 
\end{lemma}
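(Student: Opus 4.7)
The plan is to combine the Bochner fundamental theorem of calculus in $Z$ with a standard weak-compactness upgrade based on the reflexivity of $B$. I would proceed in three steps.

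First, I would use $\partial_t v \in L^1(I;Z)$ together with $v \in L^1(I;Z)$ to select a representative of $v$ for which
\begin{align*}
 v(t_2)-v(t_1) = \int_{t_1}^{t_2} \partial_t v(s)\,\mathrm{d}s
\end{align*}
holds in $Z$ for all $t_1,t_2 \in \overline I$; this representative then lies in $C(\overline I;Z)$. Second, I would show that, after this identification, $v(t)\in B$ with $\|v(t)\|_B \leq \|v\|_{L^\infty(I;B)}$ for \emph{every} $t \in \overline I$: taking a sequence $(t_n) \subset I$ outside the null set on which the essential bound in $B$ fails and with $t_n \to t$, the sequence $(v(t_n))$ is bounded in the reflexive space $B$, so a subsequence converges weakly in $B$ to some $w \in B$. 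The embedding $B \hookrightarrow Z$ combined with the fact that $v(t_n) \to v(t)$ strongly in $Z$ (by Step~1) forces $w = v(t)$, so $v(t)\in B$ with the claimed bound.

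Third, for weak continuity at an arbitrary $t \in \overline I$, I would take $t_n \to t$ in $\overline I$ and argue by subsequences. Every subsequence of $(v(t_n))$ is bounded in $B$, hence admits a further subsequence converging weakly in $B$, whose limit must again coincide with $v(t)$ by the reasoning of the second step (weak convergence in $B$ entails weak convergence in $Z$, while strong convergence in $Z$ already pins the limit down as $v(t)$). The standard subsequence principle then yields $v(t_n)\rightharpoonup v(t)$ in $B$, hence $v \in C_w(\overline I;B)$.

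The argument carries no serious obstacle; the only subtlety is ensuring that the pointwise identification $v(t) \in B$ can be attached to the continuous-in-$Z$ representative from the first step, which is exactly what the weak-compactness argument in the second step accomplishes. The hypothesis that $B$ is reflexive enters crucially here, and the continuous embedding $B\hookrightarrow Z$ is what allows the strong $Z$-limit to serve as the identifier of the weak $B$-limit.
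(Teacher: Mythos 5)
Your argument is correct and is essentially the standard route: the paper disposes of this lemma by citing \cite[Lem.~1.1, Lem.~1.4, Ch.~III, \S 1]{Temam1984}, and your three steps are exactly a self-contained re-derivation of those two lemmas (the fundamental-theorem-of-calculus representative giving continuity in $Z$, followed by the reflexivity/weak-compactness upgrade identifying the weak $B$-limit through the embedding into $Z$, and the subsequence principle). Nothing is missing; the pointwise bound $\|v(t)\|_B \leq \|v\|_{L^\infty(I;B)}$ for every $t$ and the identification of weak limits via strong convergence in $Z$ are handled correctly.
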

The preceding lemma is a direct consequence of ~\cite[Lem.~1.1, Lem.~1.4, Ch.~III, \S 1]{Temam1984}. Secondly, an interpolation result: 
\begin{lemma}[interpolation {\cite[Prop.~3.4]{DiBenedetto1993}}]\label{lem:interp}	
	Let $d\geq 2$,  let $s \in [2, \frac{2d}{d-2})$ and let 
	\begin{align*}
		r\coloneqq  \frac{4s}{d(s-2)} \in (2, \infty], 
		\qquad \text{ and } \qquad 
		\theta \coloneqq  \frac{d(s-2)}{2s} \in [0,1).
	\end{align*}
	Then there exists a constant $c>0$ such that 
	\begin{align*}
		\norm{v}_{L^{r}(I;L^s(\Omega))} &\leq c \norm{v}_{L^{\infty}(I;L^{2}(\Omega))}^{1-\theta} \norm{v}_{L^2(I;H^1(\Omega))}^{\theta}
	\end{align*}
	holds for any $v \in L^{\infty}(I;L^2(\Omega))\cap L^2(I;H^1(\Omega))$. 
\end{lemma}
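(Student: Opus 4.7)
\textbf{Proof plan for Lemma~\ref{lem:interp}.} The plan is to combine a pointwise (in time) Gagliardo--Nirenberg interpolation inequality in the spatial variable with H\"older's inequality in time, where the exponents $r$ and $\theta$ are tuned precisely so that the temporal H\"older exponent collapses to the natural $L^{2}$ bound.

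\emph{Step 1 (parameter bookkeeping).} First I would verify that the exponents in the statement match the Gagliardo--Nirenberg scaling. With $2^\sharp = 2d/(d-2)$ for $d\geq 3$ (and $2^\sharp=\infty$ for $d=2$, interpreted below in the limiting sense), the condition
\begin{equation*}
\frac{1}{s} = \frac{1-\theta}{2} + \frac{\theta}{2^\sharp}
\end{equation*}
is solved by precisely $\theta = d(s-2)/(2s)$, which lies in $[0,1)$ for $s\in[2,2^\sharp)$. Moreover a direct computation gives $r\theta = 2$, i.e.\ $r = 2/\theta$; this is the identity that will make the time integration close.

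\emph{Step 2 (spatial interpolation).} For a.e.\ $t\in I$, I would apply the Gagliardo--Nirenberg inequality on the bounded Lipschitz domain $\Omega$, yielding
\begin{equation*}
\|v(t)\|_{L^{s}(\Omega)} \leq c\, \|v(t)\|_{L^{2}(\Omega)}^{1-\theta}\,\|v(t)\|_{H^{1}(\Omega)}^{\theta}
\end{equation*}
with a constant $c$ depending only on $d$, $s$ and $\Omega$. For $d\geq 3$ this is the standard combination of the $L^{s}$-interpolation between $L^{2}$ and $L^{2^{\sharp}}$ with the Sobolev embedding $H^{1}(\Omega)\hookrightarrow L^{2^{\sharp}}(\Omega)$; for $d=2$ it is the corresponding Gagliardo--Nirenberg estimate valid for every $s\in[2,\infty)$ (here one has to invoke GN directly rather than a single Sobolev embedding, which is the only place where the two-dimensional case differs conceptually).

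\emph{Step 3 (temporal integration).} Raising the pointwise estimate to the $r$-th power, bounding $\|v(t)\|_{L^{2}}^{r(1-\theta)}\leq \|v\|_{L^{\infty}(I;L^{2})}^{r(1-\theta)}$, and integrating in $t\in I$ gives
\begin{equation*}
\int_{I} \|v(t)\|_{L^{s}(\Omega)}^{r}\,\mathrm{d}t \;\leq\; c^{r}\,\|v\|_{L^{\infty}(I;L^{2}(\Omega))}^{r(1-\theta)}\int_{I}\|v(t)\|_{H^{1}(\Omega)}^{r\theta}\,\mathrm{d}t.
\end{equation*}
The key identity $r\theta = 2$ from Step~1 turns the remaining integral into $\|v\|_{L^{2}(I;H^{1}(\Omega))}^{2}$. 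Taking the $r$-th root and using $2/r=\theta$ yields exactly the claimed inequality. I do not expect a serious obstacle here: the whole content is the exponent arithmetic and the Gagliardo--Nirenberg step, with the only mild subtlety being the two-dimensional case where $2^{\sharp}=\infty$ forces one to cite the GN inequality rather than a plain Sobolev embedding.
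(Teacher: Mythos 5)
Your proposal is correct: the paper itself gives no proof of this lemma but simply cites \cite[Prop.~3.4]{DiBenedetto1993}, and your argument (pointwise-in-time Gagliardo--Nirenberg interpolation with $\tfrac{1}{s}=\tfrac{1-\theta}{2}+\tfrac{\theta}{2^\sharp}$, followed by H\"older in time using the identity $r\theta=2$) is exactly the standard proof of that cited result, including the correct observation that $d=2$ requires the Ladyzhenskaya/Gagliardo--Nirenberg form rather than a single Sobolev embedding. The only cosmetic point is the endpoint $s=2$, where $\theta=0$ and $r=\infty$, so ``raising to the $r$-th power'' should be replaced by taking the essential supremum in time, which makes the inequality trivial in that case.
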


In particular, for $d \in \{2,3\}$, we may choose $s = 4$, $r = \frac{8}{d}$ and $\theta = \frac{d}{4}$ to obtain 
\begin{align}\label{est:interp}
	L^\infty(I;L^2(\Omega)) \cap L^2(I;H^1(\Omega)) \hookrightarrow L^{8/d}(I;L^4(\Omega)). 
\end{align}

\subsection{Finite element approximation}
\label{sec:prelim-fem}

In the following let $\Omega\subset \setR^d$ be a bounded polyhedral Lipschitz (open) domain, and let $\Gamma \coloneqq \partial \Omega$. 
We consider a conforming triangulation $\mathcal{T}$ of $\Omega$ into closed $d$-simplices, and we denote by $\mathcal{F}$ the set of faces in $\mathcal{T}$. 
By $h_K \coloneqq \diam(K)$ we denote the diameter of a $d$-simplex $K \in \mathcal{T}$, and let $h \coloneqq \max_{K \in \mathcal{T}} h_K$ be the maximal mesh size in $\mathcal{T}$. 
Similarly, let~$h_F$ denote the diameter of a face~$F \in \mathcal{F}$, and let~$h_{\Gamma} \colon \Gamma \to [0,\infty)$ be the piecewise constant function with~$h_{\Gamma}|_{F} = h_F$ for any $F \in \mathcal{F}$ with $F \subset \Gamma$. 

We consider a family of triangulations $(\mathcal{T}_k)_{k \in \mathbb{N}}$ with maximal mesh size $h_k$, with $h_k \to 0$ as $k \to \infty$. 
This family of triangulations is assumed to be shape regular, which implies that locally all mesh sizes are equivalent with constants independent of $k$, meaning that
\begin{align*}
	\abs{K}^{\frac{1}{d}} \eqsim h_K \eqsim h_F
	\quad \text{ for any face $F \in \mathcal{F}_k$ of an arbitrary $d$-simplex $K \in \mathcal{T}_k$ for any $k \in \mathbb{N}$.} 
\end{align*}
Note that we do not assume quasi-uniformity. 
In the remaining section we shall write $h \to 0$ to represent the limit $k \to \infty$, and we  denote the respective triangulations by~$\tria$ and sets of faces $\tria$ by $\faces$ with index $h$.

\subsubsection*{Discrete spaces} Let $X(K) \subset C(K)^d$ and $Q(K)\subset C(K)$ be some finite dimensional 
function spaces on $K \in \tria$ with dimension independent of $K$. 
For $\mathbb{V}\subset C(\Omega)^d$ and $\mathbb{Q}\subset L^2_0(\Omega)$ we consider the finite element spaces on $\tria$ defined by
\begin{equation}\label{eq:fem}
	\begin{aligned}
		X_h &\coloneqq \{ \bv_h\in  \mathbb{V} \colon  \bv_h|_{K} \in X(K) \text{ for all } K \in \tria\},\\
		Q_h& \coloneqq \{ q_h \in  \mathbb{Q} \colon q_h|_{K} \in Q(K) \text{ for all } K \in \tria\}. 
	\end{aligned}
\end{equation}

Let us stress that we do not assume $X_h$ to be a subset of  $H^1_{\bn}(\Omega)$, i.e., there is no conformity with respect to the impermeability boundary condition~\eqref{eq:imperm}. 
Due to the assumption on the domain the traces of finite element function in $X_h$ and in $Q_h$ are well-defined a.e.~on $\Gamma$, and they are piecewise polynomial. 
For convenience of notation in the following we refrain from denoting traces of finite element functions by $\tr$. 

We denote the space of discretely divergence-free functions and the corresponding subspace of $W$ defined in~\eqref{def:sp-W} and~\eqref{def:sp-W-v}, respectively, by 
\begin{align}\label{def:Xdiv}
	\Xdiv &\coloneqq \{\bv_h \in \Xh \colon 	\skp{ \diver \bv_h}{ q_h } - \skp{ \bv_h \cdot \bn}{ q_h }_{\Gamma} = 0\; \text{ for all } q_h \in Q_h \},
	\\
	\label{def:Bh}
	\Bh &\coloneqq \begin{cases} 
		(X_{h,\divergence},\tr(X_{h,\divergence}))
		& \text{ if } \beta > 0,\\
		X_{h,\divergence}
		&\text{ if } \beta = 0.
	\end{cases}
\end{align}
Thus, we have the embedding $W_h\hookrightarrow W \hookrightarrow B$. 
In general $\Xdiv$ is not exactly divergence-free. 
For this reason in general we have~$\Xdiv\not\subset H^1_{\divergence}(\Omega)^d$, and $\Bh \not\subset \Wdiv$, i.e., conformity with respect to the divergence constraint is not necessarily given. 
Also conformity with respect to the impermeability is not satisfied. 

\subsubsection*{Discrete norms}
Due to the penalisation terms in the Nitsche method it is useful to work with the following $h$-dependent norm
\begin{alignat}{3}\label{def:norm-h}
	\norm{\bv}_{X_h} 
	&
	\coloneqq
	\left(  
	\norm{\Dv}_{L^2(\Omega)}^2 
	+ \norm{h_{\Gamma}^{-1/2} \tr(\bv)\cdot \bn}_{L^2(\Gamma)}^2 \right)^{1/2} 
	\quad 
	&&\text{ for } \bv \in H^1(\Omega)^d.
\end{alignat}
It is a consequence of the Korn type inequality presented in Section~\ref{sec:korn} that $\norm{\cdot}_{X_h}$ is a norm on $H^1(\Omega)$. 
In particular, the whole trace is controlled, if $\norm{\cdot}_{X_h}$ is bounded. 

\begin{lemma}\label{lem:h-norm}
	Let $\Omega \subset \setR^d$ be polyhedral domain as above, with  $\Gamma = \partial \Omega $ and let $(\tria)_{h>0}$ be a family of conforming triangulations.  
	Then one has 
	\begin{align}\label{est:H1-h}
		\norm{ \bv}_B 
		\lesssim 
		\norm{ \bv}_W  
		\lesssim
		\norm{\bv}_{H^1(\Omega)}
		\lesssim
		\norm{\bv}_{X_h} \qquad \text{ for all } \bv \in H^1(\Omega)^d \text{ and for any } h>0. 
	\end{align}
	In particular, $\norm{\cdot}_{X_h}$ is a norm on $W_h$, as defined in~\eqref{def:Bh}. 
	Furthermore, for $p \in [1,2^\sharp]$ with $p < \infty$ we obtain the estimate 
	\begin{align}\label{est:trace-h}	
		\norm{\tr(\bv)}_{L^{p}(\Gamma)} \leq c \norm{\bv}_{X_h} \qquad \text{ for any } \bv \in H^1(\Omega)^d \text{ and any } h>0. 
	\end{align}
\end{lemma}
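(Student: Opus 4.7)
My plan is to reduce all three claims to (a) the abstract embeddings and trace inequality from Section~\ref{sec:fct-spaces}, (b) the Korn-type inequality of Theorem~\ref{thm:korn-n}, and (c) the fact that on a shape-regular family the mesh size $h_{\Gamma}$ is bounded above. The first two inequalities in the chain~\eqref{est:H1-h} are essentially tautological: $\|\bv\|_{B}\lesssim \|\bv\|_{W}$ is just the continuity of the embedding $W\hookrightarrow B$ recorded in~\eqref{eq:WB-emb}, while $\|\bv\|_{W}\lesssim\|\bv\|_{H^{1}(\Omega)}$ is precisely~\eqref{est:W-norm} (when $\beta>0$, the $W$-norm contains the extra $L^{2}(\Gamma)$-trace term, which is absorbed by the usual trace inequality~\eqref{est:trace} with $p=2<2^{\sharp}$; when $\beta=0$, identification~\eqref{def:sp-W-v} makes it trivial).

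The main inequality to establish is $\|\bv\|_{H^{1}(\Omega)}\lesssim \|\bv\|_{X_{h}}$. Here I would invoke Theorem~\ref{thm:korn-n} with $p=q=2$ and $\Gamma=\partial\Omega$: Assumption~\ref{assump:dom} is verified by Corollary~\ref{cor:polyhedral}, since a bounded polyhedral domain necessarily has two faces with non-collinear outer normals. The theorem then yields
\begin{equation*}
\|\bv\|_{H^{1}(\Omega)} \lesssim \|\BD\bv\|_{L^{2}(\Omega)} + \|\tr(\bv)\cdot\bn\|_{L^{2}(\Gamma)}
\qquad\text{for every } \bv\in H^{1}(\Omega)^{d}.
\end{equation*}
To recover the weighted boundary term appearing in~\eqref{def:norm-h}, I note that the shape regularity of $(\mathcal{T}_{h})_{h>0}$ together with $h\leq \diam(\Omega)$ gives $h_{\Gamma}\leq c$ uniformly in $h$, so $1\leq c^{1/2}h_{\Gamma}^{-1/2}$ pointwise on $\Gamma$. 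Consequently,
\begin{equation*}
\|\tr(\bv)\cdot\bn\|_{L^{2}(\Gamma)} \leq c^{1/2}\|h_{\Gamma}^{-1/2}\tr(\bv)\cdot\bn\|_{L^{2}(\Gamma)},
\end{equation*}
which, combined with the previous display, closes the chain~\eqref{est:H1-h}. The constants depend only on $\Omega$ and the shape regularity constants, as required.

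With~\eqref{est:H1-h} in hand, the remaining statements are immediate. For the norm property on $W_{h}$, the only axiom that is not obvious is positive definiteness; if $\|\bv_{h}\|_{X_{h}}=0$ then~\eqref{est:H1-h} forces $\|\bv_{h}\|_{H^{1}(\Omega)}=0$, hence $\bv_{h}=\b0$. Finally, for the trace estimate~\eqref{est:trace-h}, the range $p\in[1,2^{\sharp}]\cap[1,\infty)$ is exactly the range in which the standard trace inequality~\eqref{est:trace} applies, so
\begin{equation*}
\|\tr(\bv)\|_{L^{p}(\Gamma)} \lesssim \|\bv\|_{H^{1}(\Omega)} \lesssim \|\bv\|_{X_{h}},
\end{equation*}
using~\eqref{est:H1-h} in the second inequality. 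I do not anticipate any genuine obstacle: the sole substantive ingredient is the Korn-type inequality Theorem~\ref{thm:korn-n}, whose applicability on polyhedral domains is precisely what Corollary~\ref{cor:polyhedral} was set up to guarantee; the rest is bookkeeping and shape regularity.
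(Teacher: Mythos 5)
Your proposal is correct and follows essentially the same route as the paper: both reduce the key inequality $\norm{\bv}_{H^1(\Omega)}\lesssim\norm{\bv}_{X_h}$ to Theorem~\ref{thm:korn-n} with $p=q=2$ (Assumption~\ref{assump:dom} holding on polyhedral domains, as in Corollary~\ref{cor:polyhedral}), absorb the unweighted normal-trace term into the $h_\Gamma^{-1/2}$-weighted one via the uniform bound $h_\Gamma\leq\diam(\Omega)$ (which the paper does implicitly), and then obtain the remaining inequalities from the embedding $W\hookrightarrow B$, estimate~\eqref{est:W-norm}, and the trace inequality~\eqref{est:trace}.
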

\begin{proof}	
	
	For a polyhedral domain $\Omega$ and $\Gamma = \partial \Omega$ one has that $\bv \mapsto \norm{\tr(\bv) \cdot \bn}_{L^2(\Gamma)}$ is a norm on the rigid body deformations $\mathcal{R}(\Omega)$, and hence Assumption~\ref{assump:dom} is satisfied for $q = 2$. 
	Thus, the Korn-type inequality in Theorem~\ref{thm:korn-n} for $p = 2$ yields 
	\begin{align*}
		\norm{\bv}_{H^1(\Omega)} 
		\leq c (\norm{\BD\bv}_{L^2(\Omega)} + \norm{\tr(\bv) \cdot \bn}_{L^2(\Gamma)}) \leq c \norm{\bv}_{X_h}
	\end{align*}
	for any $\bv \in H^1(\Omega)$ with constant independent of $\bv$ and of $h$.  
	In combination with the continuous embedding $W \hookrightarrow B$ and the estimate~\eqref{est:W-norm} this yields 
	\begin{align}\label{est:norm-B-h}	
		\norm{ \bv}_B \leq c \norm{ \bv}_W  \leq c  \norm{\bv}_{H^1(\Omega)} \leq c \norm{\bv}_{X_h} \qquad \text{ for any } \bv \in H^1(\Omega)^d \text{ and any } h>0. 
	\end{align}
	For this reason $\norm{\cdot}_{X_h}$ is a norm on $W_h$, as defined in~\eqref{def:Bh}. 
	The second statement results from combining this estimate with the trace estimate in~\eqref{est:trace}. 
	This proves the claim. 
\end{proof}

\subsubsection{Scott--Zhang interpolation operator}
We now summarise some properties of the trace-preserving version of the Scott--Zhang interpolation operator~\cite{SZ.1990}.
To this end, we define the simplex neighbourhood $\omega_h(K)$ of a simplex $K \in \tria$ by 
\begin{align}\label{def:ngbh}
	\omega_h(K) \coloneqq \bigcup \{K' \in \tria \colon K \cap K' \neq \emptyset \}.  
\end{align}
In the following, we denote the Lagrange finite element space of polynomial degree $\ell \in \mathbb{N}$ by 
\begin{align}\label{def:Lagrange}
	\mathcal{L}^1_{\ell}(\tria) \coloneqq \{v_h \in H^1(\Omega) \colon  v_h|_{K} \in \mathcal{P}_{\ell}(K) \;\,\forall K \in \tria\},
\end{align}
where $\mathcal{P}_{\ell}(K)$ denotes the space of all polynomials of maximal degree $\ell$ on $K$. 
Besides the well-known local approximation properties, the vector-values version of the Scott--Zhang operator preserves zero normal traces. 	
Since the latter is not recorded in the literature, for the sake of completeness we present a proof. 

\begin{lemma}[Scott--Zhang interpolation operator {\cite{SZ.1990}}]\label{lem:SZ}
	There exists a linear interpolation operator operator $\PiSZ \colon H^1(\Omega)^d \to \mathcal{L}^1_1(\Omega)^d$, which is a projection and which satisfies the following:  
	\begin{enumerate}
		\item (local approximation) \label{itm:SZ-loc-approx}
		There exists a constant $c>0$ such that for any $j \in \{0,1,2\}$ and $\max(1,j) \leq  s\leq 2$ one has 
		\begin{alignat*}{3}
			\norm{\nabla^j(\PiSZ \bv - \bv)}_{L^{2}(K)} 
			&\leq c h_{K}^{s-j} \norm{\nabla^s \bv}_{L^2(\omega_h(K))}, 
		\end{alignat*}
		for all $\bv \in H^s(\Omega)$ and all $K \in \tria$, uniformly in $h>0$.
		\item (zero normal trace preservation) \label{itm:SZ-normal}
		$\PiSZ(H^1_{\bn}(\Omega)) = X_h \cap H^1_{\bn}(\Omega) $, i.e., 
		for any $\bv \in H^1_{\bn}(\Omega)$ the normal trace  satisfies 
		\begin{align*}
			\tr(\PiSZ \bv) \cdot \bn = 0 
			\quad \text{ on } \Gamma = \partial \Omega.
		\end{align*}
	\end{enumerate}
\end{lemma}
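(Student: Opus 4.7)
The plan is to follow the classical construction of Scott and Zhang from \cite{SZ.1990} with a boundary-aware choice of the auxiliary averaging simplices, and to apply the resulting scalar operator componentwise in a Cartesian frame at interior nodes and in a local normal/tangential frame at boundary nodes. Concretely, to each Lagrange node $z$ of $\tria$ I would assign a $(d-1)$-simplex $\sigma_z$ as follows: for interior nodes, $\sigma_z$ is an arbitrary $(d-1)$-face of $\tria$ containing $z$; for nodes $z$ lying on $\Gamma$, $\sigma_z$ is chosen among the $(d-1)$-faces of $\tria$ containing $z$ that are contained in a single polyhedral face $F(z)$ of $\partial\Omega$ (such a face exists because, by the polyhedrality of $\Omega$, every boundary node has a simplicial neighbour lying in some flat piece of $\partial\Omega$). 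The operator is then defined, as usual, by $\PiSZ \bv = \sum_z \bigl(\int_{\sigma_z}\psi_z\otimes \bv\,\d\mathrm{s}\bigr)\phi_z$, where $\phi_z$ is the nodal $P_1$ basis function and $\psi_z$ is the $L^2(\sigma_z)$-dual basis.

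Property \ref{itm:SZ-loc-approx}: this reduces to the classical Scott--Zhang approximation estimate. Since the construction is a projection onto $\mathcal{L}^1_1(\tria)^d$ (reproducing affine vector fields), and since the $L^2$-continuity of the averaging operator together with the scaled trace inequality from Lemma~\ref{lem:trace} yields local stability $\|\PiSZ \bv\|_{L^2(K)}\lesssim \|\bv\|_{H^1(\omega_h(K))}$, a standard Bramble--Hilbert argument on the patch $\omega_h(K)$ (using shape regularity and scaling to a reference configuration) gives the asserted bounds for $j\in\{0,1,2\}$ and $1\leq s\leq 2$.

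Property \ref{itm:SZ-normal}: let $\bv\in\Hn^{d}$, so that $\bv\cdot\bn=0$ in $H^{1/2}(\Gamma)^{d}$; in particular $\bv\cdot\bn_F = 0$ on each polyhedral face $F$ of $\partial\Omega$, where $\bn_F$ is the constant outward normal on $F$. For a boundary node $z$, by construction $\sigma_z\subset F(z)$ and hence the nodal value
\[
(\PiSZ \bv)(z)\cdot \bn_{F(z)} = \int_{\sigma_z}\psi_z\,(\bv\cdot\bn_{F(z)})\,\d\mathrm{s}=0.
\]
Since $\PiSZ \bv|_{F}$ is an affine vector field determined by its vertex values on $F$, the trace $\tr(\PiSZ\bv)\cdot\bn_F$ vanishes on $F$ provided every vertex $z$ of $F$ has $(\PiSZ \bv)(z)\cdot\bn_F=0$. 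For vertices lying in the relative interior of $F$ this is immediate since $F(z)=F$. The delicate case---and the genuine obstacle---is that of vertices $z$ lying on an edge or corner of $\partial\Omega$, where several polyhedral faces $F_1,\dots,F_k$ meet through $z$ and we may have $F(z)\neq F$.

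To resolve this, I would perform the componentwise construction with respect to an adapted orthonormal frame at each such edge/corner vertex rather than the Cartesian one: pick an orthonormal basis of $\R^{d}$ whose first $k$ vectors span $\text{span}\{\bn_{F_1},\dots,\bn_{F_k}\}$, and apply the scalar Scott--Zhang operator to each of the first $k$ components with $\sigma_z$ taken successively inside $F_1,\dots,F_k$ (one component per face). Since $\bv\cdot\bn_{F_j}=0$ on $F_j\supset\sigma_z$, each of these components of the nodal value vanishes, so $(\PiSZ\bv)(z)$ is orthogonal to every $\bn_{F_j}$ through $z$, in particular to $\bn_F$. The remaining tangential components, and interior nodes, use the standard Cartesian construction. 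The projection property and reproduction of affine fields are preserved because each scalar direction uses a standard Scott--Zhang projection, so the approximation estimates from Property \ref{itm:SZ-loc-approx} remain valid with constants depending only on shape regularity and on the (finite) maximal number of polyhedral faces meeting at any vertex of $\partial\Omega$.
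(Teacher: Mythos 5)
Your overall route -- componentwise Scott--Zhang with boundary-face averaging, followed by a special treatment of nodes lying on edges/corners of $\partial\Omega$ -- is sensible, and you correctly isolate the real difficulty (the paper's own proof is much terser: it applies the trace-preserving operator componentwise and concludes \ref{itm:SZ-normal} from linearity via $(\PiSZ \bv)\cdot\bn = \PiSZ(\bv\cdot\bn)=0$, citing \cite[Eq.~(4.3)]{SZ.1990} for \ref{itm:SZ-loc-approx}). However, your resolution of the corner/edge case has a genuine gap. In your adapted orthonormal frame, the vectors $e_{1},\dots,e_{k}$ spanning $\mathrm{span}\{\bn_{F_{1}},\dots,\bn_{F_{k}}\}$ are in general \emph{not} the normals themselves (they coincide only when the adjacent faces meet at right angles). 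The quantity you average over $\sigma_z\subset F_{j}$ is $\bv\cdot e_{j}$, and this does not vanish on $F_{j}$ -- only $\bv\cdot\bn_{F_{j}}$ does. Hence the nodal value you build is in general not orthogonal to the adjacent normals, and the normal trace of $\PiSZ\bv$ fails to vanish on the faces meeting at the corner; already a non-right-angled corner of a polygon defeats the argument. There is also a counting problem: at a vertex of a polyhedron in $d=3$ more faces than $\dim\mathrm{span}\{\bn_{F_{j}}\}$ may meet, so ``one component per face'' is not well defined.

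The step can be repaired by prescribing the components of the nodal value along the normals themselves rather than along an orthonormalisation. At such a node $z$, pick a maximal linearly independent subset $\bn_{F_{j_1}},\dots,\bn_{F_{j_m}}$ of the adjacent normals, complete it to a basis of $\R^{d}$ by tangential directions, and define the nodal value $a_{z}$ as the unique solution of $a_{z}\cdot\bn_{F_{j_i}} = \int_{\sigma^{(i)}_{z}}\psi^{(i)}_{z}\,\bigl(\bv\cdot\bn_{F_{j_i}}\bigr)\ds$ with $\sigma^{(i)}_{z}\subset F_{j_i}$, together with analogous face averages for the tangential components. For $\bv$ with zero normal trace all normal equations vanish, and since every further adjacent normal is a linear combination of the chosen ones, $a_{z}\cdot\bn_{F}=0$ for \emph{every} face $F$ containing $z$ -- exactly what your affine-on-each-boundary-face argument needs to conclude \ref{itm:SZ-normal}. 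Each defining functional is bounded on $L^{2}(\sigma)$ and exact on affine fields, so the projection property, reproduction of $\mathcal{L}^1_1(\tria)^d$, and the local stability feeding the Bramble--Hilbert argument for \ref{itm:SZ-loc-approx} survive; the constants then depend additionally on the angles between boundary faces at edges/corners of $\partial\Omega$ (not merely on how many faces meet there), which is harmless since $\Omega$ is fixed. With this correction your construction proves the lemma, by a more explicit route than the paper's.
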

\begin{proof} 
	Applying the version of the Scott--Zhang interpolation that preserves discrete traces in $\tr(\mathcal{L}^1_1(\tria))$ componentwisely, the local approximation properties~\ref{itm:SZ-loc-approx} hold, see~\cite[Eq.~(4.3)]{SZ.1990}.
	Furthermore, by the fact that it is linear and it  preserves zero traces, one has 
	\begin{alignat}{3}\label{eq:st-conv-testf}
		(\PiSZ \bv) \cdot \bn = \PiSZ (\bv \cdot \bn) = 0 \quad \text{ on } \partial \Omega.
	\end{alignat}
	Consequently, we have~$\PiSZ \bv  \in \mathcal{L}^1_1(\tria)^d \cap H^1_{\bn}(\Omega)$, and~\ref{itm:SZ-normal} holds. 
\end{proof}

Note that Lemma~\ref{lem:SZ}~\ref{itm:SZ-loc-approx} implies that for sufficiently smooth functions $\bv$ one has that 
\begin{align*}
	\PiSZ \bv \to  \bv \quad \text{ strongly in }  H^{1}(\Omega)^d, \text{ as } h \to 0;
\end{align*}

For the convergence proof, we shall use a stability property of the Scott--Zhang operator which follows from the local approximation properties in Lemma~\ref{lem:SZ} by standard arguments. 

\begin{corollary}\label{cor:SZ-stab}
	Let $\PiSZ \colon H^1(\Omega)^d \to \mathcal{L}^1_1(\Omega)^d$ be the operator in Lemma~\ref{lem:SZ}. 
	There exists a constant $c>0$ such that for any face $F \in \faces$ and any adjacent $K \in \tria$, i.e., $F \subset K$, one has that 
	\begin{align*}
		\norm{\nabla (\PiSZ \bv - \bv)}_{L^2(F)} \leq c h_{K}^{1/2} \norm{\nabla^2 \bv}_{L^2(\omega_h(K))}  \qquad \text{ for any } \bv \in H^2(\Omega)^d,
	\end{align*}
	uniformly in $h>0$. 
	Consequently, the following local and global stability estimates hold
	\begin{align*}
		\norm{\nabla \PiSZ \bv }_{L^2(F)} &\leq c  \norm{ \bv}_{H^2(\omega_h(K))}, 
		\quad \text{ for any } F \in \faces \text{ and  } K \in \tria \text{ with } F \subset K,	\\
		\norm{\nabla \PiSZ \bv }_{L^2(\partial \Omega)} &\leq c  \norm{ \bv}_{H^2(\Omega)},
	\end{align*}
	for any $\bv \in H^2(\Omega)^d$, uniformly in $h>0$. 
\end{corollary}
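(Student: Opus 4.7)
The principal content is a trace version of the interior $L^2$ approximation property of the Scott--Zhang operator, and the natural strategy is to reduce the face trace of the gradient error to a cell-interior quantity via the scaled trace inequality of Lemma~\ref{lem:trace}, and then to invoke Lemma~\ref{lem:SZ}\,\ref{itm:SZ-loc-approx} for the remaining cell bound.

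First I would fix $K \in \tria$ with $F\subset\partial K$ and apply Lemma~\ref{lem:trace} componentwise to $\nabla(\PiSZ\bv - \bv) \in H^1(K)^{d\times d}$. Crucially, since $\PiSZ \bv|_K \in \mathcal{P}_1(K)^d$ is affine, its Hessian vanishes on $K$, and the derivative appearing on the right-hand side of the scaled trace inequality is simply $-\nabla^2 \bv$ on $K$. This yields
\[
h_F^{1/2}\norm{\nabla(\PiSZ\bv-\bv)}_{L^2(F)} \lesssim \norm{\nabla(\PiSZ\bv-\bv)}_{L^2(K)} + h_K\norm{\nabla^2\bv}_{L^2(K)}.
\]
To control the first summand I would invoke Lemma~\ref{lem:SZ}\,\ref{itm:SZ-loc-approx} with $j=1$, $s=2$, giving $\norm{\nabla(\PiSZ\bv-\bv)}_{L^2(K)} \lesssim h_K\norm{\nabla^2\bv}_{L^2(\omega_h(K))}$. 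Combining with the shape-regular equivalence $h_F\eqsim h_K$ produces the advertised $h_K^{1/2}$-weighted face error bound.

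For the local stability estimate I would use the triangle inequality $\norm{\nabla\PiSZ\bv}_{L^2(F)} \leq \norm{\nabla(\PiSZ\bv-\bv)}_{L^2(F)} + \norm{\nabla\bv}_{L^2(F)}$: the first summand is controlled by the estimate just derived (absorbing the $h_K^{1/2}$ factor into the constant), and the second by the scaled trace inequality of Lemma~\ref{lem:trace} applied to $\nabla\bv\in H^1(K)^{d\times d}$. The global bound on $\partial\Omega$ then follows by squaring and summing the local estimates over boundary faces, exploiting the standard bounded-overlap property of the patches $\{\omega_h(K)\}$ for shape-regular meshes together with the continuity of the trace $H^1(\Omega) \to L^2(\partial\Omega)$ applied to $\nabla\bv$.

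I do not anticipate any deep obstacle. The one point worth flagging is the decision to apply the scaled trace inequality to the \emph{difference} $\PiSZ\bv-\bv$ rather than to $\PiSZ\bv$ or $\bv$ individually: the piecewise-affine structure of $\PiSZ\bv$ forces the otherwise problematic $h_K^{2}\norm{\nabla^2\PiSZ\bv}_{L^2(K)}^{2}$ contribution to vanish, and without this cancellation the inverse-trace loss of $h_F^{-1/2}$ could not be balanced against the approximation gain of $h_K$ supplied by Lemma~\ref{lem:SZ}\,\ref{itm:SZ-loc-approx}.
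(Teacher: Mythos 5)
Your proposal is correct and follows essentially the same route as the paper: the scaled trace inequality of Lemma~\ref{lem:trace} applied to $\nabla(\PiSZ\bv-\bv)$ on the adjacent simplex, combined with the local approximation property of Lemma~\ref{lem:SZ}~\ref{itm:SZ-loc-approx} and shape regularity, followed by the triangle inequality plus a trace bound for the stability estimates and summation over boundary faces for the global one. The only (cosmetic) difference is that you exploit $\nabla^2(\PiSZ\bv)=0$ on $K$ directly, whereas the paper instead invokes the approximation estimate with $s=2$, $j=2$ to control the Hessian term; both are equally valid.
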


\subsubsection{The inf-sup condition}
\label{sec:prelim-infsup}	
Well-posedness of the Stokes equations is a consequence of the well-known inf-sup stability, see, e.g.,~\cite{BBF.2013}: 
It states that there is a constant $c_s>0$ such that 
\begin{align}\label{est:inf-sup-const} 
	\inf_{q \in L^{2}_0(\Omega)\setminus \{0\}} \sup_{ \bv \in H^1_{0}(\Omega)^d \setminus\{\b0\}} \frac{\ll \diver \bv, q \rr}{\norm{ \nabla \bv}_{L^{2}(\Omega)} \norm{q}_{L^{2}(\Omega)}} \geq c_{s}. 
\end{align}

Throughout, we shall make the following assumptions on the finite element spaces. 
\begin{assumption}\label{assump:fem}
We assume that $\Xh, Q_h$ as in~\eqref{eq:fem} have the following properties: 
	\begin{enumerate}[label = (\roman*)]
		\item 
		(approximation properties of $X_h$)
		\label{itm:fem-X-approx} $\mathcal{L}^1_1(\tria)^d \subset \Xh$; in particular
		for each $\bv \in H^1(\Omega)^d$ there exists a sequence $(\bv_{h})_{h >0}$ with $\bv_h \in \Xh$ for any~$h>0$ such that 
		\begin{align*}
			\norm{\bv - \bv_h}_{H^1(\Omega)}& \to 0 \quad \text{ as } h \to 0;
		\end{align*}  
		\item (approximation properties of $Q_h$) 
		\label{itm:fem-Q-approx} 
		for each $q \in C^\infty(\overline \Omega)$ there exists a sequence $(q_{h})_{h >0}$ with $q_h \in Q_h$ for any~$h>0$ such that 
		\begin{align*}
			\norm{q - q_h}_{L^2(\Omega)} + \norm{q - q_h}_{L^2(\Gamma)}
			& \to 0 \quad \text{ as } h \to 0;
		\end{align*} 
		\item \label{itm:d-inf-sup-H10} 
		(discrete inf-sup stability) 
		there exists a constant $\overline{c}_s>0$ such that 
		\begin{align*} 
			\inf_{q_h \in Q_h \setminus \{0\}} \sup_{\bv_h \in \Xh \cap H^1_0(\Omega)^d  \setminus \{\b0\}} \frac{\ll \diver \bv_h, q_h \rr }{\norm{\nabla \bv_h}_{L^2(\Omega)} \norm{q_h}_{L^2(\Omega)}} \geq \overline{c}_s \quad \text{ for any } h > 0.
		\end{align*}
	\end{enumerate} 
\end{assumption}

Many classical mixed finite element pairs for the Stokes problem satisfy those assumptions, including the (generalised) Taylor-Hood elements~\cite{TaylorHood1973,GiraultRaviart1986}, the Bernardi--Raugel element of second order~\cite{BR.1985}, the MINI element~\cite{ArnoldBrezziFortin1984}, the conforming Crouzeix--Raviart element~\cite{CrouzeixRaviart1973}, the second order Guzm\'an--Neilan elements~\cite{GuzmanNeilan2014,GuzmanNeilan2014a}, and the Scott--Vogelius element for certain polynomial degrees and meshes~\cite{ScottVogelius1985,GuzmanScott2019}. 

\subsubsection{Numerical convective term}\label{sec:convterm}
The convective term in~\eqref{eq:NS-unst} in weak formulation can be represented as $b(\bu,\bu,\bv)$, defining the trilinear form~$b$ for any $\bu,\bv,\bw \in C^{\infty}(\overline \Omega)^d$ by 
\begin{align}\label{def:conv-term}
	b(\bu,\bv,\bw) 
	\coloneqq  
	- \int_{\Omega} \bv \otimes \bu : \nabla \bw \, \mathrm{d} \bx
	+ \int_{\Gamma} (\bu \cdot \bn) (\bv \cdot \bw) \ds
	,
\end{align}
where $(\nabla \bw)_{i,j} \coloneqq \partial_{x_j} w_i$. 
For $\bu$ satisfying 
$\bu \cdot \bn = 0$ on $\Gamma$ the last term vanishes and we arrive at the formulation as used in the homogeneous case~\cite[Ch. II, \S 1, eq. (1.12)]{Temam1984}.  
Integrating by parts one can see that 
\begin{align*}
	b(\bu,\bv,\bw) = - b(\bu,\bw,\bv) 
	+ \skp{\bu \cdot \bn}{\bv \cdot \bw}_{\Gamma}
	+  \skp{\divergence \bu}{\bv \cdot \bw}. 
\end{align*}
Hence, it follows that $b(\bu,\cdot,\cdot)$ is skew-symmetric in the sense that 
\begin{align*}
	b(\bu,\bv,\bw) = - b(\bu,\bw,\bv) \quad \text{ provided that } \divergence \bu = 0 \text{ and } \bu \cdot \bn = 0 \text{ on } \Gamma.
\end{align*}

We want to preserve this property on the discrete level, despite the discrete velocity functions $\bu_h \in X_h$ in the Nitsche method in general neither satisfying $\bu_h \cdot \bn = 0$  on $\Gamma$ nor $\diver \bu_h = 0$. 
Hence, as usual we define the trilinear form 
\begin{equation}\label{def:conv-term-num}
	\begin{aligned}
		\widetilde{b}(\bu_h, \bv_h, \bw_h) &\coloneqq \tfrac{1}{2} \left(b(\bu_h, \bv_h, \bw_h) - b(\bu_h, \bw_h, \bv_h) \right)\\
		& = \tfrac{1}{2}
		\left( - \skp{\bv_h \otimes \bu_h}{\nabla \bw_h} 
		+ \skp{\bw_h \otimes \bu_h}{\nabla \bv_h} \right) 
		\quad \text{ for } \bu_h, \bv_h, \bw_h \in \Xh,
	\end{aligned}
\end{equation}
and $\widetilde{b}(\bu_h, \cdot, \cdot)$ is skew-symmetric by definition. 
This coincides with the trilinear form for the case of homogeneous Dirichlet boundary conditions~\cite{Temam1984}, since the boundary term in~\eqref{def:conv-term} is skew-symmetric for fixed $\bu$ and hence cancels in $\widetilde{b}$. 
For this reason, the standard estimates apply. 
Integrating by parts we find that 
\begin{align} \label{eq:conv-id}
	\widetilde{b}(\bu, \bv, \bw)
	=-\skp{\bv \otimes \bu}{ \nabla \bw}
	+ \tfrac{1}{2} \skp{(\bu \cdot \bn)}{ (\bv \cdot \bw)}_{\Gamma}
	- \tfrac{1}{2} \skp{\diver \bu}{(\bv \cdot \bw)}.  
\end{align}
Thus, we have that
\begin{align}\label{eq:conv-id-2}
	\widetilde{b}(\bu, \bv, \bw) = b(\bu, \bv, \bw) \quad \text{ if } \divergence  \bu = 0 \text{ and } \bu \cdot \bn = 0 \text{ on } \Gamma. 
\end{align} 
The following estimates allow us to extend the trilinear forms $b, \widetilde{b}$ in~\eqref{def:conv-term}, \eqref{def:conv-term-num} to certain Sobolev spaces: 
By Hölder's inequality and the fact that $2^* =  \frac{2d}{d-2} > 4$ for $d \in \{2,3\}$, whereby $H^1(\Omega) \hookrightarrow L^4(\Omega)$, we obtain 
\begin{align}\label{est:conv-a1}
	\abs{\skp{\bv\otimes \bu}{\nabla\bw}} 
	&\leq 
	\norm{\bv}_{L^4(\Omega)} 
	\norm{\bu}_{L^4(\Omega)} 
	\norm{\nabla \bw}_{L^2(\Omega)}
	\leq 
	c \norm{\bu}_{H^1(\Omega)} \norm{\bv}_{H^1(\Omega)} \norm{\bw}_{H^1(\Omega)}.  
\end{align}
For future reference, we moreover note that for the numerical convective term~\eqref{def:conv-term-num} the estimate~\eqref{est:conv-a1}  implies that 
\begin{equation}
	\begin{aligned}\label{eq:traccovanish}
		|\widetilde{b}(\bu,\bu,\bw)| &\leq
		\norm{\bu}_{L^4(\Omega)}^2 \norm{\nabla \bw}_{L^2(\Omega)} + \norm{\bu}_{L^4(\Omega)} \norm{\nabla \bu}_{L^2(\Omega)} \norm{\bw}_{L^4(\Omega)}
		\\
		&
		\lesssim  \norm{\bu}_{H^1(\Omega)}^2 \norm{ \bw}_{H^1(\Omega)}.
	\end{aligned}
\end{equation}
In view of~\eqref{est:conv-a1}, \eqref{eq:traccovanish}, we conclude that $b(\cdot,\cdot,\cdot)$ and $\widetilde b(\cdot,\cdot,\cdot)$ can be extended to $H^1(\Omega)^d \times H^1(\Omega)^d \times H^1(\Omega)^d$ as bounded trilinear mappings. 

For the time-dependent case, similarly, with Hölder's inequality and $\frac{d}{8} + \frac{d}{8} + \frac{4-d}{4}  = 1$ we estimate 
\begin{equation}
	\begin{aligned}\label{est:conv-t-a1}
		\int_0^T \abs{\skp{\bv\otimes \bu}{\nabla\bw}}  \dt 
		&\leq 
		\int_0^T \norm{\bv}_{L^4(\Omega)} \norm{\bu}_{L^4(\Omega)} \norm{\nabla \bw}_{L^2(\Omega)} \dt\\
		&\leq 
		c \norm{\bv}_{L^{8/d}(I;L^4(\Omega))}
		\norm{\bu}_{L^{8/d}(I;L^4(\Omega))}  \norm{\bw}_{L^{\frac{4}{4-d}}(I;H^1(\Omega))}.
	\end{aligned}
\end{equation}
Note that $\frac{4}{4-d} = 2$ if $d = 2$, and $\frac{4}{4-d} = 4$ if $d = 3$. 
Again with Hölder's inequality, with 
$\frac{1}{2} + \frac{d}{8} + \frac{4-d}{8}  = 1$, we find that
\begin{equation}
	\begin{aligned}\label{est:conv-t-a2}
		\int_0^T \abs{\skp{\bw\otimes \bu}{\nabla\bv}}  \dt 
		&\leq \int_0^T \norm{\bw}_{L^4(\Omega)} \norm{\bu}_{L^4(\Omega)} \norm{\nabla \bv}_{L^2(\Omega)} \dt\\
		&\leq 
		c\norm{\bw}_{L^{\frac{8}{4-d}}(I;H^1(\Omega))} 
		\norm{\bu}_{L^{8/d}(I;L^4(\Omega))}
		\norm{\nabla \bv}_{L^{2}(I;L^2(\Omega))}.
	\end{aligned}
\end{equation}
Note that $\frac{8}{4-d} = 4$ if $d = 2$, and $\frac{8}{4-d} = 8$ if $d = 3$. 
From the exponents we see that admissibility of 
a solution as test function 
is lost in dimension $d = 3$. 

With the interpolation estimate in~\eqref{est:interp} we may extend the trilinear forms $b,\widetilde{b}$ to bounded mappings 
\begin{align*}
	L^\infty(I;L^2(\Omega)^d) \cap L^2(I;H^1(\Omega)^d) \times L^\infty(I;L^2(\Omega)^d) \cap L^2(I;H^1(\Omega)^d) \times L^\frac{8}{4-d	}(I;H^1(\Omega)^d) \to \setR.
\end{align*}

\subsubsection{$L^2$-projection mapping to $W_h$}

Recall that we have that $\Bh \subset W\hookrightarrow B$, with the spaces $W,B$ as defined in~\eqref{def:sp-W}, \eqref{def:sp-B}, and~\eqref{def:sp-B-v}, \eqref{def:sp-W-v}, respectively, and with $W_h$ as defined in~\eqref{def:Bh}. 
However, $W_h$ is not a subspace of the divergence-free space $W_{h,\divergence}$. 

We introduce the $B$-orthogonal projection operator $\PiBh \colon B \to \Bh $ defined by 
\begin{align}\label{def:PiBh}
	(\PiBh  \bv, \bw_h)_B = 	( \bv, \bw_h)_B \quad \text{ for any } \bw_h \in \Bh,
\end{align}
for $\bv \in B$.  
By definition, one has the stability estimate 
\begin{align}\label{def:PiBh-stab}
	\norm{\PiBh \bv}_B \leq	\norm{\bv}_B \qquad \text{ for any } \bv \in B.
\end{align} 

\subsection{Time discretisation and compactness}\label{sec:prel-timediscr} 

For simplicity we consider a time-descretisation with uniform time step size. 
For $m\in \mathbb{N}$ and $\delta  \coloneqq \delta_m \coloneqq \frac{T}{m}$ we consider the time grid points $t_j \coloneqq  j \delta$ 
for $j \in \{0,\ldots, m\}$
, so that 
$0 = t_0 < t_1 < \ldots< t_m = T$. 
Denoting the time intervals $I_j  = (t_{j-1},t_j]$ we work with the  partition 
\begin{align*}
	J_\delta \coloneqq \{ I_j \colon j \in \{1, \ldots, m\}\}
\end{align*} 
of the interval $(0,T]$. 
In the following we shall employ a backward Euler time-stepping. 
For a Banach space $X$ and a sequence $(\phi_j)_{j \in \mathbb{N}_0} \subset X$ we denote the difference quotient as
\begin{align*}
	\difft \phi_j \coloneqq \tfrac{1}{\delta}(\phi_j - \phi_{j-1}) \qquad\text{ for } j \in \mathbb{N}. 
\end{align*}
Note that for  $(\phi_j)_{j \in \mathbb{N}} \subset H$ for a Hilbert space $H$ with inner product $(.,.)_H$ we have
\begin{align}\label{eq:difft}
	(\difft \phi_j,\phi_j)_H = \tfrac{1}{2\delta} \left(\norm{\phi_j}_H^2 - \norm{\phi_{j-1}}_H^2 + \norm{\phi_j - \phi_{j-1}}_H^2 \right) \qquad \text{ for any } j \in \mathbb{N}. 
\end{align}
This follows from the identity 
$\tfrac{1}{2}\left(a^2 - b^2 + (a-b)^2\right) = 	(a-b)a.$

For a Banach space $X$ we denote the space of (right-continuous) piecewise constant in time functions with values in $X$ by 
\begin{align*}
	\mathcal{L}_{0}^0(J_\delta; X)\coloneqq \{\psi \in L^{\infty}(0,T;X)\colon \psi(t,\cdot)|_{I_j} \in X \text{ is constant in time for any } j \in \{1,\ldots, m \}\}. 
\end{align*}
Note that for any $\psi^\delta \in \mathcal{L}_{0}^0(J_\delta; X)$ with $\psi^\delta_j \coloneqq \psi^\delta|_{I_{j}}$ one has that 
\begin{align*}
	\norm{\psi^\delta}_{L^p(I;X)} 
	&= \left( \delta \sum_{j= 1}^m  \norm{\psi^\delta_j}_X^p \right)^{1/p} 
	\quad \text{ if } p \in [1,\infty),
	\\
	\norm{\psi^\delta}_{L^\infty(I;X)} 
	&= \max_{j= 1, \ldots, m} 
	\norm{\psi^\delta_j}_X.
\end{align*}

\subsubsection{$L^2$-projection}\label{def:Pid}

For a Hilbert space $H$ we denote by $\Pi_\delta \colon L^2(I;H) \to \mathcal{L}_{0}^0(J_\delta; H)$ the $L^2(I;H)$-orthogonal projection mapping to $\mathcal{L}_{0}^0(J_\delta; H)$. 
By definition it satisfies 
\begin{align}\label{est:Pd-stab-L2}
	\norm{\Pid \psi}_{L^2(I;H)} \leq \norm{\psi}_{L^2(I;H)} \quad \text{ for any } \psi \in L^2(I;H). 
\end{align}
Furthermore, one can show that 
\begin{align}\label{est:Pid-conv}
	\Pid \psi \to \psi \quad \text{ strongly in } L^2(I;H), \quad \text{ as } \delta \to 0. 
\end{align}

\subsubsection{Compactness tools}

The standard Aubin--Lions compactness result for some Banach spaces $V,B,Y$ with compact embedding $V \hookrightarrow B$ and  continuous embedding $B \hookrightarrow Y$, $I = (0,T)$ and $q \in [1,\infty)$ ensures the following: 
Assuming that a sequence of functions is bounded in $L^q(I;V)$ and the sequence of its time derivatives is bounded in $L^{q}(I;Y)$, it allows us to conclude strong convergence of a subsequence in $L^{q}(I;B)$. 
In the convergence proof we shall employ a  discrete version by~\cite{GallouetLatche2012}. 
{Note that the Nitsche method uses the spaces $X_{h,\divergence}$ as in~\eqref{def:Xdiv}, which are not $H^1_{\bn}(\Omega)$-conforming.
	For this reason, we use a non-conforming discrete version of the Aubin--Lions lemma which allows for $h$-dependent norms. 
	More specifically, we employ the following special case of a  result}  due to~\cite{GallouetLatche2012}, see also~\cite{DreherJuengel2012,CJL.2014} for similar versions:  

\begin{lemma}[discrete Aubin--Lions Lemma, {\cite[Thm.~3.4, Rmk.~6]{GallouetLatche2012}}]\label{lem:d-AL-0}
	Let $(B,\norm{\cdot}_B)$ be a Hilbert space, and let $(V_k)_{k \in \mathbb{N}}$ be a sequence of finite dimensional subspaces of $B$. 
	Moreover, assume that for each $k \in \mathbb{N}$ there are norms $\norm{\cdot}_{V_k}$ and $\norm{\cdot}_{Y_k}$ on $V_k$ such that the following conditions are satisfied: 	 
	\begin{enumerate}[label = (\roman*)]
		\item \label{itm:norm-Xk-0}
		If $(v_k)_{k \in \mathbb{N}}\subset B$ is a sequence with $v_k \in V_k$ for all $k \in \mathbb{N}$ which satisfies 
		\begin{align*}
			\norm{v_k}_{V_k} \leq c \quad \text{ for any } k \in \mathbb{N},
		\end{align*} 
		for a constant $c>0$ independent of $k$, then there exists a (non-relabelled) subsequence 
		{and $v\in B$} such that $v_k \to v$ in $B$ as $k \to \infty$;
		\item \label{itm:norm-Yk-0} The norm $\norm{\cdot}_{Y_k}$  on $V_k$ is dual to $\norm{\cdot}_{V_k}$
		with respect to the inner product in $B$, that is, 
		\begin{align*}
			\norm{v_k}_{Y_k} \coloneqq \sup_{\phi_k \in V_k} \frac{(v_k,\phi_k)_B}{\norm{\phi_k}_{V_k}} \qquad \text{  for }v_k \in V_k.
		\end{align*}	
	\end{enumerate}	
	For a given $T>0$ and a sequence $(m_k)_{k \in \mathbb{N}}$ with $m_k \to \infty$, 
	let $\delta_k \coloneqq T/m_k$ and consider the corresponding equidistant partition $J_{\delta_k}$ of $I = (0,T)$. Then the following holds: 
    
    \noindent If $(\psi^{k})_{k\in\mathbb{N}}$ is a sequence such that 
	\begin{itemize}
		\item[(iii)] $\psi^k \in \mathcal{L}^0_0(J_{\delta_k};V_k)$ for any $k \in \mathbb{N}$, and, 
		\item[(iv)] denoting $\psi^k_j \coloneqq \psi^k|_{I_j}$, there exists a constant $c>0$ and an exponent $q \in [1,\infty)$ such that 
		\begin{align}\label{est:d-Aubin-Lions}
			\delta_k \sum_{{j = 1}}^{m_k} \norm{\psi_j^k}^q_{V_k} + \delta_k \sum_{{j = 2}}^{m_k} \norm{\frac{1}{\delta_k}(\psi^k_j -\psi^k_{j-1})}_{Y_k}^q
			\leq c \qquad \text{ for all } k \in \mathbb{N},
		\end{align}
	\end{itemize}
	then, there exists a (non-relabelled) subsequence of $(\psi^k)_{k \in \mathbb{N}}$ which converges strongly to a function $\psi$ in $L^q(I;B)$ as $k \to \infty$. 
\end{lemma}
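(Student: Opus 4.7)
The strategy is to adapt the classical Aubin--Lions--Simon proof to this discrete setting. The two classical ingredients -- a compact embedding $V\hookrightarrow\hookrightarrow B$ together with a continuous embedding $B\hookrightarrow Y$ that transfers a time-derivative bound to a bound on translations -- will be replaced by the ``sequential'' compactness condition~(i) and the dual-norm characterisation~(ii), respectively. The scheme has three steps: a discrete Ehrling inequality, an $L^q$-equicontinuity estimate in time, and a final compactness extraction.

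The first step is to establish a \emph{discrete Ehrling-type interpolation inequality}: for every $\varepsilon>0$ there exists $C_\varepsilon>0$ such that
\begin{align*}
	\|v\|_B \leq \varepsilon\|v\|_{V_k} + C_\varepsilon \|v\|_{Y_k}\qquad \text{for every } k\in\mathbb{N}\text{ and every } v\in V_k.
\end{align*}
The usual contradiction argument applies: rescaling a sequence violating the above inequality produces $v_k\in V_k$ with $\|v_k\|_{V_k}\leq 1/\varepsilon$, $\|v_k\|_B=1$, and $\|v_k\|_{Y_k}\to 0$. Condition~(i) then yields a subsequential limit $v\neq 0$ in $B$, while the dual characterisation~(ii) combined with $\|v_k\|_{Y_k}\to 0$ forces $(v,\varphi)_B=0$ for $\varphi$ ranging over a total set, hence $v=0$, a contradiction.

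The second step is to bound the $L^q$-modulus of continuity of $\psi^k$ in $B$. Telescoping $\psi^k(t+\tau)-\psi^k(t)$ over the intermediate discrete time steps, and using H\"older's inequality together with the discrete-derivative bound in~(iv), gives
\begin{align*}
	\int_0^{T-\tau}\|\psi^k(t+\tau)-\psi^k(t)\|_{Y_k}^{q}\dt \leq c\,\tau^{q}\qquad \text{uniformly in } k.
\end{align*}
Applying the discrete Ehrling inequality pointwise in $t$ to the difference $\psi^k(t+\tau)-\psi^k(t)\in V_k$, integrating in time, and using the first term in~(iv) to absorb the $\|\cdot\|_{V_k}$-contribution, results in
\begin{align*}
	\|\psi^k(\cdot+\tau)-\psi^k(\cdot)\|_{L^q(0,T-\tau;B)}^{q} \leq C\varepsilon^{q} + C_\varepsilon^{q}\,\tau^{q}.
\end{align*}
Choosing first $\varepsilon$ small and then $\tau$ small yields equi-continuity of $(\psi^k)$ in $L^q(I;B)$ under time translations. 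The third step extracts a convergent subsequence: averaging $\psi^k$ over small time intervals produces, at each fixed time, a convex combination of the values $\psi^k_j$ whose $V_k$-norm is controlled (via Jensen) by~(iv); condition~(i) then delivers convergence in $B$ along a subsequence, and a diagonal extraction over a countable dense set of times combined with the time-equicontinuity just proved, together with the Kolmogorov--Riesz--Fr\'echet criterion for $B$-valued $L^q$-functions, gives strong convergence in $L^q(I;B)$.

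\textbf{Main obstacle.} The principal difficulty is that the spaces $(V_k,\|\cdot\|_{V_k})$ and $(V_k,\|\cdot\|_{Y_k})$ vary with $k$, so the classical single-space compact/continuous embeddings are unavailable. Conditions~(i) and~(ii) are precisely engineered to bridge this: (i) mimics the compactness of $V\hookrightarrow\hookrightarrow B$ sequentially across varying spaces, while (ii) is strong enough that a vanishing $\|\cdot\|_{Y_k}$-norm kills the limit against $B$-elements testable from within the $V_k$. Most of the technical care is needed to ensure that the constant $C_\varepsilon$ in the discrete Ehrling inequality is genuinely \emph{uniform in $k$}, and that the final diagonal-extraction step correctly patches subsequential limits across the varying ambient spaces.
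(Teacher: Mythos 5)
The paper does not prove this lemma itself --- it is quoted from Gallou\"{e}t--Latch\'{e} \cite{GallouetLatche2012} --- so your proposal has to be measured against that reference, whose strategy it essentially reproduces: a discrete Lions/Ehrling lemma uniform in $k$, a time-translate estimate coming from the bound on the discrete time derivatives, and a vector-valued Kolmogorov/Simon compactness criterion applied to time averages (which lie in $V_k$ and are handled by condition \ref{itm:norm-Xk-0}). Your Steps 2 and 3 are the standard route and are sound, up to one small correction: for $q>1$, H\"{o}lder's inequality over the at most $\tau/\delta_k+1$ jumps gives $\int_0^{T-\tau}\norm{\psi^k(t+\tau)-\psi^k(t)}_{Y_k}^q\dt \leq c\,\tau(\tau+\delta_k)^{q-1}$ rather than $c\,\tau^q$; since $\delta_k\to 0$ this still yields the required equicontinuity (the finitely many indices with $\delta_k>\tau$ are harmless, each $\psi^k$ being a single element of $L^q(I;B)$), and for $q=1$ the Fubini-type counting of jumps gives $c\,\tau$ directly.

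The one genuine flaw is your justification of the discrete Ehrling inequality. In the contradiction argument you conclude $v=0$ by testing against ``$\varphi$ ranging over a total set'', but hypotheses \ref{itm:norm-Xk-0}--\ref{itm:norm-Yk-0} provide no such set: a fixed $\varphi\in B$ need not belong to $V_k$, and even if it can be approximated from within $V_k$, nothing controls the $V_k$-norms of the approximants uniformly in $k$, so the duality bound $\abs{(v_k,\varphi)_B}\leq \norm{v_k}_{Y_k}\norm{\varphi}_{V_k}$ cannot be applied with a $k$-independent test element. Fortunately the inequality you want holds for a much simpler reason, precisely because $B$ is a Hilbert space and $\norm{\cdot}_{Y_k}$ is the dual norm of $\norm{\cdot}_{V_k}$ with respect to the $B$-inner product: for $v\in V_k$ one has $\norm{v}_B^2=(v,v)_B\leq \norm{v}_{Y_k}\norm{v}_{V_k}$, hence by Young's inequality $\norm{v}_B\leq \varepsilon\norm{v}_{V_k}+(4\varepsilon)^{-1}\norm{v}_{Y_k}$ with a constant manifestly uniform in $k$; no contradiction argument and no use of \ref{itm:norm-Xk-0} is needed at this stage (equivalently, in your normalized sequence simply take $\varphi_k=v_k$, giving $\norm{v_k}_B^2\leq \varepsilon^{-1}\norm{v_k}_{Y_k}\to 0$ and contradicting $\norm{v_k}_B=1$ directly). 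Note that in the Banach-space setting of the original reference this step genuinely requires an extra hypothesis, so the Hilbert structure assumed here is what saves your Step 1; with this repair, your argument is complete.
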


\begin{remark}
	The assumption in Lemma~\ref{lem:d-AL-0}~\ref{itm:norm-Xk-0} replaces compactness of the embedding $W \hookrightarrow B$, and condition~\ref{itm:norm-Yk-0} replaces the continuous embedding $B \hookrightarrow Y$. 
\end{remark}

\begin{remark}\label{rmk:AL-appl}
	We shall apply the discrete Aubin--Lions compactness result in Proposition~\ref{lem:d-AL-0} in both cases $\beta > 0$ and $\beta = 0$ as follows: 
	As a Hilbert space, we choose $B$ as in~\eqref{def:sp-B} or~\eqref{def:sp-B-v}, and as a sequence of $(V_k,\norm{\cdot}_{V_k})$ we choose $(W_h, \norm{\cdot}_{X_h})$ as defined in~\eqref{def:Bh} and~\eqref{def:norm-h}. 
	We denote
	\begin{align}\label{def:Yh}
		\norm{ \bv_h}_{Y_h} 
		\coloneqq \sup_{\bphi_h  \in W_h} \frac{( \bv_h, \bphi_h)_B}{\norm{ \bphi_h}_{X_h}} \qquad \text{ for } \bv_h \in W_h. 
	\end{align}
	Recall that $W_h$ is based on discretely divergence-free finite element functions, and this circumstance is reflected in the dual norm.  
	In order to verify~\ref{itm:norm-Xk-0}, we note that by Lemma~\ref{lem:h-norm} we have 
	\begin{align*}
		\norm{ \bv_h}_{W} \leq  \norm{ \bv_h}_{X_h} \leq c,
	\end{align*}
	uniformly in $h>0$. 
	Thus, by compactness of the embedding $W \hookrightarrow\hookrightarrow B $, condition~\ref{itm:norm-Yk-0} in Proposition~\ref{lem:d-AL-0} is satisfied.  
	Therefore, to apply Proposition~\ref{lem:d-AL-0}, it suffices to ensure the corresponding estimate in~\eqref{est:d-Aubin-Lions} for some $q \in [1,\infty)$. 
\end{remark}

\section{Convergence of the Nitsche method}
\label{sec:un-NS}	

We now come to the main result of the present paper and its proof. 
To this end, let $\Omega\subset\R^{d}$, for $d\in\{2,3\}$, be a bounded, polyhedral and  (open) Lipschitz domain, its boundary $\Gamma\coloneqq\partial\Omega$ being oriented by the outer unit normal $\bn\colon\Gamma\to\mathbb{S}^{d-1}$. As in Section~\ref{sec:model-results}, we consider the Navier--Stokes equations~\eqref{eq:NS-unst}--\eqref{eq:NS-law}, which we recall to read 
\begin{subequations}\label{eq:NS-unst-full}
	\begin{alignat}{2}
		\label{eq:NS-unst-A}
		\partial_t \bu + 	\diver(\bu \otimes \bu) - 2 \nu \diver \BD \bu  + \nabla \pi &= \bf \qquad \quad &&\text{ in } Q,\\
		\diver\bu &= 0 \qquad \quad &&\text{ in } Q,\\
		\bu(0,\cdot) &= \bu_0 \quad && \text{ on } \Omega, 
	\end{alignat}
	subject to the impermeability~\eqref{eq:imperm} and dynamic boundary condition~\eqref{eq:genbc}:
	\begin{alignat}{2}
		\label{eq:NS-unst-bc-1}
		\bu \cdot \bn &= 0 \quad &&\text{ on } I \times \Gamma,\\
		\label{eq:NS-unst-bc-2}
		-2\nu ( \Du\,\bn)_\tau &=  \bsigma 
		+ \beta \partial_t \bu
		\quad &&\text{ on } I\times\Gamma, 
	\end{alignat}
\end{subequations}
where the parameter $\beta\geq  0$ is as in~\eqref{eq:genbc}. 
Here, as usual, $T>0$ is a given final time, $I\coloneqq (0,T)$ and $Q\coloneqq I\times Q$. Moreover, $\bu_{0}$ is an initial velocity, $\bf$ an external force, and $\nu>0$ represents the viscosity. 

	\noindent We suppose  that one of the following conditions holds for some $r \in [1,\infty)$: 
	\begin{enumerate}[label = (C\arabic*)]
		\item \label{itm:case-expl-noncoerc} 
		$\bsigma= \Srel(\bu_\tau)$ with $\Srel$ satisfying Assumption~\ref{assump:s-expl} with exponent $r$ and constant $\lambda \geq 0$; 
		\item \label{itm:case-impl-coerc} 
		$\gbd(\bsigma  + \lambda \bu_\tau, \bu_\tau)= \b0$ with $\gbd$ satisfying Assumption~\ref{assump:gbd-mon} with exponent $r$, and with $\lambda \geq 0$; 
	\end{enumerate}
\noindent 
In case~\ref{itm:case-expl-noncoerc},  the relation between $\bsigma$ and $\bu_\tau$ is explicit, 
possibly nonmonotone and noncoercive, 
but bounded with constant $\lambda \geq 0$. 
In case~\ref{itm:case-impl-coerc} the relation between $\bsigma$ and $\bu_\tau$ is possibly implicit, $\lambda$-monotone, and coercive. 

In view of the underlying weak formulation of~\eqref{eq:NS-unst-full}, we denote the tangential trace of a Sobolev function $\bv\in W^{1,1}(\Omega)^{d}$ by
\begin{align}\label{def:tr-tau}
	\tr_{\tau}	(\bv) \coloneqq \tr(\bv) - (\tr(\bv) \cdot \bn) \bn, 
\end{align}
and recall that the spaces 
$B, \Bdiv, \Wdiv$ have been introduced in~\eqref{def:sp-B}--\eqref{def:sp-W-v}. Based on these conventions, we follow~\cite{Abbatiello2021,BulicekMalekMaringova2023} and introduce the notion of weak solutions as follows: 
\begin{definition}[weak solution]\label{def:unst-w-sol}
	In the above situation, let $\bf \in L^2(0,T;B)$ and let $\bu_0 \in \Bdiv$. 
	Furthermore, let $\lambda \geq 0$ be constant and assume that~\ref{itm:case-expl-noncoerc} or~\ref{itm:case-impl-coerc} holds. 
	Then we call a  pair of functions $(\bu, \bsigma)$ with
	\begin{alignat*}{3}
		\bu	&\in L^2(I;\Wdiv) \cap C_w(\overline I;\Bdiv)\quad &&\text{ with } \;
		\partial_t \bu \in L^{1}(I;(\Wdiv)'), 
		\\
		\bsigma &\in L^{p}(I;L^{p}(\Gamma)^d) \qquad &&\text{ for } p = \min (2,r'),
	\end{alignat*} 
	a \emph{weak solution} to~\eqref{eq:NS-unst-full}, if they satisfy
	\begin{alignat*}{3}
		\skp{\partial_t \bu}{{\bphi}}_{\Wdiv} 
		+2\nu  \skp{ \BD \bu}{ \BD \bphi }	+ b(\bu, \bu, \bphi) 
		+\skp{ \bsigma}{ \tr_{\tau}(\bphi) }_{\Gamma}  
		&= ({\bf}, {\bphi})_B
	\end{alignat*}
	for all  $\bphi \in \Wdiv$ and a.e.~in $I$ as well as  $\lim_{t \to 0_+}	\norm{{\bu}(t) - {\bu}_0}_B = 0$, 
	and 
	\begin{alignat*}{3}
		\bsigma &= \Srel(\tr_{\tau} (\bu))  \qquad &&\text{ a.e.~on } I\times \Gamma \qquad \text{ in case~\ref{itm:case-expl-noncoerc},}\\
		\gbd(\bsigma + \lambda \tr_{\tau} (\bu), \tr_{\tau}(\bu))&=\b0 \qquad &&\text{ a.e.~on } I\times \Gamma \qquad \text{ in case~\ref{itm:case-impl-coerc}.}
	\end{alignat*}
\end{definition}
Here, we choose a pressure-free formulation to avoid difficulties with non-integrable pressure. Moreover, we point out that if $\beta >0$, then the first term of the variational formulation also includes $\partial_t  \tr(\bu)$; the divergence constraint and the impermeability are contained in the definition of $\Wdiv$. 
\medskip 

In Section~\ref{sec:unst-Nitsche} we introduce the general Nitsche method to approximate solutions to system~\eqref{eq:NS-unst-full}. 
Section~\ref{sec:unst-main} contains the main result on convergence of subsequences of approximate solutions to weak solutions for $r\leq 2$. 
The following sections deal with the convergence proof, starting in Section~\ref{sec:unst-apriori} with the a priori estimates. 
Section~\ref{sec:unst-conv} contains the convergence of the approximate functions, Section~\ref{sec:unst-limit} verifies that the limiting functions satisfy the formulation, and Section~\ref{sec:unst-id} contains the identification of the nonlinear boundary terms for $r\leq 2$. 

The main convergence result for $r\leq2$ is contained in Theorem~\ref{thm:main-unsteady} and extensions for $r>2$ in special cases are presented in Section~\ref{sec:extensions_rgeq2}. 
Our results in particular constitute an existence proof of weak solutions, which is new in this general setting 
for the noncoercive case~\ref{itm:case-expl-noncoerc}, the nonmonotone implicit case with $\lambda >0$ in~\ref{itm:case-impl-coerc} and for $r = 1$. 

\subsection{The Nitsche method}
\label{sec:unst-Nitsche}
In the following, we employ an implicit time stepping and a finite element discretisation with a Nitsche method in space. 
The feature of this method is that the boundary condition $\bu \cdot \bn$ is not strongly enforced by including it in the finite element space. 
Instead, it is weakly imposed by penalisation in the weak formulation. 

We assume that, for a sequence of triangulations $(\tria)_{h>0}$ of the polyhedral domain $\Omega$, we have pairs of inf-sup stable finite element spaces $(X_h,Q_h)$ as defined in~\eqref{eq:fem}, satisfying  Assumption~\ref{assump:fem}. 
With the $B$-orthogonal projection $\PiBh\colon B\to\Bh$, see~\eqref{def:PiBh}, we discretise $ \bu_0 \in \Bdiv \subset B$ as 
\begin{align}\label{def:u0-discr}
	\bu_{0}^{h} \coloneqq \PiBh  \bu_0. 
\end{align}
Consequently, we obtain 
\begin{align}\label{est:PiBh-stab-u0}
	\norm{ \bu^h_0}_B  = \norm{\PiBh  \bu_0}_B &\leq	\norm{ \bu_0}_B,\\
	\label{eq:conv-u0}
	\bu_0^h 
	= \PiBh  \bu_0 & \to   \bu_0 \quad \text{ strongly in } B 
	\quad \text{ as } h \to 0. 
\end{align}
For the time discretisation we use a uniform time step size $\delta>0$ to obtain a sequence of partitions $(J_\delta)_{\delta>0}$ of $I$, see Section~\ref{sec:prel-timediscr}. 
Then, with the $L^2(I;B)$-orthogonal projection $\Pid$ mapping to $\mathcal{L}^0_0(J_{\delta};B)$ as defined in~\eqref{def:Pid}, we discretise $ \bf \in L^2(I;B)$ as 
\begin{align}\label{def:f-discr}
	\bf^{\delta} \coloneqq \Pid( \bf) \qquad \text{ with } \;
	\bf^{\delta}_j \coloneqq  \bf^{\delta} |_{I_j} 
	=  \dashint_{I_j}  \bf(t) \dt  \in B \quad \text{ for  } j \in \{1, \ldots, m\}. 
\end{align}
In consequence, we have that
\begin{align}\label{est:L2-stab-f}
	\norm{\Pid  \bf }_{L^2(I;B)}& \leq 	\norm{  \bf }_{L^2(I;B)},\\
	\label{eq:f-conv}
	\Pid  \bf &\to \bf \qquad \text{ strongly in } L^2(I;B)  \quad \text{ as } \delta \to 0.
\end{align}

For the definition of the discrete problem, we replace the convective term $b$ by the numerical convective term $\widetilde{b}$  as defined in~\eqref{def:conv-term-num}. 
Moreover, in case~\ref{itm:case-impl-coerc}, let~$\Seps$ with $\varepsilon>0$  be an approximation as in Assumption~\ref{assump:gbd-reg}. Lastly, let $\theta\in \{-1,0,1\}$ and let
$\alpha>0$ be a parameter to be specified later. 

\subsubsection*{The discrete problem.} 
Let $\epsilon, h, \delta >0$, and denote $\discr \coloneqq (\epsilon, h, \delta) \in \mathbb{R}^3_{>0}$. 
Then the sequences $(\overline{\bu}_j^{\discr})_{j = 0,\ldots, m}$ and $(\bsigma_j^{\discr})_{j = 1, \ldots, m}$ are defined by the following iteration: For
\begin{itemize}
	\item $j=0$, put  ${\bu}_{0}^\discr \coloneqq  \bu_0^h =  \PiBh {\bu}_0$. 
	\item $j\in\{1,...,m\}$, so in the $j$-th time step, let ${\bu}_{j-1}^{\discr}\in \Xdiv$ be the solution from the $(j-1)$-st time step and  find $({\bu}_{j}^{\discr}, \pi_{j}^{\discr})\in (\Xh,Q_h)$ such that 
	\begin{subequations}\label{eq:unst-discr}
		\begin{equation}	\label{eq:unst-discr-a}
			\begin{aligned}
				(\difft {\bu}^{\discr}_j,{\bv}_h)_B
				&+ 2\nu \skp{ \BD \bu^{\discr}_j}{ \BD \bv_h } 
				+ \widetilde{b}(\bu^{\discr}_j, \bu^{\discr}_j, \bv_h)
				- \skp{ \pi_j^{\discr}}{ \diver \bv_h } 
				+ \skp{q_h}{\diver \bu_j^{\discr}}  
				&&
				\\
				&
				+ \skp{\pi_j^\discr}{\bv_{h} \cdot \bn}_{\Gamma}
				-  \skp{q_h}{\bu_{j}^{\discr} \cdot \bn}_{\Gamma} 
				&&\\
				& 
				+ \skp{ \bsigma^\discr_j}{\tr_{\tau}(\bv_h) }_{\Gamma}
				+  \nu \alpha   \skp{h_{\Gamma}^{-1} \bu^{\discr}_j \cdot \bn}{  \bv_h \cdot \bn}_{\Gamma}&& 
				\\ 
				& 
				- 2\nu \skp{ \left( (  \BD \bu_j^{\discr}) \bn \right) \cdot \bn }{ \bv_h \cdot \bn }_{\Gamma}
				- 2 \nu \theta \skp{  \left(( \BD \bv_h) \bn \right) \cdot \bn }{ \bu_j^{\discr} \cdot \bn }_{\Gamma}
				&&= ({\bf}_j^{\delta} ,\bv_h )_B
			\end{aligned}
		\end{equation}
		for all $(\bv_h,q_h)\in X_h \times Q_h$, where 
		\begin{align}\label{eq:unst-discr-b}
			\bsigma^\discr_j \coloneqq \begin{cases}
				\Srel(\tr_\tau(\bu^\discr_j)) \qquad &\text{ in case~\ref{itm:case-expl-noncoerc},}\\
				\Seps(\tr_\tau(\bu^{\discr}_j)) - \lambda \tr_\tau(\bu^{\discr}_{j})
				&\text{ in case~\ref{itm:case-impl-coerc}.} 
			\end{cases}
		\end{align}
	\end{subequations}
\end{itemize}
As usual, we note that testing~\eqref{eq:unst-discr-a} with $(\b0,q_h)$ yields that $\bu_j^{\discr} \in X_{h,\divergence}$, and a pressure-free formulation is available by testing with $(\bv_h,0)$ for $\bv_h \in X_{h,\divergence}$. 

All terms in the first line of~\eqref{eq:unst-discr-a} are contained in a mixed method for Navier--Stokes equations subject to homogeneous boundary conditions. 
The boundary terms involving $\tr_\tau$ are due to the general class of boundary condition, and the remaining boundary terms are due to the use of the Nitsche method. 

We denote the right-continuous, piecewise constant-in-time interpolants of the velocity functions $(\bu^{\discr}_j)_{j \in \{1,\ldots, m\}}$, pressure functions  $(\pi^{\discr}_j)_{j \in \{1,\ldots, m\}}$ and stress functions  $(\bsigma^\discr_j)_{j \in \{1,\ldots, m\}}$  by $\bu^{\discr}$, $\pi^{\discr}$, and 
$\bsigma^\discr$, respectively. 

\begin{remark}\label{rem:TRockt} We comment on several aspects of the above discrete formulation: 
	\begin{enumerate}[label = (\alph*)]
		\item\label{itm:T1}
		All functions in $ \Xh\subset H^1(\Omega)^{d}$ have a trace in $L^2(\Gamma)^{d}$. Hence, with ${\bu}^{\discr}_j,\overline{\bv}_h \in W$, we have 
		\begin{align*} 
			(\difft {\bu}^{\discr}_j,{\bv}_h)_B 
			= 	(\difft {\bu}^{\discr}_j,{\bv}_h) + \beta(\difft( \tr(\bu^{\discr}_j)),\tr(\bv_h))_{\Gamma} \quad \text{ for all } \bv_h \in \Xh. 
		\end{align*}
		This formulation covers both cases $\beta> 0$ and $\beta = 0$. 
		\item\label{itm:T3} 
		As to the coercivity of the discrete formulation, the penalisation parameter~$\alpha$ has to be chosen sufficiently large, see Assumption~\ref{as:param},  unless $\lambda=0$ and the skew-symmetric variant of Nitsche's method is employed ($\theta=-1$). 
		Since it affects the condition number of the system, however, it should not be too large. 
		In principle, one may want to choose the parameter locally as  in~\cite{BringmannCarstensenStreitberger2024}. 
		In our setting this would be possible in the case of monotone relations; in nonmonotone relations, it depends on a constant appearing in a global Korn inequality.
	\end{enumerate}
\end{remark}

\subsection{Main convergence result for $r\leq 2$}
\label{sec:unst-main}
In the following we assume that, in each of the cases~\ref{itm:case-expl-noncoerc} and~\ref{itm:case-impl-coerc}, the corresponding constant $\lambda\geq 0$ is sufficiently small and $\alpha>0$ is sufficiently large. 
Let $c_{\tr,K}>0$ be the constant in the estimate 
\begin{align}\label{est:tr-K}
	\norm{\bv_h}_{L^2(\Gamma)} \leq c_{\tr,K} \norm{\bv_h}_{X_h} \qquad \text{ for any } \bv_h \in X_h,\;\text{uniformly in}\;h, 
\end{align}
see Lemma~\ref{lem:h-norm}.  
We denote the constant in a standard inverse trace inequality by $c_{\tr}>0$. 
Pertaining to Remark~\ref{rem:TRockt}~\ref{itm:T3}, we then make the following choice for the parameter $\alpha$, depending on $\lambda$. 

\begin{assumption}[parameters]\label{as:param}
	We assume that the constant $\lambda \geq 0$ in~\ref{itm:case-expl-noncoerc} and in~\ref{itm:case-impl-coerc}, respectively, satisfies 
	\begin{align}\label{est:lambda}
		\lambda < \frac{2 \nu}{c_{\mathrm{tr,K}}^2} \eqqcolon \overline c. 
	\end{align}
	Furthermore, we choose the parameter $\alpha>0$ in the discretisation~\eqref{eq:unst-discr} sufficiently large that
	\begin{align}\label{est:alpha}
		\alpha >  
		\frac{2\lambda + \frac{1}{2}\overline c (1+\theta)^2 d c_{\tr}^2}{\overline c- \lambda}
	\end{align}
\end{assumption}

In the special case of monotone (coercive) relations in~\ref{itm:case-impl-coerc} with $\lambda = 0$, condition~\eqref{est:lambda} is trivially satisfied. For~\eqref{est:alpha} to be satisfied in this case, it suffices to choose $\alpha$ such that 
\begin{align}\label{est:alpha-mon}
	\alpha>	\tfrac{1}{2}(1+\theta)^2 d c_{\mathrm{tr}}^2. 
\end{align} 
Note that~\eqref{est:alpha-mon} is trivially satisfied for any $\alpha>0$ whenever $\theta=-1$, i.e., for the skew-symmetric Nitsche method.

\begin{assumption}[general setting]\label{assump:general}
    For $T>0$ let $I = (0,T)$, and let $\Omega \subset \setR^d$ for $d \in \{2,3\}$, be a bounded and open Lipschitz domain with polyhedral boundary $\Gamma$. 	
	Let $\nu>0$ and $\beta \geq  0$ be given constants. 
	Let $ \bf \in L^2(I;B)$ and $ \bu_0 \in \Bdiv$, with spaces $B,\Bdiv$ in~\eqref{def:sp-B}, or in~\eqref{def:sp-B-v}, respectively. 
	Let
    \begin{align*}
    r \in \begin{cases} \;\;
    [1,2] &\text{ in case } \ref{itm:case-expl-noncoerc}, \quad \text{see~Assumption~\ref{assump:s-expl}},\\
 \;\; [1,\infty) &\text{ in case } \ref{itm:case-impl-coerc},  \quad \text{see~Assumption~\ref{assump:gbd-mon}},
    \end{cases}
    \end{align*}
    and let $\lambda\geq 0$, $\alpha>0$ be such that Assumption~\ref{as:param} holds. 
	Assume that $(\tria)_{h>0}$ is a sequence of shape-regular triangulations of $\Omega$ and let $(X_h,Q_h)$ be pairs of mixed finite element spaces satisfying Assumption~\ref{assump:fem}. 
	Further, for $\delta>0$  let $J_{\delta} = \{I_1, \ldots, I_{m_{\delta}}\}$ be uniform partitions of $I$. 
	Furthermore, in case~\ref{itm:case-impl-coerc} let Assumption~\ref{assump:gbd-reg}~\ref{itm:gbd-eps-0}--\ref{itm:gbd-eps-approx} be satisfied.
\end{assumption}

\begin{theorem}[convergence result for $r\leq 2$]\label{thm:main-unsteady}
In the general setting of Assumption~\ref{assump:general} let $r \in [1,2]$. 
    
Then, for each $\oldepsilon = (\varepsilon, h,\delta)\in \setR^{3}_{>0}$ there exist (approximate) solutions
	\begin{align}
		\bu^{\oldepsilon} \in \mathcal{L}^0_0(J_{\delta};X_{h,\divergence}), 
		\quad 
		\pi^{\oldepsilon} \in \mathcal{L}^0_0(J_{\delta};Q_{h}),
		\quad \text{ and } \quad 
		\bsigma^\discr 
		\in \mathcal{L}^0_0(J_{\delta};L^{\min(2,r')}(\Gamma)^d),
	\end{align}
	to the Nitsche method~\eqref{eq:unst-discr}, with $ \bu^\oldepsilon_0 = \bu_0^h \in \Bh$ as in~\eqref{def:u0-discr}, and $\bf^\delta \in \mathcal{L}^0_0(J_{\delta};B)$ as in~\eqref{def:f-discr}. 
	There exists functions $\bu, \bsigma$ and a (non-relabelled) subsequence such that 
		\begin{alignat*}{5}
			\bu^\oldepsilon 
			&\to  \bu \quad 
			&&\text{ strongly in } L^q(I;B) \text{ and } L^{s}(I;L^4(\Omega)^d), &&\text{ for any } q \in [1,\infty), s \in [1,8/d),
			\\
			\bu^\oldepsilon 
			&\overset{(*)}{\rightharpoonup}  \bu \quad 
			&&\text{ weakly* in } L^\infty(I;B) \text{ and weakly in } L^2(I;W),\\
			\tr (\bu^\oldepsilon) \cdot \bn &\to \b0 \qquad &&\text{ strongly in } L^2(I;L^2(\Gamma)^d), \\
			\tr_\tau (\bu^\oldepsilon) &\to \tr_\tau(\bu) \qquad &&\text{ strongly in } L^{2}(I;L^{p}(\Gamma)^d) \cap L^r(I;L^r(\Omega)^d),  \quad &&\text{ for any  } p \in [1, 2^\sharp),\\
			\bsigma^\discr &\wconv \bsigma \qquad &&\text{ weakly in } L^{p}(I;L^{p}(\Gamma)^d), && \text{ for } 	p = \min(2,r')
		\end{alignat*}	
	as $\oldepsilon = (\varepsilon, h, \delta) \to (0,0,0)$. 
	Furthermore, the couple $(\bu, \bsigma)$ is a weak solution to~\eqref{eq:NS-unst-full} in the sense of Definition~\ref{def:unst-w-sol}, and hence a weak solution exists.  
\end{theorem}

The proof of Theorem~\ref{thm:main-unsteady} is a consequence of Lemmas~\ref{lem:apriori-unst}--\ref{lem:unst-conv-s-c2} to be stated and proved below. 
Extensions to $r>2$ are presented in Section~\ref{sec:extensions_rgeq2}.

\begin{remark}[Assumptions on $\Gamma$ and extension to mixed boundary conditions]\label{rmk:mixed-bc}Thanks to Corollary~\ref{cor:polyhedral}, 
	Assumption~\ref{assump:dom} holds for~$\Gamma = \partial \Omega$ for a polyhedral domain~$\Omega$ as in Theorem~\ref{thm:main-unsteady}. 
	Hence, Korn's inequality is available without extra assumption. 
	
	The convergence result can be extended to mixed boundary conditions, where slip-type boundary conditions~\eqref{eq:NS-unst-bc-1}--\eqref{eq:NS-unst-bc-2} are imposed only on part of the boundary $\Gamma \subset \partial \Omega$. 
	On other parts of the boundary one may impose
	\begin{itemize} 
		\item periodic boundary conditions; if this is the only other boundary condition  Assumption~\ref{assump:dom} is assumed to be satisfied for $\Gamma \subset\partial \Omega$;
		\item Dirichlet boundary conditions; if the corresponding part of the boundary has full $\mathcal{H}^{d-1}$-measure, then Korn's inequality is available without requiring Assumption~\ref{assump:dom};
    \item natural boundary conditions when dropping the convective term; 
      if the convective term is included, our analysis will most likely still apply, but additional care needs to be taken to ensure coercivity, 
    for example via directional \emph{do-nothing boundary conditions} as in~\cite{BraackMucha2014}. 
	\end{itemize}
	In our framework also generalised Robin boundary conditions and dynamic versions thereof can be handled, where the full trace $\tr(\bu)$ and $\BS \bn$ are related implicitly.  
		This replaces~\eqref{eq:NS-unst-bc-1}, \eqref{eq:NS-unst-bc-2}, and without the impermeability condition~\eqref{eq:NS-unst-bc-1} a Nitsche penalisation is not needed. 
\end{remark}

In particular, we note that the arguments in the convergence proof include plain convergence for methods on polyhedral domains with strongly imposed impermeability condition. 

\begin{remark}[pressure formulation]
	The pressure is a distribution in time with $\pi \in \mathcal{D}'(\overline I;L^2(\Omega))$ and $t \mapsto \int_0^t \pi(s) \ds  \in L^{\frac{8}{d+4}}(I;L^2(\Omega))$. 
	In the convergence proof,  the discrete pressure functions will be shown to converge in a suitable sense, and in the limit one has 
	\begin{equation}\label{eq:unst-lim-eq-1}
		\begin{aligned}
			- \int_0^T ( \bu, \bv)_{B}\, \partial_t \phi \dt  
			&+ 2 \nu \int_{0}^T \skp{\BD \bu}{ \BD \bv} \phi \dt 
			+ \int_0^T b(\bu,\bu,\bv) \phi \dt 
			+ \int_0^T \skp{\pi}{\divergence \bv}  
			{\phi} \dt 
			\\
			& + \int_{0}^T \skp{\bsigma}{\tr_\tau(\bv)}_{\Gamma} \, \phi \dt  
            =
			\int_{0}^T ( \bf, {\bv})_B \,\phi  \dt. 
		\end{aligned}
	\end{equation}
	for any $\bv \in C^{\infty}(\overline \Omega)^d$, with $\tr(\bv) \cdot \bn  = 0$ on $\Gamma$, and $\phi \in C^{\infty}(\overline I)$ with $\phi(T) = \phi(0) =  0$. 
	If one identifies $B \equiv B'$, and defines the duality pairing between $W'$ and $W$, the first term may be rewritten, cf.~\cite{BulicekMalekMaringova2023}, as 
	\begin{align*}
		- \int_0^T ( \bu, \bv)_{B}\, \partial_t \phi \dt   = 	 \int_0^T \skp{\partial_t  \bu}{ \bv}_{W}\,  \phi \dt.
	\end{align*}
\end{remark}

The convergence proof proceeds along the following standard strategy: 
Starting from a~priori estimates we obtain weakly converging subsequences. 
By means of compactness even strongly converging subsequences may be extracted. 
Then, the limiting equation is deduced for the limiting functions, and in the end one has to identify the nonlinear terms. 
In case $r\leq 2$ we have that the tangential traces converge strongly in $L^r(I\times \Gamma)$, and thus the identification is simpler than in case $r>2$. 
To avoid repetition the a priori estimates and convergence results in Sections~\ref{sec:unst-apriori}--\ref{sec:unst-limit} are formulated for the largest possible range of the exponent~$r$, see Assumption~\ref{assump:general}. 
Section~\ref{sec:unst-id} contains the identification of the nonlinear terms in the limit for $r \in [1,2]$. 
The corresponding extensions for $r>2$ are presented in Section~\ref{sec:extensions_rgeq2}. 

\subsection{A priori estimates}
\label{sec:unst-apriori}	

We consider sequences $(\epsilon_k)_{k \in \mathbb{N}}$, $(h_k)_{k \in \mathbb{N}}$, and $(\delta_k)_{k \in \mathbb{N}}$ such that 
\begin{align*}
	\discr_k \coloneqq(\epsilon_k, h_k, \delta_k) \to 0 \qquad \text{ as } k \to \infty. 
\end{align*}
We will consider several subsequences, but shall not indicate this in the notation. 
For the sequences of approximate solutions to~\eqref{eq:unst-discr} we denote for 	$j \in \{1, \ldots, m_k\}$ and  $k \in \mathbb{N}$
\begin{equation} \label{def:discr-short}
	\begin{aligned}
		\bu^k_0 &\coloneqq  \bu^{h_k}_0 
		=  \Pi_{h_k}  \bu_0,    
		& \qquad 	 \bf^k_j &\coloneqq  \bf^{\delta_k}_j  =  \Pi_{\delta_k}  \bf |_{I_j},\\
		\bu^k_j 
		&\coloneqq \bu^{\discr_k}_j
		= 
		\bu^{(\epsilon_k,h_k,\delta_k)}_j, 
		& 
		\pi^k_j
		&\coloneqq \pi^{\discr_k}_j
		= 
		\pi^{(\epsilon_k,h_k,\delta_k)}_j,
		\\
		\bsigma^{k}_j 
		&\coloneqq \bsigma^{\discr_k}_j,
		\qquad  \text{ and } &	\widehat \bsigma^{k}_j 
		&\coloneqq \Seps(\tr_\tau(\bu^k_j)) \;\; \text{ in case \ref{itm:case-impl-coerc},}
	\end{aligned}
\end{equation}
and analogously the right-continuous, piecewise constant interpolations are $\bu^k, \pi^k, \bf^k$, $\bsigma^k$ and $\widehat \bsigma^k$. For notational brevity, 
we sometimes write  $h$ and $\delta$ instead of $h_k$ and $\delta_{k}$ in what follows.

\begin{lemma}[a~priori estimates]\label{lem:apriori-unst}
In the situation of Assumption~\ref{assump:general} there exists a constant $c>0$ depending only on 
{$\norm{ \bf}_{L^2(I;B)}^2$}, $\norm{ \bu_0}_B$, and on the constants in Assumption~\ref{assump:gbd-reg}~\ref{itm:gbd-eps-q} such that if $(\bu_j^k)_{j=1}^{m_k}$ solves \eqref{eq:unst-discr-a}, then the a~priori estimate
	\begin{align}\label{eq:Tabowser}
		\max_{j \in \{ 1, \ldots, m_k \}} \norm{{\bu}^k_j}_B^2  + \sum_{j = 1}^{m_k} \norm{{\bu}^k_j - {\bu}^k_{j-1}}_B^2 
		+  \delta_k \sum_{j = 1}^{m_k}  \norm{\bu_j^k}_{X_{h_k}}^2 
		& \leq c,
	\end{align}
	holds for all $k \in \mathbb{N}$. 
	Moreover, we have the following uniform estimates on the piecewise constant interpolants:
	\begin{align} \label{est:ap-pwconst}
		\begin{split}
			\norm{ \bu^k}_{L^\infty(I;B)}  
			+ 	\norm{ \bu^k}_{L^2(I;X_{h_k})}
			&\leq c,\\
			\norm{ \bu^k}_{L^\infty(I;L^2(\Omega))} 
			+ 	\norm{ \bu^k}_{L^2(I;H^1(\Omega))}
			+	\norm{\bu^k}_{L^{\frac{8}{d}}(I;L^4(\Omega))}
			+ \norm{\tr(\bu^k)}_{L^2(I;L^p(\Gamma))}  
			&\leq c,
		\end{split}
	\end{align}
	
	for any $k \in \mathbb{N}$ for any $p \in [1, \infty)$ with $p \leq 2^\sharp$. 
	Furthermore, we have 
	\begin{alignat*}{2}
		\norm{\bsigma^k}_{L^{2}(I;L^{2}(\Gamma))}  & \leq c \qquad && \text{ in case~\ref{itm:case-expl-noncoerc}, } 
		\\
		\norm{\tr_{\tau}(\bu^k)}_{L^{r}(I;L^{r}(\Gamma))} + 		\norm{\widehat{\bsigma}^k}_{L^{r'}(I;L^{r'}(\Gamma))}  & \leq c \qquad && \text{ in case \ref{itm:case-impl-coerc}, }
	\end{alignat*}
	for any $k \in \mathbb{N}$. 
\end{lemma}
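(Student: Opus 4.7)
\medskip
\noindent\textbf{Proof plan.} The natural strategy is to test the discrete equation \eqref{eq:unst-discr-a} with $(\bv_h,q_h)=(\bu^k_j,\pi^k_j)$ in each time step, sum over $j$ and absorb the bad terms. Let me walk through the main cancellations and bounds.

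First, the two pairs of pressure terms $\pm\skp{\pi^k_j}{\divergence\bu^k_j}$ and $\pm\skp{\pi^k_j}{\bu^k_j\cdot\bn}_{\Gamma}$ cancel. The convective term vanishes by skew-symmetry of $\widetilde b(\bu^k_j,\cdot,\cdot)$, see \eqref{def:conv-term-num}. For the time-derivative term I would invoke the Hilbert-space identity \eqref{eq:difft} to get the telescoping quantity $\tfrac{1}{2\delta_k}(\|\bu^k_j\|_B^2-\|\bu^k_{j-1}\|_B^2+\|\bu^k_j-\bu^k_{j-1}\|_B^2)$. The viscous term gives $2\nu\|\BD\bu^k_j\|_{L^2(\Omega)}^2$ and the penalisation term contributes $\nu\alpha\|h_\Gamma^{-1/2}\bu^k_j\cdot\bn\|_{L^2(\Gamma)}^2$, so the sum is exactly $\nu\min(2,\alpha)\|\bu^k_j\|_{X_{h_k}}^2$ modulo the cross term.

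The crux is to absorb the Nitsche consistency term $-4\nu\skp{((\BD\bu^k_j)\bn)\cdot\bn}{\bu^k_j\cdot\bn}_{\Gamma}$ and the boundary stress term into this coercive contribution. For the former I would pair $h_\Gamma^{1/2}((\BD\bu^k_j)\bn)\cdot\bn$ with $h_\Gamma^{-1/2}\bu^k_j\cdot\bn$, apply Young's inequality with a parameter $\theta>0$ and then use the inverse trace inequality (Lemma~\ref{lem:trace-inverse}) together with $|A\bn\cdot\bn|\leq|A|$ to control $\|h_\Gamma^{1/2}\BD\bu^k_j\|_{L^2(\Gamma)}^2\lesssim dc_{\tr}^2\|\BD\bu^k_j\|_{L^2(\Omega)}^2$. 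For the boundary stress term I distinguish the two cases: in \ref{itm:case-expl-noncoerc} the bound \eqref{est:case1-bd} gives a lower estimate of the form $-\lambda(c+\|\tr_\tau(\bu^k_j)\|_{L^2(\Gamma)}^2)$; in \ref{itm:case-impl-coerc} the monotone part yields $\skp{\widehat{\bsigma}^k_j}{\tr_\tau(\bu^k_j)}_\Gamma\gtrsim\|\widehat{\bsigma}^k_j\|_{L^{r'}(\Gamma)}^{r'}+\|\tr_\tau(\bu^k_j)\|_{L^r(\Gamma)}^r -c$ by Assumption~\ref{assump:gbd-reg}\,\ref{itm:gbd-eps-q}, while the $-\lambda\tr_\tau(\bu^k_j)$ contribution leaves the same $-\lambda\|\tr_\tau(\bu^k_j)\|_{L^2(\Gamma)}^2$ bad term. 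The latter is dominated, via \eqref{est:tr-K}, by $\lambda c_{\tr,K}^2\|\bu^k_j\|_{X_{h_k}}^2$. Collecting everything, the smallness condition \eqref{est:lambda} and the lower bound \eqref{est:alpha} on $\alpha$ in Assumption~\ref{as:param} are tailored precisely so that both $\theta dc_{\tr}^2+\lambda c_{\tr,K}^2<2\nu$ and $4\nu^2/\theta+\lambda c_{\tr,K}^2<\nu\alpha$ can be arranged simultaneously for a suitable $\theta>0$. This yields
\begin{align*}
\tfrac{1}{2\delta_k}\!\bigl(\|\bu^k_j\|_B^2-\|\bu^k_{j-1}\|_B^2+\|\bu^k_j-\bu^k_{j-1}\|_B^2\bigr)+c_0\nu\|\bu^k_j\|_{X_{h_k}}^2\leq (\bf^k_j,\bu^k_j)_B+c_1
\end{align*}
for some $c_0,c_1>0$ independent of $k$, where in case \ref{itm:case-impl-coerc} an extra term $c\|\widehat\bsigma^k_j\|_{L^{r'}(\Gamma)}^{r'}+c\|\tr_\tau(\bu^k_j)\|_{L^r(\Gamma)}^r$ sits on the left.

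The right-hand side is handled by Cauchy--Schwarz and Young, $|(\bf^k_j,\bu^k_j)_B|\leq\tfrac12\|\bf^k_j\|_B^2+\tfrac12\|\bu^k_j\|_B^2$, and multiplication by $2\delta_k$ followed by summation $j=1,\ldots,\ell$ for $\ell\leq m_k$ yields a telescoping structure on the $\|\cdot\|_B^2$-terms plus $\delta_k\sum_{j\leq\ell}\|\bu^k_j\|_B^2$ on the right; a discrete Gronwall argument together with the $L^2$-stability \eqref{est:L2-stab-f} and the stability \eqref{est:PiBh-stab-u0} of the initial datum then yield \eqref{eq:Tabowser}. The piecewise-constant-in-time bounds in \eqref{est:ap-pwconst} follow by translating sums into Bochner norms via the definition of $\mathcal{L}^0_0(J_\delta;\cdot)$ together with \eqref{est:H1-h}, the trace estimate \eqref{est:trace-h} and the interpolation Lemma~\ref{lem:interp} applied to $\bu^k\in L^\infty(I;L^2(\Omega))\cap L^2(I;H^1(\Omega))$. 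Finally, in case \ref{itm:case-expl-noncoerc} the bound on $\bsigma^k$ is immediate from \eqref{est:case1-bd2} combined with the already-established trace estimate, and in case \ref{itm:case-impl-coerc} the bound on $\widehat\bsigma^k$ drops out of the coercivity estimate Assumption~\ref{assump:gbd-reg}\,\ref{itm:gbd-eps-q} that is now sitting on the left-hand side of the summed inequality.

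The main obstacle is the delicate absorption in the second paragraph: one must treat the Nitsche consistency term, the $\lambda$-term arising from nonmonotonicity/noncoercivity, and the coercive discrete energy simultaneously, and it is precisely this that forces the quantitative thresholds \eqref{est:lambda}--\eqref{est:alpha}. Everything else is rather mechanical, and the constant $c$ in the final bounds depends only on $\|\bf\|_{L^2(I;B)}$, $\|\bu_0\|_B$, $\nu$, $\alpha-2dc_{\tr}^2$, $c_\lambda-\lambda$, and the coercivity constant in Assumption~\ref{assump:gbd-reg}.
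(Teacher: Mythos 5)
Your proposal is correct and follows essentially the same strategy as the paper: test \eqref{eq:unst-discr-a} with $(\bu^k_j,\pi^k_j)$, use \eqref{eq:difft} for the telescoping time term, absorb the Nitsche cross term via the inverse trace inequality and the $\lambda$-terms via \eqref{est:tr-K} under Assumption~\ref{as:param}, then sum, and derive the remaining bounds from interpolation, the trace inequality, \eqref{est:case1-bd2} and the coercivity of $\Seps$. Two minor deviations are worth noting. First, for the cross term you use Young's inequality with a free parameter $\theta$, whereas the paper uses the algebraic inequality \eqref{eq:Tabinatora} with $\omega=\alpha/2$, $\beta=d^{1/2}c_{\tr}$, which produces the coercive constant $2\nu\frac{\alpha-2dc_{\tr}^2}{2+\alpha}$ exactly matching the threshold \eqref{est:alpha}; a short computation shows your Young-based absorption in fact requires a slightly weaker lower bound on $\alpha$, so Assumption~\ref{as:param} certainly suffices and your claim that a suitable $\theta$ exists is justified. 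Second, for the force term you invoke Young against $\tfrac12\norm{\bu^k_j}_B^2$ and a discrete Gronwall argument; this works but tacitly needs $\delta_k<1$ (harmless, since $\delta_k\to0$ and finitely many $k$ are trivial), while the paper instead bounds $(\bf^k_j,\bu^k_j)_B\leq c(\rho)\norm{\bf^k_j}_B^2+\rho\norm{\bu^k_j}_{X_{h_k}}^2$ via \eqref{est:norm-B-h} and absorbs the second term into the coercive part, avoiding Gronwall and any restriction on $\delta_k$ altogether.
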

\begin{proof}
	
	Testing~\eqref{eq:unst-discr} with $(\bu^{k}_j,\pi_j^{k}) \in \Xh\times Q_h$ and using the notation in~\eqref{def:discr-short} yields
	\begin{equation}\label{est:unst-ap-1}
		\begin{aligned}
			({\difft {\bu}^k_j},{\bu}^k_j)_B
			&+  2 \nu \norm{\Du^k_j}_{L^2(\Omega)}^2 
			- 2\nu(1+\theta) \int_{\Gamma} ((\Du^k_j \, \bn)\cdot \bn) (\bu^k_j \cdot \bn) \ds \\
			& 
			+ \skp{\bsigma_{j}^k
			}{\tr_{\tau}(\bu^k_j) }_{\Gamma}
			+ \nu \alpha  \int_{\Gamma}h_{\Gamma}^{-1} \abs{\bu^k_j \cdot \bn}^2 \ds = ({\bf}_j^k,{\bu}^k_j)_B,
		\end{aligned}
	\end{equation}
	thanks to the skew-symmetry of the numerical convective term $\widetilde{b}$, cf.~\eqref{def:conv-term-num}.  
	Note that all terms containing the pressure vanish. 
	By~\eqref{eq:difft} we find that 
	\begin{align}\label{est:unst-ap-2}
		(\difft{\bu}^k_j,{\bu}^k_j)_B 
		= \frac{1}{2\delta_{k}} \left(\norm{{\bu}^k_j}_B^2 - \norm{{\bu}^k_{j-1}}_B^2 + \norm{{\bu}^k_j - {\bu}^k_{j-1}}_B^2 \right). 
	\end{align} 
	By a standard inverse trace estimate with constant  $c_{\mathrm{tr}}>0$, we obtain 
	\begin{equation}
		\begin{aligned}\label{est:unst-ap-3}
			\abs{\int_{\Gamma} (\Du^k_j \, \bn) \cdot \bn (\bu_j^k \cdot\bn) \ds }  
			& \leq  
			\norm{h_\Gamma^{1/2} \Du^k_j }_{L^2(\Gamma)} \norm{h_{\Gamma}^{-1/2} \bu^k_j \cdot \bn}_{L^2(\Gamma)} \\ 
			& \leq d^{\frac{1}{2}}c_{\mathrm{tr}}\norm{\Du^k_j }_{L^2(\Omega)} \norm{h_{\Gamma}^{-1/2} \bu^k_j \cdot \bn}_{L^2(\Gamma)},
		\end{aligned}
	\end{equation}
	where the factor $d$ arises as upper bound on the number of boundary faces adjacent to one $d$-simplex. We record from~\cite[Ex.\ 37.2]{EG.2021.2} that 
	\begin{align}\label{eq:Tabinatora}
		\xi^{2} - 2\beta \xi\eta + \omega\eta^{2} \geq \frac{\omega-\beta^{2}}{1+\omega}(\xi^{2}+\eta^{2})\qquad\text{for all}\;\xi,\eta,\beta,\omega\geq 0. 
	\end{align}
	Applying this inequality with the particular choice
	\begin{align*}
		\xi=\norm{\Du^k_j }_{L^2(\Omega)},\;\;\eta=\norm{h_{\Gamma}^{-1/2} \bu^k_j \cdot \bn}_{L^2(\Gamma)},\;\;\omega=\frac{\alpha}{2}\;\;\text{and}\;\;\beta=\tfrac{1}{2}(1+\theta)d^{\frac{1}{2}}c_{\mathrm{tr}}
	\end{align*}
	and recalling the definition of the norm $\norm{\cdot}_{X_h}$ in~\eqref{def:norm-h} gives us
	\begin{equation}
		\begin{aligned}
			\label{est:unst-ap-4}
			\nu & \left(	2\norm{\Du^k_j}_{L^2(\Omega)}^2    - 2(1+\theta) \int_{\Gamma } ((\Du^k_j \, \bn)\cdot \bn) (\bu^k_j \cdot \bn) + \alpha \int_{\Gamma}h_{\Gamma}^{-1} \abs{\bu^k_j \cdot \bn}^2 \ds\right) \\ & \stackrel{\eqref{est:unst-ap-3}}{\geq}  2\nu\Big(\norm{\Du^k_j }_{L^2(\Omega)}^{2}- {(1+\theta)}d^{\frac{1}{2}}c_{\mathrm{tr}}\norm{\Du^k_j }_{L^2(\Omega)} \norm{h_{\Gamma}^{-1/2} \bu^k_j \cdot \bn}_{L^2(\Gamma)} + \frac{\alpha}{2}\norm{h_{\Gamma}^{-1/2} \bu^k_j \cdot \bn}_{L^2(\Gamma)} ^{2}\Big)\\ 
			& \stackrel{\eqref{eq:Tabinatora}}{\geq} 
			2 \nu { \frac{\alpha - \frac{1}{2}(1+\theta)^2dc_{\mathrm{tr}}^2 }{2 + \alpha} }
			\left( \norm{\Du^k_j}_{L^2(\Omega)}^2 + \norm{h_{\Gamma}^{-1/2}\bu^k_j \cdot \bn}_{L^2(\Gamma)}^2
			\right) \\ &
			\stackrel{\eqref{def:norm-h}}{=}  2 \nu { \frac{\alpha -\frac{1}{2}(1+\theta)^2dc_{\mathrm{tr}}^2 }{2 + \alpha} }
			\norm{\bu^k_j}_{X_h}^2. 
		\end{aligned}
	\end{equation}
	In case~\ref{itm:case-expl-noncoerc}, by the boundedness of $\Srel$ in Assumption~\ref{assump:s-expl} with $r \leq 2$, see~\eqref{est:case1-bd}, we have that
	\begin{align}\label{est:unst-ap-5-c1}
		\begin{split}
			\skp{\bsigma^k_j}{\tr_{\tau}(\bu^k_j)}_{\Gamma}	& \stackrel{\eqref{eq:unst-discr-b}}{=}
			\skp{\Srel(\tr_{\tau}(\bu^k_j))}{\tr_{\tau}(\bu^k_j)}_{\Gamma} \\ & 
			\;\;\geq - \abs{\skp{\Srel(\tr_{\tau}(\bu^k_j))}{\tr_{\tau}(\bu^k_j)}_{\Gamma} }
			\geq - \lambda \left(c +   \norm{\tr_{\tau}(\bu^k_j)}_{L^2(\Gamma)}^2\right).  \end{split}
	\end{align}
	In case~\ref{itm:case-impl-coerc}, using the coercivity of $\Seps$ due to Assumption~\ref{assump:gbd-reg}, we obtain 
	\begin{align}\label{est:unst-ap-5-c2}
		\begin{split}
			\skp{\bsigma^k_j}{\tr_{\tau}(\bu^k_j)}_{\Gamma} & \stackrel{\eqref{eq:unst-discr-b}}{=} 
			\skp{\Seps(\tr_{\tau}(\bu^k_j)) - \lambda \tr_{\tau}(\bu^k_j) }{\tr_{\tau}(\bu^k_j)}_{\Gamma} \\ & 
			\;\;\geq  - c -  \lambda \norm{\tr_{\tau}(\bu^k_j)}_{L^2(\Gamma)}^2.
		\end{split}
	\end{align}
	In both cases, using the trace and Korn's inequality~\eqref{est:tr-K} we find that 
	\begin{align}\label{est:unst-ap-5}
		\skp{\bsigma^k_j}{\tr_{\tau}(\bu^k_j)}_{\Gamma}
		\geq 	
		- c \,\max(\lambda,1)	- \lambda \norm{\tr_\tau (\bu_{j}^k)}_{L^2(\Gamma)}^2 
		\geq 
		- c\,\max(\lambda,1)	- \lambda c_{\mathrm{tr},K}^2 \norm{ \bu_j^k}_{X_h}^2. 
	\end{align}	
	Combining~\eqref{est:unst-ap-1}--\eqref{est:unst-ap-5} and employing the conditions~\eqref{est:lambda} and~\eqref{est:alpha} on $\lambda$ and $\alpha$, there exist constants $c, C>0$ such that 
	\begin{align}\label{est:unst-ap-6}
		\frac{1}{2\delta_{k}} \left(\norm{{\bu}^k_j}_B^2 - \norm{{\bu}^k_{j-1}}_B^2 + \norm{{\bu}^k_j - {\bu}^k_{j-1}}_B^2 \right)
		+ 
		c \norm{ \bu_j^k}_{X_h}^2 
		\leq
		( \bf_j^k, \bu_j^k)_B + C.
	\end{align}
	Estimating the right-hand side of~\eqref{est:unst-ap-6} by use of the Cauchy--Schwarz inequality, employing the estimate~\eqref{est:norm-B-h} and Young's inequality with $\rho>0$, and we arrive at 	
	\begin{equation}\label{est:unst-ap-7}
		\begin{aligned}
			({\bf}^k_j,{\bu}^k_j)_B 
			&  \leq 
			\norm{ \bf_j^k}_{B} \norm{  \bu_j^k}_{B}	
			\leq 
			c \norm{ \bf_j^k}_{B} \norm{ \bu_j^k}_{X_h}
			\leq 
			c(\rho) \norm{ \bf_j^k}_{B}^2 +  \rho \norm{ \bu_j^k}_{X_h}^2. 
		\end{aligned}
	\end{equation}
	For sufficiently small $\rho>0$, this allows us to absorb the $\norm{\cdot}_{X_h}$-term on the right-hand side of~\eqref{est:unst-ap-7} into the left-hand side of~\eqref{est:unst-ap-6}, i.e., we have
	\begin{align}\label{est:unst-ap-8}
		\tfrac{1}{2\delta_k} \left(\norm{{\bu}^k_j}_B^2 - \norm{{\bu}^k_{j-1}}_B^2 
		+ \norm{{\bu}^k_j - {\bu}^k_{j-1}}_B^2 \right) 
		+ 	c\norm{ \bu_j^k}_{X_h}^2 
		\leq C \left(\norm{ \bf_j^k}_{B}^2 + 1\right),
	\end{align}
	uniformly in $k$.  
	Multiplying by $\delta_k$ and summing over $j\in\{ 1, \ldots, i\}$ yields for an arbitrary $i  \in \{1, \ldots, m_k\}$ 
	\begin{align}\label{est:unst-ap-9}
		\norm{{\bu}^k_i}_B^2 
		+ \sum_{j = 1}^i \norm{{\bu}^k_j - {\bu}^k_{j-1}}_B^2 
		+ 	c  \delta_k \sum_{j = 1}^i  \norm{ \bu_j^k}_{X_h}^2 
		\leq C \left(
		\delta_k \sum_{j = 1}^{m_k} \norm{ \bf_j^k}_{B}^2  + \norm{{\bu}^k_{0}}_B^2 + 1 \right). 
	\end{align}
	Applying the $L^2(I;B)$-stability of $\Pi_{\delta} \colon L^2(I;B) \to \mathcal{L}^0_0(I_\delta;B)$ from  see~\eqref{est:L2-stab-f}, and the~$B$-stability of $\Pi_{h_k} \colon B \to \Bh$ from~\eqref{est:PiBh-stab-u0}, we arrive at the  a priori-estimate 
	\begin{equation}\label{est:unst-ap-10}
		\begin{aligned} 
			\max_{j = 1, \ldots, m_k} \norm{{\bu}^k_j}_B^2  + \sum_{j = 1}^{m_k} \norm{{\bu}^k_j - {\bu}^k_{j-1}}_B^2 
			+ 	c  \delta_k \sum_{j = 1}^{m_k}  \norm{ \bu_j^k}_{X_h}^2  
			& \leq 	 C \left( \norm{ \bf}_{L^2(I;B)}^2 + \norm{ \bu_0}_B^2 + 1 \right),
		\end{aligned} 
	\end{equation}
	with constants independent of $k$. 
	This establishes~\eqref{eq:Tabowser}. 
	
	By virtue of Lemma~\ref{lem:h-norm}, ~\eqref{est:unst-ap-10} particularly yields that the sequence $({\bu^k})_{k \in \mathbb{N}}$ is bounded in the space $L^\infty(I;L^2(\Omega)^d) \cap L^2(I;H^1(\Omega)^d)$. Thus, 
	by interpolation via~\eqref{est:interp}, we obtain uniform bounds on the sequence $(\bu^{k})_{k \in \mathbb{N}}$ in $L^{\frac{8}{d}}(I;L^4(\Omega)^d)$ provided that $d \leq 3$. 
	Moreover, based on the bounds on $\bu^k$ in $L^2(I;H^1(\Omega)^d)$ and the trace inequality~\eqref{est:trace}, there exists a constant $c>0$ independent of $k$ such that 
	\begin{align}\label{est:unst-ap-11}
		\norm{\tr(\bu^k)}_{L^2(I;L^p(\Gamma))} 
		\leq c	\norm{\bu^k}_{L^2(I;H^1(\Omega))}
		\leq c
	\end{align} 
	holds for any $p \in [1,\infty)$ with $p \leq 2^\sharp$, uniformly in $k$. 
	
	In case~\ref{itm:case-expl-noncoerc} we obtain by~\eqref{est:case1-bd2} and the bound on $\tr_{\tau}(\bu^k)$ in~\eqref{est:unst-ap-11} with $2 < 2^\sharp$ that 
	\begin{align}\label{est:unst-ap-12}
		\norm{\bsigma^k}_{L^2(I;L^2(\Gamma))} 
		\leq c
	\end{align} 
	uniformly in $k \in \mathbb{N}$. 
    In case~\ref{itm:case-impl-coerc} using the coercivity 
	of $\Seps$ due to Assumption~\ref{assump:gbd-reg}~\ref{itm:gbd-eps-q} we obtain 
	\begin{alignat}{3}
		\label{est:unst-ap-5a}
		c\left(	 \norm{\Seps(\tr_{\tau}(\bu_{j}^k))}_{L^{r'}(\Gamma)}^{r'} 
		+ \norm{\tr_\tau (\bu_{j}^k)}_{L^r(\Gamma)}^r
		- 1 \right)
		& \leq 	\skp{\Seps(\tr_{\tau}(\bu_{j}^k)) }{\tr_{\tau}(\bu_{j}^k)}_{\Gamma}
		\qquad && \text{ if } r > 1,\\
		\label{est:unst-ap-5b}		 
        c\left(\norm{\tr_\tau (\bu_{j}^k)}_{L^r(\Gamma)}^r - 1 \right) &\leq 
			\skp{\Seps(\tr_{\tau}(\bu_{j}^k)) }{\tr_{\tau}(\bu_{j}^k)}_{\Gamma} 
            & &\text{ and }
            \\
			\norm{\Seps(\tr_{\tau}(\bu_{j}^k))}_{L^{\infty}(\Gamma)}& \leq c 
		\qquad &&\text{ if } r  =  1.
	\end{alignat} 
	This means, that~\eqref{est:unst-ap-6} also holds with	 $\norm{\tr_{\tau}(\bu^k_j)}_{L^{r}}^{r}   + \norm{\widehat{\bsigma}^k_j}_{L^{r'}}^{r'}$ on the left-hand side, and hence we have
	\begin{align}\label{est:unst-ap-13}
		\norm{\tr_{\tau}(\bu^k) }_{L^{r}(I;L^{r}(\Gamma))} + 	\norm{\widehat \bsigma^k}_{L^{r'}(I;L^{r'}(\Gamma))}  \leq c,
	\end{align}
	with constant independent of $k$. 
	This proves the claim.
\end{proof}

\begin{lemma}[existence of approximate solutions]\label{lem:ex} 
	Under the conditions of Assumption~\ref{assump:general} for each $k \in \mathbb{N}$ there exists a sequence of pairs $( \bu^k_j, \pi^k_j)_{j \in \{0, \ldots, m_k\}} \subset   X_{h_k,\divergence} \times Q_{h_k}$  solving~\eqref{eq:unst-discr}.  
\end{lemma}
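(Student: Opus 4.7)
The plan is to proceed by induction on the time level $j \in \{0,\ldots,m_k\}$. The base case $j=0$ is immediate from the definition $\bu^k_0 = \Pi_{h_k}\bu_0$ and the fact that $\Pi_{h_k}\bu_0 \in W_{h_k}\subset X_{h_k,\divergence}$. Fix $j\geq 1$ and assume $\bu^k_{j-1}\in X_{h_k,\divergence}$ is given. The key observation is that on the discretely divergence-free subspace, the pressure terms in~\eqref{eq:unst-discr-a} drop out: testing with $(\b0,q_h)$ forces the velocity iterate into $X_{h_k,\divergence}$, and conversely, for any $\bv_h\in X_{h_k,\divergence}$ and $\pi \in Q_{h_k}$, the combined pressure contribution $-\langle \pi,\divergence \bv_h\rangle + \langle \pi,\bv_h\cdot\bn\rangle_\Gamma$ vanishes by~\eqref{def:Xdiv}. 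Thus it suffices to solve a pressure-free problem in the finite-dimensional space $X_{h_k,\divergence}$, and then recover the pressure afterwards.

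First, I would define the continuous map $F\colon X_{h_k,\divergence}\to X_{h_k,\divergence}$ by Riesz representation in $(B,(\cdot,\cdot)_B)$, so that for each $\bv\in X_{h_k,\divergence}$,
\begin{align*}
(F(\bv),\bw)_B
&= \tfrac{1}{\delta_k}(\bv-\bu^k_{j-1},\bw)_B
+ 2\nu\skp{\BD\bv}{\BD\bw}
+ \widetilde{b}(\bv,\bv,\bw)
+ \skp{\bsigma(\bv)}{\tr_\tau(\bw)}_\Gamma
\\
&\quad + \nu\alpha\skp{h_\Gamma^{-1}\bv\cdot\bn}{\bw\cdot\bn}_\Gamma
- 2\nu\skp{(\BD\bv\,\bn)\cdot\bn}{\bw\cdot\bn}_\Gamma
- 2\nu\skp{(\BD\bw\,\bn)\cdot\bn}{\bv\cdot\bn}_\Gamma
- (\bf^k_j,\bw)_B,
\end{align*}
for all $\bw\in X_{h_k,\divergence}$, where $\bsigma(\bv)=\Srel(\tr_\tau \bv)$ in case~\ref{itm:case-expl-noncoerc} and $\bsigma(\bv)=\Seps(\tr_\tau \bv)-\lambda\tr_\tau \bv$ in case~\ref{itm:case-impl-coerc}. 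Continuity of $F$ follows from continuity of $\Srel$ respectively $\Seps$ (cf.\ Assumptions~\ref{assump:s-expl} and~\ref{assump:gbd-reg}) and finite-dimensionality. A zero of $F$ is precisely a velocity iterate $\bu^k_j\in X_{h_k,\divergence}$ solving the pressure-free part of~\eqref{eq:unst-discr-a}.

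Next, I would verify the coercivity-type estimate $(F(\bv),\bv)_B>0$ on a sphere of sufficiently large radius in the norm $\|\cdot\|_{X_{h_k}}$. The computation is essentially a single-time-step version of the a~priori estimate in Lemma~\ref{lem:apriori-unst}: the skew-symmetry of $\widetilde{b}$ annihilates the convective contribution $\widetilde{b}(\bv,\bv,\bv)=0$; the identity $(\bv-\bu^k_{j-1},\bv)_B\geq \tfrac{1}{2}(\|\bv\|_B^2-\|\bu^k_{j-1}\|_B^2)$ handles the time-derivative term; the Nitsche consistency and penalty terms are controlled via the inequality~\eqref{eq:Tabinatora} as in~\eqref{est:unst-ap-4}, using the parameter choice of Assumption~\ref{as:param}; and the boundary nonlinearity $\skp{\bsigma(\bv)}{\tr_\tau \bv}_\Gamma$ is estimated below by~\eqref{est:unst-ap-5-c1}/\eqref{est:unst-ap-5-c2}, where the smallness of $\lambda$ ensures absorption of the resulting $\|\bv\|_{X_h}^2$ term. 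Collecting terms and applying Cauchy--Schwarz and Young's inequality to $(\bf^k_j,\bv)_B$ yields
\begin{align*}
(F(\bv),\bv)_B \geq c_1\|\bv\|_{X_{h_k}}^2 - c_2\bigl(1+\|\bu^k_{j-1}\|_B^2+\|\bf^k_j\|_B^2\bigr),
\end{align*}
with $c_1,c_2>0$ depending on $\delta_k$, $h_k$, $\nu$, $\alpha$ and the data. Since the right-hand side is strictly positive once $\|\bv\|_{X_{h_k}}$ exceeds some radius $R>0$, the classical Brouwer-type corollary \cite[Ch.~II, Lemma~1.4]{Temam1984} furnishes a $\bu^k_j\in X_{h_k,\divergence}$ with $F(\bu^k_j)=\b0$.

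Finally, the pressure $\pi^k_j\in Q_{h_k}$ is recovered from the discrete inf-sup condition in Assumption~\ref{assump:fem}\ref{itm:d-inf-sup-H10}: the linear functional on $X_{h_k}\cap H^1_0(\Omega)^d$ defined by the residual of~\eqref{eq:unst-discr-a} (omitting the pressure contributions) vanishes on the kernel of $\bv_h\mapsto\divergence \bv_h$ there, so by inf-sup stability there exists $\pi^k_j\in Q_{h_k}$ representing it, and because such $\bv_h$ have $\bv_h\cdot\bn\equiv 0$, the boundary Nitsche pressure terms produce no additional constraint. The only mildly delicate step is ensuring coercivity of $F$ on the whole of $X_{h_k,\divergence}$, which hinges on the parameter restriction~\eqref{est:lambda}--\eqref{est:alpha}; all other ingredients are standard and finite-dimensional.
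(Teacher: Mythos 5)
Your proposal is correct and follows essentially the same route as the paper: restrict to the discretely divergence-free space (where the pressure terms cancel by~\eqref{def:Xdiv}), obtain the velocity iterate from the single-time-step version of the a~priori estimate via a Brouwer-type (acute-angle) corollary, and recover the pressure from the discrete inf-sup condition in Assumption~\ref{assump:fem}. The paper's proof is just a terse statement of exactly these three steps (citing Granas--Dugundji instead of Temam for the fixed-point corollary), so your write-up merely supplies the details it leaves implicit.
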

\begin{proof}
	Testing with $(0,q_{h_k})$ for $q_{h_k} \in Q_{h_k}$ arbitrary in~\eqref{eq:unst-discr} yields that $\bu_j^k \in X_{h_k,\divergence}$ for any $j \in \{1, \ldots, m_k\}$.
  The existence of $\bu^k_j$ follows from the a priori estimates and a corollary of Brouwer's
	fixed point theorem, see~\cite[§~5.7, (G.7), p.~104]{Granas2003}. 
	The existence of $\pi^k_j$ then is a routine  consequence of the inf-sup condition. 
\end{proof}
Next, we give a slightly improved estimate on the time increment: 

\begin{lemma}[time increment]\label{lem:dt-est} 
Under the conditions of Assumption~\ref{assump:general} with $ \norm{\cdot}_{Y_{h_k}}$ as in~\eqref{def:Yh} there exists a constant $c>0$ such that 
	\begin{align}\label{def:sigma-k}
		\delta_k\sum_{j = 1}^{m_k} \norm{\difft {\bu}^k_j}_{Y_{h_k}}  
		\leq c \qquad \text{ for all } k \in \mathbb{N}. 
	\end{align}
\end{lemma}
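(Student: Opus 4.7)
\medskip
\noindent\emph{Proof plan.}
The strategy is to bound $\|\difft \bu^k_j\|_{Y_{h_k}}$ pointwise in $j$ by testing the discrete equation \eqref{eq:unst-discr-a} against discretely divergence-free functions (so that the pressure drops out entirely) and then to sum in time using the a priori bounds from Lemma~\ref{lem:apriori-unst}.

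First, I would fix an arbitrary $\bphi_h \in W_{h_k} = X_{h_k,\diver}$ and insert $(\bv_h,q_h)=(\bphi_h,0)$ into \eqref{eq:unst-discr-a}. Since $\pi^k_j \in Q_{h_k}$ is an admissible test function in the very definition \eqref{def:Xdiv} of $X_{h_k,\diver}$, the identity $\skp{\pi^k_j}{\diver \bphi_h}=\skp{\pi^k_j}{\bphi_h\cdot\bn}_\Gamma$ holds, so that the two Nitsche pressure contributions cancel. Isolating $(\difft \bu^k_j,\bphi_h)_B$ on the left yields
\begin{align*}
(\difft \bu^k_j,\bphi_h)_B &= ({\bf}^k_j,\bphi_h)_B - 2\nu\skp{\BD\bu^k_j}{\BD\bphi_h} - \widetilde b(\bu^k_j,\bu^k_j,\bphi_h) - \skp{\bsigma^k_j}{\tr_\tau(\bphi_h)}_\Gamma \\
&\quad - \nu\alpha\skp{h_\Gamma^{-1}\bu^k_j\cdot\bn}{\bphi_h\cdot\bn}_\Gamma + 2\nu\skp{(\BD\bu^k_j\,\bn)\cdot\bn}{\bphi_h\cdot\bn}_\Gamma + 2\nu\skp{(\BD\bphi_h\,\bn)\cdot\bn}{\bu^k_j\cdot\bn}_\Gamma.
\end{align*}

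Next, I would estimate each right-hand side term by $\|\bphi_h\|_{X_{h_k}}$ times a nonnegative quantity depending only on $(\bf^k_j,\bu^k_j,\bsigma^k_j)$: the source via Cauchy--Schwarz together with \eqref{est:H1-h}; the diffusion and Nitsche penalisation terms directly from the definition \eqref{def:norm-h} of $\|\cdot\|_{X_h}$; the convective term via the extension bound \eqref{eq:traccovanish}; the two residual Nitsche boundary contributions via the inverse trace inequality (Lemma~\ref{lem:trace-inverse}) exactly as in \eqref{est:unst-ap-3}; and the slip coupling $\skp{\bsigma^k_j}{\tr_\tau(\bphi_h)}_\Gamma$ via H\"older together with the trace embedding \eqref{est:trace-h}. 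Taking the supremum over $\bphi_h\in W_{h_k}$ produces a pointwise bound of the form
\[
\|\difft \bu^k_j\|_{Y_{h_k}} \leq c\bigl(\|\bf^k_j\|_B + \|\bu^k_j\|_{X_{h_k}} + \|\bu^k_j\|_{X_{h_k}}^2 + \|\bsigma^k_j\|_{L^{q}(\Gamma)}\bigr),
\]
with $q=2$ in case \ref{itm:case-expl-noncoerc} and, after splitting $\bsigma^k_j = \widehat\bsigma^k_j - \lambda\tr_\tau(\bu^k_j)$ in case \ref{itm:case-impl-coerc}, with $q=r'$ for the $\widehat\bsigma^k_j$-piece (the $\lambda$-correction being absorbed into the $\|\bu^k_j\|_{X_{h_k}}$-term through the trace inequality).

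To conclude, I would multiply by $\delta_k$ and sum over $j\in\{1,\ldots,m_k\}$, applying discrete H\"older in time: the first two summands are $L^2$-in-time controlled via \eqref{est:ap-pwconst}, the quadratic summand is $L^1$-in-time by the same estimate squared, and the $\bsigma^k$-contribution is bounded by $\|\bsigma^k\|_{L^2(I;L^2(\Gamma))}$ in case \ref{itm:case-expl-noncoerc} and by $\|\widehat\bsigma^k\|_{L^{r'}(I;L^{r'}(\Gamma))}$ in case \ref{itm:case-impl-coerc}, both uniformly bounded by Lemma~\ref{lem:apriori-unst}. The most delicate point, where I expect the greatest care is needed, is precisely the pressure cancellation, since it relies on the nonstandard discretely divergence-free condition \eqref{def:Xdiv} tailored to the Nitsche formulation rather than on the usual weakly divergence-free condition; the remaining bounds merely reassemble the trace, Korn-type and interpolation estimates collected in Section~\ref{sec:prel}.
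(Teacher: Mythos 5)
Your proposal is correct and follows essentially the same route as the paper's proof: test \eqref{eq:unst-discr-a} with $(\bphi_h,0)$ for $\bphi_h\in X_{h_k,\diver}$ so that the two pressure terms cancel via \eqref{def:Xdiv}, bound each remaining term by $\norm{\bphi_h}_{X_{h_k}}$ using the trace/inverse-trace and Korn-type estimates (with the same splitting $\bsigma^k_j=\widehat\bsigma^k_j-\lambda\tr_\tau(\bu^k_j)$ in case \ref{itm:case-impl-coerc}), and then sum in time using Lemma~\ref{lem:apriori-unst}. The only cosmetic difference is that you bound the convective contribution by $\norm{\bu^k_j}_{X_{h_k}}^2$ rather than $\norm{\bu^k_j}_{H^1(\Omega)}^2$, which is harmless by Lemma~\ref{lem:h-norm}.
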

\begin{proof}
	We divide the proof into two main steps.
	
	\textit{1. Step (preliminary estimates)}: 
	Let $j\in\{1,...,m_{k}\}$. 
	Let us first estimate 
	\begin{equation}
		\begin{aligned}\label{def:z-form}
			\wz(\bu^k_j, \bv_h) \coloneqq & 
			- 2\nu \skp{ \BD \bu^{k}_j}{ \BD \bv_h } 
			- \widetilde{b}(\bu^{k}_j, \bu^{k}_j, \bv_h)
			\\
			& 
			- \skp{\bsigma^k_j}{\tr_{\tau}(\bv_h) }_{\Gamma}
			-   \nu \alpha \skp{ h_{\Gamma}^{-1} \bu^{k}_j \cdot \bn}{  \bv_h \cdot \bn}_{\Gamma}
			\\ 
			& 
			+ \skp{ \left( (2\nu  \BD \bu_j^{k}) \bn \right) \cdot \bn }{ \bv_h \cdot \bn }_{\Gamma}
			+ {\theta}\skp{   \left((2 \nu \BD \bv_h) \bn \right) \cdot \bn }{ \bu_j^{k} \cdot \bn }_{\Gamma}
			+ ({\bf}_j^{k} ,{\bv}_h )_B \\ 
			 \eqqcolon & \mathrm{I}_{1}+...+\mathrm{I}_{7},
		\end{aligned}
	\end{equation}
	representing all but the time derivative and the pressure terms in the discrete equation~\eqref{eq:unst-discr} when tested with $(\bv_h, 0) \in (X_{d,\divergence},Q_h)$. 
	On the first term $\mathrm{I}_{1}$ we have with Lemma~\ref{lem:h-norm} that
	\begin{align}\label{eq:unst-incr-2}
		2 \nu \abs{\skp{ \BD \bu^{k}_j}{ \BD \bv_h }} 
		\lesssim 
		\norm{\bu_j^k}_{H^1(\Omega)} \norm{\bv_h}_{H^1(\Omega)} 
		\lesssim 
		\norm{ \bu_j^k}_{X_h} \norm{ \bv_h}_{X_h}. 
	\end{align}
	By~\eqref{eq:traccovanish} and Lemma~\ref{lem:h-norm}, the numerical convective term~\eqref{def:conv-term-num} in $\mathrm{I}_{2}$   can be estimated as
	\begin{equation}\label{eq:unst-incr-3}
		\begin{aligned}
			\abs{ \widetilde b(\bu^k_j,\bu^k_j,\bv_h)}
			& \lesssim 
			\norm{ \bu^k_j}_{H^1(\Omega)}^2
			\norm{ \bv_h}_{H^1(\Omega)}
			\lesssim 	\norm{ \bu^k_j}_{H^1(\Omega)}^2
			\norm{ \bv_h}_{X_h}. 
		\end{aligned}
	\end{equation}
	In case~\ref{itm:case-expl-noncoerc} using Hölder's inequality and estimate~\eqref{est:trace-h} in Lemma~\ref{lem:h-norm} with $p = 2 \leq 2^\sharp$ we estimate  term $\mathrm{I}_{3}$ as 
		\begin{align}\label{eq:unst-incr-4-c1}
			\abs{\skp{\bsigma^k_j}{\tr_{\tau}(\bv_h) }_{\Gamma}} 
			\leq  \norm{\bsigma^k_j}_{L^2(\Gamma)} \norm{ \bv_h}_{L^2(\Gamma)}
			\lesssim \norm{\bsigma^k_j}_{L^2(\Gamma)} \norm{ \bv_h}_{X_h}. 
		\end{align}
		On the other hand, in case~\ref{itm:case-impl-coerc}, we have $\bsigma_j^k = \widehat \bsigma^k_j - \lambda \tr_{\tau} (\bu_j^k) $, and hence term  $\mathrm{I}_{3}$ is estimated with the same arguments as follows
		\begin{equation}
			\begin{aligned}
				\label{eq:unst-incr-4-c2}
				\abs{\skp{\bsigma^k_j}{\tr_{\tau}(\bv_h) }_{\Gamma}} 
				&\leq  \norm{\widehat \bsigma^k_j}_{L^{r'}(\Gamma)} \norm{ \bv_h}_{L^{r}(\Gamma)} + \lambda 
				\norm{ \tr_{\tau}(\bu^k_j)}_{L^2(\Gamma)} \norm{ \bv_h}_{L^2(\Gamma)}\\
				&\lesssim \left(\norm{\widehat \bsigma^k_j}_{L^{r'}(\Gamma)}  +  
				\norm{\bu^k_j}_{X_h} \right) 
				\norm{ \bv_h}_{X_h}. 
			\end{aligned}
		\end{equation}
		Denoting 
		\begin{align}\label{eq:def-I3k}
			(\mathrm{I}_{3})^k_j \coloneqq \begin{cases}
				\norm{\bsigma^k_j}_{L^2(\Gamma)} \quad &\text{ in case } \ref{itm:case-expl-noncoerc},\\
				\norm{\widehat \bsigma^k_j}_{L^{r'}(\Gamma)}  \quad &\text{ in case } \ref{itm:case-impl-coerc},
			\end{cases}
		\end{align}
		we may summarize both estimates~\eqref{eq:unst-incr-4-c1} and~\eqref{eq:unst-incr-4-c2} as 
		\begin{align}\label{eq:unst-incr-4}
			\abs{\skp{\bsigma^k_j}{\tr_{\tau}(\bv_h) }_{\Gamma}} 
			\lesssim 
			\left(  (\mathrm{I}_{3})^k_j +  \norm{\bu^k_j}_{X_h}  \right) 
			\norm{ \bv_h}_{X_h}. 
		\end{align}
	For the next trace term $\mathrm{I}_{4}$ in~\eqref{def:z-form}, we employ the definition of $\norm{\cdot}_{X_h}$, see~\eqref{def:norm-h}, to find
	\begin{equation}
		\begin{aligned}\label{eq:unst-incr-6}
			\skp{h_{\Gamma}^{-1} \tr(\bu^{k}_j) \cdot \bn}{\bv_h\cdot \bn }_{\Gamma}
			&\leq 
			\norm{h_{\Gamma}^{-1/2}\bu^k_j \cdot \bn}_{L^2(\Gamma)} \norm{h_{\Gamma}^{-1/2} \bv_h \cdot \bn}_{L^2(\Gamma)} 
			&
			\leq 
			\norm{ \bu^k_j}_{X_h}
			\norm{ \bv_h}_{X_h}.
		\end{aligned}
	\end{equation}
	As to $\mathrm{I}_{5}$, we proceed as in~\eqref{est:unst-ap-3} and obtain 
	\begin{equation}
		\begin{aligned}\label{eq:unst-incr-7}
			\abs{\skp{ \left( (
					\BD \bu_j^{k}) \bn \right) \cdot \bn }{ \bv_h \cdot \bn }_{\Gamma} } 
			& \lesssim  
			\norm{ \BD \bu_j^{k} }_{L^2(\Omega)}
			\norm{h_{\Gamma}^{-1/2} \bv_h \cdot \bn }_{L^2(\Gamma)}
			\lesssim   
			\norm{ \bu_j^{k} }_{X_h}
			\norm{ \bv_h}_{X_h},
		\end{aligned}
	\end{equation}
	and the next term $\mathrm{I}_{6}$ is estimated by analogous means noting that $|\theta|\leq 1$. 
	Finally, as in~\eqref{est:unst-ap-7} we have 
	\begin{equation}
		\begin{aligned}\label{eq:unst-incr-8}
			\abs{({\bf}_j^{k} ,{\bv}_h )_B } 
			\lesssim  
			\norm{{\bf}_j^{k} }_{B}
			\norm{\bv_h}_{X_h}. 
		\end{aligned}
	\end{equation}
	for $\mathrm{I}_{7}$. 
	Altogether, \eqref{def:z-form}--\eqref{eq:unst-incr-8} yield that for any $\bv_h \in X_h$,  
	\begin{equation}\label{eq:unst-est-2}
		\begin{aligned}
			\abs{\wz(\bu^k_j, \bv_h)} & \lesssim  
			\left( 
			\norm{ \bu_j^k}_{X_h} 
			+
			\norm{\bu^k_j}_{H^1(\Omega)}^2 
			+ (\mathrm{I}_{3})^k_j
			+ \norm{{\bf}_j^{k} }_{B}
			\right) \norm{ \bv_h}_{X_h}, 
		\end{aligned}
	\end{equation}
	uniformly in $k$ and $h_k$. 
	
	\textit{2. Step (estimate of time increment):}
	Testing~\eqref{eq:unst-discr} with $(\bv_h,0)\in X_{h,\divergence} \times Q_h$ 	yields with $\wz$ as defined in~\eqref{def:z-form} that 
	\begin{equation}
		\label{eq:unst-incr-1}
		(\difft {\bu}^{k}_j,{\bv}_h)_B
		= \wz( \bu^k,\bv_h) \qquad \text{ for all } \bv_h \in  X_{h,\divergence}. 
	\end{equation}
	Applying the estimate~\eqref{eq:unst-est-2} on $\wz$ yields for any $ \bv_h \in \Bh$, see~\eqref{def:Bh}, that
	\begin{align}\label{eq:unst-incr-9}
		\frac{\abs{(\difft {\bu}^{k}_j,{\bv}_h)_B}}{\norm{ \bv_h}_{X_h}} 
		\lesssim 
		\norm{ \bu_j^k}_{X_h} 
		+ 
		\norm{ \bu^k_j}_{H^1(\Omega)}^2 
		+ (\mathrm{I}_{3})^k_j
		+ \norm{{\bf}_j^{k} }_{B}. 
	\end{align}
	Therefore, in the dual discrete norm $\norm{\cdot}_{Y_h}$ as defined in~\eqref{def:Yh}, we have the bound
	\begin{align}\label{eq:unst-incr-10}
		\norm{\difft {\bu}^{k}_j}_{Y_h} 
		\lesssim
		\norm{ \bu_j^k}_{X_h} 
		+ \norm{ \bu^k_j}_{H^1(\Omega)}^2 
		+ (\mathrm{I}_{3})^k_j		
		+ \norm{{\bf}_j^{k} }_{B}. 
	\end{align}
	Summing over $j \in\{1, \ldots, m_{k}\}$ and multiplying with $\delta_k$ yields 
	\begin{align*}
		\delta_k \sum_{j = 1}^{m_k}	\norm{\difft {\bu}^{k}_j}_{Y_h} 
		&\lesssim  
		\norm{ \bu^k}_{L^1(I;X_h)}
		+ \norm{\bu^k}_{L^2(I;H^1(\Omega))}^2 
		+(\mathrm{I}_{3})^k	
		+ \norm{ \bf^k}_{L^{1}(I;B)}. 
	\end{align*}
	where
		\begin{align}\label{eq:def-I3k-norm}
			(\mathrm{I}_{3})^k
			\coloneqq \begin{cases}
				\norm{\bsigma^k}_{L^1(I;L^2(\Gamma))} \quad &\text{ in case \ref{itm:case-expl-noncoerc}},\\
				\norm{\widehat \bsigma^k}_{L^1(I;L^{r'}(\Gamma))}  \quad &\text{ in case ~\ref{itm:case-impl-coerc}}.
			\end{cases}
	\end{align}
	Employing the estimates from Lemma~\ref{lem:apriori-unst} we find that the right-hand side is bounded uniformly in $k \in \mathbb{N}$. 
    This is~\eqref{def:sigma-k}, and the proof is complete. 
\end{proof}
We now come to estimates on the pressure functions, which are a consequence of refined bounds on the convective terms. 
\begin{lemma}[pressure estimates]\label{lem:p-est} 
Under the conditions of Assumption~\ref{assump:general} there  exists a constant $c>0$ such that 
	\begin{align}\label{def:p-k}
		\delta_k^2 \sum_{j = 1}^{m_k} \norm{\pi^k_j}_{L^2(\Omega)}^{8/(d+4)} 
		\leq c \qquad \text{ for all } k \in \mathbb{N}. 
	\end{align}
\end{lemma}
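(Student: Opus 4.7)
\medskip

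The plan is to combine the discrete inf-sup condition with a tailored discrete H\"older argument in time; the specific value $p=8/(d+4)$ will arise as the conjugacy condition between the $L^{8/d}(I;L^4)$- and $L^2(I;H^1)$-bounds on $\bu^k$ from Lemma~\ref{lem:apriori-unst}.

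First, by Assumption~\ref{assump:fem}~\ref{itm:d-inf-sup-H10}, I will use
\[
\overline c_s\,\norm{\pi_j^k}_{L^2(\Omega)}\leq \sup_{\bv_h\in X_h\cap H^1_0(\Omega)^d\setminus\{\b0\}}\frac{\skp{\diver\bv_h}{\pi_j^k}}{\norm{\nabla\bv_h}_{L^2(\Omega)}}
\]
and test the discrete equation \eqref{eq:unst-discr-a} with $(\bv_h,0)$ for $\bv_h\in X_h\cap H^1_0$. Since $\bv_h|_\Gamma=\b0$, the contributions involving $\bsigma^{\discr}_j$, the normal pressure trace, the Nitsche penalty, and the symmetric Nitsche term $\skp{(\BD\bu_j^k\,\bn)\cdot\bn}{\bv_h\cdot\bn}_\Gamma$ all vanish; however, the asymmetric term $-2\nu\skp{(\BD\bv_h\,\bn)\cdot\bn}{\bu_j^k\cdot\bn}_\Gamma$ survives because $\bu_j^k\cdot\bn$ is only weakly enforced to be zero. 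I will then bound each surviving term by $\norm{\nabla\bv_h}_{L^2}$ times an a~priori-controlled quantity: the viscous and asymmetric-Nitsche trace terms by $\norm{\bu_j^k}_{X_h}$ (using the scaled trace inequality of Lemma~\ref{lem:trace-inverse} for the latter); the source by $\norm{\bf_j^k}_{L^2}$ via Poincar\'e; the convective term, by splitting $\widetilde{b}$ into its two integrals and invoking the Sobolev embedding $H^1_0\hookrightarrow L^4$ on $\bv_h$, by $\norm{\bu_j^k}_{L^4}\norm{\bu_j^k}_{H^1}$; and the time derivative, by writing $(\difft\bu_j^k,\bv_h)_{L^2}=\delta_k^{-1}(\bu_j^k-\bu_{j-1}^k,\bv_h)_{L^2}$ and applying Cauchy--Schwarz with Poincar\'e, by $\delta_k^{-1}\norm{\bu_j^k-\bu_{j-1}^k}_B$.

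This will yield the pointwise-in-$j$ estimate
\[
\norm{\pi_j^k}_{L^2(\Omega)}\lesssim \norm{\bu_j^k}_{X_h}+\norm{\bu_j^k}_{L^4(\Omega)}\norm{\bu_j^k}_{H^1(\Omega)}+\norm{\bf_j^k}_{L^2(\Omega)}+\delta_k^{-1}\norm{\bu_j^k-\bu_{j-1}^k}_B.
\]
To conclude, I will raise this to the power $p=8/(d+4)$, multiply by $\delta_k^2$, and sum in $j$. The hardest step, which pins down the exponent, is the convective contribution: $p=8/(d+4)$ is the unique value for which $\frac{pd}{8}+\frac{p}{2}=1$, allowing the discrete H\"older bound
\[
\delta_k\sum_{j}\bigl(\norm{\bu_j^k}_{L^4}\norm{\bu_j^k}_{H^1}\bigr)^p\leq \Bigl(\delta_k\sum_{j}\norm{\bu_j^k}_{L^4}^{8/d}\Bigr)^{pd/8}\Bigl(\delta_k\sum_{j}\norm{\bu_j^k}_{H^1}^{2}\Bigr)^{p/2}\leq c
\]
by Lemma~\ref{lem:apriori-unst}. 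The time-derivative contribution will be controlled by discrete H\"older with exponent $2/p\geq 1$ and the summability $\sum_j\norm{\bu_j^k-\bu_{j-1}^k}_B^2\leq c$ from Lemma~\ref{lem:apriori-unst}, yielding
\[
\delta_k^{2-p}\sum_{j}\norm{\bu_j^k-\bu_{j-1}^k}_B^{p}\leq c\,T^{1-p/2}\,\delta_k^{1-p/2},
\]
which is uniformly bounded for $p\leq 2$ (and in fact tends to zero). The viscous, source, and asymmetric-Nitsche terms are handled analogously via $L^2$-in-time H\"older and the a~priori estimates in Lemma~\ref{lem:apriori-unst}. An additional factor of $\delta_k$ from $\delta_k^2=\delta_k\cdot\delta_k$ then delivers the stated uniform bound with a constant independent of $k$.
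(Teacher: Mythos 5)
Your proposal is correct and follows essentially the same route as the paper: the discrete inf-sup condition with test functions in $X_h\cap H^1_0(\Omega)^d$ (so that all boundary terms except the one involving $\BD\bv_h$ vanish), the refined convective bound $\norm{\bu_j^k}_{L^4}\norm{\bu_j^k}_{H^1}$, and then raising to the power $8/(d+4)$, multiplying by $\delta_k^2$ and summing. Your explicit Hölder-in-time bookkeeping (in particular isolating the time-increment term, which is the one that actually needs the full $\delta_k^2$ weight) is exactly the step the paper leaves implicit when it invokes the a~priori estimates of Lemma~\ref{lem:apriori-unst}.
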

\begin{proof}	
Since $\bu^k_j \in X_{h,\divergence}$, we may rewrite~\eqref{eq:unst-discr} tested with $\bv_h \in X_h \cap H^1_0(\Omega) ^d$ and with $q_h = 0$ as
	\begin{align}\label{eq:est-press-1}
		\skp{ \pi_j^{k}}{ \diver \bv_h } 
		= -
		(\difft {\bu}^{k}_j,{\bv}_h)_B + z_2( \bu^k_j,\bv_h),
	\end{align}
	with
 \begin{equation}
		\begin{aligned}\label{def:z-form-2}
			\vz(\bu^k_j, \bv_h) &\coloneqq 
			- 2\nu \skp{ \BD \bu^{k}_j}{ \BD \bv_h } 
			- \widetilde{b}(\bu^{k}_j, \bu^{k}_j, \bv_h)
			+ 2 \nu \skp{   \left(( \BD \bv_h) \bn \right) \cdot \bn }{ \bu_j^{k} \cdot \bn }_{\Gamma}
			+ ({\bf}_j^{k} ,\bv_h )_B. 
		\end{aligned}
	\end{equation}
	Indeed, in view of~\eqref{eq:unst-discr}, we note that all except the  remaining boundary terms vanish, since $\bv_h  \in H^1_0(\Omega)^d$.
	In order to obtain $L^p$-estimates on the pressure with more than integrability  in time better than $L^1$ we have to resort to a the finer estimate on the convective term  in~\eqref{eq:traccovanish}. We obtain
	\begin{equation}\label{eq:est-press-2}
		\begin{aligned}
			\abs{ \widetilde b(\bu^k_j,\bu^k_j,\bv_h)}
			& \leq 
			\norm{\bu^k_j}_{L^4(\Omega)}^2
			\norm{\nabla \bv_h}_{L^2(\Omega)}
			+ \norm{\bu^k_j}_{L^4(\Omega)} 	
			\norm{\nabla \bu^k_j}_{L^2(\Omega)}
			\norm{\bv_h}_{L^4(\Omega)}\\
			& \lesssim 
			\norm{\bu^k_j}_{L^4(\Omega)} 	
			\norm{ \bu^k_j}_{H^1(\Omega)}
			\norm{ \bv_h}_{X_h}. 
		\end{aligned}
	\end{equation}
	The remaining terms are estimated as in the proof of Lemma~\ref{lem:dt-est}, which yields 
	
	\begin{equation}\label{eq:est-press-3}
		\begin{aligned}
			\abs{\vz(\bu^k_j, \bv_h)} & \lesssim c
			\left( 
			\norm{ \bu_j^k}_{X_h} 
			+
			\norm{\bu^k_j}_{L^4(\Omega)} 	
			\norm{\bu^k_j}_{H^1(\Omega)}
			+ \norm{{\bf}_j^{k} }_{B}
			\right) \norm{ \bv_h}_{X_h}. 
		\end{aligned}
	\end{equation}
	By the inf-sup condition in Assumption~\ref{assump:fem}~\ref{itm:d-inf-sup-H10}, identity~\eqref{eq:est-press-1}, and estimate~\eqref{eq:est-press-3} on $z_2$ we find 
	\begin{equation} 
		\begin{aligned}\label{eq:est-press-2a}
			c_s \norm{\pi^k_j}_{L^{2}(\Omega)} 
			& \leq
			\sup_{\bv_h \in \Xh \cap H^1_0(\Omega)^d \setminus \{\b0\}} \frac{\skp{ \diver \bv_h}{ \pi_j^k } }{\norm{\bv_h}_{X_h} }\\
			&	\leq  
			\sup_{\bv_h \in \Xh\setminus \{\b0\}} \norm{\bv_h}_{X_h}^{-1} \left( 
			\abs{(\difft {\bu}^{k}_j,{\bv}_h)_B } + \abs{ \vz ( \bu^k_j,\bv_h)}\right)
			\\
			& \lesssim  \tfrac{1}{\delta_k} \norm{ \bu^k_{j} -  \bu^k_{j-1}}_B +   
			\norm{ \bu_j^k}_{X_h} 
			+ \norm{\bu^k_j}_{L^4(\Omega)} 	
			\norm{\nabla \bu^k_j}_{L^2(\Omega)} 
			+ \norm{{\bf}_j^{k} }_{B}. 
		\end{aligned}
	\end{equation}
	Raising both sides to the power $\frac{8}{d+4}$, summing over $j$ and multiplying by $\delta_k^2$, allows us to bound the right-hand side using the a priori estimates in Lemma~\ref{lem:apriori-unst} and estimate~\eqref{est:L2-stab-f}, and thus we find 
	\begin{align}\label{eq:est-press-5}
		\delta_k^2 \sum_{j = 1}^{m_k} \norm{\pi_j^k}_{L^2(\Omega)}^{8/(d+4)} \leq c,
	\end{align}
	uniformly in $k$. 
	This shows the claim. 
\end{proof}

\subsection{Convergence}\label{sec:unst-conv}

In the following convergence proof we avoid the use of  strong convergence in $B$ to extract convergence {of} the traces. 
In particular, the proof is therefore also valid for $\beta = 0$. 

\begin{lemma}[convergence of velocities and boundary terms]\label{lem:unst-conv-u}
In the situation of Assumption~\ref{assump:general}, let $ \bu^k \in \mathcal{L}^0_0(J_{\delta_k};W_{h_k})$ be the piecewise constant interpolant of the sequence $( \bu^k_j)_{j \in {1, \ldots, m_k}}$ of approximate solutions to~\eqref{eq:unst-discr}. 
	Then, there is a (non--relabelled) subsequence and a function
	\begin{align*}
		\bu & \in L^\infty(I;B) \cap L^{2}(I;W) \cap L^2(I;H^1_{\bn,\divergence}(\Omega)) 
	\end{align*}
	such that 
	\begin{alignat*}{5}
		\bu^k 
		&\to  \bu \quad 
		&&\text{ strongly in } L^q(I;B), &&\text{ for any } q \in [1,\infty),
		\\
		\bu^k 
		&\overset{*}{\rightharpoonup}  \bu \quad 
		&&\text{ weakly* in } L^\infty(I;B),
		\\
		\bu^k 
		&\rightharpoonup  \bu \quad 
		&&\text{ weakly in } L^2(I;W),
		\\
		\bu^k &\to \bu \quad &&\text{ strongly in } L^s(I;L^4(\Omega)^d), 
		\quad &&\text{ for any } s \in [1,8/d),\\
		\tr (\bu^k) \cdot \bn &\to \b0 \qquad &&\text{ strongly in } L^2(I;L^2(\Gamma)^d), \quad &&
		\\
		\tr_\tau (\bu^k)
		&\to \tr_\tau(\bu) \qquad
		&& \text{ strongly in } L^2(I;L^p(\Gamma)^d),  \quad
		&&  \text{ for any } 
	   p \in [1, 2^\sharp),\\
		\bsigma^k
		& 
		\wconv \bsigma \qquad
		&& 
		\text{ weakly in } L^{p}(I;L^{p}(\Gamma)^d), &&\text{ for } p = \min(2,r')
	\end{alignat*}	
	as $k \to \infty$, {where $r$ is as in Theorem~\ref{thm:main-unsteady}} and $2^\sharp$ as in~\eqref{def:2-sharp}.
	
	In case~\ref{itm:case-impl-coerc} one additionally has that 
	\begin{alignat*}{5}
		\tr_\tau (\bu^k) &\wconv \tr_\tau(\bu) \qquad &&\text{ weakly in } L^r(I;L^r(\Gamma)^d),
		\quad
		&& \\
		\widehat{\bsigma}^k &\wsconv \widehat{\bsigma} \qquad &&\text{ weakly* in } L^{r'}(I;L^{r'}(\Gamma)^d)&& 
	\end{alignat*}	
	as $k \to \infty$, 
	with 
	\begin{align}\label{id:sigma}
		\bsigma = \widehat{\bsigma} - \lambda \tr_{\tau}(\bu). 
	\end{align}
\end{lemma}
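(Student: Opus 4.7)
\textbf{Proof plan for Lemma~\ref{lem:unst-conv-u}.} The plan is to combine the a~priori bounds from Lemma~\ref{lem:apriori-unst} with Banach--Alaoglu and with the discrete Aubin--Lions compactness of Lemma~\ref{lem:d-AL-0}, and then to identify the weak limit with the requisite structural constraints ($\bu\cdot\bn=0$, $\divergence\bu=0$) and convert weak bounds on $\bu^{k}$ into strong tangential trace convergence via interpolation.

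\emph{Step 1 (weak compactness).} From Lemma~\ref{lem:apriori-unst} the sequence $(\bu^{k})$ is uniformly bounded in $L^{\infty}(I;B)\cap L^{2}(I;W)\cap L^{8/d}(I;L^{4}(\Omega)^{d})$, and $(\tr(\bu^{k}))$ is bounded in $L^{2}(I;L^{p}(\Gamma)^{d})$ for each $p\in[1,2^{\sharp})$. By the Banach--Alaoglu and Eberlein--\v{S}mulyan theorems, extract a common subsequence and $\bu\in L^{\infty}(I;B)\cap L^{2}(I;W)$ delivering the claimed weak/weak$^\ast$ convergences. Similarly $(\bsigma^{k})$ is bounded in $L^{p}(I;L^{p}(\Gamma)^{d})$ with $p=\min(2,r')$ thanks to the two cases of Lemma~\ref{lem:apriori-unst}, so we may assume $\bsigma^{k}\rightharpoonup\bsigma$ after passing to a further subsequence; in case \ref{itm:case-impl-coerc} the analogous argument yields $\widehat{\bsigma}^{k}\overset{\ast}{\rightharpoonup}\widehat{\bsigma}$ in $L^{r'}(I;L^{r'}(\Gamma)^{d})$ and $\tr_{\tau}(\bu^{k})\rightharpoonup\tr_{\tau}(\bu)$ in $L^{r}(I;L^{r}(\Gamma)^{d})$.

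\emph{Step 2 (strong convergence in $L^{q}(I;B)$).} Apply Lemma~\ref{lem:d-AL-0} along the lines of Remark~\ref{rmk:AL-appl} with $(V_{k},\|\cdot\|_{V_{k}})=(W_{h_{k}},\|\cdot\|_{X_{h_{k}}})$ and $\|\cdot\|_{Y_{k}}=\|\cdot\|_{Y_{h_{k}}}$. Criterion~\ref{itm:norm-Xk-0} follows from Lemma~\ref{lem:h-norm} and the compact embedding $W\cpemb B$; the two summability requirements in \eqref{est:d-Aubin-Lions} with $q=1$ are supplied by the $L^{2}(I;X_{h})$-bound of Lemma~\ref{lem:apriori-unst} and the dual discrete bound of Lemma~\ref{lem:dt-est}. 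This produces $\bu^{k}\to\bu$ strongly in $L^{1}(I;B)$, and interpolating against the uniform $L^{\infty}(I;B)$-bound upgrades this to strong convergence in $L^{q}(I;B)$ for every $q\in[1,\infty)$.

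\emph{Step 3 (strong convergence in $L^{s}(I;L^{4}(\Omega)^{d})$ and traces).} Since the $L^{2}(\Omega)$-component is controlled by $B$ in both cases $\beta>0$ and $\beta=0$, Step~2 yields $\bu^{k}\to\bu$ in $L^{q}(I;L^{2}(\Omega)^{d})$; combined with the uniform $L^{8/d}(I;L^{4}(\Omega)^{d})$-bound, H\"older (or Lebesgue interpolation of the Cauchy differences) gives strong convergence in $L^{s}(I;L^{4}(\Omega)^{d})$ for every $s<8/d$. For the trace, applying the Sobolev interpolation $\|v\|_{H^{s}(\Omega)}\lesssim \|v\|_{L^{2}(\Omega)}^{1-s}\|v\|_{H^{1}(\Omega)}^{s}$ to the Cauchy differences $\bu^{k}-\bu^{m}$ (and using the uniform $L^{2}(I;H^{1}(\Omega))$-bound from Lemma~\ref{lem:h-norm}) yields strong convergence in $L^{2}(I;H^{s}(\Omega)^{d})$ for each $s\in(1/2,1)$. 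Continuity of $\tr\colon H^{s}(\Omega)\to H^{s-1/2}(\Gamma)$ followed by the (compact) Sobolev embedding $H^{s-1/2}(\Gamma)\hookrightarrow L^{p}(\Gamma)$ for $p<2^{\sharp}$ delivers the claimed strong trace convergence $\tr_{\tau}(\bu^{k})\to \tr_{\tau}(\bu)$ in $L^{2}(I;L^{p}(\Gamma)^{d})$.

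\emph{Step 4 (vanishing normal trace, divergence-freeness, and identification of $\bsigma$).} The $L^{2}(I;X_{h})$-bound implies $\|h_{\Gamma}^{-1/2}\tr(\bu^{k})\cdot\bn\|_{L^{2}(I;L^{2}(\Gamma))}\leq c$, whence $\|\tr(\bu^{k})\cdot\bn\|_{L^{2}(I;L^{2}(\Gamma))}\lesssim h_{k}^{1/2}\to0$; this gives the strong convergence to zero and forces $\tr(\bu)\cdot\bn=0$, so $\bu\in L^{2}(I;H^{1}_{\bn}(\Omega)^{d})$. For the divergence constraint, since $\bu^{k}_{j}\in X_{h_{k},\divergence}$ we test with the Scott--Zhang interpolant (Lemma~\ref{lem:SZ}) of an arbitrary smooth $q\in C^{\infty}(\overline{\Omega})\cap L^{2}_{0}(\Omega)$; Assumption~\ref{assump:fem}~\ref{itm:fem-Q-approx} and the weak convergence of $\divergence\bu^{k}$ in $L^{2}(I;L^{2}(\Omega))$ together with the just established vanishing of the boundary term conclude $\divergence\bu=0$. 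Finally, in case \ref{itm:case-impl-coerc}, the identity \eqref{id:sigma} follows by passing to the limit in $\bsigma^{k}=\widehat{\bsigma}^{k}-\lambda\tr_{\tau}(\bu^{k})$, using weak convergence of $\widehat{\bsigma}^{k}$ against the strongly convergent $\tr_{\tau}(\bu^{k})$.

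\emph{Main obstacle.} The only non-routine point is the strong tangential trace convergence, because the Nitsche setup prevents us from working in $H^{1}_{\bn}(\Omega)$-conforming spaces; the crucial leverage is that $\|\bu^{k}\|_{X_{h_{k}}}$ controls the $H^{1}(\Omega)$-norm uniformly (Lemma~\ref{lem:h-norm}), which makes the Sobolev interpolation argument in Step~3 available together with the strong $L^{2}(L^{2})$ convergence from the discrete Aubin--Lions lemma.
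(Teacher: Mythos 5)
Your plan follows the paper's proof essentially verbatim: discrete Aubin--Lions together with the $L^{\infty}(I;B)$ bound for strong $L^{q}(I;B)$ convergence, space--time interpolation for $L^{s}(I;L^{4}(\Omega)^{d})$, fractional Sobolev interpolation of the uniform $H^{1}$-bound against the strong $L^{2}(I;L^{2}(\Omega)^{d})$ convergence to upgrade to strong trace convergence in $L^{2}(I;L^{p}(\Gamma)^{d})$ for $p<2^{\sharp}$ (the paper fixes $s=3/4$ and interpolates on $\Gamma$, you vary $s$ and embed $H^{s-1/2}(\Gamma)$ -- equivalent), the penalty term for the vanishing normal trace, the discrete divergence identity with $Q_{h}$-approximations for $\diver\bu=0$, and Banach--Alaoglu plus uniqueness of limits for the weak limits and \eqref{id:sigma}. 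One cosmetic correction: the scalar test functions for the divergence constraint must be taken from $Q_{h}$ via Assumption~\ref{assump:fem}~\ref{itm:fem-Q-approx} (which you also cite), not via the Scott--Zhang operator of Lemma~\ref{lem:SZ}, which maps into the vector-valued Lagrange velocity space rather than the discrete pressure space.
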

\begin{proof}
	We divide the proof into five major steps. 
	
	\textit{1. Step (compactness and strong convergence):}
	We apply the discrete Aubin--Lions Lemma~\ref{lem:d-AL-0} with $\norm{\cdot}_{W_h}$, $\norm{\cdot}_{Y_h}$ and $q = 1$ { as indicated in Remark~\ref{rmk:AL-appl}.}
	The respective uniform bounds on $( \bu^k_j)_{j \in \{1, \ldots, m_k\}}$ for $k \in \mathbb{N}$ are satisfied due to  Lemmas~\ref{lem:apriori-unst} and~\ref{lem:dt-est}. 
	Thus, there exists a (not relabelled) subsequence of $( \bu^k)_{k \in \mathbb{N}}$ such that 
	\begin{align}\label{eq:conv-unst-1}
		\bu^k \to  \bu \quad \text{ strongly in } L^1(I;B) \quad \text{ as } k \to \infty.
	\end{align}
	We combine this strong convergence with the uniform estimate in $L^\infty(I;B)$  and use interpolation between $L^1(I)$ and $L^\infty(I)$ to obtain  
		\begin{align}
			\label{eq:conv-unst-2a}
			\bu^k \to  \bu \qquad \text{ strongly in } L^q(I;B),  \;\;\text{ for any } q \in [1,\infty)
		\end{align}
		as $k \to \infty$.
		Similarly, we combine the uniform bounds in $L^{2}(I;H^1(\Omega)^d)$ thanks to~\eqref{est:ap-pwconst} with an embedding estimate and with the strong convergence in~\eqref{eq:conv-unst-2a}. 
		Then, by interpolation in space and time we also find that 					
  \begin{align}
			\label{eq:conv-unst-2b}
			\bu^k \to \bu \quad \text{ strongly in } L^s(I;L^4(\Omega)^d), 
			\;\; \text{ for any } s \in [1,8/d)
		\end{align}
		as $k \to \infty$. Note that the arguments are analogous to the ones proving~\eqref{est:interp}. 
	
	Note that for $\beta >0 $ due to the definition of the norm $\norm{\cdot}_B$ in~\eqref{def:norm-B} strong convergence of the full traces is a consequence of~\eqref{eq:conv-unst-2a}. 
		However, if $\beta = 0$, then this is not possible, and hence we have to take a detour to obtain strong convergence. 
		For this purpose, we shall use interpolation estimates and interpolation to obtain stronger convergence results on the full traces. And then we will use the convergence of the normal traces to zero due to the penalisation, to deduce the strong convergence for the tangential traces. 
	
 First, by function space interpolation, we have that
	$[L^2(\Omega)^{d},W^{1,2}(\Omega)^{d}]_{3/4,2} = W^{3/4,2}(\Omega)^{d}$. 
 Hence, there exists a constant $c>0$ such that, whenever $\bv\in L^{2}(I;L^{2}(\Omega)^{d})\cap L^{2}(I;W^{1,2}(\Omega)^{d})$,
	we have 
	\begin{align*}
		\|\bv(t,\cdot)\|_{W^{\frac{3}{4},2}(\Omega)} \leq c\,\|\bv(t,\cdot)\|_{L^{2}(\Omega)}^{\frac{1}{4}}\|\bv(t,\cdot)\|_{W^{1,2}(\Omega)}^{\frac{3}{4}}\qquad\text{for $\mathcal{L}^{1}$-a.e.~$t\in I$}, 
	\end{align*}
	whereby an integration with respect to $t$ and H\"{o}lder's inequality yield that 
	\begin{align}\label{eq:tabtabtab}
		\|\bv\|_{L^{2}(I;W^{3/4,2}(\Omega))} \lesssim \|\bv\|_{L^{2}(I;L^{2}(\Omega))}^{\frac{1}{4}}\|\bv\|_{L^{2}(I;W^{1,2}(\Omega))}^{\frac{3}{4}}. 
	\end{align}
	By~\eqref{eq:conv-unst-2a}, $(\bu^k)_{k \in \mathbb{N}}$ converges strongly in $L^2(I;L^2(\Omega)^d)$. On the other hand,  by boundedness of $(\bu^k)_{k \in \mathbb{N}}$ in $L^2(I;W^{1,2}(\Omega)^d)$ due to Lemma~\ref{lem:apriori-unst}, \eqref{eq:tabtabtab} implies that  
	\begin{align}\label{eq:conv-unst-3}		
		\bu^k \to \bu \quad \text{ strongly in }  L^2(I;W^{3/4,2}(\Omega)^d) \quad \text{ as } k \to \infty. 
	\end{align}
	By continuity of the trace operator $W^{s,p}(\Omega)^{d} \to W^{s-\frac{1}{p},p}(\Gamma)^{d}$, we have in particular that  
	\begin{align}\label{eq:conv-unst-4}
		\tr(\bu^k) \to \tr(\bu) \quad \text{ strongly in } L^2(I;L^2(\Gamma)^d)\; \quad \text{ as } k \to \infty. 
	\end{align}
	We now claim that~\eqref{eq:conv-unst-4} implies that 
	\begin{alignat}{3}\label{eq:conv-unst-5a}
		\tr (\bu^k) &\to \tr(\bu) \qquad \text{ strongly in } L^2(I;L^p(\Gamma)^d) 
		\text{ for any }  p < 2^\sharp. 
	\end{alignat}
	Indeed, if $p\leq 2$, then~\eqref{eq:conv-unst-5a} directly follows from~\eqref{eq:conv-unst-4}. Now, if $2<p<2^{\sharp}$, 
	interpolation yields with a suitable $0<\mu<1$ that 
	\begin{align}\label{eq:trace-interp}
		\|\mathrm{tr}(\bu^{k}-\bu)\|_{L^{2}(I;L^{p}(\Gamma))} \leq \|\mathrm{tr}(\bu^{k}-\bu)\|_{L^{2}(I;L^{2}(\Gamma))}^{{\mu}}\|\mathrm{tr}(\bu^{k}-\bu)\|_{L^{2}(I;L^{2^{\sharp}}(\Gamma))}^{1-\mu}
	\end{align}
	By Lemma~\ref{lem:apriori-unst} and continuity of the trace embedding, see~\eqref{est:trace}, $(\mathrm{tr}(\bu^{k}))_{k\in\mathbb{N}}$ is bounded in $L^{2}(I;L^{2^{\sharp}}(\Gamma)^{d})$. Hence, in view of~\eqref{eq:conv-unst-4}, \eqref{eq:trace-interp} implies~\eqref{eq:conv-unst-5a}. 
	
	\textit{2. Step (weak convergence):}
	By the uniform bounds of $({\bu}^k)_{k \in \mathbb{N}}$ in Lemma~\ref{lem:apriori-unst}, the Banach-Alaoglu theorem and Lemma~\ref{lem:h-norm}, there exist further (non-relabelled) subsequences such that 
	\begin{alignat}{5} \label{eq:conv-unst-5c}
		\bu^k &\overset{*}{\rightharpoonup}  \bu \qquad &&\text{ weakly* in } \;\; &&L^{\infty}(I;B),\\
		\label{eq:conv-unst-5d}
		\bu^k &\rightharpoonup    \bu \quad &&\text{ weakly in } &&L^{2}(I;W)
	\end{alignat}
	as $k \to \infty$. 
	
	\textit{3. Step (traces):}
	By the bound~\eqref{eq:Tabowser} on $( \bu^k)_{k \in \mathbb{N}}$, the definition of the norm $\norm{\cdot}_{X_h}$ in~\eqref{def:norm-h} implies that
	\begin{align}\label{eq:conv-unst-6a}
		\tr (\bu^k) \cdot \bn \to 0 \qquad \text{ strongly in } L^2(I;L^2(\Gamma))
	\end{align}
	as $k \to \infty$, and therefore~\eqref{eq:conv-unst-5a} gives us 
	\begin{align}\label{eq:Tabretto}
		\tr(\bu) \cdot \bn = 0\qquad\text{ as an identity in $L^2(I;L^2(\Gamma))$}. 
	\end{align}
	Since $\bu$ has a trace in $L^{2}(I;L^{2^\sharp}(\Gamma)^d)$, the identity also holds in this space, and in particular we have $\bu \in L^2(I;\Hdivn)$. 
		
	Note that by the convergence of the full trace in~\eqref{eq:conv-unst-5a} and of the normal trace in~\eqref{eq:conv-unst-6a}, also the tangential trace converges 
		\begin{align}\label{eq:conv-unst-6b1}
			\tr_{\tau}(\bu^k) = \tr(\bu^k) - (\tr(\bu^k) \cdot \bn) \bn  \to \tr(\bu) - 0 = \tr_{\tau}(\bu) \quad \text{ strongly in } L^2(I;L^2(\Gamma)^d),  
		\end{align}
		an $k \to \infty$. 
		By the fact that the full traces are bounded in $L^2(I;L^{2^\sharp}(\Gamma)^d)$ by Lemma~\ref{lem:apriori-unst}, using the fact that $\abs{\tr(\bu)}^2 = \abs{\tr_\tau(\bu)}^2 + \abs{\tr(\bu) \cdot \bn}^2$, also the tangential traces are bounded in $L^2(I;L^{2^\sharp}(\Gamma)^d)$. 
		Interpolating between $L^2(I;L^{2^\sharp}(\Gamma)^d)$ and $L^2(I;L^2(\Gamma)^d)$, we thus obtain that 
		\begin{align}\label{eq:conv-unst-6b2}
			\tr_{\tau}(\bu^k) \to \tr_{\tau}(\bu) \quad \text{ strongly in } L^2(I;L^p(\Gamma)^d),  \text{ for any } p < 2^{\sharp}
		\end{align}
		an $k \to \infty$. 
	
In case~\ref{itm:case-impl-coerc} from the uniform estimates of $(\tr_{\tau} (\bu^k))_{k \in \mathbb{N}}$ in $L^{r}(I;L^r(\Gamma)^d)$  due to Lemma~\ref{lem:apriori-unst}, the Banach--Alaoglu theorem allows us to extract a weakly converging subsequence, such that
		\begin{alignat}{5} \label{eq:conv-unst-6c1}
			\tr_{\tau}(\bu^k) & \wconv \tr_{\tau}(\bu) && \quad \text{ weakly in } L^{r}(I;L^r(\Gamma)^d) \;\; \text{ as } k \to \infty. 
		\end{alignat}
	
	\textit{4. Step (nonlinear trace term)}:
	With the definition of $\bsigma^k$ in~\eqref{eq:unst-discr-b} from the uniform estimates in Lemma~\ref{lem:apriori-unst} it follows that $(\bsigma^k)_{k \in \mathbb{N}}$ is bounded in $L^{p}(I;L^{p}(\Gamma)^d)$ for $p = \min(2,r')$. 
	Hence, there is a weakly converging subsequence such that 
		\begin{alignat}{5}
			\label{eq:conv-unst-6c3}
			\bsigma^k &\wconv  \bsigma && \quad \text{ weakly in } L^{p}(I;L^{p}(\Gamma)^d)
		\end{alignat}
		as $k \to \infty$. 
		Similarly, in case~\ref{itm:case-impl-coerc} from the uniform estimates of $(\widehat \bsigma^k)_{k \in \mathbb{N}}$ in $L^{r'}(I;L^{r'}(\Gamma)^d)$, we obtain a weakly star converging subsequence such that
		\begin{alignat}{5}
			\label{eq:conv-unst-6c2}
			\widehat \bsigma^k &\wsconv \widehat \bsigma && \quad \text{ weakly$^*$ in } L^{r'}(I;L^{r'}(\Gamma)^d) \;\; \text{ as } k \to\infty. 
		\end{alignat}
		
		Note that in case~\ref{itm:case-impl-coerc} we have $\bsigma^k = \widehat{\bsigma}^k - \lambda \tr_{\tau}(\bu^k)$. 
		Since by Lemma~\ref{lem:unst-conv-u} we have $\bsigma^k \wconv \bsigma$ weakly in $L^{\min(2,r')}(I;L^{\min(2,r')}(\Gamma)^d)$, $\widehat{\bsigma}^k \wsconv \widehat \bsigma$ weakly$^*$ in $L^{r'}(I;L^{r'}(\Gamma)^d)$, and $\tr_{\tau}(\bu^k) \to \tr_{\tau}(\bu)$ strongly in $L^2(I;L^2(\Gamma)^d)$ as $k \to \infty$,  with the uniqueness of limits we obtain that 
		\begin{align}
			\bsigma = \widehat{\bsigma} - \lambda \tr_{\tau}(\bu) \quad \text{ in } L^{\min(2,r')}(I;L^{\min(2,r')}(\Gamma)^d),
		\end{align}
		which proves~\eqref{id:sigma}. 
		
		\textit{5. Step (incompressibility):}	
		We now show that $\bu$ is divergence-free. 	
		Let $q \in C^{\infty}(\overline \Omega)\cap L^2_0(\Omega)$ and let $\phi \in C^\infty_0(I)$ be arbitrary. By  Assumption~\ref{assump:fem}~\ref{itm:fem-Q-approx}  there exists  a sequence $(q_h)_{h>0} \subset Q_h$  such that 
		\begin{alignat}{3}\label{eq:conv-unst-7a-1}
			q_h \to q \qquad \text{ strongly in } L^2(\Omega) \qquad \text{ and }  \quad 
			q_h|_{\Gamma} \to \tr(q) \qquad \text{ strongly in } L^2(\Gamma)
		\end{alignat}
		as $k \to \infty$.					Furthermore, let $\phi^k \coloneqq \Pi_{\delta_k} \phi \in \mathcal{L}^0_0(J_{\delta_k})$ be the projection from Section~\eqref{def:Pid}. 
		Then, for $\psi^k \coloneqq q_{h_k} \phi^k \in \mathcal{L}^0_0(J_{\delta_k};Q_{h_k})$ we have 
		\begin{alignat}{3}\label{eq:conv-unst-7a}
			\psi^k &\to  \phi q \qquad &&\text{ strongly in } L^2(Q), \\
			\label{eq:conv-unst-7b}
			\psi^k|_{\Gamma} &\to \phi q|_{\Gamma} &&\text{ strongly in } L^2(I \times \Gamma)
		\end{alignat}
		as $k \to \infty$. 
		
		Since $\bu^k \in \mathcal{L}^0_0(J_{\delta_k};X_{h,\diver})$ we have 
		\begin{align*}
			\int_0^T \skp{\divergence \bu^k}{\psi^k} - \skp{\bu^k \cdot \bn}{ \psi^k}_{\Gamma} \dt = 0\qquad \text{for all}\;k\in\mathbb{N}. 
		\end{align*}
		With~\eqref{eq:conv-unst-6a} and the fact that $(\psi^k)_{k\in\mathbb{N}}$ is bounded in $L^2(I \times \Gamma)$ by~\eqref{eq:conv-unst-7b}, the second term vanishes as $k \to \infty$. Hence, 
		\begin{align}\label{eq:conv-unst-8}
			\int_0^T \skp{\divergence \bu^k}{\psi^k } \dt \to 0 \qquad \text{ as } k \to \infty.  
		\end{align}
		To conclude the proof, we estimate 
		\begin{equation}
			\begin{aligned}\label{eq:conv-unst-9}
				\abs{\int_0^T \skp{\divergence \bu}{\phi q} \dt}  
				\leq \abs{\int_0^T \skp{\divergence (\bu - \bu^k)}{\phi q} \dt} 
				&+ \abs{\int_0^T \skp{\divergence \bu^k}{\phi q - \psi^k} \dt}
				\\
				&+ \abs{\int_0^T \skp{\divergence \bu^k}{\psi^k} \dt}.
			\end{aligned}
		\end{equation}
		The first term vanishes by the fact that $\bu^k \rightharpoonup \bu$ in $L^2(I;H^1(\Omega)^d)$ as $k \to \infty$. 
		Since $(\bu^{k})_{k\in\mathbb{N}}$ is bounded in $L^{2}(I;H^{1}(\Omega)^{d})$, \eqref{eq:conv-unst-7a} implies that the second term vanishes as $k\to\infty$. 
		Lastly, by~\eqref{eq:conv-unst-8}, the ultimate term vanishes as $k \to \infty$. 
		In consequence, $\divergence \bu = 0$ in the sense of distributions. 
		Because of $\divergence \bu \in L^2(Q)$ and~\eqref{eq:Tabretto},  $\bu \in L^2(I;H^1_{\bn,\divergence}(\Omega))$. 
		This completes the proof. 
	\end{proof}
	
	Next, we study the convergence of the pressure functions:	
	\begin{lemma}[convergence of the pressure] \label{lem:unst-conv-pi}
	In the situation  of Assumption~\ref{assump:general}, let $(\pi^k)_{k \in \mathbb{N}}$  be the sequence of pressure functions so that, in particular,  $\pi^k \in  \mathcal{L}^0_0(J_{\delta_k};Q_{h_k})$ for all $k\in\mathbb{N}$. 
		Then, there exists a (non-relabelled)  subsequence 
		and a function 
		\begin{align*}
			\xi \in L^{8/(d+4)}(I;L^2(\Omega))
		\end{align*}
		with the following property: If $\phi \in C^{\infty}(\overline I)$ satisfies $\phi(T) = 0$ and $\phi^k \in  \mathcal{L}^0_0(J_{\delta_k};Q_{h_k})$ denotes the piecewise constant interpolant of $(\phi^k_j)_{j = 1, \ldots, m_k}$ with $\phi^k_j = \phi(t_j)$ for $j\in\{1,...,m_{k}\}$, 
		then 
		\begin{align*}
			\int_0^T \pi^k \phi^k \dt \wconv -	\int_0^T \xi \partial_t \phi \dt \quad \text{ weakly in $L^2(\Omega)$ as   $k \to \infty$}.
		\end{align*}
	\end{lemma}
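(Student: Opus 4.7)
My approach is to define the time primitive $\xi^k(t) \coloneqq \int_0^t \pi^k(s)\,\ds$. Since $\pi^k \in \mathcal{L}^0_0(J_{\delta_k};Q_{h_k})$, the map $\xi^k$ is continuous and piecewise linear in time with values in $Q_{h_k} \subset L_0^2(\Omega)$, with $\xi^k(0) = 0$ and $\partial_t\xi^k = \pi^k$ a.e.\ on $I$. Once a suitable weak subsequential limit $\xi^k \wconv \xi$ has been extracted, the conclusion will follow by integration by parts in time together with a Lipschitz-type estimate controlling the error in replacing $\phi$ by $\phi^k$.

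The crux of the argument, and its main obstacle, is a uniform bound on $\xi^k$ in $L^\infty(I;L^2(\Omega))$. For $\bv_h \in X_{h_k} \cap H^1_0(\Omega)^d$, the $\beta$-contribution in $(\cdot,\cdot)_B$ drops out since $\tr(\bv_h) = 0$ on $\Gamma$; summing~\eqref{eq:est-press-1} from $i=1$ to $j$ with weight $\delta_k$ and telescoping the discrete time derivative yields
\begin{align*}
\skp{\Xi^k_j}{\diver \bv_h} = (\bu^k_0 - \bu^k_j, \bv_h)_{L^2(\Omega)} + \delta_k \sum_{i=1}^{j} z_2(\bu^k_i, \bv_h),
\end{align*}
for $\Xi^k_j \coloneqq \xi^k(t_j) = \delta_k \sum_{i=1}^j \pi^k_i \in Q_{h_k}$. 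The discrete inf-sup condition of Assumption~\ref{assump:fem}~\ref{itm:d-inf-sup-H10} applied to $\Xi^k_j \in Q_{h_k}$, combined with the bound on $z_2$ from~\eqref{eq:est-press-3} and $\|\bv_h\|_{X_{h_k}} \lesssim \|\nabla \bv_h\|_{L^2(\Omega)}$ for $\bv_h \in H^1_0(\Omega)^d$, then gives
\begin{align*}
\|\Xi^k_j\|_{L^2(\Omega)} \lesssim \|\bu^k_0 - \bu^k_j\|_{L^2(\Omega)} + \int_0^{t_j}\Big(\|\bu^k\|_{X_{h_k}} + \|\bu^k\|_{L^4(\Omega)}\|\bu^k\|_{H^1(\Omega)} + \|\bf^k\|_B\Big)(s)\,\ds.
\end{align*}
The first term is bounded uniformly by the $L^\infty(I;B)$-estimate of Lemma~\ref{lem:apriori-unst}; for the second, H\"older in time combined with the uniform bounds on $\bu^k$ in $L^{8/d}(I;L^4(\Omega))$, $L^2(I;H^1(\Omega))$ and on $\bf^k$ in $L^2(I;B)$ shows that the integrand lies in $L^{8/(d+4)}(I) \hookrightarrow L^1(I)$ uniformly. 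Since $\xi^k(t) - \Xi^k_{j-1} = (t-t_{j-1})\pi^k_j$ on $I_j$ with $\delta_k\|\pi^k_j\|_{L^2(\Omega)}$ controlled by~\eqref{eq:est-press-2a} together with Lemma~\ref{lem:apriori-unst}, this gives the claimed $L^\infty(I;L^2(\Omega))$-bound on $\xi^k$.

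Banach--Alaoglu then yields a (non-relabelled) subsequence and a limit $\xi \in L^\infty(I;L^2(\Omega))$ such that $\xi^k \wsconv \xi$ weakly* in $L^\infty(I;L^2(\Omega))$, and in particular $\xi^k \wconv \xi$ weakly in $L^{8/(d+4)}(I;L^2(\Omega))$. Since $\xi^k \in W^{1,1}(I;L^2(\Omega))$ with $\partial_t \xi^k = \pi^k$, standard integration by parts with $\xi^k(0) = 0$ and $\phi(T) = 0$ gives
\[
\int_0^T \pi^k \phi\,\dt = -\int_0^T \xi^k \,\partial_t \phi\,\dt,
\]
whose right-hand side converges weakly in $L^2(\Omega)$ to $-\int_0^T \xi\,\partial_t \phi\,\dt$ because $\partial_t \phi \in L^\infty(I) \subset L^{p'}(I)$ for $p = 8/(d+4)$. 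Finally, replacing $\phi$ by $\phi^k$ costs at most $\|\phi - \phi^k\|_{L^\infty(I)} \leq \delta_k \|\partial_t \phi\|_{L^\infty(I)}$, and Lemma~\ref{lem:p-est} gives $\|\pi^k\|_{L^1(I;L^2)} \leq T^{1/p'}\|\pi^k\|_{L^p(I;L^2)} \lesssim \delta_k^{-1/p}$, so that
\[
\Big\|\int_0^T \pi^k(\phi^k - \phi)\,\dt\Big\|_{L^2(\Omega)} \lesssim \delta_k^{(p-1)/p} \to 0
\]
because $p > 1$ for $d \in \{2,3\}$. Combining the two pieces concludes the proof.
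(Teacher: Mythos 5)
Your proposal is correct, and its overall architecture coincides with the paper's: introduce the time primitive $\xi^k(t)=\int_0^t\pi^k(s)\,\ds$, obtain a $k$-uniform bound, extract a weak limit by Banach--Alaoglu, integrate by parts in time using $\xi^k(0)=0$ and $\phi(T)=0$, and control the replacement of $\phi$ by its piecewise constant interpolant via Lemma~\ref{lem:p-est}, arriving at exactly the same rate $\delta_k^{(p-1)/p}=\delta_k^{(4-d)/8}\to 0$. The genuine difference lies in how the uniform bound on $\xi^k$ is justified. The paper asserts $\norm{\xi^k}_{L^{8/(d+4)}(I;L^2(\Omega))}\leq c$ directly ``by Lemma~\ref{lem:p-est}''; note, however, that Lemma~\ref{lem:p-est} by itself only gives $\norm{\pi^k}_{L^{8/(d+4)}(I;L^2(\Omega))}\lesssim \delta_k^{-(d+4)/8}$, so a naive bound $\norm{\xi^k(t)}_{L^2}\leq\norm{\pi^k}_{L^1(I;L^2)}$ is not uniform in $k$. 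Your argument supplies the missing mechanism: summing \eqref{eq:est-press-1} in time so that the increments $\difft\bu^k_i$ telescope to $\bu^k_0-\bu^k_j$ (bounded in $B$ by Lemma~\ref{lem:apriori-unst}), applying the discrete inf-sup condition to $\Xi^k_j=\delta_k\sum_{i\leq j}\pi^k_i\in Q_{h_k}$, and using that for $\bv_h\in X_{h_k}\cap H^1_0(\Omega)^d$ the $\Gamma$-terms in $(\cdot,\cdot)_B$ drop and $\norm{\bv_h}_{X_{h_k}}\leq\norm{\nabla\bv_h}_{L^2(\Omega)}$; together with the $L^1(I)$-integrability of $\norm{\bu^k}_{X_{h_k}}+\norm{\bu^k}_{L^4}\norm{\bu^k}_{H^1}+\norm{\bf^k}_B$ and the crude bound $\delta_k\norm{\pi^k_j}_{L^2}\leq c$ from \eqref{eq:est-press-2a}, this even yields the stronger uniform $L^\infty(I;L^2(\Omega))$-bound, insensitive to the $\delta_k^{-1/p}$ growth of $\norm{\pi^k}_{L^p}$. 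In this respect your route is more careful and self-contained than the paper's one-line claim, at the cost of a longer derivation; the remaining cosmetic difference (integrating by parts directly against the smooth $\phi$ rather than against the piecewise affine Lagrange interpolant $\widehat\phi^k$ as in \eqref{eq:pressure-ibp}--\eqref{eq:pressure-1}) is immaterial, since your final error estimate for $\int_0^T\pi^k(\phi^k-\phi)\dt$ reproduces the paper's \eqref{eq:pressure-3}.
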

	\begin{proof}
		For the pressure functions $\pi^k \in \mathcal{L}^0_0(J_{\delta_k};Q_h)$ we define $\xi^k \in \mathcal{L}^1_1(J_{\delta_k};Q_h)$ by
		\begin{align}\label{def:phik-2}
			\xi^k(t) \coloneqq \int_0^t \pi^k(s) \ds \quad \text{ for } t \in I. 
		\end{align}
		This means that 
		\begin{align}
			\xi^k(0) = 0, 
			\quad 
			\pi^k = \partial_t \xi^k,
			\quad 
			\text{ and } \quad 
			{\pi^k_j = \difft \xi^k_j}. 
		\end{align}
		By Lemma~\ref{lem:p-est} we have that 
		\begin{align}\label{eq:est-press-4}
			\norm{\xi^k}_{L^{8/(d+4)}(I;L^2(\Omega))} \leq c,
		\end{align}
		uniformly in $k$.
		Thus, there is a function $\xi \in L^{8/(d+4)}(I;L^2(\Omega))$ and a (non-relabelled) subsequence such that 
		\begin{alignat}{3}\label{eq:conv-xi}
			\xi^k 
			&\wconv \xi \quad 
			&&\text{ weakly in } L^{8/(d+4)}(I;L^2(\Omega)) \quad \text{ as } k \to \infty.
		\end{alignat}	
		Noting that $\xi^k(0) = 0$ for any $\psi \in H^1(I)$ with $\psi(T) = 0$ we obtain 
		\begin{align}\label{eq:pressure-ibp}
			\int_0^T \pi^k \psi \dt 
			=
			\int_0^T \partial_t \xi^k \psi \dt 
			= 
			-	\int_0^T  \xi^k \partial_t \psi \dt. 
		\end{align}
		Let $\phi \in C^\infty(\overline I)$ with $\phi(T) = 0$ be arbitrary. 
		Defining $\widehat \phi^k \in \mathcal{L}^1_1(J_{\delta_k})$ as the Lagrange interpolation of $\phi$, we have $\widehat{\phi}^{k}(T)=0$ and 
		\begin{align}\label{eq:phi-conv}
			\widehat \phi^k \to \phi \quad \text{ strongly in } W^{1,\infty}(I)\;\text{as}\;k\to\infty.
		\end{align} 
		Now, applying~\eqref{eq:pressure-ibp}, and using both the strong convergence $\partial_t \widehat \phi^k \to \partial_t \phi$ in $W^{1,\infty}(I)$ from~\eqref{eq:phi-conv} and the weak convergence of $\xi^k \wconv \xi$ in $L^{8/(d+4)}(I;L^{2}(\Omega))$ from~\eqref{eq:conv-xi} we find that 
		\begin{align}\label{eq:pressure-1}
			\int_0^T \pi^k \widehat \phi^k \dt 
			= 	
			- \int_0^T \xi^k  \partial_t \widehat \phi^k  \dt
			\wconv 
			- \int_0^T \xi \partial_t \phi  \dt
		\end{align}
		weakly in $L^2(\Omega)$ as $k \to \infty$. 
		Since~$\phi^k \in \mathcal{L}^0_0(J_{\delta_k})$ is the piecewise constant interpolant of $\phi$, and therefore also of $\widehat \phi^k$, we obtain 
			\begin{align}\label{eq:pressure-2}
				\norm{\phi^k - \widehat{\phi}^k}_{L^\infty(I)} \leq  \delta_k \norm{\partial_t \widehat{\phi}^k}_{L^\infty(I)} 
				\leq \delta_k \norm{\partial_t {\phi}}_{L^\infty(I)}. 
			\end{align}
			Therefore, with Hölder's inequality we find that 
			\begin{equation}\label{eq:pressure-3}
				\begin{aligned}
					\norm{	\int_0^T \pi^k (\widehat \phi^k - \phi^k) \dt  }_{L^2(\Omega)}
					&\lesssim  \norm{\pi^k}_{L^{\frac{8}{d+4}}(I;L^2(\Omega))}	\norm{\phi^k - \widehat{\phi}^k}_{L^\infty(I)}  
					\\
					&\lesssim \left( \delta_k\sum_{ j = 1}^{m_k}  \norm{\pi^k_j}_{L^2(\Omega)}^{8/(d+4)} \right)^{\frac{d+4}{8}}  \delta_k \norm{\partial_t {\phi}^k}_{L^\infty(I)} 
					\\
					&\lesssim \delta_k^{\frac{4-d}{8}} 
					\left( \delta_k^2 \sum_{ j = 1}^{m_k}  \norm{\pi^k_j}_{L^2(\Omega)}^{8/(d+4)} \right)^{\frac{d+4}{8}}   \norm{\partial_t {\phi}^k}_{L^\infty(I)}. 
				\end{aligned}
			\end{equation}
			Thanks to the estimate in Lemma~\ref{lem:p-est},
			the right-hand side converges to $0$ as $k \to \infty$. 
			Thus, in combination with~\eqref{eq:pressure-1}, this proves the claim.  
	\end{proof}
	
	\subsection{Limit passage in equation}\label{sec:unst-limit}
	
	In the following we shall prove that the limiting functions satisfy the weak formulation in Definition~\ref{def:unst-w-sol}, as well as an energy inequality and the attainment of the initial data. 
	Afterwards it remains to identify the nonlinear boundary condition.  
	
	\begin{proposition}[limiting equation]\label{prop:limit-eq}
	In the situation of Assumption~\ref{assump:general} the limiting pair of functions~$(\bu, \bsigma)$ from Lemma~\ref{lem:unst-conv-u} satisfies that 
		\begin{align}
			\skp{\partial_t  \bu}{ \bv}_{\Wdiv} 
			+ 2 \nu \skp{\BD \bu}{ \BD \bv}   
			+  b(\bu,\bu,\bv)  
			+  \skp{
				\bsigma
			}{\tr_\tau(\bv)}_{\Gamma}  =  ( \bf, {\bv})_B,
		\end{align}
		for all $\bv \in \Wdiv$ and for a.e.~$t \in I$, where 
		$\bsigma = \widehat{\bsigma} - \lambda \tr_{\tau}(\bu)$ in case~\ref{itm:case-impl-coerc}. 
		Furthermore, we have that $\bu \in C_w(\overline I;B_{\divergence})$, and that the initial datum is attained in the sense as specified in Definition~\ref{def:unst-w-sol}. 
		
		The following energy identity 
		\begin{align*}
			\tfrac{1}{2} \norm{\bu(s_2)}_{B}^2  + 2 \nu \int_{s_1}^{s_2}  \norm{\BD \bu}_{L^2(\Omega)}^2 \dt   + \int_{s_1}^{s_2} \skp{\bsigma}{\tr_{\tau} (\bu)}_{\Gamma} \dt 
			= \int_{s_1}^{s_2} (\bf,\bu)_B \dt + 	\tfrac{1}{2} \norm{\bu(s_1)}_{B}^2  
		\end{align*}
		is satisfied for a.e.~$0<s_1<s_2< T$, and for $s_1 = 0$.  
	\end{proposition}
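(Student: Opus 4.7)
The strategy is to pass to the limit in the fully discrete equation~\eqref{eq:unst-discr} tested against a suitably discretised test function, and then to exploit the convergences from Lemmas~\ref{lem:unst-conv-u} and~\ref{lem:unst-conv-pi}. Let me describe the main steps.

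\textbf{Step 1 (Discretisation of test functions).} I take a smooth test function $\bv\in C^{\infty}(\overline{\Omega})^{d}$ with $\bv\cdot\bn=0$ and $\divergence \bv=0$, together with $\phi\in C_{c}^{\infty}(I)$, and use the Scott--Zhang operator $\PiSZ$ from Lemma~\ref{lem:SZ} to produce $\bv_{h}\coloneqq \PiSZ \bv\in X_{h}\cap \Hn$, i.e.\ with exact zero normal trace. Writing $\phi^{k}_{j}\coloneqq \phi(t_{j})$ and $\bphi^{k}(t)\coloneqq \phi^{k}_{j}\bv_{h_{k}}$ on each $I_{j}$, I sum~\eqref{eq:unst-discr-a} tested with $(\bv_{h_{k}}\phi_{j}^{k},0)$ over $j$ and multiply by $\delta_{k}$. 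Thanks to Lemma~\ref{lem:SZ}, $\bv_{h_{k}}\cdot\bn=0$ on $\Gamma$, so all four Nitsche boundary terms involving $\bv_{h_{k}}\cdot\bn$ and $h_{\Gamma}^{-1}$-penalty disappear from the test, and the pressure term $\langle \pi_{j}^{k},\divergence \bv_{h_{k}}\rangle$ is the only pressure-velocity coupling that remains.

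\textbf{Step 2 (Limit of the linear and Nitsche terms).} For the viscous term I use the weak convergence $\bu^{k}\rightharpoonup \bu$ in $L^{2}(I;W)$ and the strong convergence $\PiSZ \bv\to \bv$ in $H^{1}(\Omega)^{d}$ together with $\phi^{k}\to \phi$ in $L^{\infty}(I)$. For the time derivative term I perform a discrete integration by parts in time, transferring the difference quotient onto $\phi^{k}_{j}$: this yields $-\int_{I}(\bu^{k},\bv_{h_{k}})_{B}\,\partial_{t}\phi^{\,k,-}\dt$ plus boundary-in-time contributions which vanish because $\phi$ is compactly supported; passing to the limit uses the strong $L^{q}(I;B)$ convergence of $\bu^{k}$ from Lemma~\ref{lem:unst-conv-u}. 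The forcing converges by~\eqref{eq:f-conv}. The Nitsche terms involving $(\BD \bv_{h_{k}}\bn)\cdot\bn$ and $\bu^{k}\cdot\bn$ are handled using Corollary~\ref{cor:SZ-stab} to control $\|\nabla \PiSZ \bv\|_{L^{2}(\Gamma)}$ and the strong convergence $\tr(\bu^{k})\cdot\bn\to 0$ in $L^{2}(I;L^{2}(\Gamma))$.

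\textbf{Step 3 (Convective and pressure terms).} For the convective term I split $\widetilde b(\bu^{k},\bu^{k},\bv_{h_{k}})$ into its two skew-symmetric halves and use the strong convergence of $\bu^{k}$ in $L^{8/d}(I;L^{4}(\Omega))\cap L^{q}(I;B)$ together with the weak convergence in $L^{2}(I;W)$ to pass to the limit. Since the limit $\bu$ is divergence-free with $\bu\cdot\bn=0$, I can invoke~\eqref{eq:conv-id-2} to identify $\widetilde b(\bu,\bu,\bv)=b(\bu,\bu,\bv)$. The pressure term $\int_{I}\langle\pi^{k},\divergence \bv_{h_{k}}\rangle\phi^{k}\dt$ is the delicate one: here I apply Lemma~\ref{lem:unst-conv-pi} with test function $q=\divergence \bv\equiv 0$ \emph{plus} a correction controlling $\|\divergence(\PiSZ \bv-\bv)\|_{L^{2}(\Omega)}\to 0$ from Lemma~\ref{lem:SZ}\ref{itm:SZ-loc-approx}; together with the uniform bound $\delta_{k}^{2}\sum_{j}\|\pi^{k}_{j}\|_{L^{2}}^{8/(d+4)}\leq c$, this shows the pressure term vanishes in the limit. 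Finally, the boundary nonlinear term $\langle \bsigma^{k},\tr_{\tau}\bv_{h_{k}}\rangle_{\Gamma}$ converges to $\langle \bsigma,\tr_{\tau}\bv\rangle_{\Gamma}$ by combining the weak convergence $\bsigma^{k}\rightharpoonup \bsigma$ in $L^{\min(2,r')}(I;L^{\min(2,r')}(\Gamma))$ with $\tr_{\tau}\PiSZ\bv\to\tr_{\tau}\bv$ strongly in the dual space (the latter from Lemma~\ref{lem:SZ} and Corollary~\ref{cor:SZ-stab}). This yields the variational equation in distributional form in time for smooth $\bv$, and by density and reflexivity I extend to all $\bv\in\Wdiv$.

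\textbf{Step 4 (Weak time continuity, initial datum, energy identity).} From the variational equation and the a~priori bounds, $\partial_{t}\bu\in L^{1}(I;(\Wdiv)')$, so Lemma~\ref{lem:w-cont} applied with $Z=(\Wdiv)'$, $B=\Bdiv$ gives $\bu\in C_{w}(\overline{I};\Bdiv)$. The initial datum is recovered by choosing a time cutoff $\phi$ with $\phi(0)=1$ and $\phi(T)=0$, passing to the limit using $\bu^{k}(0)=\Pi_{h_{k}}\bu_{0}\to \bu_{0}$ strongly in $B$ by~\eqref{eq:conv-u0}, and then applying weak continuity to identify the limit as $\|\bu(t)-\bu_{0}\|_{B}\to 0$ as $t\to 0_{+}$. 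For the energy identity, I sum~\eqref{est:unst-ap-8} (with equality rather than inequality, since I use the exact identity~\eqref{est:unst-ap-2}) over $j$ between time indices approximating $s_{1}$ and $s_{2}$, pass to the limit using all the strong/weak convergences established, and use the nonnegativity of $\sum_{j}\|\bu^{k}_{j}-\bu^{k}_{j-1}\|_{B}^{2}$ together with lower semicontinuity of the $L^{2}$-norm of $\BD\bu$ to obtain first the energy inequality; the reverse inequality then follows from the variational formulation since $\bu$ itself is admissible as test function in two space dimensions (and the skew-symmetry of $b$ handles the convective term). The main obstacle is the pressure term in Step~3, because $\PiSZ\bv$ is not exactly divergence-free and no local Fortin operator compatible with nonzero normal traces is available; it has to be balanced with the sub-optimal $L^{8/(d+4)}$-in-time pressure bound through the $\delta_{k}^{2}$ scaling in Lemma~\ref{lem:p-est}.
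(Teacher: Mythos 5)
Your Steps 1--3 follow the paper's skeleton (Scott--Zhang interpolation with zero normal trace, discrete summation by parts in time, Corollary~\ref{cor:SZ-stab} for the leftover Nitsche term), but your treatment of the pressure term has a genuine gap. Lemma~\ref{lem:p-est} only bounds $\delta_k^{2}\sum_j\|\pi^k_j\|_{L^2(\Omega)}^{8/(d+4)}$, which translates into $\|\pi^k\|_{L^{8/(d+4)}(I;L^2(\Omega))}\lesssim \delta_k^{-(d+4)/8}$; it does \emph{not} give a $k$-uniform bound on $\pi^k$ itself, only on its time antiderivative $\xi^k$. Your ``correction'' term $\int_0^T\skp{\pi^k}{\divergence(\PiSZ\bv-\bv)}\phi^k\dt$ is therefore of size $\delta_k^{-(d+4)/8}\,h_k$, which vanishes only under a mesh--time-step coupling that Theorem~\ref{thm:main-unsteady} does not assume (the limit $(\varepsilon,h,\delta)\to 0$ is unconstrained). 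The way out is exactly what Lemma~\ref{lem:unst-conv-pi} is built for: keep non-solenoidal test functions with zero normal trace, write $\int_0^T\skp{\pi^k}{\divergence\bv_{h_k}}\phi^k\dt$ as the $L^2(\Omega)$-pairing of $\int_0^T\pi^k\phi^k\dt$ (which converges weakly to $-\int_0^T\xi\,\partial_t\phi\dt$) with $\divergence\bv_{h_k}$ (which converges strongly), and only in the limiting identity restrict to divergence-free $\bv\in\Wdiv$; this is the paper's route and needs no coupling. Relatedly, you take $\phi\in C_c^\infty(I)$ in Step~1 but later need $\phi(0)=1$; the paper works with $\phi(T)=0$ only, so the term $(\bu_0^k,\bv_h)_B\phi(0)$ is carried along from the start.

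Step 4 has two further problems. First, weak continuity plus the weak formulation with $\phi(0)=1$ only yields $(\bu(t)-\bu_0,\bv)_B\to 0$; Definition~\ref{def:unst-w-sol} demands the strong attainment $\|\bu(t)-\bu_0\|_B\to 0$, which does not follow from weak convergence. The paper obtains it by integrating the discrete energy inequality~\eqref{est:unst-ap-6} over $(0,s)$, using the strong $L^2(I;B)$ convergence of the piecewise affine interpolants and $\bu_0^k\to\bu_0$ in $B$, and sending $s\to 0_+$ to get $\limsup_{s\to 0_+}\|\bu(s)-\bu_0\|_B^2\le 0$. Second, your derivation of the energy identity is blocked on both ends: the exact discrete energy balance~\eqref{est:unst-ap-1} for the \emph{symmetric} Nitsche method contains the cross term $-4\nu\skp{(\BD\bu^k\,\bn)\cdot\bn}{\bu^k\cdot\bn}_{\Gamma}$, for which there is not enough information to pass to the limit (the paper states this explicitly at the end of Section~\ref{sec:asym-Nitsche}; this is the whole reason the antisymmetric variant is introduced for $r>2$), whereas the absorbed inequality~\eqref{est:unst-ap-8} has already destroyed the sharp constants, the $\bsigma$-term and the forcing pairing, so summing it cannot reproduce the stated identity. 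Moreover, your reverse inequality ``by testing with $\bu$'' is admissible only for $d=2$, while the proposition covers $d\in\{2,3\}$. The paper avoids the discrete energy balance altogether here: it proves the identity at the continuous level by testing the limiting weak formulation with the truncated and time-mollified solution $\psi_\delta(\rho_\epsilon*\rho_\epsilon*(\psi_\delta\bu))$ \`{a} la Lions, exploiting that the boundary term is fully admissible in space for the relevant range of $r$.
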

	\begin{proof}
		Using the convergence results we want to take the limit in the discrete equation~\eqref{eq:unst-discr}.  
		
		\textit{1. Step (distributional solution):}
		We first show that the limiting  functions $\bu, \bsigma$  and $\xi$  in Lemmas~\ref{lem:unst-conv-u}, \ref{lem:unst-conv-s-c1}, \ref{lem:unst-conv-s-c2}, and \ref{lem:unst-conv-pi} satisfy
		\begin{equation}\label{eq:unst-lim-eq-1a}
			\begin{aligned}
				- \int_0^T ( \bu, \bv)_{B}\, \partial_t \phi \dt  
				&+ 2 \nu \int_{0}^T \skp{\BD \bu}{ \BD \bv} \phi \dt 
				+ \int_0^T b(\bu,\bu,\bv) \phi \dt 
				+ \int_0^T \skp{\xi}{\divergence \bv} \partial_t \phi \dt 
				\\
				& + \int_{0}^T \skp{\bsigma}{\tr_\tau(\bv)}_{\Gamma} \, \phi \dt  
				=
				\int_{0}^T ( \bf, {\bv})_B \,\phi  \dt
				+ (\bu(0),  \bv)_B \, \phi(0),
			\end{aligned}
		\end{equation}
		for arbitrary $\bv \in C^{\infty}(\overline \Omega)^d$, with $\tr(\bv) \cdot \bn  = 0$ on $\Gamma$, and $\phi \in C^{\infty}(\overline I)$ with $\phi(T) = 0$. 
		This means that $(\bu,\bsigma)$ satisfies~\eqref{eq:NS-unst-full} in the sense of distributions. 
	
	For this purpose, let such test functions $\bv, \phi$ be arbitrary but fixed. 
	For $k \in \mathbb{N}$ we consider the sequence $\phi_j^k \coloneqq \phi(t_j)$ for $j = 0, \ldots, m_k$, and let $\phi^k \in\mathcal{L}^0_0(J_{\delta_k})$ be the piecewise constant interpolant of $(\phi_j^k)_{j = 1, \ldots, m_k}$. 
	Note that $\phi(T) = \phi(t_{m_k}) = 0 $. 
	Furthermore, we denote by $\widehat{\phi}^k \in \mathcal{L}^1_1(J_{\delta_k})$ the Lagrange interpolation of $\phi$, i.e., the continuous piecewise affine interpolant of  $(\phi_j^k)_{j = 0, \ldots, m_k}$. 
	Then $\partial_t \widehat \phi^k \in \mathcal{L}^0_0(J_{\delta_{k}})$  coincides with the piecewise constant interpolant of $(\difft \phi_j)_{j}$ and we have that $\widehat{\phi}^k(T) = 0$. 
	By standard approximation results we have the uniform convergence 
	\begin{align}\label{eq:unst-lim-phik}
		\norm{\phi - \phi^k}_{L^\infty(I)} + 
		\norm{\partial _t \phi - \partial_t \widehat{\phi}^k}_{L^\infty(I)} \to 0 \quad \text{ as } k \to \infty.
	\end{align}
	For $h = h_k$ let $\PiSZ$ be the Scott--Zhang operator, cf.~Lemma~\ref{lem:SZ}, and choose $\bv_h \coloneqq \PiSZ \bv \in X_{h} \cap H^1_{\bn}(\Omega)$. 
	Note that by Lemma~\ref{lem:SZ}~\ref{itm:SZ-normal} we have that $\bv_h \cdot \bn = 0$, since $\tr(\bv) \cdot \bn = 0$ on $\Gamma$.
	Then, by the approximation property in Lemma~\ref{lem:SZ} and the trace inequality~\eqref{est:trace}
		we obtain 
	\begin{alignat}{3}\label{eq:unst-lim-vh}
		\bv_h &\to \bv \quad &&\text{ strongly in } H^{1}(\Omega)^d,\\ \label{eq:unst-lim-vh-tr}
		\tr(\bv_h)  = \tr_\tau (\bv_h)&\to \tr(\bv) = \tr_\tau(\bv) \quad &&\text{ strongly in } L^{s}(\Gamma)^d
	\end{alignat}
	as $h \to 0$, for any $s \in [1,2^\sharp)$.  
	In particular, with Lemma~\ref{lem:h-norm} we have that  
	\begin{align}\label{eq:unst-lim-vh-WB}
		\bv_h \to  \bv \quad \text{ strongly in } W \text{ and in } B \quad \text{ as } h \to 0.
	\end{align}
	Because $\bv_h \cdot \bn = 0$ on $\Gamma$, testing~\eqref{eq:unst-discr-a} with $(\bv_h,0) \in X_h \times Q_h$ the terms including the normal trace of $\bv_h$ vanish and we have 
	\begin{equation}
		\begin{aligned}	\label{eq:unst-discr-v}
			(\difft {\bu}^{k}_j,{\bv}_h)_B
			&+ 2\nu \skp{ \BD \bu^{k}_j}{ \BD \bv_h } 
			+ \widetilde{b}(\bu^{k}_j, \bu^{k}_j, \bv_h)
			- \skp{ \pi_j^{k}}{ \diver \bv_h } 
			\\
			& 
			+ \skp{ \bsigma^k_j}{\tr_{\tau}(\bv_h) }_{\Gamma}
			- 2 \nu {\theta} \skp{   \left(( \BD \bv_h) \bn \right) \cdot \bn }{ \bu_j^{k} \cdot \bn }_{\Gamma}
			= ({\bf}_j^k ,{\bv}_h )_B.
		\end{aligned}
	\end{equation} 
	Due to the fact that $\phi(T) = \phi^k_{m_k} = 0$ we may use summation by parts and the fact that $ \bu^k_0 =  \Pi_h  \bu_0$, see~\eqref{eq:unst-discr},  to find 
	\begin{equation}\label{eq:unst-lim-1-a}
		\begin{aligned}
			\delta_k	\sum_{j= 1}^{m_k} (\difft  {\bu}^{k}_j,{\bv}_h)_B \,\phi_j^k 
			& =
			-	\delta_k \sum_{j= 1}^{m_k} (  {\bu}^{k}_{j-1},{\bv}_h)_B\, \difft  \phi_{j}^k 
			- ( \bu^k_0,  \bv_h)_B\,\phi_0^k \\
			& = -  \int_0^T ( \bu^k(t-\delta_k),  \bv_h)_B \,\partial_t \widehat{\phi}^k(t) \dt 
			- ( \bu^k_0,  \bv_h)_B\,\phi(0). 
		\end{aligned}
	\end{equation}
	Formally, we extend $\bu^k$ to $(-\delta_k,T]$ by $\bu^k_0$. 
	Multiplying the equation~\eqref{eq:unst-discr-v} with $\delta_k \phi_j$ and summing over $j \in \{1, \ldots, m_k\} $ we obtain 
	\begin{equation}
		\begin{aligned}	\label{eq:unst-discr-v2}
			&-  \int_0^T ( \bu^k(t-\delta_k),  \bv_h)_B \,\partial_t \widehat{\phi}^k(t) \dt 
			+ 2\nu \int_0^T \skp{ \BD \bu^{k}}{ \BD \bv_h }\, \phi^k \dt 
			+ \int_0^T \widetilde{b}(\bu^{k}, \bu^{k}, \bv_h) \, \phi^k \dt 
			\\
			& \qquad 
			- \int_0^T \skp{ \pi^{k}}{ \diver \bv_h }\, \phi^k \dt  
			+ \int_0^T \skp{
				\bsigma^k 
			}{\tr_{\tau}(\bv_h) }_{\Gamma} \, \phi^k \dt \\
			& \qquad
			- \int_0^T \skp{   \left((2 \nu {\theta} \BD \bv_h) \bn \right) \cdot \bn }{ \bu^{k} \cdot \bn }_{\Gamma}\, \phi^k \dt 
			= \int_0^T ({\bf}^k ,{\bv}_h )_B \, \phi^k \dt 
			+ 	( \bu^k_0,  \bv_h)_B\,\phi(0).
		\end{aligned}
	\end{equation} 
	We proceed taking the limit term by term. 
	
	For the first term, recall that by~\eqref{eq:unst-lim-phik} and by~\eqref{eq:unst-lim-vh-WB} we have $\partial_t \widehat{\phi}^k \to \partial_t \phi$ in $L^\infty(I)$, and that $ \bv_h \to  \bv$ in $B$ as $k \to \infty$.
	By the estimate in Lemma~\ref{lem:apriori-unst} we have 
	\begin{align}\label{eq:unst-lim-1-b}
		\norm{  \bu^k(\cdot) - \bu^k(\cdot-\delta_k) }_{L^2(I;B)}^2 = \delta_k\sum_{j = 1}^{m_k} 
		\norm{ {\bu}^{k}_j   -{\bu}^{k}_{j-1} }_B^2  \leq \delta_k c \to 0
	\end{align}
	as $k \to \infty$. 
    By the fact that $ \bu^k \to \bu$ strongly in $L^2(0,T;B)$, cf.~Lemma~\ref{lem:unst-conv-u}, we also have that 
	\begin{align}\label{eq:unst-lim-1c}
		\bu^k(\cdot-\delta_k) \to \bu \quad \text{ strongly in } L^2(0,T;B)
	\end{align}
	as $k \to \infty$. 
	In combination, we arrive at  
	\begin{align}\label{eq:unst-lim-1}
		-  \int_0^T ( \bu^k(t-\delta_k), \bv_h)_B \, \partial_t \widehat{\phi}^k \dt 
		\to 
		- \int_0^T ( \bu,  \bv)_B\, \partial_t\phi \dt  
		\qquad \text{ as } k \to \infty. 
	\end{align} 
	Taking the limit in the dissipation term and in the convective term uses standard arguments.
	
	By Lemma~\ref{lem:unst-conv-pi} and $\divergence \bv_h \to \divergence \bv$ strongly in $L^2(\Omega)$, cf.~\eqref{eq:unst-lim-vh}, as well as~\eqref{eq:unst-lim-phik} it follows that 
	\begin{align} \label{eq:unst-lim-4}
		\int_0^T  \skp{\pi^k }{\divergence \bv_h} \, \phi^k \dt \to  
    - \int_0^T  \skp{\xi}{\divergence \bv} \,\partial_t\phi \dt,  
		\qquad \text{ as } k \to \infty. 
	\end{align}
	By Lemma~\ref{lem:unst-conv-u} we know that $\bsigma^k \wconv \bsigma $ converges weakly in $L^{\min(2,r')}(I \times \Gamma)^d$ as $k \to \infty$.  
	Furthermore, by~\eqref{eq:unst-lim-vh-tr} we have that $\tr_\tau(\bv_h) \to \tr_{\tau}(\bv)$ strongly in $L^{\max(r,2)}(\Gamma)^d$ since 
	$r \in [1,2^\sharp]$ with $r < \infty$, and by~\eqref{eq:unst-lim-phik} we have that $\phi^k \to \phi$ strongly in $L^{\max(r,2)}(I)^d$ as $k \to \infty$. 
	This means, we can take the limit 
	\begin{align}\label{eq:unst-lim-5}
		\int_0^T \skp{\bsigma^k}{\tr_\tau(\bv_h)}_{\Gamma} \, \phi^k \dt  \to 
		\int_0^T \skp{\bsigma}{\bv}_{\Gamma} \, \phi \dt 	\qquad \text{ as } k \to \infty. 
	\end{align}
	To show that the last boundary term in~\eqref{eq:unst-discr-v2} vanishes we use the Hölder and the triangle inequality to find
	\begin{equation}\label{eq:unst-lim-7a}
		\begin{aligned}
			\abs{\int_{0}^T  \skp{  \left(( \BD \bv_h) \bn \right) \cdot \bn }{ \bu^{k} \cdot \bn }_{\Gamma} \,\phi^k \dt}  
			&\leq 
			\norm{\nabla \bv_h}_{L^2(\Gamma)} \norm{\bu^k \cdot \bn}_{L^2(I;L^2(\Gamma))} \norm{\phi^k}_{L^2(I)}. 
		\end{aligned}
	\end{equation}
	By the stability property of the Scott--Zhang operator in Corollary~\ref{cor:SZ-stab}, we have  
	\begin{align}\label{est:unst-stab}
		\norm{\nabla \bv_h}_{L^2(\Gamma)}  = \norm{\nabla \PiSZ \bv }_{L^2(\Gamma)} \leq c \norm{\bv}_{H^2(\Omega)},
	\end{align}
	which is bounded uniformly in $h$, since $\bv$ is smooth.  
	In combination with the fact that $(\phi^k)_{k \in \mathbb{N}}$ is bounded in $L^2(I)$ by~\eqref{eq:unst-lim-phik}, and that 
	$\tr(\bu^k) \cdot \bn \to 0$ converges in particular strongly in $L^2(I\times \Gamma)$ by Lemma~\ref{lem:unst-conv-u} we obtain 
	\begin{align}\label{eq:unst-lim-7}
		- \int_0^T \skp{   \left((2 \nu { \theta} \BD \bv_h) \bn \right) \cdot \bn }{ \bu^{k} \cdot \bn }_{\Gamma}\, \phi^k \dt \to 0
		\qquad \text{ as } k \to \infty.  
	\end{align}
	For the force term in~\eqref{eq:unst-discr-v2} by the strong convergence $ \bf^k = \Pi_{\delta_k}(
	\bf)  \to  \bf $  in $L^2(I;B)$ as in~\eqref{eq:f-conv}, and employing again~\eqref{eq:unst-lim-phik}, \eqref{eq:unst-lim-vh-WB} we obtain 
	\begin{align}\label{eq:unst-lim-8}
		\int_0^T ( \bf^k  ,{\bv}_h)_B \,\phi^k \dt 
		\to  
		\int_0^T ( \bf  ,{\bv})_B \,\phi \dt 
		\qquad  \text{ as } k \to \infty. 
	\end{align}
	For the initial velocity we know that  $\bu^k_0 =  \Pi_h  \bu_0 \to  \bu_0$ converges strongly in $B$ as $k \to \infty$, see~\eqref{eq:conv-u0}. 
	Thus, with~\eqref{eq:unst-lim-vh} we have 
	\begin{align}\label{eq:unst-lim-9}
		- ( \bu^k_0,  \bv_h)_B\,\phi(0) 
		\to 
		- ( \bu_0,  \bv)_B\,\phi(0) 
		\qquad  \text{ as } k \to \infty. 
	\end{align}
	
	Altogether, \eqref{eq:unst-discr-v2} as well as~\eqref{eq:unst-lim-1}, 
	\eqref{eq:unst-lim-4}, \eqref{eq:unst-lim-5}, \eqref{eq:unst-lim-7}, \eqref{eq:unst-lim-8} and~\eqref{eq:unst-lim-9} show that $\bu, \bsigma$ and $\xi$ satisfy the distributional formulation~\eqref{eq:unst-lim-eq-1a}. 
	
	\textit{2. Step (pressure-free weak formulation):}
	Testing with arbitrary divergence-free functions $\bv \in \mathcal{W}_{\divergence}$ and $\phi \in C^{\infty}(\overline I)$ with $\phi(T) = 0$, we obtain the pressure-free formulation 
	\begin{equation}\label{eq:unst-lim-distr-2}
		\begin{aligned}
			- \int_0^T ( \bu, \bv)_{B}\, \partial_t \phi \dt  
			&+ 2 \nu \int_{0}^T \skp{\BD \bu}{ \BD \bv} \phi \dt 
			+ \int_0^T b(\bu,\bu,\bv) \phi \dt 
			\\
			& + \int_{0}^T \skp{
				\bsigma
			}{\tr_\tau(\bv)}_{\Gamma} \, \phi \dt  = \int_{0}^T (\bf, {\bv})_B\, \phi  \dt + (\bu(0),  \bv)_B \, \phi(0).
		\end{aligned}
	\end{equation}
 By taking the limit in the corresponding equation for $\bu^k$, we may identify the initial value term replacing $\bu(0,\cdot)$ by $\bu_0$.  
	Noting, that $ \bu \in L^2(I;\Bdiv)$ and $ \bv \in \Bdiv$, we may rewrite a.e.~in $I$ 
	\begin{align}
		( \bu, \bv)_{\Bspace}  = ( \bu, \bv)_{\Bdiv},
	\end{align}
	see~\eqref{eq:B-Bdiv}. 
	Furthermore, by the regularity of each of the terms and by density of $\mathcal{W}_{\divergence}$ in $\Wdiv$ we have 
	\begin{equation}\label{eq:unst-lim-distr-3}
		\begin{aligned}
			- \int_0^T ( \bu, \bv)_{\Bdiv}\, \partial_t \phi \dt  
			&+ 2 \nu \int_{0}^T \skp{\BD \bu}{  \BD \bv} \phi \dt 
			+ \int_0^T b(\bu,\bu,\bv) \phi \dt 
			\\
			& + \int_{0}^T \skp{\bsigma}{\tr_\tau(\bv)}_{\Gamma} \,\phi \dt  = \int_{0}^T ( \bf, {\bv})_B\, \phi \dt + (\bu_0,  \bv)_B \, \phi(0),
		\end{aligned}
	\end{equation}
	for any $\bv \in \Wdiv$ and for any $\phi \in C^{\infty}(\overline I)$ with $\phi(T) = 0$. 
	
	Similarly as before, we may investigate the boundedness properties of 
	\begin{equation}
		\begin{aligned}\label{def:z-form-v}
			\yz(\bu, \bv) &\coloneqq 
			- 2\nu \skp{ \BD \bu}{ \BD \bv } 
			- {b}(\bu, \bu, \bv)
			- \skp{\bsigma}{\tr_{\tau}(\bv) }_{\Gamma}
			+ ({\bf} ,{\bv} )_B,
		\end{aligned}
	\end{equation}
	for $\bv \in \Wdiv$. 
	This allows us to conclude that the distributional time-derivative  of $\bu \in L^2(I;B)$ is $\partial_t \bu \in L^{4/d}(I;(\Wdiv)')$. 
	By the definition of the duality relation between $(\Wdiv)'$ and $\Wdiv$ we have that 
	\begin{equation}\label{eq:unst-lim-weak-1}
		\begin{aligned}
			\skp{\partial_t  \bu}{ \bv}_{\Wdiv} 
			+ 2 \nu \skp{\BD \bu}{ \BD \bv}   
			+  b(\bu,\bu,\bv)  
			+  \skp{
				\bsigma
			}{\tr_\tau(\bv)}_{\Gamma}  =  ( \bf, {\bv})_B,
		\end{aligned}
	\end{equation} 
	for all $\bv \in \Wdiv$ and for a.e.~$t \in I$. 
	This means that the weak formulation in Definition~\ref{def:unst-w-sol} holds. 
		
		\textit{3. Step (attainment of initial data):} 
Recall that $\Wdiv, \Bdiv$ and hence also $(\Wdiv)'$ are reflexive Banach spaces with $\Bdiv \hookrightarrow (\Wdiv)'$, see~\eqref{def:Gelfand}. 
	Thus, in particular we have that 
	\begin{align*}
		\bu \in L^1(I;(\Wdiv)')\cap L^\infty(0,T;\Bdiv) \quad \text{ and } \quad 
		\partial_t  \bu \in L^1(I;(\Wdiv)'). 
	\end{align*}
	Hence, by Lemma~\ref{lem:w-cont} it follows that $ \bu \in C_{w}([0,T];\Bdiv)$. 
	Furthermore, taking the limit in the formulation with time derivative on the test function shows that 
		\begin{align}\label{eq:ic-weak}
			\skp{\bu(s,\cdot) - \bu_0}{\bv} \to 0  \qquad \text{ as } s \to 0, \quad \text{ for any } \bv \in \Bdiv.  
		\end{align}
  
		To show that the initial value is attained, we start from estimate~\eqref{est:unst-ap-6}, which states that
		\begin{align}
			\frac{1}{2\delta_{k}} \left(\norm{{\bu}^k_j}_B^2 - \norm{{\bu}^k_{j-1}}_B^2 + \norm{{\bu}^k_j - {\bu}^k_{j-1}}_B^2 \right)
			+ 
			c \norm{ \bu_j^k}_{X_h}^2 
			\leq
			( \bf_j^k, \bu_j^k)_B + C,
		\end{align}	
		with constants independent of $j,k$. 
		Considering the piecewise constant extensions $\bu^k$, $\bsigma^k$ and $\bf^k = \Pi_{\delta_k} \bf $, cf.~\eqref{def:f-discr}, and the continuous, piecewise affine extension $\widetilde \bu^k$, and integrating over $(0,s)$ yields 
		\begin{align}\label{est:initcond-1}
			\tfrac{1}{2} \norm{\widetilde \bu^k(s,\cdot)}_{B}^2 &-  	\tfrac{1}{2}  \norm{\bu^k_0}_{B}^2
			\leq   \int_{0}^{s}\left(({\bf}^k,{\bu}^k)_B + c\right) \dt.
		\end{align}
		Since $\abs{\widetilde{\bu}^k_j - \bu^k_j}  = \abs{\bu^k_j - \bu^k_{j-1}}$, from the estimates in~\eqref{est:unst-ap-10} it follows that 
		\begin{align*}
			\widetilde{\bu}^k - \bu^k \to 0 \quad \text{ strongly in } L^2(I;B)
		\end{align*}
		as $k \to \infty$. 
        Hence, from $\bu^k \to \bu$ strongly in $L^2(I;B)$ , by Lemma~\ref{lem:unst-conv-u} it follows that $\widetilde{\bu}^k \to \bu$ strongly in $L^2(I;B)$ as $k \to \infty$.  
		Consequently, there is a subsequence, such that 
		\begin{align*}
			\widetilde{\bu}^k(s,\cdot) \to \bu(s,\cdot) \quad \text{ strongly in } B \;\; \text{ as } k \to \infty,
		\end{align*}
		for a.e.~$s \in I$. 
		Also, by~\eqref{eq:conv-u0} we have that 
		that 
		\begin{align*}
			{\bu}^k_0 \to \bu_0, \quad \text{ strongly in } B, \text{ as } k \to \infty. 		
		\end{align*}
		Thus, with the previous two convergence results and~\eqref{est:initcond-1} for a.e.~$s \in I$ we obtain 
		\begin{align*}
			0 \leq \norm{\bu(s,\cdot) - \bu_0}_B^2
			&= 	\lim_{k \to \infty} \norm{\widetilde{\bu}^k(s,\cdot) - \bu^k_0}^2_{B} \\
   & =  	\lim_{k \to \infty} \left(\norm{\widetilde{\bu}^k(s,\cdot)}^2_{B} - \norm{ \bu^k_0}^2_{B}  + 2 \skp{ \bu^k_0- \widetilde{\bu}^k(s,\cdot)}{\bu^k_0} \right)\\
			& \leq \limsup_{k \to \infty}  2 \left(
			\int_{0}^{s}\left(({\bf}^k,{\bu}^k)_B + c\right) \dt + \skp{\bu^k_0 - \bu^k(s,\cdot)}{\bu^k_0}\right)\\
			& = 2\left( \int_{0}^{s}\left(({\bf},{\bu})_B + c\right) \dt+ \skp{\bu_0 - \bu(s,\cdot)}{\bu_0} \right),
		\end{align*}
		where we have used that $\bf^\delta = \Pi_{\delta} \bf \to \bf$ strongly in $L^2(I;B)$, cf.~\eqref{eq:f-conv}, and $\bu^k \to \bu$ strongly in $L^2(I;B)$ by Lemma~\ref{lem:unst-conv-u}. 
		Then, taking $\lim_{s \to 0}$ shows with the absolute continuity of the integral and with~\eqref{eq:ic-weak}, that 
		\begin{align*}
			\lim_{s \to 0_+} \norm{\bu(s,\cdot) - \bu_0}_B^2  = 0. 
		\end{align*}
		
		\textit{4. Step (energy identity):} 	
		While in space we have full admissibility, in time some integrability is missing due to the convective term.  
		Note however, that by the choice of $r$ in all cases we have full admissibility in the trace term. 
		For this reason, one may employ a truncation and mollification in time to prove an energy identity for almost all times, as in~\cite[Ch.~2.5]{Li.1969}: 
		
		Let $0<s_1<s_2<T$ be arbitrary but fixed, and choose a mollification parameter $\oldepsilon$ such that $\oldepsilon < \tfrac{1}{3}\min(s_1, T - s_2)$. 
		Let $\rho_{\oldepsilon}$ be the standard mollifier in time with support $[-\oldepsilon, \oldepsilon]$. 
		For   a Banach space $Y$ and some function $\bv \in L^1(I;Y)$ extended by zero to $\setR$, we denote the mollification in time by 
		\begin{align*}
			(\rho_{\oldepsilon} * \bv )(t,\cdot) \coloneqq \int_{\setR} \rho_{\oldepsilon}(t-s) \bv(s,\cdot) \,\mathrm{d}s. 
		\end{align*}
		Furthermore, we use the piecewise affine cut-off function $\psi_{\delta} \in C(I)$ defined by its values 
		\begin{align*}
			\psi_{\delta}(0 ) = \psi_{\delta}(s_1 - \delta) = \psi_{\delta}(s_2 + \delta) = \psi_{\delta}(T) = 0
			\quad \text{ and }  \quad \psi_{\delta}(s_1 + \delta) = \psi_{\delta}(s_2 - \delta) = 1. 
		\end{align*}
		We consider the smoothed and truncated function $\bu_{\oldepsilon,\delta} \in C(\overline I;W_{\divergence})$  defined by 
		\begin{align*}
			\bu_{\oldepsilon,\delta} \coloneqq \psi_\delta (\rho_{\oldepsilon} * \rho_{\oldepsilon} * (\psi_{\delta} \bu)). 
		\end{align*} 
		As test function in~\eqref{eq:unst-lim-weak-1} we use $\bv \coloneqq \bu_{\oldepsilon,\delta}(t,\cdot) $, and we integrate in $I = (0,T)$. 
		Then, first taking the limit in $\oldepsilon \to 0$, and then $\delta \to 0$ one can show that the energy identity is satisfied for a.e.~$0 < s_1 < s_2 < T$. 
		By the attainment of the initial data, the energy identity in particular holds for $s_1 = 0$. 
		The proof is complete. 
\end{proof}
	
	\begin{remark}\label{rmk:local-fortin}  
		When showing that the term $\int_0^T \skp{   \left(( \BD \bv_h) \bn \right) \cdot \bn }{ \bu^{k} \cdot \bn }_{\Gamma}\, \phi^k \dt$ in~\eqref{eq:unst-lim-7} converges to zero as $k\to \infty$, we have employed the stability estimate on the Scott--Zhang operator, see~\eqref{est:unst-stab}. 
		This stability cannot be proved by use of a global trace estimate, because $\nabla \bv_h$ is not in $H^1$. 
		Instead, its proof is based on local approximation results in Corollary~\ref{cor:SZ-stab}, which is the reason we use the Scott--Zhang interpolation operator. 
		Alternatively, one may use a Fortin operator. 
		This would have the advantage that one can work with a pressure-free formulation. 
		However, for the Fortin operator to have  local stability properties, it is not enough to use an abstract Fortin operator which exists for any inf-sup stable pair of finite element spaces thanks to the Fortin Lemma,  cf.~\cite[p.~217]{GiraultRaviart1986}. 
		Instead, one would have to use a constructive approach, which is available for most low order finite elements but usually only for homogeneous Dirichlet boundary conditions, see~\cite{BR.1985,GiraultScott2003},~\cite[Ch.~8]{BBF.2013,} and~\cite{DieningStornTscherpel2022}, and~\cite[A.2.2]{Tscherpel2018} for an overview. 
		Fortin operators with trace-preservation properties for the Scott--Vogelius element in 2D are constructed  in~\cite{ParkerSueli2025,EickmannGuzmanNeilanEtAl2025}. 
	\end{remark}
	
	\subsection{Identification of the nonlinear boundary term for $r \in [1,2]$}\label{sec:unst-id}
	To prove Theorem~\ref{thm:main-unsteady} it remains to identify that 
	\begin{alignat*}{3}
		\bsigma &= \Srel(\tr_{\tau} (\bu))  \qquad &&\text{ a.e.~on } I\times \Gamma \qquad \text{ in case~\ref{itm:case-expl-noncoerc},}\\
		\gbd(\bsigma + \lambda \tr_{\tau} (\bu), \tr_{\tau}(\bu))&=\b0 \qquad &&\text{ a.e.~on } I\times \Gamma \qquad \text{ in case~\ref{itm:case-impl-coerc}.}
	\end{alignat*}
	Then, in combination with Proposition~\ref{prop:limit-eq} we have that $(\bu,\bsigma)$ is a weak solution as in Definition~\ref{def:unst-w-sol}. 
    Since for $r\leq 2$ we even have strong convergence of the trace terms, in this case the identification is simpler than in the case $r > 2$. 
	We now proceed separately in each of the cases~\ref{itm:case-expl-noncoerc} and~\ref{itm:case-impl-coerc}, see~\eqref{eq:unst-discr-b}. 
	
	\begin{lemma}[convergence of the nonlinear trace term in case~\ref{itm:case-expl-noncoerc}]\label{lem:unst-conv-s-c1}
	In the situation of Theorem~\ref{thm:main-unsteady}, case~\ref{itm:case-expl-noncoerc}, recall that Assumption~\ref{assump:s-expl} is in action, and $r \in [1,2]$, see Assumption~\ref{assump:general}. 
  Recalling from~\eqref{def:discr-short} the sequence $(\bsigma^{k})_{k\in\mathbb{N}}$ of piecewise constant interpolants of the finite sequences $(\Srel(\tr_\tau(\bu^k_j)))_{j\in\{1,...,m_{k}\}}$ with $	\bsigma^k  \in \mathcal{L}^0_0(J_{\delta_k}; L^{2}(\Gamma)^d) $ for all $k\in\mathbb{N}$, there exists a (non-relabelled)  subsequence such that  
  \begin{align}\label{eq:streetwalker}
			\bsigma^k 
			& \rightharpoonup  \Srel(\tr_{\tau}(\bu)) \eqqcolon \bsigma  \quad 
			&&\text{ weakly in } L^{2}(I;L^{2}(\Gamma)^d) \qquad \text{ as } k \to \infty, 
		\end{align}
		where $\bu$ is as in Lemma~\ref{lem:unst-conv-u}.
	\end{lemma}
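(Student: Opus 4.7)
The plan is to combine the strong convergence of the tangential traces from Lemma~\ref{lem:unst-conv-u} with the continuity and growth properties of $\Srel$ supplied by Assumption~\ref{assump:s-expl}, and conclude via a Vitali-type argument. The key input is the strong convergence $\tr_\tau(\bu^k) \to \tr_\tau(\bu)$ in $L^2(I;L^p(\Gamma)^d)$ for every $p\in[1,2^\sharp)$; after extracting a further (non-relabelled) subsequence, this yields $\tr_\tau(\bu^k) \to \tr_\tau(\bu)$ pointwise almost everywhere on $I\times\Gamma$. Since $\Srel$ is continuous, this immediately upgrades to the pointwise a.e. convergence $\Srel(\tr_\tau(\bu^k))\to \Srel(\tr_\tau(\bu))$.

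To promote the pointwise convergence to strong $L^2$-convergence, I would split into the ranges $r\in[1,2)$ and $r=2$. In the first range, the growth condition reads $|\Srel(\bv)|^2 \leq c(1+|\bv|^{2(r-1)})$ with $2(r-1)<2$; combined with the uniform bounds on $\tr_\tau(\bu^k)$ in $L^2(I;L^{p}(\Gamma)^d)$ for any $p\in[2,2^\sharp)$ provided by \eqref{est:ap-pwconst}, an application of H\"older's inequality yields a uniform bound on $|\Srel(\tr_\tau(\bu^k))|^2$ in $L^q(I\times\Gamma)$ for some $q>1$ (trivially $q=\infty$ in the degenerate case $r=1$). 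The de la Vall\'ee--Poussin criterion then furnishes equi-integrability of $(|\Srel(\tr_\tau(\bu^k))|^2)_{k\in\mathbb{N}}$ in $L^1(I\times\Gamma)$, and Vitali's convergence theorem promotes the pointwise convergence to the desired strong $L^2$-convergence.

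The case $r=2$ is the borderline situation in which the growth becomes linear and no gain of integrability is available from the growth condition alone. Here, I would instead exploit the H\"older-type condition~\eqref{eq:condo-1}: setting $\bv = \tr_\tau(\bu^k)$ and $\bw=\tr_\tau(\bu)$, applying H\"older's inequality on $I\times\Gamma$ with conjugate exponents $1/\alpha$ and $1/(1-\alpha)$ produces
\begin{align*}
\|\Srel(\tr_\tau(\bu^k)) - \Srel(\tr_\tau(\bu))\|_{L^2(I\times\Gamma)}^2 \lesssim \|\tr_\tau(\bu^k)-\tr_\tau(\bu)\|_{L^2(I\times\Gamma)}^{2\alpha} M^{2(1-\alpha)},
\end{align*}
where $M\coloneqq \|\tr_\tau(\bu^k)\|_{L^2(I\times\Gamma)} + \|\tr_\tau(\bu)\|_{L^2(I\times\Gamma)}$ stays uniformly bounded thanks to the a priori estimates in Lemma~\ref{lem:apriori-unst}. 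The $L^2$-strong convergence of $\tr_\tau(\bu^k)$ then yields the strong $L^2$-convergence of $\Srel(\tr_\tau(\bu^k))$ directly. Uniqueness of weak limits combined with the weak convergence $\bsigma^k \wconv \bsigma$ in $L^2(I;L^2(\Gamma)^d)$ from Lemma~\ref{lem:unst-conv-u} finally identifies $\bsigma = \Srel(\tr_\tau(\bu))$.

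I expect the borderline case $r=2$ to be the main (if mild) obstacle, since it is the only place where the H\"older-type refinement \eqref{eq:condo-1} of continuity genuinely enters. Under only continuity and linear growth, the standard integrability-gain argument for $r<2$ fails to deliver equi-integrability of $|\Srel(\tr_\tau(\bu^k))|^2$, and some quantitative modulus of continuity is required to bypass this issue.
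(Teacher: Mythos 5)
Your proposal is correct and follows essentially the same route as the paper: pointwise a.e.\ convergence of $\Srel(\tr_\tau(\bu^k))$ from the strong trace convergence plus continuity, upgraded to strong $L^2$-convergence via Vitali (the paper checks $2$-equi-integrability directly on small sets with the $(r-1)$-growth, while you certify it through a uniform $L^q$-bound with $q>1$ and de la Vall\'ee--Poussin, which is equivalent), and for the borderline case $r=2$ the same H\"older-type estimate based on \eqref{eq:condo-1}. No gaps.
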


	\begin{proof}
    By Lemma~\ref{lem:unst-conv-u} we in particular have that $\bsigma^k = \Srel(\tr_\tau(\bu^k)) \wconv \bsigma$ weakly in $L^2(I;L^2(\Gamma)^d)$, and that 
    $\tr_{\tau}(\bu^k) \to \tr_{\tau}(\bu)$ strongly in $L^2(I;L^2(\Gamma)^d)$ as $k \to \infty$. 
    From the latter it follows, that there is a (non-relabelled) subsequence of $(\tr_{\tau}(\bu^k))_{k \in \mathbb{N}}$, which converges to $\tr_{\tau}(\bu)$ pointwise a.e.~in $I \times \Gamma$. 
    Since by Assumption~\ref{assump:s-expl} the function $\Srel$ is continuous, we also have that $(\Srel(\tr_{\tau}(\bu^k)))_{k \in \mathbb{N}}$ converges to $\Srel(\tr_{\tau}(\bu))$ pointwise a.e.~in $I \times \Gamma$. 
    By boundedness of $(\Srel(\tr_{\tau}(\bu^k)))_{k \in \mathbb{N}}$ in $L^2(I;L^2(\Gamma)^d)$ it converges weakly to $\Srel(\tr_{\tau}(\bu))$. 
    Then, by the uniqueness of limits it follows that $\bsigma = \Srel(\tr_{\tau}(\bu))$, which proves the claim.  
\end{proof}
	
	In case~\ref{itm:case-impl-coerc} we may use monotonicity tools to identify the limits in the nonlinear boundary term. 
	Recall that $\bsigma^k = \widehat{\bsigma}^k - \lambda \tr_{\tau}(\bu^k)$ converges to $\bsigma = \widehat{\bsigma} - \lambda \tr_{\tau}(\bu)$, cf.~Lemma~\ref{lem:unst-conv-u}. 
	Thus, it remains to identify that  
	\begin{align*}
		\gbd(\widehat \bsigma,\tr_{\tau}(\bu)) = \b0 \quad \text{ a.e.~on }I \times \Gamma. 
	\end{align*}
	
	\begin{lemma}[identification of the trace term in case~\ref{itm:case-impl-coerc}]
		\label{lem:unst-conv-s-c2}
		Under the conditions of Theorem~\ref{thm:main-unsteady} in case~\ref{itm:case-impl-coerc}, recall that in particular $r \in [1,2]$. 
        Let $\widehat{\bsigma}^k  \in \mathcal{L}^0_0(J_{\delta_k}; L^{r'}(\Gamma)^d) $
		be the piecewise constant interpolant of the sequence $(\mathbf{\mathcal{s}}_{\varepsilon_k}(\tr_\tau(\bu^k_j)))_j$, and let $\widehat{\bsigma}$ be its limit, cf.~Lemma~\ref{lem:unst-conv-u}. 
		Then we have that 
		\begin{align*}
			\gbd(\widehat{\bsigma},\tr_{\tau}(\bu)) = \b0 \quad \text{ a.e.~on } I \times \Gamma. 
		\end{align*}
		In particular, we have that $\gbd(\bsigma + \lambda \tr_{\tau}(\bu),\tr_{\tau}(\bu)) = \b0$ a.e.~on $I \times \Gamma$, see Definition~\ref{def:unst-w-sol}. 
	\end{lemma}
	\begin{proof}
		Since $r \leq 2 < 2^\sharp$, we have  with Lemma~\ref{lem:unst-conv-u} that 
		\begin{alignat}{5}
			\tr_\tau (\bu^k) &\to \tr_\tau(\bu) \qquad &&\text{ strongly in } L^r(I;L^r(\Gamma)^d),
			\quad
			&& \\
			\widehat{\bsigma}^k &\wsconv \widehat{\bsigma} \qquad &&\text{ weakly* in } L^{r'}(I;L^{r'}(\Gamma)^d)&& 
		\end{alignat}	
		as $k \to \infty$.
		In this case due to the strong convergence of the traces we may directly apply Lemma~\ref{lem:conv-simple} which is based on Assumption~\ref{assump:gbd-mon} and~\ref{assump:gbd-reg}~\ref{itm:gbd-eps-0}--\ref{itm:gbd-eps-approx}. 
	\end{proof}
	
	\subsection{Extensions for $r>2$}\label{sec:extensions_rgeq2}

    Thanks to the strong convergence of $\tr_\tau(\bu^k)$ in $L^r(I;L^r(\Gamma)^d)$  for $r\in [1,2]$ the identification of the nonlinearities in Lemmas~\ref{lem:unst-conv-s-c1} and~\ref{lem:unst-conv-s-c2} is relatively straightforward. 
    In contrast, for $r\in (2,2^\sharp)$ in the coercive case~\ref{itm:case-impl-coerc} by Lemma~\ref{lem:unst-conv-u} we have in general merely strong convergence in $L^{p}(I;L^r(\Gamma)^d)$ for $p<r$ of the traces. 
    This does not suffice to apply the convergence Lemma~\ref{lem:conv-simple}.
   In the following we present special situations in which the identification for a certain range of $r>2$ can still be established. 
   Recall that the a priori estimates and convergence results in Sections~\ref{sec:unst-apriori}--\ref{sec:unst-limit} hold in the situation of Assumption~\ref{assump:general}. 

For the time-dependent problems in Subsection~\ref{sec:extensions_rgeq2_beta}--\ref{sec:skew-sym}  we shall only consider the coercive case~\ref{itm:case-impl-coerc}. 
   This is due to the fact that we in the non-coercive case the condition $r\leq 2$ is key to obtain the estimates~\eqref{est:unst-ap-5-c1} and~\eqref{est:unst-ap-5} and integrability in time of the boundary term. 
   
   In the stationary case in Section~\ref{sec:steady} below, we address both cases~\ref{itm:case-impl-coerc} and~\ref{itm:case-expl-noncoerc}.   

\subsubsection{Dynamic boundary conditions ($\beta>0$)} \label{sec:extensions_rgeq2_beta}
All previous a priori estimates and convergence results hold uniformly in $\beta \geq 0$, and hence in particular for $\beta=0$.
In the special case $\beta>0$  we obtain better control on the trace term. 
Indeed, additionally $(\tr(\bu^k))_{k \in \mathbb{N}}$ is bounded $L^\infty(I;L^2(\Gamma)^d)$ with constant depending on $\beta$. 
Recall that by~\eqref{est:ap-pwconst} the sequence $(\tr(\bu^k))_{k \in \mathbb{N}}$ is bounded in $L^2(I;L^p(\Gamma)^d)$ for $p \in [2,\infty)$ such that $p\leq 2^\sharp$. 
In combination, by function space interpolation one can show that
$(\tr(\bu^k))_{k \in \mathbb{N}}$ is bounded in $L^s(I;L^s(\Gamma)^d)$ for $s  = \tfrac{4}{p}(p-1)$.  
Again by interpolation and the fact that $\tr(\bu^k) \to \tr(\bu)$ converges strongly in $L^2(I;L^2(\Gamma)^d)$ it follows that 
\begin{align}\label{eq:beta_convergence}
    \tr(\bu^k) \to \tr(\bu) 
\quad 
\text{strongly in } L^r(I;L^r(\Gamma)^d)\; \text{ as } k \to \infty,
\end{align} 
provided that 
\begin{equation}\label{eq:beta_range}
 r < \begin{cases}
    3 \quad & \text{ for }d=3, \\
    4 & \text{ for }d=2.
  \end{cases}
\end{equation}

This allows us to conclude the following:

\begin{proposition}\label{prop:beta}
  In the situation of Assumption~\ref{assump:general} in  case~\ref{itm:case-impl-coerc} assume that  $\beta >0$ and that $r$  as in~\eqref{eq:beta_range}. 
  Then, the statement of Theorem~\ref{thm:main-unsteady} holds, and moreover the convergence~\eqref{eq:beta_convergence} is available. 
\end{proposition}

\subsubsection{Explicit coercive relation}\label{sec:explicit}

Let us consider the explicit coercive case, i.e., in the situation of Assumption~\ref{assump:gbd-mon}, see case~\ref{itm:case-impl-coerc}, assume additionally that the graph can be represented by a continuous function $\Srel\colon \R^d\to \R^d$, cf.~\eqref{eq:gbd-expl}. 
Comparing with Assumption~\ref{assump:general} note that the a~priori and convergence results from the previous sections are available. 
In this case for any $r \in [1,\infty)$ as in Lemma~\ref{lem:unst-conv-u} we obtain 
\begin{align*}
\Srel(\tr_\tau(\bu^k)) \wsconv \bsigma \quad \text{ weakly* in } L^{r'}(I;L^{r'}(\Gamma)^d) \qquad \text{ as } k \to \infty. 
\end{align*}
Furthermore, by Lemma~\ref{lem:unst-conv-u} the traces $\tr_{\tau}(\bu^k)\to \tr_{\tau}(\bu)$ converge strongly in $L^2(I;L^2(\Gamma)^d)$, and thus there exists a subsequence, which converges pointwisely as $k \to \infty$. 
Thus, since $\Srel$ is continuous, as in Lemma~\ref{lem:unst-conv-s-c1} we may conclude that $\bsigma = \Srel(\tr_{\tau}(\bu))$. 

\begin{proposition}\label{prop:explicit}
  In the situation of Assumption~\ref{assump:general} in case~\ref{itm:case-impl-coerc} assume that the coercive boundary relation is represented by a continuous function as in~\eqref{eq:gbd-expl}. 
  Then, without further restrictions on $r\in [1,\infty)$, the statements of Theorem~\ref{thm:main-unsteady} hold. 
\end{proposition}

\subsubsection{Skew-symmetric Nitsche method $(\theta = -1)$}
\label{sec:skew-sym}
Now we consider the situation in which only the weak convergence of $\tr_\tau(\bu^k)$ in $L^r(I;L^r(\Gamma)^d)$ is employed, and the identification is carried out using the Minty-type convergence Lemma~\ref{lem:minty}. 
To prove the required limsup estimate~\eqref{est:minty-lsc} we exploit an energy identity for $\bu$ and an energy inequality for $\bu^k$. 
The latter is available for the skew-symmetric variant of the Nitsche method, i.e., \eqref{eq:unst-discr} with $ \theta=-1$, because in~\eqref{est:unst-ap-1} the  terms involving the velocity gradient on the boundary vanish. 

Let us note that the argument does not not make use of the pointwise convergence of the traces, and hence there might be room for improvement.  

\begin{proposition} \label{prop:skew-symmetric-Nitsche}
  In the situation of Assumption~\ref{assump:general} consider the case~\ref{itm:case-impl-coerc}. 
  Assume additionally, that $\theta = -1$. 
  Then, without further restrictions on $r \in [1,\infty)$, the statement of Theorem~\ref{thm:main-unsteady} holds. 
\end{proposition} 
\begin{proof}
It suffices to consider the case $r>2$.
By Lemma~\ref{lem:unst-conv-u} in this case we have 
		\begin{alignat}{5}
			\tr_\tau (\bu^k) &\wconv \tr_\tau(\bu) \qquad &&\text{ weakly in } L^r(I;L^r(\Gamma)^d),
			\quad
			&& \\
			\widehat{\bsigma}^k &\wsconv \widehat{\bsigma} \qquad &&\text{ weakly* in } L^{r'}(I;L^{r'}(\Gamma)^d)&& 
		\end{alignat}	
		as $k \to \infty$.  
	In this case we apply the Minty type convergence Lemma~\ref{lem:minty},  which is based on Assumption~\ref{assump:gbd-mon} and~\ref{assump:gbd-reg}~\ref{itm:gbd-eps-0}--\ref{itm:gbd-eps-approx}. 
		With the fact that $\widehat{\bsigma}^k = \Seps(\tr_{\tau}(\bu^k))$ and thanks to the weak($^*$) convergence, it remains to verify that~\eqref{est:minty-lsc} holds, i.e., that 
		\begin{align}\label{est:minty-lsc-claim}
			\limsup_{k \to \infty}	\int_{J} \int_{\Omega}  \widehat{\bsigma}^k \cdot \tr_{\tau}(\bu^k)  \dx \dt \leq\int_{J} \int_{\Omega} \widehat{\bsigma} \cdot \tr_{\tau}(\bu)  \dx \dt,
		\end{align}
		for suitable sets $J \subset I$. 
		This can be derived by the energy identity in Proposition~\ref{prop:limit-eq} and the corresponding energy inequality for $\bu^k$.
		For a.e.~$0< s < T$ by Propposition~\ref{prop:limit-eq} we have 
		\begin{equation}\label{est:minty1}
			\begin{aligned}
				\int_{0}^{s} \skp{\widehat \bsigma}{\tr_{\tau} (\bu)}_{\Gamma} \dt 
				= \int_{0}^{s} (\bf,\bu)_B \dt  &- 2 \nu \int_{0}^{s}  \norm{\BD \bu}_{L^2(\Omega)}^2 \dt   
				+  \lambda \int_{0}^{s}  \norm{\tr_{\tau} (\bu)}^2_{L^2(\Gamma)} \dt   \\
				& + 	\tfrac{1}{2} \norm{\bu_0}_{B}^2  
				- \tfrac{1}{2} \norm{\bu(s)}_{B}^2. 
			\end{aligned}
		\end{equation}
		Let us fix an arbitrary value $s \in I$. 
		Using the identity~\eqref{est:unst-ap-2} in~\eqref{est:unst-ap-1} and integrating over $(0,s)$ we have 	

		\begin{align*}
			\tfrac{1}{2} \norm{\widetilde \bu^k(s)}_{B}^2 &-  	\tfrac{1}{2}  \norm{\bu^k_0}_{B}^2	+ 2 \nu \int_{0}^{s} \norm{\BD \bu^k}_{L^2(\Omega)}^2 \dt 
			+ \int_{0}^{s}\skp{\widehat{\bsigma}^k 
			}{\tr_{\tau}(\bu^k_j) }_{\Gamma} \dt\\
			&- 	\lambda \int_{0}^{s} \norm{\tr_{\tau}(\bu^k)}_{L^2(\Gamma)}^2 \dt
			+ \nu \alpha  \int_{0}^{s} \norm{h_{\Gamma}^{-1/2} \bu^k_j \cdot \bn}_{L^2(\Gamma)}^2  \dt\leq   \int_{0}^{s}({\bf}^k,{\bu}^k)_B \dt.
		\end{align*}
    Recall that $\widetilde{\bu}^k$ represents the piecewise linear interpolant of the discrete velocities.
		Rearranging, we have 
		\begin{equation}\label{est:minty-3}
			\begin{aligned}
				\int_{0}^{s}\skp{\widehat{\bsigma}^k 
				}{\tr_{\tau}(\bu^k_j) }_{\Gamma} \dt 
				&\leq 
				- \tfrac{1}{2} \norm{\widetilde \bu^k(s)}_{B}^2 
				+  	\tfrac{1}{2}  \norm{\bu^k_0}_{B}^2	- 2 \nu \int_{0}^{s} \norm{\BD \bu^k}_{L^2(\Omega)}^2 \dt 
				\\
				& \quad 
				+  	 \int_{0}^{s} \norm{\tr_{\tau}(\bu^k)}_{L^2(\Gamma)}^2 \dt
				- \nu \alpha  \int_{0}^{s}\norm{ h_{\Gamma}^{-1/2} \bu^k \cdot \bn}_{L^2(\Gamma)}^2 \dt+   \int_{0}^{s}({\bf}^k,{\bu}^k)_B \dt\\
				& \eqqcolon \mathrm{I}_{1}+...+\mathrm{I}_{6}. 
			\end{aligned}
		\end{equation}
    We consider term by term on the right-hand side, starting from $\mathrm{I}_1$. 
		Since $\abs{\widetilde{\bu}^k_j - \bu^k_j}  = \abs{\bu^k_j - \bu^k_{j-1}}$, from the estimates in~\eqref{est:unst-ap-10} it follows that 
		\begin{align*}
			\widetilde{\bu}^k - \bu^k \to 0 \quad \text{ strongly in } L^2(I;B) 
            \quad \text{ as  } k \to \infty. 
		\end{align*}
		Hence, from $\bu^k \to \bu$ strongly in $L^2(I;B)$, by Lemma~\ref{lem:unst-conv-u} it follows that $\widetilde{\bu}^k \to \bu$ strongly in $L^2(I;B)$ as $k \to \infty$.  
		In particular, there is a subsequence such that for a.e.~$s \in I$ one has  
		\begin{align*}
			\mathrm{I}_1 = \norm{\widetilde{\bu}^k(s)}_B^2 \to \norm{\bu(s)}_{B}^2 \quad \text{ as }k \to \infty. 		
		\end{align*}
		For the second term, it follows directly by~\eqref{eq:conv-u0} that 
		\begin{align*}
			\mathrm{I}_2 = \norm{{\bu}^k_0}_B^2 \to \norm{\bu_0}_{B}^2 \quad \text{ as }k \to \infty. 		
		\end{align*}
		On the third term, by the weak convergence of $\BD \bu^k \to \BD \bu$ weakly in $L^2(I;L^2(\Omega)^{d \times d})$, see Lemma~\ref{lem:unst-conv-u}, and lower semicontinuity of the norm with respect to weak convergence, it follows that 
		\begin{align*}
			\limsup_{k \to \infty} \mathrm{I}_3	
			& = \limsup_{k \to \infty}\left( - 2 \nu \norm{\BD \bu^k}_{L^2((0,s)\times \Omega)}^2 \right)  = - 2 \nu \liminf_{k \to \infty }  \norm{\BD \bu^k}_{L^2((0,s)\times \Omega)}^2  \\
			&\leq - 2 \nu  \norm{\BD \bu}_{L^2((0,s)\times \Omega)}^2. 
		\end{align*}
		On the fourth term, by the convergence of $\tr_{\tau}(\bu^k) \to \tr_{\tau}(\bu)$  strongly in $L^2(I;L^2(\Gamma)^d)$, see Lemma~\ref{lem:unst-conv-u}, we find that 
		\begin{align*}
			\mathrm{I}_4 = \norm{\tr_{\tau}(\bu^k)}_{L^2((0,s);L^2(\Gamma))}^2 \to \norm{\tr_{\tau}(\bu)}_{L^2((0,s);L^2(\Gamma))}^2
		\end{align*}
		as $k \to \infty$. 
		On the 5th term we simply use that 
		\begin{align*}
			\mathrm{I}_5 = - \nu \alpha \norm{h_{\Gamma}^{-1/2}\bu^k \cdot \bn}_{L^2((0,s);L^2(\Gamma))} \leq 0. 
		\end{align*}
    The term $I_6$ can be readily handled with the available convergences, and we thus deduce that
		\begin{equation}\label{est:minty-4}
			\begin{aligned}
				\limsup_{k \to \infty} \int_{0}^{s}\skp{\widehat{\bsigma}^k 
				}{\tr_{\tau}(\bu^k_j) }_{\Gamma} \dt 
				\leq & 
				- \tfrac{1}{2} \norm{ \bu(s)}_{B}^2 
				+  	\tfrac{1}{2}  \norm{\bu_0}_{B}^2	- 2 \nu \int_{0}^{s} \norm{\BD \bu}_{L^2(\Omega)}^2 \dt 
				\\
				&
				+  	 \int_{0}^{s} \norm{\tr_{\tau}(\bu)}_{L^2(\Gamma)}^2 \dt
				+   \int_{0}^{s}({\bf},{\bu})_B \dt\\
				&= 
				\int_{0}^s \skp{\widehat{\bsigma}}{\tr_{\tau}(\bu)}_{\Gamma} \dt,
			\end{aligned}
		\end{equation}
		where we have used~\eqref{est:minty1}. 
		This shows that the conditions of the Minty type convergence Lemma~\ref{lem:minty} are fulfilled, and thus concludes the proof. 
	\end{proof}

	\begin{remark}
		The reason why we focus on the skew-symmetric Nitsche method ($\theta=-1$) for $r>2$ is the fact that in this case the energy identity~\eqref{est:unst-ap-1} does not contain the term 
		\begin{align*}
\skp{(\Du^k \, \bn)\cdot \bn}{\bu^k \cdot \bn}_{\Gamma},
		\end{align*}
On this term we do not have enough information to show that it vanishes as $k \to 0$. 
	\end{remark}

	\begin{table}[ht!] \small
 \begin{TAB}(r)[5pt]{|c|c|c|c|c|}{|cc|cc|c|c|c|}
		convergence & boundary condition   & $\theta$ 
        & $\beta  $ &  range of $r$ \\
       result 	 & Assumption~\ref{assump:general}   &  &  &  
		\\
		Thm.~\ref{thm:main-unsteady} &    \ref{itm:case-expl-noncoerc}   &   $\theta \in [-1,1]$ 	
        &  $\beta \geq 0$ & $ [1,2]$\\
        & 
        \ref{itm:case-impl-coerc}
        &   $\theta \in [-1,1]$ 	
     &  $\beta \geq 0$ & $ [1,2]$\\
    Prop.~\ref{prop:beta} & \ref{itm:case-impl-coerc}  & $\theta \in [-1,1]$ 
    & $\beta >0$ & 
    $\begin{aligned}
        [1,3) & \;\text{ if } d = 3\\
        [1,4) & \; \text{ if } d = 2
        \end{aligned}
    $
    \\
    Prop.~\ref{prop:explicit} & \ref{itm:case-impl-coerc}  \text{ and explicit} & $\theta \in [-1,1]$ 
    & $\beta\geq 0$ & $[1,\infty)$ 
    \\
     Prop.~\ref{prop:skew-symmetric-Nitsche} & \ref{itm:case-impl-coerc}  & $ \theta = -1$ 
     & $\beta \geq 0$  & $[1,\infty)$
    \end{TAB}
	\caption{Overview on convergence results in the time-dependent case, all of them conditionally nonmonotone.  
	}
	\label{tab:overview}
\end{table} 

\subsubsection{The steady problem}\label{sec:steady}
For the stationary case the arguments are much simpler, and convergence is available for a larger range of $r$. 
The steady formulation related to the weak formulation in Definition~\ref{def:unst-w-sol} is obtained in a straightforward manner.
Note that in this case the solution is sought in $(\bu,\bsigma) \in H^1_{\bn, \diver}(\Omega) \times L^{r'}(\Gamma)^d$.
The corresponding discrete formulation is obtained by formally letting $\delta\to \infty$ in~\eqref{eq:unst-discr}; 
thus the terms corresponding to the time derivative are not present and only two approximation indices are required: $\bu^k := \bu^{\varepsilon_k,h_k}$, where $(\varepsilon_k,h_k)\to 0$ as $k\to \infty$. 

\begin{assumption}[general setting steady]\label{assump:general-steady}
Let $\Omega \subset \setR^d$ for $d \in \{2,3\}$, be a bounded and open Lipschitz domain with polyhedral boundary $\Gamma$. 	
	Let $\nu>0$  and 
	let $ \bf \in L^2(\Omega)^d$ be given. 
	Let $r \in [1,\infty)$ in both cases 
    \begin{align*}
   \begin{cases} 
     \ref{itm:case-expl-noncoerc}, \quad \text{see~Assumption~\ref{assump:s-expl}},\\
\ref{itm:case-impl-coerc},  \quad \text{see~Assumption~\ref{assump:gbd-mon}},
    \end{cases}
    \end{align*}
    and let $\lambda\geq 0$ be sufficiently small, and let $\alpha>0$ be sufficiently large.  
    Assume that $(\tria)_{h>0}$ is a sequence of shape-regular triangulations of $\Omega$ and let $(X_h,Q_h)$ be pairs of mixed finite element spaces satisfying Assumption~\ref{assump:fem}. 
	Furthermore, in case~\ref{itm:case-impl-coerc} let Assumption~\ref{assump:gbd-reg}~\ref{itm:gbd-eps-0}--\ref{itm:gbd-eps-approx} be satisfied.
\end{assumption}

Under those assumptions one can show analogously to above certain a priori estimates, and conclude the following convergence results 
\begin{alignat*}{5}\label{eq:steady_convergence}
  \bu^k&\to\bu &&
  \text{weakly in }H^1(\Omega)^d
              && \\
\tr_{\tau}(\bu^k) 
&\to 
\tr_{\tau}(\bu) \qquad && \text{strongly in }L^p(\Gamma)^d\qquad
&&\text{for any }
p<2^\sharp,\\
\tr_{\tau}(\bu^k) 
&\wsconv
\tr_{\tau}(\bu) 
\quad 
&&\text{weakly* in }
L^p(\Gamma)^d
\quad
&&\text{for any }
p\leq 2^\sharp,\\
 \Srel(\tr_{\tau}(\bu^k)) 
&\wsconv
\Srel(\tr_{\tau}(\bu)) \quad && \text{weakly* in }L^{r'}(\Gamma)^d\quad
&&\text{if }
r \leq 2^\sharp \text{ in case~\ref{itm:case-expl-noncoerc}},\\
\widehat{\bsigma}^k
&\wsconv
\widehat{\bsigma} \quad && \text{weakly* in }L^{r'}(\Gamma)^d\quad
&& \text{in case~\ref{itm:case-impl-coerc}},\\
\tr_{\tau}(\bu^k) 
&\wconv 
\tr_{\tau}(\bu) \quad && \text{weakly in }L^{r}(\Gamma)^d\quad
&& \text{in case~\ref{itm:case-impl-coerc}}
\end{alignat*}
as $k \to \infty$, and $\bu$ can be shown to be a solution to the steady problem.

\begin{theorem}[convergence steady]
\label{thm:main-steady}
In the setting of Assumption~\ref{assump:general-steady} the discrete approximations converge to a weak solution to the steady system in the following cases
\begin{enumerate}[label = (\roman*)]
     \item \label{itm:st-1} in case~\ref{itm:case-expl-noncoerc} if $r \in [1,2]$;
     \item \label{itm:st-2} in case~\ref{itm:case-impl-coerc} if $r\in [1, 2^\sharp)$;
     \item \label{itm:st-3} in case~\ref{itm:case-impl-coerc} and the boundary relation is represented by a continuous (explicit) relation as in~\eqref{eq:gbd-expl}, without further restriction on $r \in [1,\infty)$;
     \item  \label{itm:st-4} in case~\ref{itm:case-impl-coerc} and $\theta = -1$ 
without further restriction on $r \in [1,\infty)$. \end{enumerate}
\end{theorem}

\begin{proof}
    The proof of~\ref{itm:st-1} follows as in Lemma~\ref{lem:unst-conv-s-c1}, \ref{itm:st-2} as in Lemma~\ref{lem:unst-conv-s-c2}, \ref{itm:st-3} as in Proposition~\ref{prop:explicit}, and~\ref{itm:st-4} as in Proposition~\ref{prop:skew-symmetric-Nitsche}. 
\end{proof}

\begin{table}[ht!] \small
	\begin{TAB}(r)[5pt]{|c|c|c|c|}{|cc|cccc|}
		convergence & boundary condition   & $\theta$ 
          &  range of $r$ \\
       result 	 & Assumption~\ref{assump:general-steady}   &   &  
		\\
		Thm.~\ref{thm:main-steady} &    \ref{itm:case-expl-noncoerc}   &   $\theta \in [-1,1]$ 	
         & $ [1,2]$\\
        & 
        \ref{itm:case-impl-coerc}
        &   $\theta \in [-1,1]$ 	
     & $ [1,2^\sharp)$\\
     & \ref{itm:case-impl-coerc}  \text{ and explicit} & $\theta \in [-1,1]$    & $[1,\infty)$ 
    \\
      & \ref{itm:case-impl-coerc}  & $ \theta = -1$ 
       & $[1,\infty)$
    \end{TAB}
	\caption{Overview on convergence results in the steady case, all of them conditionally nonmonotone.  
	}
	\label{tab:overview-steady}
\end{table} 
	    
	\section{Numerical experiments}
	\label{sec:num-exp}
	\noindent
	In this section, we present numerical examples for several constitutive relations; the computational domain is chosen as the unit square $\Omega = (0,1)^2$ in dimension $d = 2$ with viscosity $\nu = 1$.
	The slip boundary is taken to be the top wall~$\Gamma_s = (0,1)\times \{1\}$ and no-slip conditions are imposed on the complement~$\Gamma_d\coloneqq \partial\Omega \setminus \Gamma_s$.
	All examples are implemented in \texttt{firedrake}~\cite{Firedrake} using the Taylor--Hood finite element pair with 50403 spatial degrees of freedom,  time step size~$\delta=0.005$, and  penalty parameter~$\alpha=10$.
	The discrete systems are linearised by means of Newton's method with the CP line search from PETSc~\cite{petsc-user-ref}, and the linear systems were solved using MUMPS~\cite{MUMPS}. 
	The code used to implement the numerical experiments, including the exact components of \texttt{firedrake} that were used, has been archived in Zenodo~\cite{zenodo}.

	\subsection{Steady flow with nonmonotone slip}
	\label{sec:num-exp-1}
	
	In this example we consider the steady system with constitutive relations modelling two types of nonmonotone slip behaviour, one of them smooth and the second containing an activation criterion. 

 \subsubsection{Explicit example} 
	The smooth constitutive relation is inspired by a constitutive relation proposed in the bulk by Le Roux and Rajagopal~\cite{LeRoux2013}, see Example~\ref{ex:gbd-nonmon}: 
	\begin{equation}\label{eq:non-monotone-slip}
		\bsigma = \left(a (1+ b|\bu_\tau|^2)^\theta + c \right)\bu_\tau.
	\end{equation}
	We choose~$a=1.0$, $b=0.1$, $c=0.001$, $\theta=-0.75$; with this choice the relation is indeed nonmonotone. 
	Let~$\hat{\bu}$ and $\hat{p}$ be a Taylor--Green vortex
	\begin{equation}\label{eq:taylor_green}
		\begin{aligned}
			\hat{\bu}(x,y) &\coloneqq \Lambda_\star [\sin(\pi x) \cos(\pi y), - \cos(\pi x) \sin(\pi y)]^\top, \\
			\hat{p}(x,y) &\coloneqq  \frac{\Lambda_\star}{4}(\cos(2\pi x) + \sin(2 \pi y)),
		\end{aligned}
	\end{equation}
	for $(x,y)\in \Omega$, where $\Lambda_\star>0$ is a parameter determining the amplitude of $\hat{\bu}$ and $\hat{p}$. 
	For large $\Lambda_\star$, which leads to larger magnitude of $\bu_\tau$ on $\Gamma_s$, the solution is expected to exhibit nonmonotone behaviour.
	The forcing term in the momentum equation is determined by $\bm{f} \coloneqq - 2\diver( \BD\hat{\bu}) + \diver(\hat{\bu}\otimes \hat{\bu}) + \nabla \hat{p}$, i.e., we work with  a manufactured solution. 
	
	\subsubsection{Implicit example}
	\label{ex:non-monotone-tresca}
	For the non-smooth case we consider the boundary condition in~\cite{Fang2020}, see Example~\ref{ex:Fang}. 
	We follow~\cite{JHYW.2018} for the definition of the source term, by considering the functions
	\begin{equation}\label{eq:exact_solution}
		\begin{aligned}
			\hat{\bu}(x,y) &\coloneqq  \Lambda_\star[20x^2 (x-1)^2 y (y-1) (2y-1), -20x(x-1)(2x-1)y^2 (y-1)^2]^\top, \\
			\hat{p}(x,y) &\coloneqq 20\Lambda_\star(2x-1)(2y-1),
		\end{aligned}
	\end{equation}
	and then set $\bm{f}= -\diver(2\nu_\star \BD\hat{\bu}) + \diver(\hat{\bu}\otimes \hat{\bu}) + \nabla \hat{p}$, as before.
	We consider the following constitutive relation, as  proposed in~\cite{Fang2020}:
	\begin{equation}\label{eq:nonsmooth_nonmonotone}
		\begin{array}{rlr}
			\bsigma &= \mu(|\bu_\tau|)\frac{\bu_\tau}{|\bu_\tau|}  & \text{ if }\bu_\tau \neq 0  \\
			|\bsigma| &\leq \mu(0)  & \text{ if }\bu_\tau = 0
		\end{array}
		\qquad \text{ with  }
		\mu(t) = (a-b)e^{-\beta t} + b,
	\end{equation}
	for parameters $a=1.6$, $b=1.5$, $\beta=10$. 
	This describes a nonmonotone variant of the Tresca constitutive relation, since the `activation parameter' decreases with the magnitude of the tangential velocity. 
	We consider the regularisation
	\begin{equation}
		\bsigma_\epsilon = \Seps(\bu_\tau) \coloneqq \mu(|\bu_\tau|) \frac{\bu_\tau}{\sqrt{\varepsilon^2 + |\bu_\tau|^2}} 
		\qquad
		\text{ for } \varepsilon>0,
	\end{equation}
	and we choose $\varepsilon = 0.0002$ in the computations. 
	Note that $\bv \mapsto \mu(\abs{\bv}) \frac{\bv}{\abs{\bv}} + \lambda \bv$ for $\lambda = \beta(a - b)$ is monotone;
	regularising this as above yields that $\Seps$ satisfies Assumption~\ref{assump:gbd-reg} (see Example~\ref{ex:reg-rel}).
	
	Let us stress that the analysis from~\cite{Fang2020}, see also~\cite{HCJ.2021}, is carried out for a formulation using a variational inequality, which allows for multivalued solutions. 
	In contrast, in our framework the solutions are single-valued functions and the constitutive relation is imposed pointwise at the boundary.
	
	Note that $\hat{\bu}|_{\partial\Omega}=\bm{0}$, and hence $\hat{\bu}$ is an exact solution to the equation subject to no-slip boundary conditions.
	On the other hand, the wall stress at the slip part of the $\Gamma_s$ satisfies 
	\begin{equation}
		\max_{\overline{\Gamma_s}} |\bsigma| = \max_{x\in [0,1]}|20  \Lambda_\star x^2(x-1)^2|
		= \tfrac{5}{4}  \Lambda_\star.
	\end{equation}
	In particular, for sufficiently small constant $\Lambda_\star$ the wall stress $|\bsigma|$  does not reach the
	critical value $|\mu(\bu_\tau)|$, and thus the solution is in fact the no-slip solution~\eqref{eq:exact_solution}. 
	For larger values of $\Lambda_\star$ 
	this is no longer the case and nonmonotone slip occurs. 
	\medskip 
	
	Figure~\ref{fig:CR_smooth} shows the values obtained for $\abs{\bu_\tau}$ and $\abs{\bsigma}$ on the boundary $\Gamma_s$ in the smooth case~\eqref{eq:non-monotone-slip}, and the $x$-dependence along the wall is depicted in Figure~\ref{fig:slice_smooth}. 
	In particular one can observe in Figure~\ref{fig:CR_smooth} (B) that the stress is nonmonotone in the tangential velocity. 
	The corresponding plots for the non-smooth problem with~\eqref{eq:nonsmooth_nonmonotone} are shown in Figures~\ref{fig:CR_nonsmooth} and~\ref{fig:slice_nonsmooth}. The results match with the behaviour expected from the model.
	
	\begin{figure}
		\centering
		\subfloat[C][{\centering {\normalsize $\Lambda_\star=1$}.}]{{%
				\includegraphics[width=0.49\textwidth]{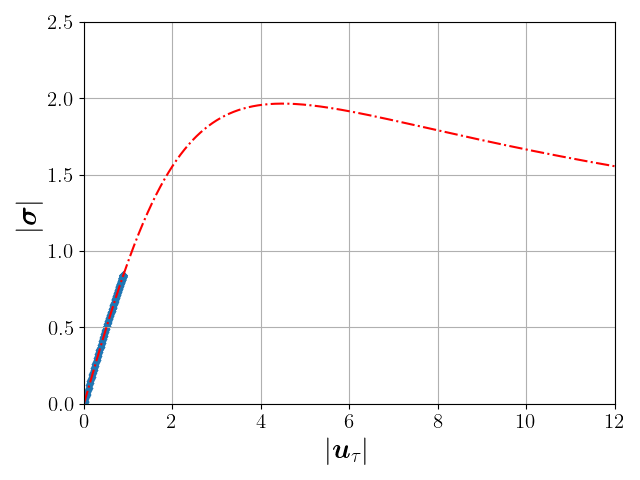}%
		}}%
		\subfloat[D][{\centering {\normalsize $\Lambda_\star = 10$}.}]{{%
				\includegraphics[width=0.49\textwidth]{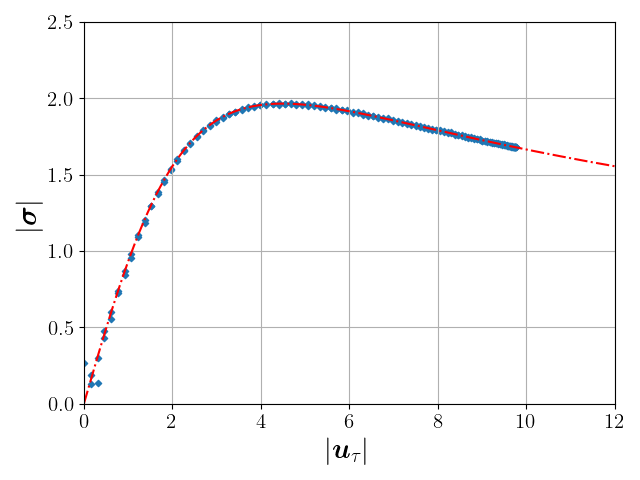}%
		}}\\
		\caption{Exact (red) and computed (blue) constitutive relation on $\Gamma_s$ for the smooth relation~\eqref{eq:non-monotone-slip}.}%
		\label{fig:CR_smooth}
	\end{figure}
	
	\begin{figure}
		\centering
		\subfloat[C][{\centering {\normalsize $\Lambda_\star=1$}.}]{{%
				\includegraphics[width=0.49\textwidth]{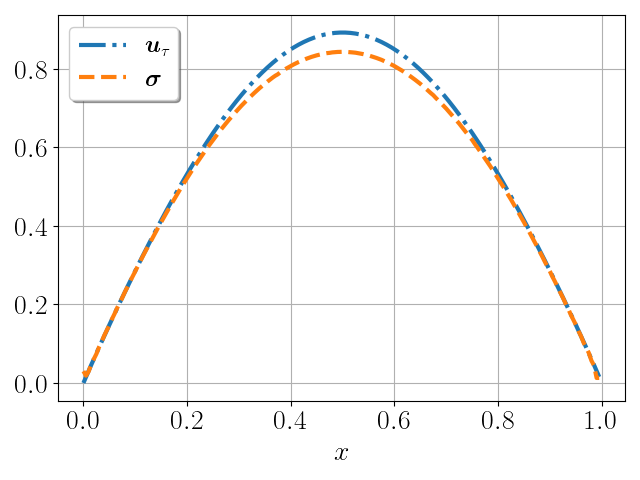}%
		}}%
		\subfloat[D][{\centering {\normalsize $\Lambda_\star = 10$}.}]{{%
				\includegraphics[width=0.49\textwidth]{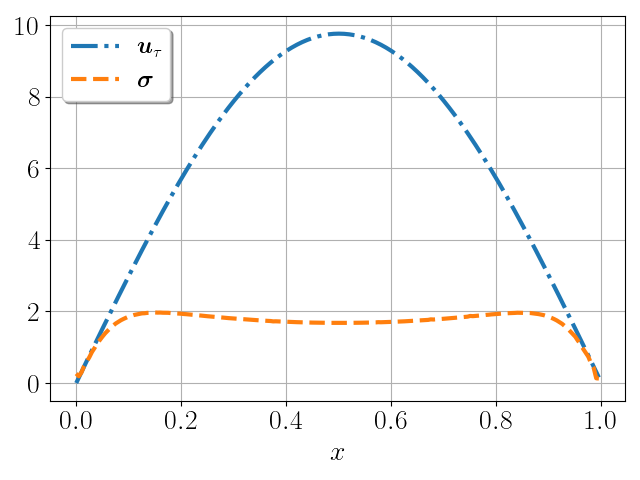}%
		}}\\
		\caption{Wall stress and tangential velocity on $\Gamma_s$ for the smooth relation~\eqref{eq:non-monotone-slip}.}%
		\label{fig:slice_smooth}
	\end{figure}
	
	\begin{figure}
		\centering
		\subfloat[C][{\centering {\normalsize $\Lambda_\star=0.6$}.}]{{%
				\includegraphics[width=0.49\textwidth]{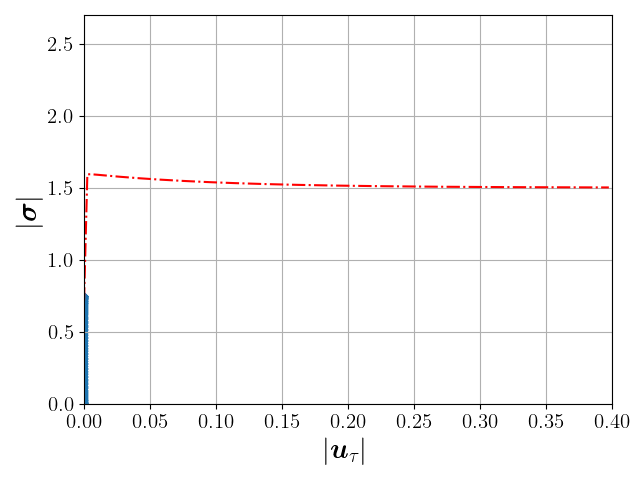}%
		}}%
		\subfloat[D][{\centering {\normalsize $\Lambda_\star = 5$}.}]{{%
				\includegraphics[width=0.49\textwidth]{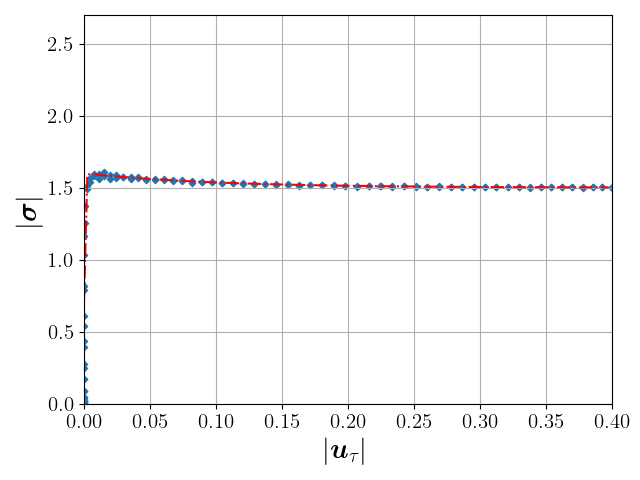}%
		}}\\
		\caption{Exact (red) and computed (blue) constitutive relation on $\Gamma_s$ for the non-smooth problem~\eqref{eq:nonsmooth_nonmonotone}.}%
		\label{fig:CR_nonsmooth}
	\end{figure}
	
	\begin{figure}
		\centering
		\subfloat[C][{\centering {\normalsize $\Lambda_\star=0.6$}.}]{{%
				\includegraphics[width=0.49\textwidth]{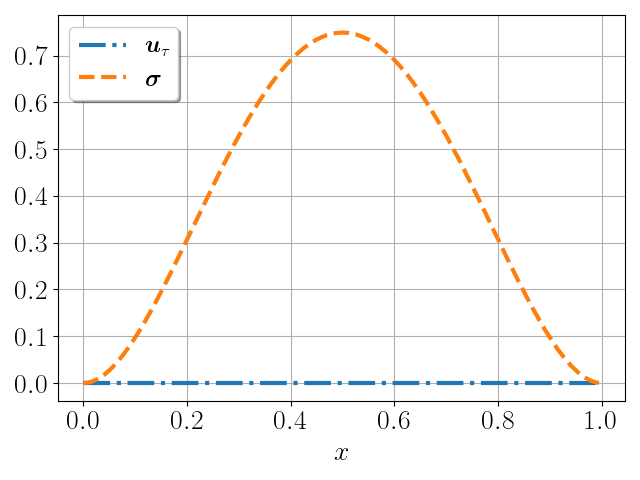}%
		}}%
		\subfloat[D][{\centering {\normalsize $\Lambda_\star = 5$}.}]{{%
				\includegraphics[width=0.49\textwidth]{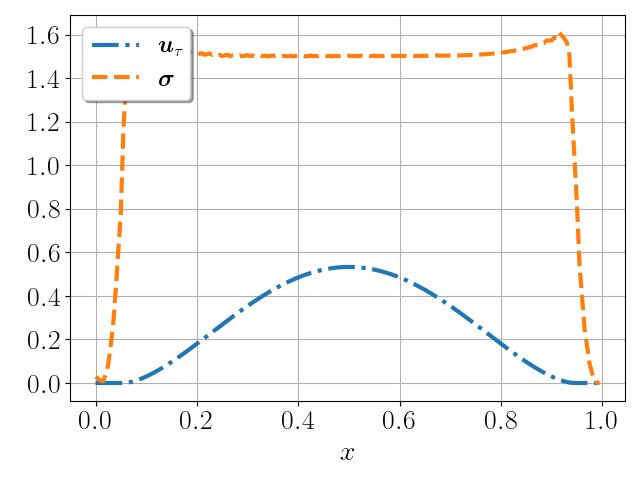}%
		}}\\
		\caption{Wall stress and tangential velocity on $\Gamma_s$ for the non-smooth problem~\eqref{eq:nonsmooth_nonmonotone}.}%
		\label{fig:slice_nonsmooth}
	\end{figure}

	\subsection{Tresca and stick-slip}
	\label{sec:num-exp-2}
	
	Let us consider the unsteady problem with stick-slip or Tresca boundary condition, see Example~\ref{ex:Tresca}, given by
	\begin{equation}\label{eq:stick_slip}
		\begin{array}{rlr}
			\bsigma &= \gamma_\star \bu_\tau + \mu_\star \frac{\bu_\tau}{|\bu_\tau|}  & \text{ if }\bu_\tau \neq 0  \\
			|\bsigma| &\leq \mu_\star  & \text{ if }\bu_\tau = 0
		\end{array}
		\qquad
		\text{ for constants   }	\gamma_\star,\mu_\star \geq 0.
	\end{equation}
	For~$\gamma_\star=0$ this is the Tresca boundary condition.  
	In this example we do not consider dynamic slip effects, i.e., we have $\beta_\star=0$.
	We employ the following regularisation, see Example~\ref{ex:reg-rel},
	\begin{equation}
		\Seps(\bu_\tau) = \gamma_\star \bu_\tau + \mu_\star \frac{\bu_\tau}{\sqrt{\varepsilon^2 + |\bu_\tau|^2}}
		\qquad \text{ for }
		\varepsilon>0.
	\end{equation}
	In the computations we choose $\varepsilon = 0.0002$ and $\mu_\star = 1$. 
	We set the initial condition to zero and the forcing term to $$\bm{f}\coloneqq \partial_t \tilde{\bu} - 
	2 
	\diver( \BD\tilde{\bu}) + \diver(\tilde{\bu}\otimes \tilde{\bu}) + \nabla\tilde{p},$$ where $\tilde{\bu}(t,x)= t \, \hat{\bu}(t,x)$ and $\tilde{p}(t,x)= t\, \hat{p}(t,x)$, for $\hat u, \hat \pi$ defined in~\eqref{eq:exact_solution}.
	That means, the fluid starts from rest and for small times the exact solution is the no-slip solution $(\tilde{\bu},\tilde{p})$. 
	As time evolves, the magnitude of the forcing term induces activation and slip occurs at the slip boundary $\Gamma_s$.
	This is precisely the observed behaviour in the computed solutions; this is shown in Figures~\ref{fig:slice_st-slip} and~\ref{fig:CR_stick-slip}.
	
	\begin{figure}
		\centering
		\subfloat[C][{\centering {\normalsize $\gamma_\star=0$, $t=0.5$}.}]{{%
				\includegraphics[width=0.49\textwidth]{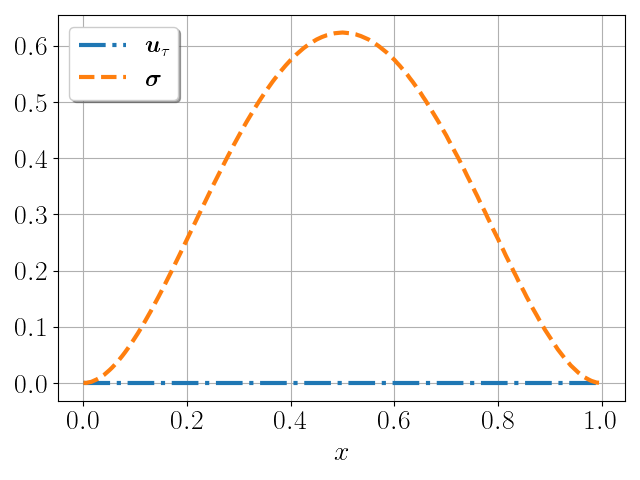}%
		}}%
		\subfloat[D][{\centering {\normalsize $\gamma_\star = 0$, $t=1.5$}.}]{{%
				\includegraphics[width=0.49\textwidth]{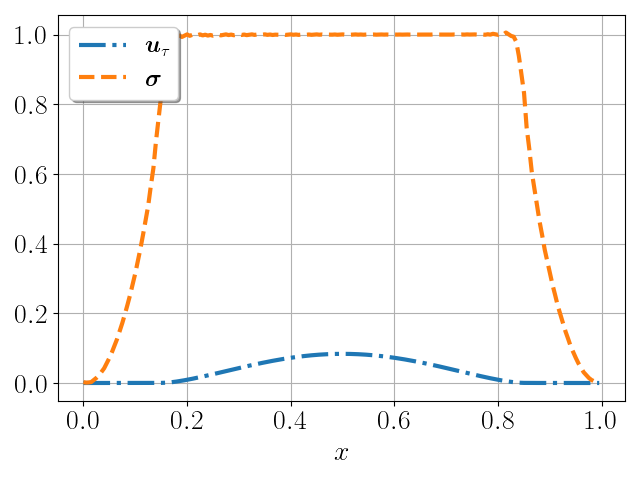}%
		}}\\
		\subfloat[C][{\centering {\normalsize $\gamma_\star=2$, $t=0.5$}.}]{{%
				\includegraphics[width=0.49\textwidth]{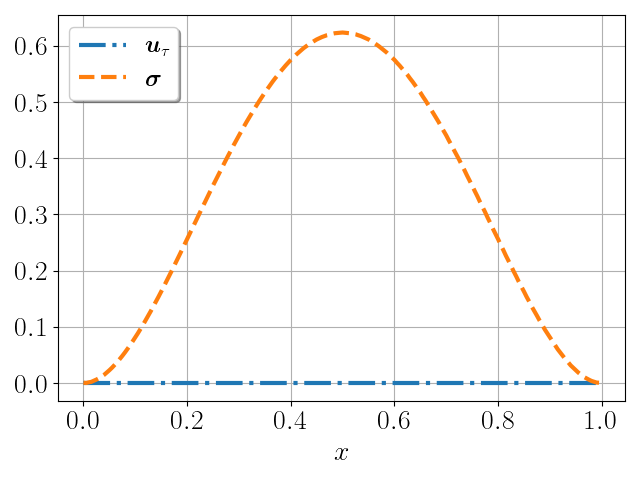}%
		}}%
		\subfloat[D][{\centering {\normalsize $\gamma_\star = 2$, $t=1.5$}.}]{{%
				\includegraphics[width=0.49\textwidth]{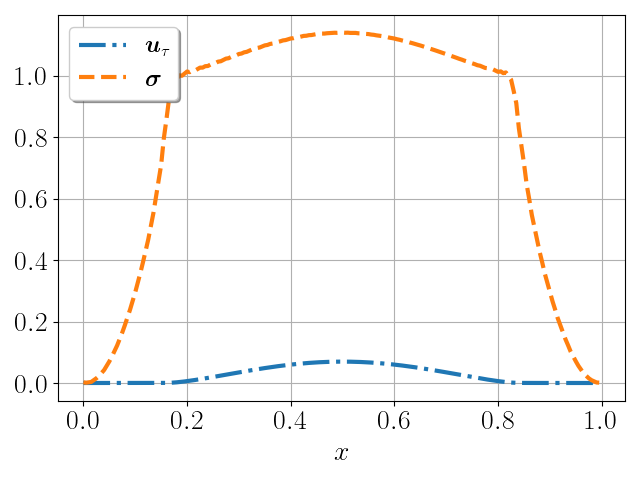}%
		}}\\
		\caption{Wall stress and tangential velocity on $\Gamma_S$ for the unsteady stick-slip condition~\eqref{eq:stick_slip}.}%
		\label{fig:slice_st-slip}
	\end{figure}
	
	\begin{figure}
		\centering
		\subfloat[C][{\centering {\normalsize $\gamma_\star=0$, $t=0.5$}.}]{{%
				\includegraphics[width=0.49\textwidth]{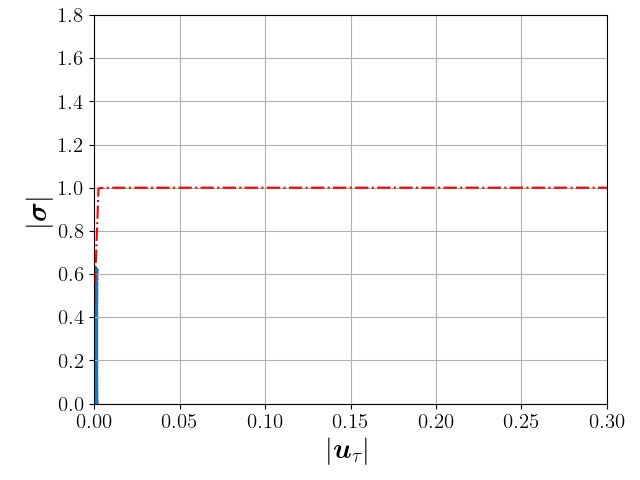}%
		}}%
		\subfloat[D][{\centering {\normalsize $\gamma_\star = 0$, $t=2.0$}.}]{{%
				\includegraphics[width=0.49\textwidth]{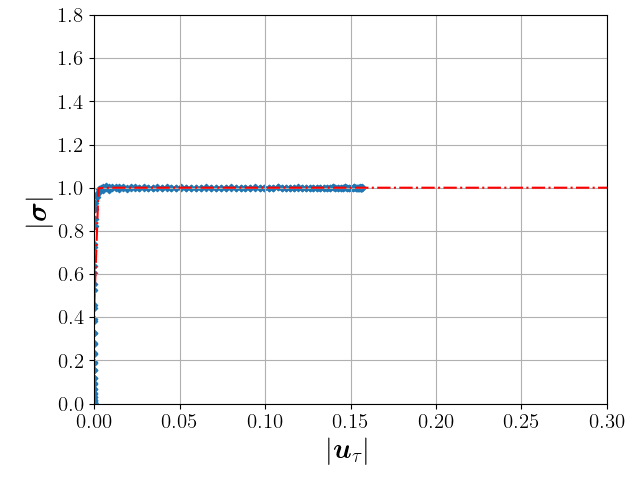}%
		}}\\
		\subfloat[C][{\centering {\normalsize $\gamma_\star=2$, $t=0.5$}.}]{{%
				\includegraphics[width=0.49\textwidth]{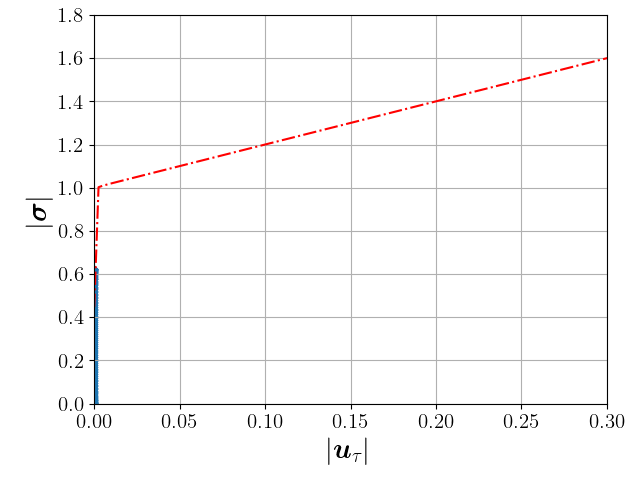}%
		}}%
		\subfloat[D][{\centering {\normalsize $\gamma_\star = 2$, $t=2.0$}.}]{{%
				\includegraphics[width=0.49\textwidth]{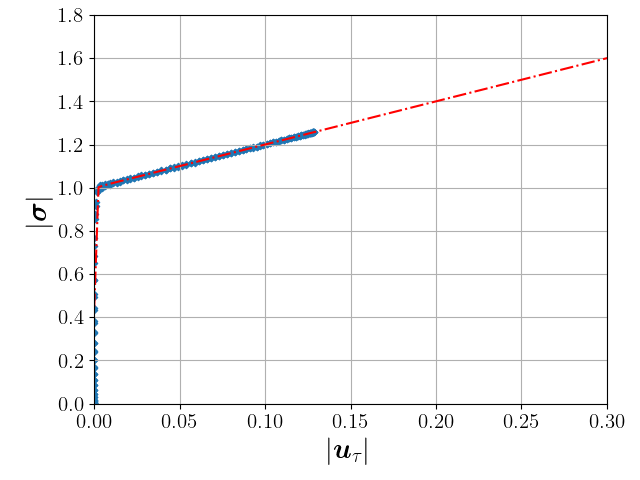}%
		}}\\
		\caption{Exact (red) and computed (blue) constitutive relation on $\Gamma_S$ for the unsteady stick-slip condition~\eqref{eq:stick_slip}.}%
		\label{fig:CR_stick-slip}
	\end{figure}
	
	\subsection{Dynamic boundary conditions}
	\label{sec:num-exp-3}
	Following~\cite{Abbatiello2021} we consider flow induced by a moving boundary $\Gamma_s$ on which dynamic slip boundary conditions are imposed; 
	the slip boundary is chosen as in the previous two sections.
	Taking into account the moving boundary, the boundary condition  reads
	\begin{equation}\label{eq:linear_dynamic_BC}
		\bsigma \coloneqq -(\BS\bn)_\tau = \beta_\star \partial_t(\bu_\tau - \hat{\bu}_b) + \gamma_\star (\bu_\tau - \hat{\bu}_b)
		\qquad
		\beta_\star,\gamma_\star\geq 0,
	\end{equation}
	where $\hat{\bu}_b= [\min\{\tfrac{t}{\theta_\star},1\}, 0]^\top$. 
	The parameter $\theta_\star$ represents the acceleration of the wall for 
	a certain time interval and is set to $\theta_\star=0.01$; 
	the presence of $\hat{\bu}_b$ models that the wall moves at constant speed, once it has reached the velocity of magnitude 1.
	The forcing term 
	$\bf$ and the initial velocity are chosen as zero, meaning that the motion is driven solely by the presence of the function $\hat{\bu}_b$. 
	
	In~\cite{Abbatiello2021} this problem is posed for $d = 3$ as a 3+1-dimensional problem and then reduced to a 1+1-dimensional problem and solved semi-analytically; 
	here, in contrast, we pose it in two spatial dimensions and solve it numerically. 
	One of the interesting phenomena that is captured by the dynamic slip for $\beta_\star >0$ is the appearance of relaxation behaviour as the wall starts to move; as observed in~\cite{Abbatiello2021} (cf.~\cite{Hatzikiriakos2012}), this is not possible for the traditional slip model with $\beta_\star=0$. 
	
	A plot of the actual slip velocity at the midpoint $(x,y)=(0.5,1.0)$ for $\gamma_\star=1$ and several values of $\beta_\star$ is depicted in Figure~\ref{fig:dynamic_slip}. 
	The results confirm the theoretical analysis in~\cite{Abbatiello2021}, which establishes that the Navier slip response ($\beta_\star = 0$) is monotonic in time: 
	the `relaxation time' is effectively zero and the effect on the fluid is instantaneous. 
	In contrast, the dynamic term with $\beta_\star>0$ allows for a nonmonotonic response in time. 
	
	\begin{figure}
		\centering
		\includegraphics[width=0.5\textwidth]{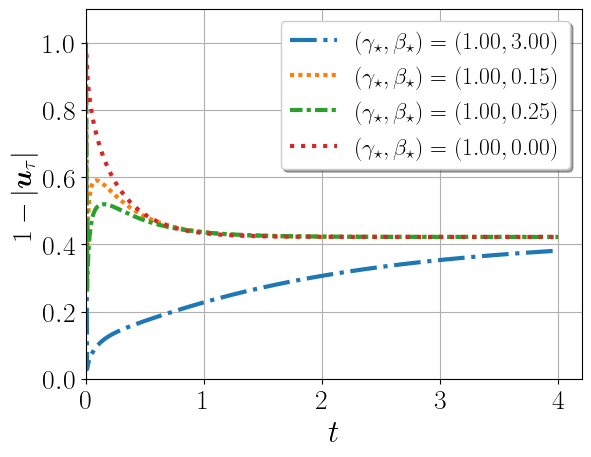}
		\caption{Time-dependence of the slip velocity for the dynamic boundary condition~\eqref{eq:linear_dynamic_BC} at $(x,y)=(0.5,1)$.}%
		\label{fig:dynamic_slip}
	\end{figure}
	
	\subsection*{Acknowledgements} 
	We thank Ridgway Scott for helpful discussions on the Scott--Zhang interpolation operator, see Lemma~\ref{lem:SZ}.
	
	Alexei Gazca-Orozco gratefully acknowledges support from the Charles University Research program no.~PRIMUS/25/SCI/025 and UNCE/24/SCI/005. 
	Franz Gmeineder was supported by the Hector foundation (Hector II, Project Number FP 626/21). 
	The work by Tabea Tscherpel was supported by the German Research Foundation (DFG) via grant TRR 154, subproject C09, project number 239904186. 
	Part of this work was carried out while the authors were hosted by the Institute of Science and Technology Austria, whose support is gratefully acknowledged.
	
	\appendix 
	
	\section{Properties of the regularised Tresca relation}\label{app:ex-reg-Tresca}
		
		To verify~\ref{itm:gbd-eps-approx} for the regularised Tresca relation in Example~\ref{ex:reg-rel}, let $\bv\in L^1(M)^d$ and $\bs\in L^\infty(M)^d$ be such that $\gbd(\bs,\bv)=\bm{0}$ a.e.~in $M$ with $\gbd$ as in~\eqref{eq:gbd-Tresca}.
			We introduce the functions 
			\begin{align*}
				\bphi \coloneqq \tfrac{1}{\sqrt{2}} (\bv + \bs) \quad \text{ and } 	\quad  \bpsi \coloneqq \tfrac{1}{\sqrt{2}} (-\bv + \bs).
			\end{align*}
			This means, that with the so-called \emph{Cayley transform}, $\Phi \colon \setR^d \to \setR^d$ we have $( \bphi,  \bpsi) = \Phi(\bv,\bs)$, cf.~\cite[p.~265]{AlbertiAmbrosio1999}. 
			The reverse transformation is given by $(\bv,\bs)  = \Phi^{-1}(\bphi,\bpsi) =  \tfrac{1}{\sqrt{2}} (\bphi - \bpsi,\bphi + \bpsi)$. 
			Since $\Seps$ is a maximal monotone function, by~\cite[Prop.~1.1]{AlbertiAmbrosio1999}  the mapping $ F_{\epsilon} \colon \setR^d \to \setR^d, \overline \bphi \mapsto \overline  \bpsi_\varepsilon$, defined by 
			\begin{align*}
				\Seps\left(\tfrac{1}{\sqrt{2}}( \overline \bphi- \overline \bpsi_\varepsilon )\right) = \tfrac{1}{\sqrt{2}}(\overline\bphi +  \overline \bpsi_\varepsilon),
			\end{align*}
			is a $1$-Lipschitz function. 
			Analogously, the maximal monotone zero set of $\gbd$ can be represented by a $1$-Lipschitz function $F$ with argument $\overline \bphi$. 
			With $\Seps$ defined as above one can show that  $F_\varepsilon(\overline \bphi)\to F(\overline \bphi)$ for any $\overline \bphi \in \setR^d$, and uniformly on every compact set.
			Note also that $F(\b0) = F_{\epsilon}(\b0) = \b0$, and that $\indicator_{S}\bphi \in L^\infty(M)$ for $S \coloneqq \{\abs{\bv}\leq \mu_\star \}$. 
			This means that the image of $\indicator_S \bphi$ is contained in a compact set, and defining  $\bpsi_\varepsilon \coloneqq F_{\varepsilon}( \indicator_{S}\bphi )$ we thus have 
			\begin{align}\label{conv:psi}
				\indicator_{S} ( \bpsi_\epsilon - \bpsi  )  =  F_{\varepsilon}( \indicator_{S}\bphi ) - F(\indicator_{S}\bphi ) \to 0
			\end{align}
			converges strongly in $L^\infty(M)^d$ as $\varepsilon \to 0$. 
			Then, we choose 
			\begin{align}\label{conv_veps}
				\bv_{\varepsilon} &\coloneqq  \tfrac{1}{\sqrt{2}} (\bphi - \bpsi_{\varepsilon})    \indicator_{S} + \bv \indicator_{S^c} \to  
				\tfrac{1}{\sqrt{2}} (\bphi - \bpsi)    \indicator_{S} + \bv \indicator_{S^c} 
				= \bv, 
			\end{align}
			which converges in particular strongly in $L^1(M)^d$, as $\varepsilon \to 0$. 
		
			Next, let us show that $\Seps(\bv) \indicator_{S^c} \to \bs \indicator_{S^c} = \mu_\star \frac{\bv}{\abs{\bv}} \indicator_{S^c}$, as $\varepsilon \to 0$. 
			Noting that on $S^c \coloneqq \{ \abs{\bv} > \mu_\star\}$, it follows that 
			\begin{align}\label{est:mu}
				\frac{1}{\sqrt{\abs{\bv}^2 + \varepsilon^2}} \leq \frac{1}{\abs{\bv}} < \frac{1}{\mu_\star}. 
			\end{align}
			Thus, for any $\bw$ with $\abs{\bw} >\mu_\star$ we have with~\eqref{est:mu} and with Taylor expansion that 
			\begin{align*}
				\abs{\Seps(\bw) - \mu_{\star} \frac{\bw}{\abs{\bw}}} 
				&= 
				\mu_\star \abs{\bw} \abs{\frac{1}{\sqrt{\abs{\bw}^2 + \epsilon^2 }} - \frac{1}{\abs{\bw}} } \\
				&= \mu_\star \frac{1}{\sqrt{\abs{\bw}^2 + \epsilon^2 }}   \left( \sqrt{\abs{\bw}^2 + \epsilon^2 } - \sqrt{\abs{\bw}^2} \right)  \leq \frac{\varepsilon^2}{\mu_{\star}}. 
			\end{align*}      
			This means that  we have the strong convergence in $L^\infty(M)^d$ of
			\begin{align*}
				\Seps(\bv) \indicator_{S^c} \to \bs \indicator_{S^c} = \mu_\star \frac{\bv}{\abs{\bv}} \indicator_{S^c} \quad \text{ as } \varepsilon \to 0. 
			\end{align*}
			In combination with $\indicator_S \bphi \in L^\infty(M)^d$ and~\eqref{conv:psi} we obtain
			\begin{align}\label{conv:seps}
				\Seps(\bv_{\epsilon}) &= 
				\tfrac{1}{\sqrt{2}} (\bphi + \bpsi_{\varepsilon}) \indicator_{S} + \Seps(\bv) \indicator_{S^c}
				\to \tfrac{1}{\sqrt{2}} (\bphi + \bpsi) \indicator_{S} 
				+ \mu_{\star} \frac{\bv}{\abs{\bv}}\indicator_{S^c}  = \bs
			\end{align}	
			converges strongly in $L^\infty(M)^d$ as $\varepsilon \to 0$. 
			Combining~\eqref{conv_veps} and~\eqref{conv:seps} proves the claim. 
	
	%bibtex 
	\printbibliography
	
	\vspace{-1em}
	
\end{document}